\numberwithin{equation}{section}
\newtheorem{satz}{Satz}[section]
\newtheorem{theorem}[satz]{Theorem}
\newtheorem{proposition}[satz]{Proposition}
\newtheorem{corollary}[satz]{Corollary}
\newtheorem{lemma}[satz]{Lemma}
\newtheorem{assumption}[satz]{Assumption}
 \theoremstyle{definition}
\newtheorem{definition}[satz]{Definition}
\newtheorem{remark}[satz]{Remark}
\newtheorem{example}[satz]{Example}
\DeclareMathOperator{\E}{{\mathbb E}}
\DeclareMathOperator{\R}{{\mathbb R}}
\DeclareMathOperator{\C}{{\mathbb C}}
\DeclareMathOperator{\Z}{{\mathbb Z}}
\DeclareMathOperator{\N}{{\mathbb N}}
\DeclareMathOperator{\T}{{\mathbb T}}
\DeclareMathOperator{\PP}{{\mathbb P}}
\DeclareMathOperator{\trace}{trace}
\DeclareMathOperator{\supp}{supp} \DeclareMathOperator{\im}{im}
 \DeclareMathOperator{\Id}{Id}
\DeclareMathOperator{\Var}{Var} \DeclareMathOperator{\Cov}{Cov}
\DeclareMathOperator{\dom}{dom}
\providecommand{\eps}{\varepsilon}
\renewcommand{\phi}{\varphi}
\renewcommand{\theta}{\vartheta}
\renewcommand{\subset}{\subseteq}
\renewcommand{\cdot}{{\scriptstyle \bullet} }
\providecommand{\abs}[1]{\lvert #1 \rvert}
\providecommand{\norm}[1]{\lVert #1 \rVert}
\providecommand{\bnorm}[1]{{\Bigl\lVert #1 \Bigr\rVert}}
\providecommand{\babs}[1]{{\Bigl\lvert #1 \Bigr\rvert}}
\providecommand{\scapro}[2]{\langle #1,#2 \rangle}
\providecommand{\bscapro}[2]{\Bigl\langle #1,#2 \Bigr\rangle}
\providecommand{\floor}[1]{\lfloor #1 \rfloor}
\renewcommand{\Re}{\operatorname{Re}}
\renewcommand{\Im}{\operatorname{Im}}
\renewcommand{\le}{\leqslant}
\renewcommand{\ge}{\geqslant}
\providecommand{\MR}{$\clubsuit$}
\let\scr\mathscr     
\newcommand{\cF}{\mathcal{F}}
\title{Information bounds for inference in stochastic evolution equations observed under noise}
\author{
Gregor Pasemann\thanks{
Institut für Mathematik, Humboldt-Universität zu Berlin, Germany
({\tt gregor.pasemann@hu-berlin.de}, {\tt mreiss@math.hu-berlin.de})
This research has been partially funded by the Deutsche Forschungsgemeinschaft (DFG)- Project-ID 318763901 - SFB1294.
}
\and
Markus Rei\ss{}\footnotemark[1]
}
\begin{document}

\maketitle

\begin{abstract}
We consider statistics for stochastic evolution equations in Hilbert space with emphasis on stochastic partial differential equations (SPDEs). We observe a solution process under additional measurement errors and want to estimate a real or functional parameter in the drift. Main targets of estimation are the diffusivity, transport or source coefficient in a parabolic SPDE. By bounding the Hellinger distance between observation laws under different parameters we derive lower bounds on the estimation error, which reveal the underlying information structure. The estimation rates depend on the measurement noise level, the observation time, the covariance of the dynamic noise, the dimension and the order, at which the parametrised coefficient appears in the differential operator. A general estimation procedure attains these rates in many parametric cases and proves their minimax optimality. For nonparametric estimation problems, where the parameter is an unknown function, the lower bounds exhibit an even more complex information structure. The proofs are to a large extent based on functional calculus, perturbation theory and monotonicity of the semigroup generators.
\end{abstract}

\noindent {\it Keywords:}
	Hellinger distance, cylindrical Gaussian measure, stochastic partial differential equation, stochastic evolution equation, weak solution, Ornstein-Uhlenbeck process, minimax rate, Cauchy problem.

\noindent {\it MSC classification:}
	46N30, 60H15, 62G20

\section{Introduction}

Many dynamical systems in nature and society are subject to randomness and stochastic partial differential equations (SPDEs) yield prototypical examples of such stochastic dynamical models. Let us consider linear parabolic SPDEs of the form
\begin{equation}\label{EqSPDEPhysics}
\dot X(t,y)=D_\theta X(t,y)+\dot W(t,y),\quad t\ge 0,\,y\in\Lambda,
\end{equation}
with space-time Gaussian white noise $\dot W$ or equivalently in It\^o form
\begin{equation}\label{EqSPDE}
dX(t,y)=D_\theta X(t,y)\,dt+dW(t,y),\quad t\ge 0,\,y\in\Lambda,
\end{equation}
with a cylindrical Brownian motion $W(t)$ on $L^2(\Lambda)$, subject to some initial condition $X(0,y)$, $y\in\Lambda$.
 Here, $\Lambda\subset \R^d$ is a spatial domain and $D_\theta$ denotes a general second-order differential operator
\[ D_\theta f(y):=\nabla\cdot\big(c^{(2)}_\theta(y)\nabla f+c^{(1)}_\theta(y) f\big)(y)+c^{(0)}_\theta(y)f(y),\quad y\in\Lambda,
\]
involving some boundary condition and (matrix-/vector-/scalar-valued) coefficients $c^{(2)}_\theta,c^{(1)}_\theta,c^{(0)}_\theta$, parametrised by a Euclidean or functional parameter $\theta\in\Theta$. First fundamental results for  estimation in a general SPDE framework have been obtained by \citet{HuRo1995} and \citet{IbrKha2001}. Afterwards, estimators for $\theta$ in several specific settings and from observing $X$ globally, locally or at discrete points in time and space  have been developed, see \citet{cialenco2018} for an excellent survey of the state of the art up to this time. Recently, in the general setting \eqref{EqSPDE} a parametric estimation theory has been developed by \citet{ATW2024} under multiple local observations of $X$.

In many cases, however, measurements of $X$ involve an additional error, see e.g \citet{PFABS2020} for measurements of cell motility. Under measurement noise different observation schemes often lead to the same estimation theory because the measurement noise typically dominates the discretisation errors.
Therefore we consider the noisy observations $dY$ of $X$ given by
\begin{equation}\label{EqYWN}
dY(t,y)=B X(t,y)\,dt+\eps dV(t,y),\quad   t\in[0,T],\,y\in\Lambda,
\end{equation}
where $V$ is a cylindrical Brownian motion on $L^2(\Lambda)$, independent of $W$, $B$ is an observation operator and $\eps\ge 0$ is the noise level. This formulation is  standard for stochastic filtering problems in dynamical systems, see e.g. \citet{BaCr2009}. From an alternative point of view, we may consider a regression setting with discrete data
\[ Y_{i,j}=(B X)(t_i,y_j)+\eps_{i,j}\]
at some time points $t_i$ and spatial locations $y_j$ with  i.i.d. error variables $\eps_{i,j}\sim {\cal N}(0,\sigma^2)$, independent of the driving noise $W$. If the design points $(t_i,x_j)$ are equally spaced and become dense in $[0,T]\times\Lambda$ for sample size $n\to\infty$, then one can conclude in a rigorous manner that these observation schemes become asymptotically equivalent with observing \eqref{EqYWN} for $\eps^2=\frac{\sigma^2T\abs{\Lambda}}n$, compare \citet{reiss2008}, Section 3 in \citet{BHMR2014} and the references therein. This interpretation, however, requires that point evaluations of $BX$ are well defined.

 Given the observations \eqref{EqYWN}, we aim at establishing minimax optimal estimation rates for $\theta$. It turns out that these rates differ significantly whenever this parameter only appears in the {\it diffusivity} (or {\it conductivity}) coefficient $c^{(2)}_\theta$, the {\it transport} (or {\it advection}) coefficient $c^{(1)}_\theta$ or in the {\it source} (or {\it reaction}) coefficient $c^{(0)}_\theta$. Moreover, when these coefficient functions $c^{(k)}_\theta(y)=\theta(y)$ are not parametrised by a real parameter, but assumed to belong just to a functional class of H\"older regularity $\alpha>0$, then the rates become nonparametric and exhibit structural differences, depending on the asymptotics taken.

Establishing minimax optimal estimation rates requires to construct estimators attaining these rates as well as to derive general information-theoretic lower bounds proving that no estimator can converge faster, see \cite{Tsyb} for a comprehensive introduction. Lower bounds provide a genuine insight into the intrinsic difficulty in drawing inference on the parameter and reveal the underlying information structure. We thus first concentrate on a general lower bound result in the framework of stochastic evolution equations (see e.g. \citet{DPZa2014,RoLo2012}), generalizing the SPDE \eqref{EqSPDE},
\begin{equation}\label{EqX}
 dX_t=A_\theta X_tdt+dW_t,\quad t\in[0,T],
 \end{equation}
on a real Hilbert space $H$ where $A_\theta$ is the generator of a strongly continuous semigroup on $H$. Then the observations are given by
\begin{equation}\label{EqY}
dY_t=BX_tdt+\eps dV_t,\quad t\in[0,T],
\end{equation}
with an independent $H$-valued cylindrical Brownian motion $V$. By linearity, the observations $dY$ follow a cylindrical Gaussian law on $L^2([0,T];H)$ and our main information-theoretic result in Theorem \ref{ThmLB} gives a tight bound for the Hellinger distance between the observation laws for two different parameters $\theta_0,\theta_1\in\Theta$. The bound is given in terms of the Hilbert-Schmidt norm of functions of $A_{\theta_0},A_{\theta_1}$. The fundamental functional-analytic difficulty consist in reducing the intrinsic Hilbert-Schmidt norm for the covariance operators of the time-continuous processes on $L^2([0,T];H)$ to a norm of the generators on $H$ without assuming commutativity of the generators. A particular consequence is the absolute continuity of the laws, which for direct observations has been studied by \cite{peszat1992}.

In Section \ref{SecParRates} we construct estimators in a general parametric setting with real parameter $\theta$ and commuting operators which under quite general conditions attain the lower bound rates, thus establishing minimax optimality. The concrete construction and analysis of the estimators is rather involved, but the main idea is to reduce the observational noise first by averaging with an operator-valued kernel and then to apply a continuous-regression estimator as in the case of direct observations of $X$.

The implications of these abstract results for fundamental parametric estimation problems are demonstrated in Section \ref{SEcParEx}. Already for noisy observations of the standard Ornstein-Uhlenbeck process
\[   dY_t=X_tdt+\eps\,dV_t\text{ with } dX_t=-\theta X_tdt+\sigma\,dW_t,\quad t\in[0,T],\,X_0=0,\]
with $\theta>0$ our optimal estimation rate $v_n=(\theta^{1/2}T^{-1/2}+ T^{-1}) (\eps^2\sigma^{-2}\theta^2+ 1)$ from Proposition \ref{PropOU}  generalizes results from \citet{kutoyants2004} and reveals interesting phenomena.  For fixed $\theta>0$ we obtain the classical $T^{-1/2}$-rate under ergodicity when $\eps/\sigma$ remains bounded and for $\theta\downarrow 0$ we approach the null-recurrent $T^{-1}$-rate. In both cases the rate does not suffer from measurement noise and equals that for direct observation of $X$ ($\eps=0$).

\begin{table}\centering

\begin{tabular}{|l|l|}
  \hline
  SPDE with parameter $\theta>0$ &  rate $v_n^{par}$\\\hline
   $dX(t)=\theta\Delta X(t)dt+dW(t)$ & $T^{-1/2}\eps^{(d+2)/4}$ \\
   $dX(t)=(\nu\Delta X(t)+\theta\partial_\xi X(t))dt+dW(t)$ & $T^{-1/2}\eps^{d/4}\nu^{(d+2)/4}$ \\
   $dX(t)=(\nu\Delta X(t)-\theta X(t))dt+dW(t)$ &  $T^{-1/2}\eps^{(d-2)_+/4}\nu^{d/4}$ \\
  \hline
\end{tabular}
\caption{Rates for estimating $\theta>0$ in different coefficients as a function of observation time $T$, static noise level $\eps$, dimension $d$ and diffusivity $\nu$ (dropping a log factor in the last row for $d=2$).}\label{TabParRates}
\end{table}

In Table \ref{TabParRates} we gather the minimax rates obtained for SPDEs of the form \eqref{EqSPDE} with a Laplacian $\Delta$ on a $d$-dimensional bounded domain $\Lambda$ and a first-order derivative $\partial_\xi$ in direction $\xi$ under standard boundary conditions, see Propositions \ref{PropParDiff}, \ref{PropParametricTransport} and \ref{PropParametricSource} for the precise formulations.
Ergodicity in time yields the $T^{-1/2}$-rate, keeping the range of $\theta$ fixed.
The higher the order of the coefficient is, the easier it is estimated for small static noise level $\eps$. There is yet another asymptotics that permits consistent estimation of transport or source coefficient, namely that the second-order coefficient degenerates as $\nu\downarrow 0$, which was first statistically exploited for direct observations of $X$ by \citet{GaRe2022}. The larger the dimension $d$, the faster the rates become, which is explained by the Weyl asymptotics of the underlying eigenvalues. All rates are valid up to some maximal dimension $d$, afterwards $\theta$ is directly identifiable. In Section \ref{SEcParEx} we also discuss the rate for the fractional Laplacian, the general question of absolute continuity of laws of $X$ under different parameters and the dependence of the rate on different spatial correlation induced by the operator $B$.

The nonparametric lower bounds are much more involved because the operators $A_\theta$ do not commute, even in standard examples like a space-dependent diffusivity $\theta(y)$. The main tool to bound the Hilbert-Schmidt norm in the Hellinger bound for differential operators $A_\theta=D_\theta$ are estimates of the form $g(D_{\theta_1})\preccurlyeq C g(D_{\theta_0})$ for functions $g$ and some constant $C>0$ with respect to the partial order $\preccurlyeq$ induced by positive semi-definiteness. Since the functions $g$ are mostly not operator monotone in the sense of \citet{bhatia2013}, a careful specific analysis is required, drawing on PDE ideas in \citet{EN2000} and similar to the non-commutative SPDE analysis by \citet{LoRo2000}.

\begin{table}\centering
\begin{tabular}{|l|l|l|}
  \hline
  SPDE with $\alpha$-regular function $\theta(\cdot)$ &  rate $(v_n^{par})^{\frac{2\alpha}{2\alpha+d}}$ & rate for $\eps\thicksim 1$\\\hline
   $dX(t)=\nabla\cdot(\theta\nabla  X)(t)dt+dW(t)$ & for $T\le\eps^{1-\alpha}$ & $T^{-\frac{\alpha}{2\alpha+3}}$ \\
   $d X(t)=(\Delta X(t)+\partial_\xi (\theta X)(t))dt+d W(t)$ &  for $T\le\eps^{-\alpha}$  & $T^{-\frac{\alpha}{2\alpha+5}}$\\
   $dX(t)=(\Delta X(t)-(\theta X)(t))dt+d W(t)$ &  for $T\le\eps^{-\alpha-(d\wedge 2)/2}$  & $T^{-\frac{\alpha}{2\alpha+4+d\wedge 3}}$\\
  \hline
\end{tabular}
\caption{Rates for estimating $\theta(\cdot)$ nonparametrically in different coefficients. The second column shows when the classical scaling of the parametric rate $v_n^{par}$ applies (a log factor is dropped in the last row for $d=2$). The third column gives the rate for non-vanishing noise level $\eps$.}\label{TabNonparRates}
\end{table}

The rates obtained for space-dependent diffusivity, transport and source terms exhibit an ellbow effect: they relate to the corresponding parametric rates $v_n^{par}$ as in classical nonparametric regression whenever $T$ is not growing too fast (or is constant) as $\eps\downarrow 0$, while they are completely different for $T\to\infty$ with $T\ge \eps^{-p}$ for certain powers $p$. Table \ref{TabNonparRates} reports where (at which $p$) the ellbow occurs and states the result for $T\to\infty$ and noise levels $\eps$ of order one. Detailed results are given in Theorems \ref{ThmNonpar1}, \ref{ThmNonpar2} and \ref{ThmNonpar3}. In particular, for $\eps\thicksim 1$ the rates necessarily slow down in smaller dimensions compared to the classical $T^{-\alpha/(2\alpha+d)}$-rate  ($d\le 2$ for diffusivity, $d\le 4$ for transport and $d\le 6$ for source estimation). In the companion paper \citet{GPMR2024}, a nonparametric estimator of diffusivity was constructed that attains the lower bound rate for fixed $T$ and regularity $\alpha\in[1,\alpha_{max}]$, which turned out to be highly non-trivial. The analysis there shows that for $\alpha<1$ the approach to reduce locally the static noise first cannot yield optimal rates, which gives an upper bound perspective of the ellbow effect. We conjecture that our nonparametric lower bounds give the minimax rates also in all other cases, but the construction and analysis of corresponding estimators remains a challenging open problem.

In Section \ref{SecHellinger} we bound the Hellinger distance between two cylindrical Gaussian measures, which might be of independent interest. This is then used in Section \ref{SecSEE}  to bound the laws for noisy observations of stochastic evolution equations. In a generic setting we construct parametric estimators in Section \ref{SecParRates} whose risk attain the  lower bounds in wide generality. This is exemplified in Section \ref{SEcParEx} for the Ornstein-Uhlenbeck process and standard SPDEs. Section \ref{SecNonparEx} derives the nonparametric lower bounds for SPDEs. We introduce specific notation before its first usage and gather all notation in Appendix \ref{SecNotation} for the convenience of the reader. Appendix \ref{SecProofs} provides more standard proofs together with the bounds for the error of the estimator constructed in Section \ref{SecParRates}. Auxiliary results are collected in Appendix \ref{SecAux}.

\section{Hellinger bounds for cylindrical Gaussian measures}\label{SecHellinger}

To establish minimax lower bounds, we use the classical approach based on bounding the Hellinger distance between two parameters. For observations from the statistical model $({\cal X},{\scr F},(\PP_\theta)_{\theta\in\Theta})$ where the (non-empty) parameter set $\Theta$ is equipped with a semi-metric $d$, we combine the reduction scheme in \citet[Section 2.2]{Tsyb} with \citet[Thm. 2.2(ii)]{Tsyb} to obtain

\begin{theorem}\label{ThmLB}
Let $\delta>0$. Assume there are $\theta_0,\theta_1\in\Theta$ with semi-distance $d(\theta_0,\theta_1)\ge \delta$ such that their respective laws have Hellinger distance $H(\PP_{\theta_0},\PP_{\theta_1})\le 1$. Then
\begin{equation}\label{EqLBgen} \inf_{\hat\theta}\sup_{\theta\in\Theta}\PP_\theta\big(\delta^{-1}d(\hat\theta,\theta)\ge 1/2\big)\ge \tfrac{2-\sqrt{3}}{4}
\end{equation}
holds where the infimum is taken over all estimators (measurable $\Theta$-valued functions) in the statistical model.
\end{theorem}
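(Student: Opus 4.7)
The plan is to follow the standard reduction-to-testing strategy that is signalled in the theorem statement itself: associate a binary hypothesis test to any candidate estimator, bound the minimax probability by the two-point testing error via the triangle inequality, and finally invoke the Hellinger-based lower bound on the test error.

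Concretely, for any estimator $\hat\theta$, I would define a test $\psi\in\{0,1\}$ by $\psi=0$ if $d(\hat\theta,\theta_0)<\delta/2$ and $\psi=1$ otherwise. The separation hypothesis $d(\theta_0,\theta_1)\ge\delta$ together with the triangle inequality ensures that the events $\{d(\hat\theta,\theta_0)<\delta/2\}$ and $\{d(\hat\theta,\theta_1)<\delta/2\}$ are disjoint. Consequently,
\[
\PP_{\theta_0}(\psi=1) = \PP_{\theta_0}(d(\hat\theta,\theta_0)\ge\delta/2), \qquad \PP_{\theta_1}(\psi=0)\le \PP_{\theta_1}(d(\hat\theta,\theta_1)\ge\delta/2),
\]
and both quantities are bounded by $\sup_{\theta\in\Theta}\PP_\theta(\delta^{-1}d(\hat\theta,\theta)\ge 1/2)$. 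This reduces \eqref{EqLBgen} to lower bounding the minimax testing error $\inf_\psi\max(\PP_{\theta_0}(\psi=1),\PP_{\theta_1}(\psi=0))$, uniformly over all $\{0,1\}$-valued tests $\psi$.

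For the latter I would apply Theorem 2.2(ii) of Tsybakov, which gives in terms of the Hellinger distance
\[
\inf_\psi\max\bigl(\PP_{\theta_0}(\psi=1),\PP_{\theta_1}(\psi=0)\bigr) \ge \tfrac{1}{2}\Bigl(1 - H(\PP_{\theta_0},\PP_{\theta_1})\sqrt{1 - H^2(\PP_{\theta_0},\PP_{\theta_1})/4}\,\Bigr).
\]
The function $t\mapsto t\sqrt{1-t^2/4}$ is monotone increasing on $[0,\sqrt 2]$, so the hypothesis $H(\PP_{\theta_0},\PP_{\theta_1})\le 1$ yields $H\sqrt{1-H^2/4}\le \sqrt{3}/2$, and the right-hand side is at least $(1-\sqrt 3/2)/2=(2-\sqrt 3)/4$. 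Combining with the reduction above proves the claim.

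There is no genuine obstacle in this argument; it is a textbook assembly of the reduction scheme in Tsybakov, Section 2.2, together with the two-point Hellinger lower bound. The substantive difficulty of the paper is concentrated elsewhere, namely in verifying the Hellinger hypothesis $H(\PP_{\theta_0},\PP_{\theta_1})\le 1$ for the cylindrical Gaussian observation laws arising from stochastic evolution equations, which is precisely what the subsequent sections develop through functional calculus and perturbation theory.
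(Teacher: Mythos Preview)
Your proposal is correct and follows exactly the approach the paper indicates: the paper does not give a detailed proof but simply states that the result is obtained by combining the reduction scheme in \citet[Section 2.2]{Tsyb} with \citet[Thm.~2.2(ii)]{Tsyb}, which is precisely what you have spelled out. One minor slip: the function $t\mapsto t\sqrt{1-t^2/4}$ is increasing only on $[0,2/\sqrt{3}]$, not on all of $[0,\sqrt{2}]$, but since $1<2/\sqrt{3}$ this does not affect your conclusion.
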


In a nutshell, the idea for establishing tight minimax lower bounds is to find two parameters (real values or functions) under which the observation laws satisfy a non-trivial Hellinger bound and which at the same time have a large (Euclidean or functional) distance. We shall apply the lower bound for sequences of models, which become more informative in $n\in\N$,  and try to find the largest $\delta=\delta_n$ in \eqref{EqLBgen}. Then no estimator sequence $\hat\theta_n$ can satisfy $\delta_n^{-1}d(\hat\theta_n,\theta)\xrightarrow{\PP_\theta} 0$ uniformly over $\theta$. In other words, estimators $\hat\theta_n$ are minimax rate-optimal when they satisfy $d(\hat\theta_n,\theta)={\cal O}_{\PP_\theta}(v_n)$ for a rate $v_n\thicksim \delta_n$, uniformly over $\theta$ (Appendix \ref{SecNotation} recalls uniform stochastic convergence).

Next, we study the Hellinger distance between cylindrical Gaussian measures with different covariance operators.

\begin{lemma}\label{LemHellreal}
The squared Hellinger distance between Gaussian laws ${\cal N}(0,\sigma_0^2)$ and ${\cal N}(0,\sigma_1^2)$ satisfies  for $\sigma_0,\sigma_1>0$
\[ H^2({\cal N}(0,\sigma_0^2),{\cal N}(0,\sigma_1^2))\le  \frac14\Big(\frac{\sigma_1}{\sigma_0}-\frac{\sigma_0}{\sigma_1}\Big)^2.\]
\end{lemma}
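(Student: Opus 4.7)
The plan is to compute the Hellinger affinity $\int\sqrt{p_0 p_1}$ in closed form, since for centred Gaussians on $\R$ the integrand is again proportional to a Gaussian density. Writing $p_i(x) = (2\pi\sigma_i^2)^{-1/2}\exp(-x^2/(2\sigma_i^2))$, one finds
\[
\sqrt{p_0(x)p_1(x)} = (2\pi\sigma_0\sigma_1)^{-1/2}\exp\Bigl(-\tfrac{x^2}{4}(\sigma_0^{-2}+\sigma_1^{-2})\Bigr),
\]
and a standard Gaussian integration gives $\int\sqrt{p_0p_1}\d x=\sqrt{2\sigma_0\sigma_1/(\sigma_0^2+\sigma_1^2)}$. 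Using $H^2(\PP_0,\PP_1)=1-\int\sqrt{p_0p_1}$, this yields the exact expression
\[
H^2({\cal N}(0,\sigma_0^2),{\cal N}(0,\sigma_1^2)) = 1-\sqrt{\tfrac{2\sigma_0\sigma_1}{\sigma_0^2+\sigma_1^2}}.
\]

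After this exact evaluation, the statement reduces to an elementary scalar inequality. Setting $r:=\sigma_1/\sigma_0$ and $x:=2r/(r^2+1)\in(0,1]$, I observe that $(r-1/r)^2/4 = (r^2-1)^2/(4r^2) = (1-x^2)/x^2$. I would then rewrite $1-\sqrt{x} = (1-x)/(1+\sqrt{x})$, so that the desired bound becomes
\[
\frac{1-x}{1+\sqrt{x}}\;\le\;\frac{(1-x)(1+x)}{x^2}.
\]
For $x=1$ both sides vanish; for $x\in(0,1)$ I divide by $1-x>0$ and the claim is equivalent to $x^2\le (1+x)(1+\sqrt{x})$, which is immediate since $x^2\le 1 \le (1+x)(1+\sqrt{x})$ on $(0,1]$.

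I do not anticipate a serious obstacle. The only mild subtlety is verifying that one may assume $r\le 1$ (or equivalently $x\in(0,1]$) without loss of generality; this follows from the symmetry of both sides of the asserted bound under $\sigma_0\leftrightarrow\sigma_1$. The proof therefore splits naturally into (i) the Gaussian integration yielding the closed-form affinity and (ii) the algebraic inequality above, with no further analytic machinery needed.
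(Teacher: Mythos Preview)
Your argument has a factor-of-two slip coming from the Hellinger convention. In this paper the squared Hellinger distance is normalised so that $H^2(\PP_0,\PP_1)=\int(\sqrt{p_0}-\sqrt{p_1})^2=2-2\int\sqrt{p_0p_1}\in[0,2]$ (the proof of the lemma in the paper explicitly uses $H^2=2-2\sqrt{2\sigma/(\sigma^2+1)}$ and notes that $H^2$ is ``always bounded by two''). You instead use $H^2=1-\int\sqrt{p_0p_1}$, so with your substitution $x=2r/(r^2+1)$ the inequality you actually need is
\[
2(1-\sqrt x)=\frac{2(1-x)}{1+\sqrt x}\le\frac{(1-x)(1+x)}{x^2},
\]
i.e.\ $2x^2\le(1+x)(1+\sqrt x)$ for $x\in(0,1]$, not merely $x^2\le(1+x)(1+\sqrt x)$. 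The stronger inequality is still elementary---for $x\le1$ one has $2x^2\le 2x\le 1+x\le(1+x)(1+\sqrt x)$---so your route goes through with this correction. (Incidentally, $x=2r/(r^2+1)\in(0,1]$ holds for \emph{all} $r>0$, so no symmetry reduction to $r\le1$ is needed.)

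By comparison, the paper's proof also starts from the closed-form affinity but then argues differently: it first disposes of the regime $(\sigma-\sigma^{-1})^2>8$ trivially (the bound exceeds~$2$), and in the remaining regime uses $(\sigma^2+1)(1+\sigma^{-1})^2\ge8$ together with $\sqrt{1-u}\ge1-u$ to bound the affinity from below. Your (corrected) route avoids the case split entirely and is arguably cleaner, since the single inequality $2x^2\le(1+x)(1+\sqrt x)$ covers all $\sigma_0,\sigma_1>0$ at once.
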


\begin{proof}
By invariance of the Hellinger distance under bi-measurable bijections, see \citet[Appendix A.1]{reiss2011}, we have $H^2({\cal N}(0,\sigma_0^2),{\cal N}(0,\sigma_1^2))= H^2({\cal N}(0,1),{\cal N}(0,(\sigma_1/\sigma_0)^2))$, so it suffices to show
\begin{align}\label{eq:lemHellReal}
	H^2({\cal N}(0,1),{\cal N}(0,\sigma^2))\le  \tfrac14(\sigma-\sigma^{-1})^2
\end{align}
for $\sigma>0$.
If $(\sigma-\sigma^{-1})^2 > 8$, this bound holds trivially as the squared Hellinger distance is always bounded by two.
Now assume $(\sigma-\sigma^{-1})^2\le 8$.
Following \citet{reiss2011} the squared Hellinger distance for scalar Gaussian laws is given by
\[ H^2({\cal N}(0,1),{\cal N}(0,\sigma^2))=2-2\sqrt{2\sigma/(\sigma^2+1)}.\]
We use $(\sigma^2+1)(1+\sigma^{-1})^{2}\ge 8$ (the mininum is attained at $\sigma=1$) and bound, differently from that reference,
\begin{align*}
\sqrt{2\sigma/(\sigma^2+1)}&=\sqrt{1-(\sigma-1)^2/(\sigma^2+1)}\ge \sqrt{1-\tfrac18(\sigma-1)^2(1+\sigma^{-1})^2}\\
&=\sqrt{1-\tfrac18(\sigma-\sigma^{-1})^2}\ge 1-\tfrac18(\sigma-\sigma^{-1})^2,
\end{align*}
given that $0\le (\sigma-\sigma^{-1})^2\le 8$. This implies \eqref{eq:lemHellReal}.
\end{proof}

This real-valued bound gives rise to a corresponding Hilbert space bound. For details on cylindrical Gaussian measures ${\cal N}_{cyl}(\mu,Q)$ we refer to \citet{bogachev1998}. Here, the main use is that $X\sim {\cal N}_{cyl}(0,Q)$ means that $(\scapro{X}{v})_{v\in H}$ is a centred Gaussian process indexed by $v$ with $\Cov(\scapro{X}{v_1},\scapro{X}{v_2})=\scapro{Qv_1}{v_2}$ where the covariance operator $Q:H\to H$ is bounded (not necessarily trace class) and positive semi-definite, notation $Q\succcurlyeq0$.

\begin{proposition}\label{PropHellinger}
Consider cylindrical Gaussian laws ${\cal N}_{cyl}(0,Q_{\bf_0}),{\cal N}_{cyl}(0,Q_{\bf 1})$ in some separable real Hilbert space $H$ with  covariance operators $Q_{\bf 0},Q_{\bf 1}$. If $Q_{\bf 0},Q_{\bf 1}$ are one-to-one and $\im(Q_{\bf 0}^{1/2})=\im(Q_{\bf 1}^{1/2})$,
then
\[ H^2({\cal N}_{cyl}(0,Q_{\bf 0}),{\cal N}_{cyl}(0,Q_{\bf 1}))\le  \tfrac14\norm{Q_{\bf 0}^{-1/2}Q_{\bf 1}^{1/2}-(Q_{\bf 1}^{-1/2}Q_{\bf 0}^{1/2})^\ast}_{HS}^2,\]
provided the right-hand side is finite. In that case the laws are equivalent.
\end{proposition}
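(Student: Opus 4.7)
My plan is to reduce the operator norm on the right-hand side to an expression involving a single positive self-adjoint operator, and then to reduce the Hellinger distance to a product of one-dimensional Hellinger distances for which Lemma \ref{LemHellreal} provides a matching bound.

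First I would verify that $K := Q_{\bf 0}^{-1/2}Q_{\bf 1}^{1/2}$ is a bounded bijection of $H$. The range assumption $\im(Q_{\bf 1}^{1/2}) = \im(Q_{\bf 0}^{1/2}) = \dom(Q_{\bf 0}^{-1/2})$ makes $K$ everywhere defined; closedness of $Q_{\bf 0}^{-1/2}$ combined with boundedness of $Q_{\bf 1}^{1/2}$ makes $K$ closed, hence bounded by the closed graph theorem. The same argument applies to $L := Q_{\bf 1}^{-1/2}Q_{\bf 0}^{1/2}$, and a direct calculation using $Q_{\bf j}^{1/2}Q_{\bf j}^{-1/2}x = x$ on $\im(Q_{\bf j}^{1/2})$ gives $KL = LK = I$, so that $K^{-1} = L$ and in particular $(Q_{\bf 1}^{-1/2}Q_{\bf 0}^{1/2})^\ast = (K^{-1})^\ast$. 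The polar decomposition $K = UC$ with $U$ unitary and $C = (K^\ast K)^{1/2}$ positive self-adjoint with bounded inverse then yields $(K^{-1})^\ast = UC^{-1}$, whence $K - (K^{-1})^\ast = U(C - C^{-1})$ and, by unitary invariance of the Hilbert-Schmidt norm,
\[ \|K - (K^{-1})^\ast\|_{HS} = \|C - C^{-1}\|_{HS}. \]

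Next I would bring both Gaussian measures into a common coordinate frame. For $v \in \im(Q_{\bf 0}^{1/2})$ set $Z_v := \xi_{Q_{\bf 0}^{-1/2}v}$, where $(\xi_w)_{w \in H}$ denotes the canonical Gaussian linear process. Under $\mathcal{N}_{cyl}(0, Q_{\bf 0})$ one computes $\Cov(Z_{v_1}, Z_{v_2}) = \langle v_1, v_2\rangle$, while under $\mathcal{N}_{cyl}(0, Q_{\bf 1})$ one obtains $\langle Q_{\bf 0}^{-1/2}Q_{\bf 1} Q_{\bf 0}^{-1/2}v_1, v_2\rangle = \langle KK^\ast v_1, v_2\rangle$. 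Since the bijection $w \mapsto Q_{\bf 0}^{1/2}w$ from $H$ onto $\im(Q_{\bf 0}^{1/2})$ identifies $(Z_v)_{v \in \im(Q_{\bf 0}^{1/2})}$ with $(\xi_w)_{w \in H}$, the two processes generate the same $\sigma$-algebra and the Hellinger distance is preserved. By continuity in $v$ and density of $\im(Q_{\bf 0}^{1/2})$, the transported cylindrical Gaussians extend uniquely to $\mathcal{N}_{cyl}(0, I)$ and $\mathcal{N}_{cyl}(0, KK^\ast)$ on $H$.

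Finally I would diagonalize the bounded self-adjoint operator $KK^\ast = UC^2 U^\ast$ in an ONB $\{f_i\}$ of eigenvectors with eigenvalues $\mu_i^2$, the squared singular values of $K$. In this basis the two transported measures decompose as the independent products $\bigotimes_i \mathcal{N}(0,1)$ and $\bigotimes_i \mathcal{N}(0, \mu_i^2)$. Since the Hellinger distance between cylindrical Gaussian measures equals the supremum over finite-dimensional marginals, the tensorization bound $1 - \prod_i \rho_i \le \sum_i (1 - \rho_i)$ for Hellinger affinities $\rho_i \in [0,1]$ together with Lemma \ref{LemHellreal} applied to each factor yields
\[ H^2 \le \sum_i \tfrac14 (\mu_i - \mu_i^{-1})^2 = \tfrac14 \|C - C^{-1}\|_{HS}^2 = \tfrac14 \|K - (K^{-1})^\ast\|_{HS}^2. \]
When the right-hand side is finite, Lemma \ref{LemHellreal} also gives $1 - \rho_i \le \tfrac18 (\mu_i - \mu_i^{-1})^2$, so $\prod_i \rho_i > 0$ and hence $H^2 < 2$; the Feldman-Hajek dichotomy then forces the two Gaussian laws to be equivalent.

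The principal difficulty will be the common-frame step: as $Q_{\bf 0}^{-1/2}$ is typically unbounded, there is no bi-measurable bijection of $H$ implementing the reduction to $Q_{\bf 0} = I$, so the identification of the two $\sigma$-algebras has to be carried out at the level of the Gaussian random linear process, exploiting the density of $\im(Q_{\bf 0}^{1/2})$ before extending by continuity. Once this reduction is in place, the remaining analysis is a clean tensorization of the scalar bound from Lemma \ref{LemHellreal}.
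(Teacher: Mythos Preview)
Your proposal is correct and follows essentially the same route as the paper: diagonalise the positive operator $R=KK^\ast$ (your $UC^2U^\ast$, the paper's $R=Q_{\bf 0}^{-1/2}Q_{\bf 1}^{1/2}(Q_{\bf 0}^{-1/2}Q_{\bf 1}^{1/2})^\ast$), represent both cylindrical Gaussians as independent products of one-dimensional Gaussians in that eigenbasis, and sum the scalar Hellinger bounds from Lemma~\ref{LemHellreal}. The only presentational differences are that the paper builds both laws directly as $\sum_k\zeta_kQ_{\bf 0}^{1/2}e_k$ and $\sum_k\zeta_k\tau_k^{1/2}Q_{\bf 0}^{1/2}e_k$ from an i.i.d.\ standard Gaussian sequence (rather than invoking your transport $Z_v=\xi_{Q_{\bf 0}^{-1/2}v}$), and then computes $\tfrac14\sum_k(\tau_k^{1/2}-\tau_k^{-1/2})^2=\tfrac14\trace(R-2\Id+R^{-1})$ directly instead of passing through your polar decomposition identity $\|K-(K^{-1})^\ast\|_{HS}=\|C-C^{-1}\|_{HS}$.
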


\begin{remark}\label{RemHell} \
\begin{enumerate}
\item If ${\cal N}(0,Q_{\bf 0}),{\cal N}(0,Q_{\bf 1})$ are equivalent proper laws on $H$ (i.e., $Q_{\bf 0},Q_{\bf 1}$ are trace class),
the Feldman-H\'ajek Theorem \cite[Thm. 2.25]{DPZa2014} shows that $\im(Q_{\bf 0}^{1/2})=\im(Q_{\bf 1}^{1/2})$  and that $Q_{\bf 0}^{-1/2}Q_{\bf 1}^{1/2}(Q_{\bf 0}^{-1/2}Q_{\bf 1}^{1/2})^\ast-\Id$ is a  Hilbert-Schmidt  operator on $H$ (note that here $\im(Q_{\bf 0}^{1/2})$ is dense in $H$ by the injectivity of $Q_{\bf 0}$). Since $(Q_{\bf 0}^{-1/2}Q_{\bf 1}^{1/2})^\ast$ has the bounded inverse $(Q_{\bf 1}^{-1/2}Q_{\bf 0}^{1/2})^\ast$, we deduce that also the product $Q_{\bf 0}^{-1/2}Q_{\bf 1}^{1/2}-(Q_{\bf 1}^{-1/2}Q_{\bf 0}^{1/2})^\ast$ is a  Hilbert-Schmidt  operator and the Hellinger bound is finite.
\item
We may write
\[ Q_{\bf 0}^{-1/2}Q_{\bf 1}^{1/2}-(Q_{\bf 1}^{-1/2}Q_{\bf 0}^{1/2})^\ast=Q_{\bf 0}^{-1/2}(Q_{\bf 1}-Q_{\bf 0})Q_{\bf 1}^{-1/2}\]
if we consider the Gelfand triple $H^1\hookrightarrow H\hookrightarrow H^{-1}$ with $H^1=\im(Q_i^{1/2})$, $H^{-1}=(H^1)^\ast$ and interpret $Q_{\bf 1}-Q_{\bf 0}: H^{-1}\to H^1$. In contrast to the bound (cf. \citet{reiss2011})
\[H^2({\cal N}_{cyl}(0,Q_{\bf 0}),{\cal N}_{cyl}(0,Q_{\bf 1}))\le  2\norm{Q_{\bf 0}^{-1/2}(Q_{\bf 1}-Q_{\bf 0})Q_{\bf 0}^{-1/2}}_{HS}^2,\]
which is asymmetric in $Q_{\bf 0}$ and $Q_{\bf 1}$, the bound of Proposition \ref{PropHellinger} will allow us to obtain feasible expressions also for non-commuting covariance operators.
\end{enumerate}
\end{remark}

\begin{proof}
Following  the proof of the Feldman-H\'ajek Theorem (step 2 for Thm. 2.25 in \cite{DPZa2014}) we let
 $(e_k)_{k\ge 1}$ be the orthonormal basis of eigenvectors of
 \[R:=Q_{\bf 0}^{-1/2}Q_{\bf 1}^{1/2}(Q_{\bf 0}^{-1/2}Q_{\bf 1}^{1/2})^\ast\]
 with corresponding positive eigenvalues $(\tau_k)_{k\ge 1}$, noting that $R$ is an injective positive operator and that
 \[R-\Id=\big(Q_{\bf 0}^{-1/2}Q_{\bf 1}^{1/2}-(Q_{\bf 1}^{-1/2}Q_{\bf 0}^{1/2})^\ast\big)(Q_{\bf 0}^{-1/2}Q_{\bf 1}^{1/2})^\ast\]
 is by assumption a Hilbert-Schmidt operator, which has an orthonormal basis of eigenvectors.

We can generate the laws via an i.i.d. sequence $\zeta_k\sim N(0,1)$:
 \[ \sum_{k\ge 1} \zeta_kQ_{\bf 0}^{1/2}e_k\sim {\cal N}_{cyl}(0,Q_{\bf 0}),\quad \sum_{k\ge 1} \zeta_k\tau_k^{1/2}Q_{\bf 0}^{1/2}e_k\sim {\cal N}_{cyl}(0,Q_{\bf 1}).
 \]
 In fact, we must check for all $ g\in H$
 \[\sum_{k\ge 1} \zeta_k\scapro{Q_{\bf 0}^{1/2}e_k}{g}\sim {\cal N}(0,\scapro{Q_{\bf 0}g}{g}), \quad \sum_{k\ge 1} \zeta_k\scapro{\tau_k^{1/2}Q_{\bf 0}^{1/2}e_k}{g}\sim {\cal N}(0,\scapro{Q_{\bf 1}g}{g}),
 \] compare \citet[Thm. 2.2.4]{bogachev1998}. The first statement follows from $\sum_{k\ge 1}\scapro{Q_{\bf 0}^{1/2}e_k}{g}^2=\norm{Q_{\bf 0}^{1/2}g}^2$, the second statement from
 \[ \sum_{k\ge 1}\scapro{Q_{\bf 0}^{1/2}e_k}{g}\scapro{\tau_kQ_{\bf 0}^{1/2}e_k}{g}=\scapro{Q_{\bf 0}^{1/2}g}{RQ_{\bf 0}^{1/2}g}=\scapro{Q_{\bf 1}g}{g}.\]

Then by  the subadditivity of the squared Hellinger distance under independence  and Lemma \ref{LemHellreal}
\begin{align*}
&H^2({\cal N}_{cyl}(0,Q_{\bf 0}),{\cal N}_{cyl}(0,Q_{\bf 1}))\\
&= H^2\Big(\bigotimes_{k\ge 1}{\cal N}(0,\norm{Q_{\bf 0}^{1/2}e_k}^2),\bigotimes_{k\ge 1}{\cal N}(0,\norm{\tau_k^{1/2}Q_{\bf 0}^{1/2}e_k}^2)\Big)\\
 &\le \sum_{k\ge 1} H^2\Big({\cal N}(0,\norm{Q_{\bf 0}^{1/2}e_k}^2),{\cal N}(0,\tau_k\norm{Q_{\bf 0}^{1/2}e_k}^2)\Big)\\
&\le \frac14\sum_{k\ge 1} (\tau_k^{1/2}-\tau_k^{-1/2})^2\\
&= \frac14 \trace\big(R-2\Id+R^{-1}\big)\\
&= \frac14 \trace\Big(\big((Q_{\bf 0}^{-1/2}Q_{\bf 1}^{1/2})^\ast-Q_{\bf 1}^{-1/2}Q_{\bf 0}^{1/2}\big)\big(Q_{\bf 0}^{-1/2}Q_{\bf 1}^{1/2}-(Q_{\bf 1}^{-1/2}Q_{\bf 0}^{1/2})^\ast\big)\Big)\\
&=\tfrac14\norm{Q_{\bf 0}^{-1/2}Q_{\bf 1}^{1/2}-(Q_{\bf 1}^{-1/2}Q_{\bf 0}^{1/2})^\ast}_{HS}^2,
\end{align*}
where we used $R^{-1}=(Q_{\bf 1}^{-1/2}Q_{\bf 0}^{1/2})^\ast Q_{\bf 1}^{-1/2}Q_{\bf 0}^{1/2}$ and commutativity under the trace.

The equivalence follows verbatim as for the Feldman-H\'ajek Theorem.
\end{proof}

\section{Stochastic evolution equations under noise}\label{SecSEE}

For a comprehensive treatment of stochastic evolution equations we refer to \cite{DPZa2014} and \cite{RoLo2012}. We consider the observation process $dY_t$ \eqref{EqY} of the solution $X$ to the stochastic evolution equation \eqref{EqX}. This means in particular that conditionally on $X$ we observe $dY\sim {\cal N}_{cyl}(BX,\eps^2\Id)$ on $L^2(H)$, that is $\int_0^T f(t)dY_t\sim {\cal N}(\int_0^T\scapro{BX_t}{f(t)}\,dt,\eps^2\norm{f}_{L^2(H)}^2)$ for all test functions $f\in L^2(H)$. This generalises the setting of \citet[Section 3.1]{kutoyants2004} (there, $X$ and $Y$ are interchanged) and of \cite{GPMR2024} (there, formally $dY_t/dt$ describes the observation process).

For the observational noise level we assume $\eps\in[0,1]$ so that also non-noisy observations of $BX$ are included for $\eps=0$. The observation time is $T \ge 1$ and usually remains fixed or tends to infinity. $B:H\to H$ is a known bounded linear operator. The unknown parameter is denoted by $\theta\in\Theta$ and for the parameter set  we assume throughout that at least two parameters $\theta_0,\theta_1$ lie in $\Theta$. We  abbreviate $A_{\bf 0}:=A_{\theta_0}$, $A_{\bf 1}:=A_{\theta_1}$ and apply this rule to all further indexing. For simplicity the initial condition $X_0$ is taken to be zero, noting that all minimax lower bounds derived later trivially extend when the maximum is taken over arbitrary $X_0\in H$.

The possibly unbounded linear operators $A_\theta:\dom(A_\theta)\subset H\to H$ are  normal   with the same domain $\dom(A_\theta)$ for all $\theta$ and generate a strongly continuous semigroup $(e^{A_\theta u})_{u\ge 0}$ by functional calculus. Note that then $(e^{A_\theta^\ast u})_{u\ge 0}$ is its adjoint semigroup.
 As a generator of a semigroup, $A_\theta$ is necessarily quasi-dissipative, meaning that $A_\theta$ has a spectrum whose real part is bounded from above, see e.g. \citet[Lemma 7.2.6]{BS2018}. Then
\begin{equation}\label{EqVoC} X_t:=\int_0^t e^{A_\theta(t-s)}dW_s,\quad t\in[0,T],
\end{equation}
defines for each $t$ a cylindrical Gaussian measure on $H$ via $\scapro{X_t}{z}=\int_0^t\scapro{e^{A_\theta^\ast(t-s)}z}{dW_s}$, $z\in H$, which  solves for $z\in\dom(A_\theta^\ast)$ the weak formulation \cite[Thm. 5.4]{DPZa2014}
\[ d\scapro{z}{X_t}=\scapro{A_\theta^\ast z}{X_t}dt+\scapro{z}{dW_t},\quad t\in[0,T],\text{ with } \scapro{z}{X_0}=0.
\]
Let us stress that we do not assume commutativity  of the operators $(A_\theta)_{\theta\in\Theta}$.

\begin{example}\label{ex:ObsToDynamicNoise}\

Writing $\tilde X_t=BX_t$ and assuming $B$ to commute with all $A_\theta$,  we observe
\[dY_t=\tilde X_tdt+\eps dV_t\text{ with } d\tilde X_t= A_\theta\tilde X_tdt+BdW_t,\; \tilde X_0=0.
\]
This way we can also treat cases of evolution equations with more general noise as a driver.
An instructive example  is given by $B=\sigma\Id$ for some known $\sigma>0$. Then we observe
\[dY_t=\tilde X_tdt+ \eps dV_t\text{ with } d\tilde X_t=A_\theta \tilde X_tdt+\sigma dW_t,\; \tilde X_0=0.\]
On the other hand, multiplying the observation process \eqref{EqY} by $\sigma^{-1}$ and replacing $Y$ by $\sigma^{-1}Y$, we observe equivalently
\[ dY_t=X_tdt+\eps\sigma^{-1} dV_t\text{ with } dX_t=A_\theta X_tdt+dW_t,\; X_0=0,\]
Consequently, introducing a dynamic noise level $\sigma>0$ is statistically equivalent to replacing the observation noise level $\eps>0$ by the ratio $\eps/\sigma$. This means that higher dynamic noise levels will lead to smaller statistical errors, compare the discussion in \cite{GPMR2024}.
\end{example}

\begin{lemma}\label{LemCovOp}
The observations $(\int_0^T \scapro{g(t)}{dY_t},\,g\in L^2(H))$ with $dY$ from \eqref{EqY} generate a cylindrical Gaussian measure ${\cal N}_{cyl}(0,Q_\theta)$  on $L^2(H)$ with
\[Q_\theta=\eps^2 \Id+BC_\theta B^\ast\text{, where }C_\theta=S_\theta S_\theta^\ast\]
 and for $f\in L^2(H)$
\[S_\theta f(t)=\int_0^t e^{A_\theta(t-s)}f(s)\,ds,\quad S_\theta^\ast f(t)=\int_t^T e^{A_\theta^\ast(s-t)}f(s)\,ds,\quad t\in[0,T].\]
\end{lemma}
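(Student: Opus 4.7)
The plan is to show directly that for any test function $g\in L^2(H):=L^2([0,T];H)$, the real-valued random variable $\int_0^T\scapro{g(t)}{dY_t}$ is centred Gaussian with variance $\scapro{Q_\theta g}{g}_{L^2(H)}$, and that these random variables are jointly Gaussian as $g$ varies. Jointness and centredness follow directly by linearity in $g$ and the joint Gaussianity of the independent drivers $V, W$, so the main content is computing the variance.

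Splitting the integral into the signal and noise parts,
\[
\int_0^T\scapro{g(t)}{dY_t}=\int_0^T\scapro{B^\ast g(t)}{X_t}\,dt+\eps\int_0^T\scapro{g(t)}{dV_t},
\]
the two terms are independent (since $V\perp W$ and $X$ is measurable with respect to $W$). The noise part has variance $\eps^2\norm{g}_{L^2(H)}^2$, matching the $\eps^2\Id$ contribution in $Q_\theta$. For the signal part, I would use the weak formulation $\scapro{z}{X_t}=\int_0^t\scapro{e^{A_\theta^\ast(t-s)}z}{dW_s}$ (valid for $z\in H$ since $e^{A_\theta^\ast u}$ is everywhere defined) with $z=B^\ast g(t)$, and apply stochastic Fubini to swap the $dt$ and $dW_s$ integrations:
\[
\int_0^T\scapro{B^\ast g(t)}{X_t}\,dt=\int_0^T\Bigl\langle\int_s^T e^{A_\theta^\ast(t-s)}B^\ast g(t)\,dt,\,dW_s\Bigr\rangle=\int_0^T\scapro{(S_\theta^\ast B^\ast g)(s)}{dW_s}.
\]
Its variance is then $\norm{S_\theta^\ast B^\ast g}_{L^2(H)}^2=\scapro{BS_\theta S_\theta^\ast B^\ast g}{g}_{L^2(H)}=\scapro{BC_\theta B^\ast g}{g}_{L^2(H)}$. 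Summing the two variances gives $\scapro{Q_\theta g}{g}_{L^2(H)}$ as claimed.

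The auxiliary identity $(S_\theta)^\ast=S_\theta^\ast$ in the stated formula is a routine Fubini computation: for $f,g\in L^2(H)$,
\[
\scapro{S_\theta f}{g}_{L^2(H)}=\int_0^T\int_0^t\scapro{f(s)}{e^{A_\theta^\ast(t-s)}g(t)}\,ds\,dt=\scapro{f}{S_\theta^\ast g}_{L^2(H)},
\]
which also shows that $C_\theta=S_\theta S_\theta^\ast$ is positive semi-definite on $L^2(H)$. Boundedness of $S_\theta$ on $L^2(H)$ follows from the semigroup bound $\norm{e^{A_\theta u}}\le Me^{\omega u}$ (which holds by quasi-dissipativity) together with the finiteness of $T$, so that $S_\theta$ is essentially a Volterra operator with bounded operator-valued kernel.

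The main obstacle is the rigorous justification of the stochastic Fubini step, because $X$ is only defined as a cylindrical Gaussian process rather than a genuine $H$-valued process, so one cannot naively integrate $\scapro{B^\ast g(t)}{X_t}$ over $t$. I would circumvent this by working directly with the deterministic $H$-valued kernel $(s,t)\mapsto \mathbf{1}_{s\le t}e^{A_\theta^\ast(t-s)}B^\ast g(t)$, checking it lies in $L^2([0,T]^2;H)$ by the semigroup estimate, and invoking the Fubini theorem for Wiener integrals against a cylindrical Brownian motion (e.g.\ \citet[Thm.\ 4.33]{DPZa2014}). With this justified, the computation above yields $Q_\theta=\eps^2\Id+BC_\theta B^\ast$ and the centred cylindrical Gaussian character of the observation law.
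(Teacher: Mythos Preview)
Your proposal is correct and follows essentially the same route as the paper: split the observation integral into the independent signal and noise parts, rewrite the signal part via the weak solution formula and stochastic Fubini as $\int_0^T\scapro{S_\theta^\ast B^\ast g(s)}{dW_s}$, and then read off the covariance by It\^o isometry and polarisation. The paper's proof is slightly terser (it does not separately verify the adjoint formula for $S_\theta$ or discuss boundedness), and it invokes \citet[Thm.~4.18]{DPZa2014} for the Fubini step rather than your Thm.~4.33, but the argument is otherwise the same.
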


\begin{remark}
Since $BC_\theta B^\ast$ is positive semi-definite, $Q_\theta$ is strictly positive for $\eps>0$ and an isomorphism on $L^2(H)$. For proper Gaussian measures and $\eps=0$ the result from Lemma \ref{LemCovOp} and consequences for absolutely continuous laws can be found in \cite{peszat1992}.
\end{remark}

\begin{proof}
The variation-of-constants formula \eqref{EqVoC} and the stochastic Fubini Theorem \cite[Thm. 4.18]{DPZa2014} yield
\begin{align*}
\int_0^T\scapro{g(t)}{X(t)}\,dt &=\int_0^T\int_0^t \scapro{g(t)}{e^{A_\theta(t-s)}dW_s}\,dt\\
&=\int_0^T \bscapro{\int_s^T e^{A_\theta^\ast(t-s)}g(t)\,dt}{dW_s}=\int_0^T \scapro{S_\theta^\ast g(s)}{dW_s}.
\end{align*}
We deduce by It\^o isometry
\[ \E\Big[\Big(\int_0^T\scapro{g(t)}{X(t)}\,dt\Big)^2\Big]=\norm{S_\theta^\ast g}_{L^2(H)}^2=\scapro{S_\theta S_\theta^\ast g}{g}_{L^2(H)}.
\]
By independence of $dV$ and $dW$ we have
\begin{align*}
 \E\Big[\Big(\int_0^T \scapro{g(t)}{dY_t}\Big)^2\Big]&=\E\Big[\Big(\int_0^T\scapro{g(t)}{B X(t)}\,dt\Big)^2\Big]+\eps^2\norm{g}_{L^2(H)}^2\\
 &=\scapro{S_\theta S_\theta^\ast B^\ast g}{B^\ast g}_{L^2(H)}+\eps^2\scapro{g}{g}_{L^2(H)}\\
&=\scapro{Q_\theta  g}{g}_{L^2(H)}.
\end{align*}
By linearity and polarisation, $(\int_0^T \scapro{g(t)}{dY_t},\,g\in L^2(H))\sim {\cal N}_{cyl}(0,Q_\theta)$ follows.
\end{proof}

In order to be able to apply functional calculus for normal operators, we complexify $H$ and all operators in the usual way (see Section 5.1 in \citet{BS2018} for more formal details) by introducing the complex Hilbert space $H^{\C}:=H+iH$ with $\norm{v+iw}_{H^{\C}}^2:=\norm{v}_H^2+\norm{w}_H^2$ and the extension $A_\theta^{\C}:\dom(A_\theta)+i\dom(A_\theta)\subset H^{\C}\to H^{\C}$ via $A_\theta^{\C}(v+iw):=A_\theta v+iA_\theta w$ for $v,w\in \dom(A_\theta)$. Then by normality of $A_\theta$ ($(A_\theta^{\C})^\ast$ denotes the complex adjoint)
\[ (A_\theta^{\C})^\ast A_\theta^{\C}(v+iw)=A_\theta^\ast A_\theta v-iA_\theta^\ast A_\theta w= A_\theta^{\C}(A_\theta^{\C})^\ast (v+iw)
\]
for all $v,w\in \dom(A_\theta^\ast A_\theta)=\dom(A_\theta A_\theta^\ast)$ and $A_\theta^{\C}$ is normal on $H^{\C}$. With the canonical isometric injection $\iota:H\to H^{\C}$, $\iota (v):=v+i\cdot0$ we have $A_\theta^{\C}\iota(v)=A_\theta v$, $v\in H$, and $A_\theta^{\C}$ extends $A_\theta$. The extension of a bounded operator $T$ on $H$ satisfies $\norm{T^{\C}}=\norm{T}$ and $\norm{T^{\C}}_{HS}^2=2\norm{T}_{HS}^2$ in case of a Hilbert-Schmidt operator. In order to ease notation, the superscript $\C$ will be dropped from now on and functional calculus for operators will be implicitly complexified. Let us emphasise that the observation model and in particular the Gaussian  noise is always defined over the real numbers.

By \citet[Theorem 6.3.11]{BS2018}  the normal operators $A_\theta$ on the complex Hilbert space $H$ satisfy $\dom(A_\theta)=\dom(A_\theta^\ast)$ and can be decomposed by two self-adjoint operators $R_\theta:\dom(R_\theta)\to H$, $J_\theta:\dom(J_\theta)\to H$ with $\dom(A_\theta)=\dom(R_\theta)\cap\dom(J_\theta)$ such that
\[ A_\theta v=R_\theta v + iJ_\theta v,\quad A_\theta^\ast v=R_\theta v - iJ_\theta v,\quad \norm{A_\theta v}^2=\norm{R_\theta v}^2+\norm{J_\theta v}^2
\]
for all $v\in\dom(A_\theta)$.
Note that any two operators from $A_\theta, A^*_\theta, R_\theta, J_\theta$ commute.
Since $A_\theta$ is normal, we can use its spectral measure to define $f(A_\theta)$ for measurable $f:\C\to\C$ \citep{BiSo2012,schmudgen2012}, in particular $R_\theta=\Re(A_\theta)$ and $J_\theta=\Im(A_\theta)$ hold on $\dom(A_\theta)$. Frequently, we shall make use of the bound $\norm{f(A_\theta)}\le\norm{f}_\infty$ for bounded $f$. Moreover, we shall employ the Bochner integral over Banach space-valued functions (e.g. with values in a Hilbert space or a space of bounded linear operators) and use standard properties, as exposed e.g. in \citet[Appendix C]{EN2000}. We lift linear operators $L$ on $H$ naturally to $L^2(H)$ by setting pointwise $(Lf)(t):=L(f(t))$ for $f\in L^2(H)$.

\begin{lemma}\label{LemCformula}
In the setting of Lemma \ref{LemCovOp},
we have the explicit representation
\[
C_\theta f(t) = \frac12\int_0^T e^{iJ_\theta t}\Big(\int_{\abs{t-s}}^{t+s} e^{R_\theta v}dv\Big)e^{-iJ_\theta s} f(s)\,ds.
\]
\end{lemma}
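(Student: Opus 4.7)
The plan is to expand $C_\theta = S_\theta S_\theta^\ast$ as an iterated integral, convert it to a kernel integral in $s$ via Fubini, and then simplify the resulting operator kernel using the normal decomposition $A_\theta = R_\theta + iJ_\theta$ together with one linear change of variable.

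Concretely, I would first substitute the definitions of $S_\theta$ and $S_\theta^\ast$ from Lemma \ref{LemCovOp} to obtain
\[
(C_\theta f)(t) = \int_0^t\int_u^T e^{A_\theta(t-u)}e^{A_\theta^\ast(s-u)}f(s)\,ds\,du.
\]
Applying Fubini on the triangular region $\{(u,s)\colon 0<u<t,\,u<s<T\} = \{(u,s)\colon 0<s<T,\,0<u<\min(s,t)\}$ puts the $s$-integral on the outside. Since $R_\theta$ and $J_\theta$ are commuting self-adjoint operators with $A_\theta = R_\theta + iJ_\theta$, the functional calculus factorises $e^{A_\theta u} = e^{R_\theta u}e^{iJ_\theta u}$ and $e^{A_\theta^\ast v} = e^{R_\theta v}e^{-iJ_\theta v}$, and all four one-parameter families mutually commute. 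Multiplying these and collecting, the imaginary parts contribute $e^{iJ_\theta(t-s)} = e^{iJ_\theta t}e^{-iJ_\theta s}$, which is free of $u$ and can be pulled outside of the $u$-integral, while the real parts collapse to $e^{R_\theta(t+s-2u)}$. Finally, the substitution $v = t+s-2u$ rescales by $1/2$ and maps $u \in (0,\min(s,t))$ onto $v \in (\abs{t-s},t+s)$, yielding exactly the claimed formula.

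The main technical point is to justify these rearrangements when $R_\theta$ and $J_\theta$ may be unbounded. Since $(e^{A_\theta u})_{u\ge 0}$ is a $C_0$-semigroup on $H$, one has $\norm{e^{A_\theta u}} = \norm{e^{R_\theta u}} \le Me^{\omega u}$ on $[0,T]$ (using that $e^{iJ_\theta u}$ is unitary by self-adjointness of $J_\theta$), with the analogous bound for the adjoint semigroup. Consequently, the double integral is absolutely convergent in $H$ for $f \in L^2([0,T];H)$ by Cauchy-Schwarz in $s$, so Fubini applies to the Bochner integral. The factorisation $e^{A_\theta u} = e^{R_\theta u}e^{iJ_\theta u}$ is an instance of multiplicativity of the functional calculus on the commutative algebra generated by $R_\theta$ and $J_\theta$, and since $v = t+s-2u \ge \abs{t-s} \ge 0$ throughout the integration range, $e^{R_\theta v}$ is a bounded operator at every point. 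Beyond these bookkeeping steps I expect no serious obstacle.
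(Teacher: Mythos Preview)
Your proposal is correct and matches the paper's proof essentially line by line: expand $S_\theta S_\theta^\ast$, apply Fubini for Bochner integrals, use the decomposition $A_\theta=R_\theta+iJ_\theta$ to factor the exponentials and pull the $u$-independent pieces $e^{iJ_\theta t}$, $e^{-iJ_\theta s}$ outside, then substitute $v=t+s-2u$. The paper's justification is slightly terser (it just invokes ``Fubini's theorem for Bochner integrals with continuous integrands''), but your additional remarks on boundedness of the semigroups and the functional calculus are entirely in line with the surrounding framework.
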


\begin{proof}
The asserted identity  follows from
Fubini's theorem for Bochner integrals with  continuous integrands:
\begin{align*}
C_\theta f(t)=S_\theta S_\theta^\ast f(t)
&= \int_0^t e^{A_\theta(t-u)}\Big(\int_u^Te^{A^*_\theta(s-u)}f(s)\,ds\Big)\,du\\
&=\int_0^T \int_0^{t\wedge s}e^{A_\theta t-2R_\theta u+A_\theta^\ast s}du\, f(s)\,ds\\
&=\int_0^T e^{iJ_\theta t}\int_0^{t\wedge s}e^{R_\theta t-2R_\theta u+R_\theta s}du\,e^{-iJ_\theta s} f(s)\,ds\\
&=\frac12\int_0^T e^{iJ_\theta t}\Big(\int_{\abs{t-s}}^{t+s} e^{R_\theta v}dv\Big)e^{-iJ_\theta s} f(s)\,ds.
\end{align*}
\end{proof}

We proceed to bound the operator norm of $R_\theta S_\theta$ on $L^2(H)$. This is analogous to the convolution operator $f\mapsto \int_0^T \Re(\lambda)e^{\lambda(\cdot-s)}f(s)ds$ on $L^2([0,T];\C)$ with the classical norm bound $\norm{\Re(\lambda)e^{\Re(\lambda)\cdot}}_{L^1([0,T])}$. The proof involves, however, more involved functional calculus and tensorisation.

\begin{proposition}\label{PropRSnorm}
With $(x)_+=x\vee 0$ we have for the operator norm in $L^2(H)$
\begin{equation}\label{EqAlphatheta} \norm{R_\theta S_\theta}^2=\norm{S^*_\theta R_\theta}^2\le \tfrac34+\tfrac{1}{4} e^{2\norm{(R_\theta)_+}T}=:\alpha_\theta^2.
\end{equation}
\end{proposition}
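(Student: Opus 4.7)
The first equality $\norm{R_\theta S_\theta}=\norm{S_\theta^\ast R_\theta}$ is immediate: $R_\theta=\Re(A_\theta)$ is self-adjoint, so $(R_\theta S_\theta)^\ast = S_\theta^\ast R_\theta$ and taking adjoints preserves the operator norm. My plan for the inequality is to reduce the operator-norm estimate on $L^2([0,T];H)$ to a family of scalar estimates via the spectral theorem, and then to prove the scalar bound by two complementary arguments distinguishing the sign of $\Re(\lambda)$.

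By the multiplicative form of the spectral theorem for the normal operator $A_\theta$ on the complexified $H$, we may identify $H$ with $L^2(\Omega,\mu)$ so that $A_\theta$ acts as multiplication by a measurable function $\lambda\colon\Omega\to\C$. Lifting this unitary identification to $L^2([0,T];H)\cong L^2([0,T]\times\Omega)$, the semigroup $e^{A_\theta u}$ ($u\ge 0$) acts as multiplication by $e^{\lambda(\omega)u}$ and $R_\theta$ as multiplication by $r(\omega):=\Re\lambda(\omega)$. Thus $R_\theta S_\theta$ acts fibrewise in $\omega$ as the scalar Volterra-type operator
\[
(K_\lambda g)(t) := r\int_0^t e^{\lambda(t-s)}g(s)\,ds
\]
on $L^2([0,T];\C)$, with $\lambda=\lambda(\omega)$. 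The functional calculus applied to $x\mapsto(\Re x)_+$ gives $\norm{(R_\theta)_+}=\esssup_\omega r(\omega)_+$, so
\[
\norm{R_\theta S_\theta}^2 \le \esssup_{\omega\in\Omega}\norm{K_{\lambda(\omega)}}^2,
\]
and it suffices to prove $\norm{K_\lambda}^2\le\tfrac34+\tfrac14 e^{2r_+T}$ for every $\lambda\in\C$.

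For $r\le 0$, the convolution kernel $k_\lambda(u):=re^{\lambda u}$ on $u\ge 0$ (and $0$ otherwise) lies in $L^1(\R)$ with $\norm{k_\lambda}_{L^1(\R)}=|r|\int_0^\infty e^{ru}\,du=1$ (the case $r=0$ giving $k_\lambda=0$). Young's convolution inequality on $\R$, applied after extending $g\in L^2([0,T])$ by zero, yields $\norm{K_\lambda}_{L^2\to L^2}\le 1$, so $\norm{K_\lambda}^2\le 1 = \tfrac34+\tfrac14 e^0$. For $r>0$, Cauchy--Schwarz pointwise in $t$ gives
\[
|K_\lambda g(t)|^2 \le r^2\int_0^t |e^{\lambda(t-s)}|^2\,ds\cdot\int_0^t |g(s)|^2\,ds \le \tfrac{r(e^{2rt}-1)}{2}\norm{g}_{L^2}^2,
\]
and integrating over $t\in[0,T]$ produces $\norm{K_\lambda g}^2 \le \tfrac14(e^{2rT}-1-2rT)\norm{g}^2 \le \tfrac14 e^{2rT}\norm{g}^2$. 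Hence $\norm{K_\lambda}^2 \le \tfrac14 e^{2rT}\le \tfrac34 + \tfrac14 e^{2rT}$. Taking the essential supremum in $\omega$ yields the claim.

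The main obstacle I anticipate is making the spectral reduction fully rigorous in the unbounded setting: one must verify that the Bochner integral defining $S_\theta$ descends, after the unitary identification, to the pointwise scalar formula for $K_{\lambda(\omega)}$. This hinges on the uniform bound $|e^{\lambda(\omega)u}|=e^{r(\omega)u}\le e^{\norm{(R_\theta)_+}u}$ on $u\in[0,T]$ for a.e.~$\omega$, which allows Fubini/Tonelli and the fibrewise decomposition to be applied safely.
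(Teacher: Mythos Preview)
Your argument is correct, and it takes a genuinely different route from the paper's. The paper itself notes just before the proposition that the scalar analogy with the convolution operator and its $L^1$-kernel bound is clear, but then chooses \emph{not} to reduce to scalars. Instead, the paper computes $\scapro{C_\theta R_\theta f}{R_\theta f}=\norm{S_\theta^\ast R_\theta f}^2$ directly on $L^2(H)$ via the explicit kernel of $C_\theta$ from Lemma~\ref{LemCformula}, substitutes $g(t)=e^{-iJ_\theta t}f(t)$, and then splits $R_\theta=(R_\theta)_+-(R_\theta)_-$. The negative-part contribution is handled by a tensorisation trick: the double integral is rewritten as a trace involving $\int g(t+r)\otimes\overline{g(t)}\,dt$, which is dominated in the positive-semidefinite order by $\int g(t)\otimes\overline{g(t)}\,dt$ via Cauchy--Schwarz, yielding the bound $2\norm{g}^2$. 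The positive-part contribution is bounded by Cauchy--Schwarz and semigroup estimates to give $\tfrac12(e^{2\norm{(R_\theta)_+}T}-1)\norm{g}^2$. Adding and dividing by two recovers the $\tfrac34+\tfrac14 e^{2\norm{(R_\theta)_+}T}$.

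Your approach is considerably more elementary: the multiplicative spectral theorem reduces $R_\theta S_\theta$ to a direct integral of scalar Volterra operators, and Young's inequality handles the dissipative fibres in one line while Cauchy--Schwarz handles the expansive ones. What the paper's route buys is that it stays entirely at the operator level and never invokes a direct-integral decomposition; this may feel cleaner to some readers and keeps all estimates in the same framework used for the later non-commuting results. What your route buys is brevity and transparency: the scalar estimates are sharp enough and the fibrewise decomposition is completely standard once one checks (as you note) that the Bochner integral defining $S_\theta$ commutes with the unitary identification---which follows immediately from the uniform bound $|e^{\lambda(\omega)u}|\le e^{\norm{(R_\theta)_+}T}$ and Fubini.
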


\begin{proof}
See Section \ref{SecProofSEE}.
\end{proof}

We have a tighter bound in the contractive setting where $R_\theta\preccurlyeq 0$. In the sequel, we also write short $\abs{a}_T^{-p}:=\abs{a}^{-p}\wedge T^p$ and $\abs{a}_{T^{-1}}^{p}:=\abs{a}^p\vee T^{-p}=(\abs{a}_T^{-p})^{-1}$ for $a\in\C$, $p,T>0$ with $\abs{0}_T^{-p}:=T^p$.

\begin{corollary}\label{CorRSnorm}
If $R_\theta\preccurlyeq0$ holds, then we can bound
\begin{equation}\label{EqAlphatheta2} \norm{\abs{R_\theta}_{T^{-1}} S_\theta}=\norm{S^*_\theta \abs{R_\theta}_{T^{-1}}}\le 1.
\end{equation}
\end{corollary}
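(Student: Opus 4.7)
The plan is to split $\abs{R_\theta}_{T^{-1}}$ along the spectral projection $P := \mathbf{1}_{\{\abs{R_\theta}\ge T^{-1}\}}$ of $R_\theta$ (defined via the functional calculus of $A_\theta$) and to handle the two resulting pieces separately, reusing Proposition \ref{PropRSnorm} for one and a crude contraction estimate for the other. Since $R_\theta = \Re(A_\theta)$ is a function of the normal operator $A_\theta$, the projection $P$ commutes with $A_\theta$, with the semigroup $(e^{A_\theta u})_{u\ge 0}$, and, lifted fibrewise to $L^2(H)$, with $S_\theta$ as well.

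Concretely, I will use the decomposition
\[ \abs{R_\theta}_{T^{-1}} = \abs{R_\theta}\,P + T^{-1}(\Id - P),\]
so that $\abs{R_\theta}_{T^{-1}} S_\theta f = \abs{R_\theta} S_\theta Pf + T^{-1} S_\theta(\Id-P)f$. Because $P$ and $\Id-P$ are fibrewise orthogonal projections commuting with $S_\theta$, the two summands lie in orthogonal subspaces of $L^2(H)$, so their squared norms add:
\[ \|\abs{R_\theta}_{T^{-1}} S_\theta f\|^2 = \|\abs{R_\theta} S_\theta Pf\|^2 + T^{-2}\|S_\theta(\Id-P)f\|^2.\]

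For the first term I invoke Proposition \ref{PropRSnorm}: the hypothesis $R_\theta\preccurlyeq 0$ yields $(R_\theta)_+ = 0$, hence $\alpha_\theta = 1$ and $\|R_\theta S_\theta\|\le 1$; since $\abs{R_\theta} = -R_\theta$ in the functional calculus, also $\|\abs{R_\theta} S_\theta Pf\|\le \|Pf\|$. For the second term it suffices to prove the crude bound $\|S_\theta\|\le T$: normality of $A_\theta$ together with $R_\theta\preccurlyeq 0$ makes $(e^{A_\theta u})$ a contraction semigroup, so $\|S_\theta f(t)\|\le \int_0^t\|f(s)\|\,ds$, and one application of Cauchy--Schwarz then produces the factor $T$, giving $T^{-2}\|S_\theta(\Id-P)f\|^2\le \|(\Id-P)f\|^2$.

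Adding both estimates yields $\|\abs{R_\theta}_{T^{-1}} S_\theta f\|^2\le \|Pf\|^2+\|(\Id-P)f\|^2=\|f\|^2$, i.e.\ $\|\abs{R_\theta}_{T^{-1}} S_\theta\|\le 1$; the asserted equality with $\|S_\theta^*\abs{R_\theta}_{T^{-1}}\|$ is then automatic from self-adjointness of $\abs{R_\theta}_{T^{-1}}$ and $\|L^*\|=\|L\|$. The only delicate point I would not skip is the justification that $P$, acting pointwise in $t$ on $L^2(H)$, truly commutes with the Bochner integral defining $S_\theta$; this follows from $P\,e^{A_\theta u} = e^{A_\theta u}\,P$ for every $u\ge 0$ together with boundedness of $P$.
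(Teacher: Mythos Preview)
Your proof is correct and is essentially the same as the paper's: the paper defines the complementary projection $\Pi_{\theta,T}:=\mathbf{1}(R_\theta\in[-T^{-1},0])=\Id-P$, applies Proposition \ref{PropRSnorm} (with $\alpha_\theta=1$) on the range of $\Id-\Pi_{\theta,T}$ and the crude contraction bound $\|S_\theta\|\le T$ on the range of $\Pi_{\theta,T}$, then combines via orthogonality exactly as you do.
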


\begin{proof}
For $R_\theta\preccurlyeq 0$ we have $(R_\theta)_+=0$ and $\alpha_\theta=1$ in Proposition \ref{PropRSnorm}. By functional calculus, $\Pi_{\theta,T}:={\bf 1}(R_\theta\in[-T^{-1},0])$ and $\Id-\Pi_{\theta,T}$ are orthogonal projections in $L^2(H)$ ($R_\theta$ being lifted from $H$ to $L^2(H)$). Since $S_\theta$ commutes with $R_\theta$, it also commutes with $\Pi_{\theta,T}$ so that by Proposition \ref{PropRSnorm} for $f\in L^2(H)$
\[ \norm{(\Id-\Pi_{\theta,T})\abs{R_\theta} S_\theta f}_{L^2(H)}=\norm{-R_\theta S_\theta (\Id-\Pi_{\theta,T})f}_{L^2(H)}\le \norm{(\Id-\Pi_{\theta,T})f}_{L^2(H)}
\]
follows. On the other hand, we have by direct calculation
\begin{align*}
\norm{\Pi_{\theta,T} S_\theta f}_{L^2(H)}^2 &=\int_0^T\bnorm{\int_0^t e^{A_\theta(t-s)}\Pi_{\theta,T}f(s)\,ds}^2dt\\
&\le \int_0^T\Big(\int_0^t \norm{e^{A_\theta(t-s)}}\norm{\Pi_{\theta,T}f(s)}\,ds\Big)^2dt\\
&\le T^2\norm{\Pi_{\theta,T}f}_{L^2(H)}^2,
\end{align*}
where $\norm{e^{A_\theta(t-s)}}=\norm{e^{R_\theta(t-s)}}\le 1$ was used due to $R_\theta\preccurlyeq0$. Combining the two bounds, we obtain the claim, writing $(\Id-\Pi_{\theta,T})\abs{R_\theta}+T^{-1}\Pi_{\theta,T}=\abs{R_\theta}_{T^{-1}}$.
\end{proof}

In view of Proposition \ref{PropHellinger} we can bound the Hellinger distance between the observations in \eqref{EqY} for $\theta_0$, $\theta_1$ by bounding the Hilbert-Schmidt norm of $Q_{\bf 0}^{-1/2}Q_{\bf 1}^{1/2}-(Q_{\bf 1}^{-1/2}Q_{\bf 0}^{1/2})^\ast$ on $L^2(H)$. Our aim is to find tight bounds only involving the generators $A_{\bf 0}$ and $A_{\bf 1}$ on $H$.

We establish simple properties of the operator $S_\theta$, which are similar to the well known solution theory for non-homogeneous Cauchy problems \cite[Appendix 3]{DPZa2014}. We use the $H$-valued Sobolev space ${\cal H}^1([0,T];H)={\cal H}^1(H)$ and its subspaces ${\cal H}_0^1(H)$, ${\cal H}_T^1(H)$ of functions $f\in {\cal H}^1(H)$ vanishing in $0$ and in $T$, respectively, see Appendix \ref{SecNotation} for more details.

\begin{proposition}\label{PropSInverse} Suppose $\dom(A_\theta)=\dom(R_\theta)$ for some $\theta\in\Theta$. Then:
\begin{enumerate}
\item $S_\theta,S_\theta^\ast$ map $L^2([0,T];H)$ into $L^2([0,T];\dom(A_\theta))$, $S_\theta$ maps  ${\cal H}^1(H)$ into ${\cal H}_0^1(H)$ and $S_\theta^\ast$ maps ${\cal H}^1(H)$ into ${\cal H}_T^1(H)$.
\item Let $\partial_t$ denote the time derivative. Then we have the identities $(\partial_t-A_\theta)S_\theta=(-\partial_t-A_\theta^\ast)S_\theta^\ast=\Id$ on ${\cal H}^1(H)$ and $S_\theta (\partial_t-A_\theta)=S_\theta^\ast (-\partial_t-A_\theta^\ast)=\Id$ on ${\cal H}_0^1(H)\cap L^2([0,T];\dom(A_\theta))$.
\item We have on $L^2(H)$
\[ S_{\bf 1}-S_{\bf 0}=S_{\bf 0}(A_{\bf 1}-A_{\bf 0})S_{\bf 1}=S_{\bf 1}(A_{\bf 1}-A_{\bf 0})S_{\bf 0}.\]
\end{enumerate}
\end{proposition}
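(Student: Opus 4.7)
The three parts are intimately linked: the content of (a) and (b) is that $S_\theta$ acts as a right inverse of $\partial_t - A_\theta$ on ${\cal H}^1(H)$ and as a left inverse on ${\cal H}^1_0(H)\cap L^2([0,T];\dom(A_\theta))$ (with the obvious symmetric statements for $S_\theta^\ast$ and $-\partial_t - A_\theta^\ast$); part (c) is then a standard resolvent identity built from (b).

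For (a), I would start from the representation $S_\theta f(t)=\int_0^t e^{A_\theta u}f(t-u)\,du$ together with $\|e^{A_\theta u}\|\le e^{(R_\theta)_+ u}\le e^{\|(R_\theta)_+\|T}$ from the normality and spectral calculus recalled in the paper. This immediately gives boundedness $L^2(H)\to L^2(H)$ and, combined with the formulas in Lemma \ref{LemCformula}, the membership $S_\theta f(t)\in\dom(R_\theta)=\dom(A_\theta)$ for a.e.\ $t$ after a Fubini/spectral-measure argument (the generator recipe $\int_0^t A_\theta e^{A_\theta u}\,du=e^{A_\theta t}-\Id$ applied pointwise). For $f\in{\cal H}^1(H)$ an integration by parts in $u$ gives $S_\theta f(t)=\int_0^t e^{A_\theta u}f(t-u)\,du$ with $S_\theta f(0)=0$ and turns $\partial_tS_\theta f$ into $e^{A_\theta t}f(0)+S_\theta f'(t)$, which is in $L^2(H)$; thus $S_\theta f\in{\cal H}_0^1(H)$. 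The arguments for $S_\theta^\ast$ are identical after the time reversal $s\mapsto T-s$, yielding the boundary condition at $t=T$.

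For (b), the key identity is
\begin{equation*}
\partial_s\bigl(e^{A_\theta(t-s)}f(s)\bigr)=-A_\theta e^{A_\theta(t-s)}f(s)+e^{A_\theta(t-s)}f'(s),
\end{equation*}
valid for $f\in{\cal H}^1(H)$ with $f(s)\in\dom(A_\theta)$ (to justify the first term) or, by closedness of $A_\theta$ combined with the regularisation from (a), after pulling $A_\theta$ outside the integral. Integrating from $0$ to $t$ yields
\begin{equation*}
f(t)-e^{A_\theta t}f(0)=-A_\theta S_\theta f(t)+S_\theta f'(t)=-A_\theta S_\theta f(t)+\partial_tS_\theta f(t)-e^{A_\theta t}f(0),
\end{equation*}
using the formula for $\partial_tS_\theta f$ derived in (a). This gives $(\partial_t-A_\theta)S_\theta f=f$. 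For the left-inverse statement, apply the same identity to $f\in{\cal H}_0^1(H)\cap L^2([0,T];\dom(A_\theta))$ but now integrate the derivative $\partial_s(e^{A_\theta(t-s)}f(s))$ directly: this yields $f(t)-e^{A_\theta t}f(0)=S_\theta(f'-A_\theta f)(t)$, and the boundary term vanishes because $f(0)=0$. The adjoint statements for $S_\theta^\ast$ follow by the same computation after the change of variables $s\mapsto T-s$.

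For (c), I would exploit (b) purely algebraically. Pick $f\in L^2(H)$. By (a), $S_{\bf 0}f\in{\cal H}_0^1(H)\cap L^2([0,T];\dom(A_{\bf 0}))$, and since all $A_\theta$ share the domain $\dom(A_{\bf 0})=\dom(A_{\bf 1})$ (global assumption on the $A_\theta$ in Section \ref{SecSEE}), this space is also $L^2([0,T];\dom(A_{\bf 1}))$. Write
\begin{equation*}
S_{\bf 1}(A_{\bf 1}-A_{\bf 0})S_{\bf 0}f=-S_{\bf 1}(\partial_t-A_{\bf 1})S_{\bf 0}f+S_{\bf 1}(\partial_t-A_{\bf 0})S_{\bf 0}f.
\end{equation*}
The first term equals $-S_{\bf 0}f$ by the left-inverse part of (b) applied to $A_{\bf 1}$, and the second equals $S_{\bf 1}f$ by the right-inverse part applied to $A_{\bf 0}$. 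The second equality in (c) is the same manipulation with the roles of $0$ and $1$ exchanged, or equivalently by taking adjoints and using the $S_\theta^\ast$ identities in (b).

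\textbf{Main obstacle.} The delicate point is not the chain of manipulations themselves but guaranteeing that every quantity lives in the correct space so that unbounded operators may be commuted with integrals. Specifically, in (b) one must justify passing $A_\theta$ inside the Bochner integral $S_\theta f$ on $f\in{\cal H}^1(H)$ without a priori assuming $f(s)\in\dom(A_\theta)$; this is what forces the detour via $\partial_s(e^{A_\theta(t-s)}f(s))$ and uses closedness of $A_\theta$ together with the regularising estimate from (a). Once this is in place, (c) is immediate.
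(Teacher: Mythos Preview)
Your overall architecture matches the paper's: (b) is the core Cauchy-problem computation, and (c) is an algebraic resolvent-type identity built on it. Part (b) and the ${\cal H}^1(H)\to{\cal H}^1_0(H)$ portion of (a) are essentially the paper's arguments, just with the integration-by-parts organised slightly differently (the paper writes $\partial_tS_\theta f$ first and then integrates $\int_0^t A_\theta e^{A_\theta u}f(t-u)\,du$ by parts in $u$; you differentiate $e^{A_\theta(t-s)}f(s)$ in $s$ and integrate). Two points need tightening, however.

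\textbf{The range statement in (a).} Your justification that $S_\theta$ maps $L^2(H)$ into $L^2([0,T];\dom(A_\theta))$ via ``the generator recipe $\int_0^t A_\theta e^{A_\theta u}\,du=e^{A_\theta t}-\Id$ applied pointwise'' does not work: the semigroup is merely strongly continuous (even under $\dom(A_\theta)=\dom(R_\theta)$, the pointwise norm $\|A_\theta e^{A_\theta u}\|$ blows up at $u=0$), so you cannot pull $A_\theta$ inside the Bochner integral pointwise for generic $f\in L^2(H)$. The paper instead invokes Proposition~\ref{PropRSnorm}, which establishes directly that $R_\theta S_\theta$ extends to a bounded operator on $L^2(H)$; combined with the hypothesis $\dom(A_\theta)=\dom(R_\theta)$, this yields $S_\theta:L^2(H)\to L^2([0,T];\dom(A_\theta))$ with no pointwise argument.

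\textbf{The starting space in (c).} You pick $f\in L^2(H)$ and assert ``by (a), $S_{\bf 0}f\in{\cal H}^1_0(H)\cap L^2([0,T];\dom(A_{\bf 0}))$''. But (a) gives the ${\cal H}^1_0$-membership only for $f\in{\cal H}^1(H)$, not for $f\in L^2(H)$; for general $f$ you only have $S_{\bf 0}f\in L^2([0,T];\dom(A_{\bf 0}))$. Consequently neither the left-inverse nor the right-inverse identity from (b) is available as stated. The paper proves the identity first for $f\in{\cal H}^1(H)$, where (a) does supply ${\cal H}^1_0$-regularity of $S_{\bf 0}f$ and $S_{\bf 1}f$, and then extends to $L^2(H)$ by density, using that all three operators $S_{\bf 1}-S_{\bf 0}$, $S_{\bf 0}(A_{\bf 1}-A_{\bf 0})S_{\bf 1}$, $S_{\bf 1}(A_{\bf 1}-A_{\bf 0})S_{\bf 0}$ are bounded on $L^2(H)$. (Your claim is in fact true---once (a) and (b) are in place one can show $S_\theta:L^2(H)\to{\cal H}^1_0(H)$ by a closure argument---but that is an additional step you have not written.)
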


\begin{proof}
	See Section \ref{SecProofSEE}.
\end{proof}

The perturbation property in (c) will be essential for us. The assumption $\dom(A_\theta)=\dom(R_\theta)$ is required to have all evaluations well defined. Since always $\dom(A_\theta)\subset\dom(R_\theta)$ holds, it means $\abs{J_\theta}\preccurlyeq C(\abs{R_\theta}+\Id)$ for some constant $C>0$ and $A_\theta$ is a sectorial operator.

The next step is to find an appropriate upper bound for $Q_\theta^{-1/2}$, appearing in the Hellinger bound. To do so, we assume  that there are injective  operators $\bar B_\theta\succ 0$, commuting with $A_\theta$, such that
\begin{equation}\label{EqBtheta}
 \bar B_\theta^2\succcurlyeq B^\ast B\text{ or equivalently } \norm{\bar B_\theta^{-1}B^\ast}\le 1.
\end{equation}
Assuming $\bar B_\theta$ to be injective is not very restrictive. If $\bar B_\theta$ is not invertible, we can add $\delta \Id$ and then let $\delta\downarrow 0$ in the final lower bound.

\begin{lemma}\label{LemQbound}
For an invertible operator $\bar B_\theta$, commuting with $A_\theta$ and satisfying \eqref{EqBtheta}, we have
\[  (BS_\theta)^{\ast}Q_\theta^{-1}BS_\theta \preccurlyeq (\eps_\theta^{2}R_\theta^2\bar B_\theta^{-2}+\Id)^{-1}
\]
for $\eps>0$,
where $\eps_\theta:=\eps/\alpha_\theta$ with $\alpha_\theta$ from \eqref{EqAlphatheta}. If additionally $R_\theta\preccurlyeq0$, then we even have
\[  (BS_\theta)^{\ast}Q_\theta^{-1}BS_\theta \preccurlyeq (\eps^{2}\abs{R_\theta}_{T^{-1}}^2\bar B_\theta^{-2}+\Id)^{-1}.
\]
\end{lemma}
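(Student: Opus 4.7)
My plan is to bound $(BS_\theta)^*Q_\theta^{-1}(BS_\theta)$ via the algebraic identity
\[ A^*(\eps^2\Id+AA^*)^{-1}A=\phi(A^*A)\quad\text{with }\phi(x):=x/(x+\eps^2), \]
applied to $A=BS_\theta$; here $Q_\theta=\eps^2\Id+(BS_\theta)(BS_\theta)^\ast$ by Lemma \ref{LemCovOp}. Since $\phi$ is operator monotone on $[0,\infty)$ (write $\phi(x)=1-\eps^2/(x+\eps^2)$), it suffices to find a self-adjoint $C\succcurlyeq A^*A$ for which $\phi(C)=(\eps_\theta^2R_\theta^2\bar B_\theta^{-2}+\Id)^{-1}$ and invoke functional calculus.

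First I would use \eqref{EqBtheta} together with the commutativity of $\bar B_\theta$ with $A_\theta$ (and hence with $S_\theta$ and $S_\theta^*$) to bound
\[ A^*A=S_\theta^*B^*BS_\theta\preccurlyeq S_\theta^*\bar B_\theta^2S_\theta=\bar B_\theta S_\theta^*S_\theta\bar B_\theta=:N. \]
Next I would establish the key estimate $N\preccurlyeq\alpha_\theta^2\bar B_\theta^2R_\theta^{-2}$ as a quadratic-form inequality. Since $R_\theta$ commutes with $\bar B_\theta$ and $S_\theta$, Proposition \ref{PropRSnorm} gives $\norm{R_\theta S_\theta g}\le\alpha_\theta\norm{g}$ on $L^2(H)$, so substituting $g=R_\theta^{-1}\bar B_\theta f$ on the form domain of $R_\theta^{-1}$ yields
\[ \norm{S_\theta\bar B_\theta f}=\norm{R_\theta S_\theta(R_\theta^{-1}\bar B_\theta f)}\le\alpha_\theta\norm{\bar B_\theta R_\theta^{-1}f}, \]
whose square is exactly $\scapro{Nf}{f}\le\alpha_\theta^2\scapro{\bar B_\theta^2R_\theta^{-2}f}{f}$. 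Applying operator monotonicity of $\phi$ and evaluating $\phi(\alpha_\theta^2\bar B_\theta^2R_\theta^{-2})$ via joint functional calculus in the commuting operators $R_\theta,\bar B_\theta$ produces the target $(\eps_\theta^2R_\theta^2\bar B_\theta^{-2}+\Id)^{-1}$: on a spectral value with $R_\theta^2\bar B_\theta^{-2}=\lambda>0$ one computes $\phi(\alpha_\theta^2/\lambda)=\alpha_\theta^2/(\alpha_\theta^2+\eps^2\lambda)=1/(1+\eps_\theta^2\lambda)$, while the convention $\phi(+\infty)=1$ matches the value $1$ taken by the right-hand side on the kernel of $R_\theta$.

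For the contractive case $R_\theta\preccurlyeq0$, the only change is to invoke Corollary \ref{CorRSnorm} instead of Proposition \ref{PropRSnorm}, replacing $R_\theta$ by $\abs{R_\theta}_{T^{-1}}$ throughout; the latter is strictly positive (bounded below by $T^{-1}$), so no regularisation is required, and the constant $\alpha_\theta$ collapses to $1$, giving $\eps$ in place of $\eps_\theta$ in the final bound.

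The principal obstacle is that $R_\theta$ may have $0$ in its spectrum, making $R_\theta^{-2}$ unbounded and the substitution $g=R_\theta^{-1}\bar B_\theta f$ only licit on a form domain. I would handle this by interpreting $N\preccurlyeq\alpha_\theta^2\bar B_\theta^2R_\theta^{-2}$ as a quadratic-form inequality (trivially valid off the form domain, where the right-hand side is $+\infty$), after which operator monotonicity of $\phi$ extended via functional calculus to unbounded positive self-adjoint operators (with $\phi(+\infty):=1$) yields the bounded operator inequality $\phi(N)\preccurlyeq(\eps_\theta^2R_\theta^2\bar B_\theta^{-2}+\Id)^{-1}$ on all of $L^2(H)$. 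A cleaner alternative is to replace $R_\theta^2$ by $R_\theta^2+\delta\Id$, carry out the computation with everything invertible, and let $\delta\downarrow0$ in the final estimate.
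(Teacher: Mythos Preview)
Your proof is correct and takes a genuinely different route from the paper. You exploit the exact identity $(BS_\theta)^*Q_\theta^{-1}(BS_\theta)=\phi\big((BS_\theta)^*(BS_\theta)\big)$ with $\phi(x)=x/(x+\eps^2)$, bound $(BS_\theta)^*(BS_\theta)$ from above by $\alpha_\theta^2\bar B_\theta^2 R_\theta^{-2}$, and invoke operator monotonicity of $\phi$. The paper instead bounds $Q_\theta$ from \emph{below}: using $\eps^2\Id\succcurlyeq\eps^2 B\bar B_\theta^{-2}B^*$ from \eqref{EqBtheta} and $\Id\succcurlyeq\alpha_\theta^{-2}S_\theta R_\theta^2 S_\theta^*$ from Proposition~\ref{PropRSnorm}, it obtains $Q_\theta\succcurlyeq BS_\theta(\eps_\theta^2 R_\theta^2\bar B_\theta^{-2}+\Id)(BS_\theta)^*$ and then applies the inversion principle of Lemma~\ref{LemTraceBound}(b) (namely $R_1\succcurlyeq TR_2T^*\Rightarrow T^*R_1^{-1}T\preccurlyeq R_2^{-1}$). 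The paper's route has the practical advantage of never introducing $R_\theta^{-2}$, so the domain and regularisation issues you flag simply do not arise. Your route is more conceptual, making the operator-monotonicity mechanism explicit; the unboundedness obstacle is genuine but, as you observe, is cleanly handled by writing $\phi(x)=1-\eps^2/(x+\eps^2)$ and using that the resolvent map $x\mapsto -(x+\eps^2)^{-1}$ is monotone in the form sense even for unbounded positive operators, or alternatively by the $\delta$-regularisation.
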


\begin{proof}
Proposition \ref{PropRSnorm} yields $\Id\succcurlyeq \alpha_\theta^{-2}S_\theta R_\theta^2S_\theta^\ast$, whence by Lemma \ref{LemCovOp} and the property \eqref{EqBtheta} of $\bar B_\theta$
\[ Q_\theta=\eps^2 \Id+BC_\theta B^\ast\succcurlyeq B(\eps^2 \bar B_\theta^{-2}+S_\theta S_\theta^\ast)B^\ast \succcurlyeq BS_\theta(\eps_\theta^{2}R_\theta^2\bar B_\theta^{-2}+\Id)(BS_\theta)^\ast.\]
Therefore the first claim follows from Lemma \ref{LemTraceBound}(b) below. The second claim follows when Corollary \ref{CorRSnorm} is used instead of Proposition \ref{PropRSnorm}.
\end{proof}

We arrive at our main general result. For this let $f(\lambda)=(\int_0^1\int_0^t e^{\lambda v}dvdt)^{1/2}$, i.e. $f(\lambda)=(\frac{e^{\lambda}-1-\lambda}{\lambda^2})^{1/2}$ for $\lambda\in\R\setminus\{0\}$ and $f(0)=1/\sqrt 2$.

\begin{theorem}\label{ThmMain}
Assume for $\theta\in\{\theta_0,\theta_1\}$ that $\dom(A_\theta)=\dom(R_\theta)$ and that there are $\bar B_\theta\succ0$ which commute with $A_\theta$ and satisfy \eqref{EqBtheta}. Then:
\begin{align*}
&H^2({\cal N}_{cyl}(0,Q_{\bf 0}),{\cal N}_{cyl}(0,Q_{\bf 1}))\\
&\le \tfrac{T^2}2\norm{(\eps_{\bf 0}^2R_{\bf 0}^2\bar B_{\bf 0}^{-2}+\Id)^{-1/2}(A_{\bf 1}-A_{\bf 0}) f(2TR_{\bf 1})(\eps_{\bf 1}^2R_{\bf 1}^2\bar B_{\bf 1}^{-2}+\Id)^{-1/2}}_{HS}^2\\
&\quad +\tfrac{T^2}2\norm{(\eps_{\bf 1}^2R_{\bf 1}^2\bar B_{\bf 1}^{-2}+\Id)^{-1/2}(A_{\bf 1}-A_{\bf 0}) f(2TR_{\bf 0})(\eps_{\bf 0}^2R_{\bf 0}^2\bar B_{\bf 0}^{-2}+\Id)^{-1/2}}_{HS}^2.
\end{align*}
If additionally $R_{\bf 1}\preccurlyeq0$ and $R_{\bf 0}\preccurlyeq 0$ hold, then we even have
\begin{align*}
&H^2({\cal N}_{cyl}(0,Q_{\bf 0}),{\cal N}_{cyl}(0,Q_{\bf 1}))\\
&\le \tfrac {T}4 \norm{(\eps^2\abs{R_{\bf 0}}_{T^{-1}}^2\bar B_{\bf 0}^{-2}+\Id)^{-1/2}(A_{\bf 1}-A_{\bf 0}) \abs{R_{\bf 1}}_{T^{-1}}^{-1/2}(\eps^2\abs{R_{\bf 1}}_{T^{-1}}^2\bar B_{\bf 1}^{-2}+\Id)^{-1/2}}_{HS}^2\\
&\quad +\tfrac {T}4 \norm{(\eps^2\abs{R_{\bf 1}}_{T^{-1}}^2\bar B_{\bf 1}^{-2}+\Id)^{-1/2}(A_{\bf 1}-A_{\bf 0}) \abs{R_{\bf 0}}_{T^{-1}}^{-1/2}(\eps^2\abs{R_{\bf 0}}_{T^{-1}}^2\bar B_{\bf 0}^{-2}+\Id)^{-1/2}}_{HS}^2.
\end{align*}
\end{theorem}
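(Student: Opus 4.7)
The plan is to start from Proposition~\ref{PropHellinger} in the sharper form of Remark~\ref{RemHell}(b), namely $H^2 \le \tfrac14\norm{Q_{\bf 0}^{-1/2}(Q_{\bf 1}-Q_{\bf 0})Q_{\bf 1}^{-1/2}}_{HS}^2$. Since $Q_\theta=\eps^2\Id+BC_\theta B^\ast$ from Lemma~\ref{LemCovOp}, the $\eps^2\Id$ cancels and $Q_{\bf 1}-Q_{\bf 0}=B(C_{\bf 1}-C_{\bf 0})B^\ast$. I would next expand $C_{\bf 1}-C_{\bf 0}=S_{\bf 1}S_{\bf 1}^\ast-S_{\bf 0}S_{\bf 0}^\ast$ telescopically as $(S_{\bf 1}-S_{\bf 0})S_{\bf 1}^\ast+S_{\bf 0}(S_{\bf 1}^\ast-S_{\bf 0}^\ast)$ and apply the perturbation identity of Proposition~\ref{PropSInverse}(c) (and its adjoint) to obtain
\[
B(C_{\bf 1}-C_{\bf 0})B^\ast = BS_{\bf 0}(A_{\bf 1}-A_{\bf 0})S_{\bf 1}(BS_{\bf 1})^\ast + BS_{\bf 0}S_{\bf 0}^\ast(A_{\bf 1}-A_{\bf 0})^\ast(BS_{\bf 1})^\ast.
\]
Setting $P_\theta:=Q_\theta^{-1/2}BS_\theta$ and applying $\norm{X+Y}_{HS}^2\le 2\norm{X}_{HS}^2+2\norm{Y}_{HS}^2$, the Hellinger bound reduces to $\tfrac12\norm{P_{\bf 0}(A_{\bf 1}-A_{\bf 0})S_{\bf 1}P_{\bf 1}^\ast}_{HS}^2$ plus an adjoint term with the roles of $\theta_0,\theta_1$ swapped.

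Next I would invoke Lemma~\ref{LemQbound}, which gives $P_\theta^\ast P_\theta \preccurlyeq K_\theta:=(\eps_\theta^2R_\theta^2\bar B_\theta^{-2}+\Id)^{-1}$. The elementary inequality $\operatorname{tr}(Z^\ast AZ)\le\operatorname{tr}(Z^\ast BZ)$ for $A\preccurlyeq B$, applied twice to peel off $P_{\bf 0}$ from the left and $P_{\bf 1}^\ast$ from the right, yields
\[
\norm{P_{\bf 0}(A_{\bf 1}-A_{\bf 0})S_{\bf 1}P_{\bf 1}^\ast}_{HS(L^2(H))}^2 \le \norm{K_{\bf 0}^{1/2}(A_{\bf 1}-A_{\bf 0})S_{\bf 1}K_{\bf 1}^{1/2}}_{HS(L^2(H))}^2.
\]

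The crux is then to pass from this Hilbert--Schmidt norm on $L^2(H)$ to one on $H$ weighted by $f(2TR_{\bf 1})$. Writing the kernel of $S_{\bf 1}$ as $\mathbf 1_{s\le t}e^{A_{\bf 1}(t-s)}$ and using $\norm{L}_{HS(L^2(H))}^2=\int_0^T\!\int_0^T\norm{L(t,s)}_{HS(H)}^2\,ds\,dt$, the right-hand side equals
\[
\int_0^T\!\int_0^t\operatorname{tr}_H\!\bigl(K_{\bf 1}^{1/2}e^{A_{\bf 1}^\ast(t-s)}(A_{\bf 1}-A_{\bf 0})^\ast K_{\bf 0}(A_{\bf 1}-A_{\bf 0})e^{A_{\bf 1}(t-s)}K_{\bf 1}^{1/2}\bigr)\,ds\,dt.
\]
Commuting $K_{\bf 1}^{1/2}$ through $e^{A_{\bf 1}^\ast u}$ (valid because a positive operator commuting with the normal $A_{\bf 1}$ also commutes with $A_{\bf 1}^\ast$), using the normality identity $e^{A_{\bf 1}^\ast u}e^{A_{\bf 1}u}=e^{2R_{\bf 1}u}$, and applying trace cyclicity reduces the integrand to $\operatorname{tr}_H(K_{\bf 1}e^{2R_{\bf 1}(t-s)}(A_{\bf 1}-A_{\bf 0})^\ast K_{\bf 0}(A_{\bf 1}-A_{\bf 0}))$. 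Rescaling $t=T\tilde t$, $u=t-s=T\tilde u$ produces a factor $T^2$ and identifies $\int_0^1\!\int_0^{\tilde t}e^{2TR_{\bf 1}\tilde u}\,d\tilde u\,d\tilde t=f(2TR_{\bf 1})^2$, so the HS-norm equals $T^2\norm{K_{\bf 0}^{1/2}(A_{\bf 1}-A_{\bf 0})f(2TR_{\bf 1})K_{\bf 1}^{1/2}}_{HS(H)}^2$. Together with the factor $\tfrac14\cdot 2=\tfrac12$ this proves the general bound.

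For the contractive case $R_{\bf 0},R_{\bf 1}\preccurlyeq 0$, I would instead use the sharper second assertion of Lemma~\ref{LemQbound}, i.e.\ $\tilde K_\theta:=(\eps^2\abs{R_\theta}_{T^{-1}}^2\bar B_\theta^{-2}+\Id)^{-1}$ in place of $K_\theta$, and then absorb $T^2f(2TR_\theta)^2$ into $\tfrac T2\abs{R_\theta}_{T^{-1}}^{-1}$. This reduces to the scalar inequality $(e^{-x}-1+x)/x^2\le \tfrac12\min(1,2/x)$ for $x\ge 0$, split at $x=2$: the regime $x\ge 2$ is immediate, and $x\le 2$ follows from $h(x)=e^{-x}-1+x-x^2/2$ satisfying $h(0)=h'(0)=0$ and $h''(x)=e^{-x}-1\le 0$. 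Lifting to an operator inequality via functional calculus in $R_\theta$ and inserting into the trace, using that $f(2TR_{\bf 1})$ commutes with $\tilde K_{\bf 1}$ and that $\tilde K_{\bf 1}^{1/2}(A_{\bf 1}-A_{\bf 0})^\ast\tilde K_{\bf 0}(A_{\bf 1}-A_{\bf 0})\tilde K_{\bf 1}^{1/2}\succcurlyeq 0$, yields the prefactor $T/4$. The main obstacle throughout is precisely the $L^2(H)\to H$ reduction in the kernel computation: it hinges on the normality of $A_\theta$ (for $e^{A_\theta^\ast u}e^{A_\theta u}=e^{2R_\theta u}$) and on $\bar B_\theta$ commuting with both $A_\theta$ and $A_\theta^\ast$, without which the time integration cannot be collapsed into the clean functional-calculus form $f(2TR_\theta)$.
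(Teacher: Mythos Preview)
Your proof is correct and follows essentially the same route as the paper's: start from the Hellinger bound of Proposition~\ref{PropHellinger}/Remark~\ref{RemHell}(b), telescope $C_{\bf 1}-C_{\bf 0}$, apply the perturbation identity of Proposition~\ref{PropSInverse}(c), use Lemma~\ref{LemQbound} with the trace monotonicity of Lemma~\ref{LemTraceBound}(a) to replace $Q_\theta^{-1/2}BS_\theta$ by $K_\theta^{1/2}$, and then collapse the $L^2(H)$-kernel integral into $T^2f(2TR_\theta)^2$ via normality and commutativity. The only point you do not address is the borderline case $\eps=0$, where Lemma~\ref{LemQbound} is not available and the paper instead uses the polar decomposition $BS_\theta=Q_\theta^{1/2}U$ directly.
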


\begin{proof}
We apply Proposition \ref{PropHellinger} with the interpretation of Remark \ref{RemHell}(b), Lemma \ref{LemCovOp} on the form of the covariance operator $Q_\theta$ and  the formula from Proposition \ref{PropSInverse}(c)   consecutively to obtain:
\begin{align*}
&H^2({\cal N}_{cyl}(0,Q_{\bf 0}),{\cal N}_{cyl}(0,Q_{\bf 1}))\\
& \le \tfrac14 \norm{Q_{\bf 0}^{-1/2}(Q_{\bf 1}-Q_{\bf 0})Q_{\bf 1}^{-1/2}}_{HS(L^2([0,T];H))}^2\\
& = \tfrac14 \norm{Q_{\bf 0}^{-1/2}B(S_{\bf 1}S_{\bf 1}^\ast-S_{\bf 0}S_{\bf 0}^\ast)B^\ast Q_{\bf 1}^{-1/2}}_{HS(L^2(H))}^2\\
& = \tfrac14 \norm{Q_{\bf 0}^{-1/2}B((S_{\bf 1}-S_{\bf 0})S_{\bf 1}^\ast+S_{\bf 0}(S_{\bf 1}-S_{\bf 0})^\ast)B^\ast Q_{\bf 1}^{-1/2}}_{HS(L^2(H))}^2\\
& = \tfrac14 \norm{Q_{\bf 0}^{-1/2}BS_{\bf 0}((A_{\bf 1}-A_{\bf 0})S_{\bf 1}+((A_{\bf 1}-A_{\bf 0})S_{\bf 0})^\ast)(BS_{\bf 1})^\ast Q_{\bf 1}^{-1/2}}_{HS(L^2(H))}^2
\end{align*}
In view of the monotonicity in Lemma \ref{LemTraceBound}(a) below this can be bounded by Lemma \ref{LemQbound} for $\eps>0$ and by the polar decomposition $BS_\theta=Q_\theta^{1/2}U$ with unitary $U$ for $\eps=0$ such that
\begin{align}
&H^2({\cal N}_{cyl}(0,Q_{\bf 0}),{\cal N}_{cyl}(0,Q_{\bf 1}))\label{EqH2QR}\\
&\le
 \tfrac12 \norm{(\eps_{\bf 0}^{2}R_{\bf 0}^2\bar B_{\bf 0}^{-2}+\Id)^{-1/2} (A_{\bf 1}-A_{\bf 0})S_{\bf 1}(\eps_{\bf 1}^{2}R_{\bf 1}^2\bar B_{\bf 1}^{-2}+\Id)^{-1/2} }_{HS(L^2(H))}^2\nonumber\\
 &+\tfrac12 \norm{(\eps_{\bf 0}^{2}R_{\bf 0}^2\bar B_{\bf 0}^{-2}+\Id)^{-1/2} ((A_{\bf 1}-A_{\bf 0})S_{\bf 0})^\ast (\eps_{\bf 1}^{2}R_{\bf 1}^2\bar B_{\bf 1}^{-2}+\Id)^{-1/2} }_{HS(L^2(H))}^2.\nonumber
\end{align}
Given the form of $S_{\bf 1}$ in Lemma \ref{LemCovOp}, we obtain from the Hilbert-Schmidt norm for kernel operators in Lemma \ref{lem:HS:kernel} below
\begin{align*}
& \norm{(\eps_{\bf 0}^{2}R_{\bf 0}^2\bar B_{\bf 0}^{-2}+\Id)^{-1/2}(A_{\bf 1}-A_{\bf 0})S_{\bf 1} (\eps_{\bf 1}^{2}R_{\bf 1}^2\bar B_{\bf 1}^{-2}+\Id)^{-1/2} }_{HS(L^2(H))}^2\\
&= \int_0^T\int_0^t \norm{(\eps_{\bf 0}^{2}R_{\bf 0}^2\bar B_{\bf 0}^{-2}+\Id)^{-1/2}(A_{\bf 1}-A_{\bf 0})e^{A_{\bf 1}(t-v)} (\eps_{\bf 1}^{2}R_{\bf 1}^2\bar B_{\bf 1}^{-2}+\Id)^{-1/2} }_{HS(H)}^2 dvdt.
\end{align*}
Now, $A_{\bf 1},R_{\bf 1},\bar B_{\bf 1}$ commute and by the linearity of the trace the last expression equals
\begin{align*}
&\int_0^T\int_0^t \trace\Big((A_{\bf 1}-A_{\bf 0})^\ast(\eps_{\bf 0}^{2}R_{\bf 0}^2\bar B_{\bf 0}^{-2}+\Id)^{-1}(A_{\bf 1}-A_{\bf 0})e^{2R_{\bf 1}u} (\eps_{\bf 1}^{2}R_{\bf 1}^2\bar B_{\bf 1}^{-2}+\Id)^{-1}\Big)dudt\\
&= \trace\Big((A_{\bf 1}-A_{\bf 0})^\ast(\eps_{\bf 0}^{2}R_{\bf 0}^2\bar B_{\bf 0}^{-2}+\Id)^{-1}(A_{\bf 1}-A_{\bf 0})T^2 f(2TR_{\bf 1})^2 (\eps_{\bf 1}^{2}R_{\bf 1}^2\bar B_{\bf 1}^{-2}+\Id)^{-1}\Big)\\
&= T^2\norm{(\eps_{\bf 0}^{2}R_{\bf 0}^2\bar B_{\bf 0}^{-2}+\Id)^{-1/2}(A_{\bf 1}-A_{\bf 0}) f(2TR_{\bf 1}) (\eps_{\bf 1}^{2}R_{\bf 1}^2\bar B_{\bf 1}^{-2}+\Id)^{-1/2}}_{HS}^2.
\end{align*}
Changing the roles of $\theta_0$ and $\theta_1$ and using $\norm{F}_{HS}^2=\norm{-F^*}^2_{HS}$ for Hilbert--Schmidt operators $F$, we obtain an analogous bound for the second summand in \eqref{EqH2QR}.
This proves the first inequality.

The improved bound for $R_\theta\preccurlyeq0$ follows the same way, but using the second inequality of Lemma \ref{LemQbound} for bounding \eqref{EqH2QR}  and using Lemma \ref{LemTraceBound}(a) below with $f(2TR_\theta)^2\preccurlyeq \frac12 T^{-1}\abs{R_\theta}_T^{-1}$, which follows from $f(-x)^2=(e^{-x}-1+x)/x^2\le \frac12\wedge\abs{x}^{-1}$ for $x> 0$.
\end{proof}

\section{Parametric minimax rates}\label{SecParRates}

We consider a slightly simpler model for $X$ and $dY$ than \eqref{EqX}, \eqref{EqY}, in particular assuming a real parameter $\theta$ and commutativity of all operators. We construct an estimator whose error rate attains under mild restrictions the lower bound. This establishes the minimax rate in a quite general parametric setting.

For each $n\in\N$ let the observations $(dY_t,t\in[0,T_n])$ be given by
\begin{equation}\label{EqPar}
dY_t=  B_nX_tdt+\eps_n dV_t\text{ with } dX_t=A_{\theta} X_t\,dt+ dW_t
\end{equation}
and $X_0=0$, $T_n\ge 1$, $\eps_n\in[0,1]$, $\theta\in[\underline\theta_n,\bar\theta_n]$ for $\bar\theta_n>\underline\theta_n\ge 0$. The parametrisation of the operator $A_{\theta}$ is given by
\[A_\theta=A_{\theta,n}=M_n+\theta\Lambda\]
with known possibly unbounded linear operators $M_n,\Lambda$.

\begin{assumption}\label{AssPar}\
\begin{enumerate}
\item $M_n$ is selfadjoint and $\Lambda$ is normal with $M_n\precsim 0$, $\Re(\Lambda)\precsim 0$. Moreover, $\dom(A_\theta)=\dom(R_\theta)$ holds for  $\theta\in[\underline\theta_n,\bar\theta_n]$.
\item $B_n$ is a bounded and invertible selfadjoint operator.
\item All three operators $M_n$, $\Lambda$ and $B_n$ commute and allow for a common functional calculus.
\item $A_\theta$ has a compact resolvent  for  $\theta\in[\underline\theta_n,\bar\theta_n]$.
\end{enumerate}
\end{assumption}

We abbreviate $\bar A_n=A_{\bar\theta_n}$, $\bar R_n=R_{\bar\theta_n}$, $\bar J_n=J_{\bar\theta_n}$. By $M_n\precsim0$, $\Re(\Lambda)\precsim 0$ and   $J_{\theta}=\theta\Im(\Lambda)$, we have the ordering $\abs{R_{\theta}}\preccurlyeq\abs{\bar R_n}$, $\abs{A_{\theta}}\preccurlyeq\abs{\bar A_n}$ under Assumption \ref{AssPar}(a). For later applications note that  under Assumption \ref{AssPar}(c) observing \eqref{EqPar} is equivalent to observing
\begin{equation}\label{EqPartilde}
dY_t=  \tilde X_tdt+\eps_n dV_t\text{ with } d\tilde X_t=A_{\theta} \tilde X_t\,dt+ B_ndW_t
\end{equation}
with $\tilde X_0=0$ as in Example \ref{ex:ObsToDynamicNoise}.
We obtain immediately a general lower bound for this setting.

\begin{theorem}\label{ThmLBpar}
Grant Assumption \ref{AssPar}(a-c). Let
\[ v_n= T_n^{-1/2} \Big(\trace\big(\abs{\Lambda}^2 (\eps_n^4\abs{\bar R_n}_{T_n^{-1}}^4 B_n^{-4}+\Id)^{-1}\abs{\bar R_n}_{T_n}^{-1}\big)\Big)^{-1/2},\]
and assume $\bar\theta_n-\underline\theta_n\ge v_n>0$. Then the lower bound
\[ \inf_{\hat\theta_n}\sup_{\theta\in[\underline{\theta}_n,\bar\theta_n]}\PP_\theta\big(v_n^{-1}\abs{\hat\theta_n-\theta} \ge 2^{-7/2}\big)\ge \tfrac{2-\sqrt{3}}{4}, \]
holds where the infimum is taken over all estimators based on observing \eqref{EqPar}. Hence, estimators $\hat\theta_n$ that fulfill $\hat\theta_n-\theta={\cal O}_{\PP_\theta}(v_n)$ uniformly over $\theta\in [\underline{\theta}_n,\bar\theta_n]$ are minimax rate-optimal.
\end{theorem}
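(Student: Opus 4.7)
The plan is to invoke the two-point reduction Theorem \ref{ThmLB} with $\theta_0 := \bar\theta_n$ and $\theta_1 := \bar\theta_n - \delta_n$, where $\delta_n := 2^{-5/2}\,v_n$. Both parameters lie in $[\underline\theta_n,\bar\theta_n]$ by the hypothesis $\bar\theta_n-\underline\theta_n \ge v_n$, and $d(\theta_0,\theta_1)=\delta_n$. The claimed bound $v_n^{-1}|\hat\theta_n - \theta| \ge \delta_n/(2v_n) = 2^{-7/2}$ then follows from Theorem \ref{ThmLB} as soon as I establish $H(\PP_{\theta_0},\PP_{\theta_1}) \le 1$.

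For the Hellinger bound I would apply the improved (contractive) version of Theorem \ref{ThmMain}. By Assumption \ref{AssPar}(a) together with $\theta\ge 0$, $R_\theta = M_n + \theta\Re(\Lambda) \preccurlyeq 0$, and the choice $\bar B_\theta := B_n$, which is self-adjoint and invertible by (b) and commutes with $A_\theta$ by (c), satisfies \eqref{EqBtheta} with equality since $B^\ast B = B_n^2$. Using $A_{\bf 1}-A_{\bf 0} = -\delta_n\Lambda$ and the common functional calculus supplied by Assumption \ref{AssPar}(c), all operators inside either Hilbert--Schmidt summand of Theorem \ref{ThmMain} commute, so the HS norms collapse to traces of positive products of $|\Lambda|^2$ and commuting functions of $|R_{\bf 0}|_{T_n^{-1}}$, $|R_{\bf 1}|_{T_n^{-1}}$ and $B_n$.

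The main obstacle is that the rate $v_n$ is stated only through $\bar R_n = R_{\bf 0}$, while Theorem \ref{ThmMain} also involves $R_{\bf 1}$. From $R_{\bf 1} = R_{\bf 0} + \delta_n|\Re(\Lambda)|$ and the identity $|\bar R_n| = |M_n| + \bar\theta_n|\Re(\Lambda)| \succcurlyeq \bar\theta_n|\Re(\Lambda)|$ (valid because $M_n \preccurlyeq 0$, $\Re(\Lambda)\preccurlyeq 0$ and these commute), I would deduce
\[ |R_{\bf 1}| = |\bar R_n| - \delta_n|\Re(\Lambda)| \succcurlyeq (1 - \delta_n/\bar\theta_n)\,|\bar R_n| \succcurlyeq \tfrac12 |\bar R_n|, \]
using $\delta_n \le v_n \le \bar\theta_n$ and $2^{-5/2} < 1/2$. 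Monotonicity of $x\mapsto x \vee T_n^{-1}$ transports this bound to $|R_{\bf 1}|_{T_n^{-1}}$, and similarly yields $|R_{\bf 1}|_{T_n^{-1}}^{-1} \preccurlyeq 2|\bar R_n|_{T_n}^{-1}$.

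Combining these with the elementary scalar inequalities $(1+x)^{-2} \le (1+x^2)^{-1}$ and $(1+c^2 y)^{-1}(1+y)^{-1} \le c^{-2}(1+y^2)^{-1}$ for $c\in(0,1]$, both trace summands in Theorem \ref{ThmMain} are dominated by an absolute constant multiple of
\[ \trace\bigl(|\Lambda|^2(\eps_n^4|\bar R_n|_{T_n^{-1}}^4 B_n^{-4} + \Id)^{-1}|\bar R_n|_{T_n}^{-1}\bigr) = \frac{1}{T_n v_n^2}. \]
This yields $H^2 \le C\,\delta_n^2/v_n^2 = C\cdot 2^{-5}$ for a universal $C$, and the factor $2^{-5/2}$ in $\delta_n$ is chosen small enough to ensure $H \le 1$, after which Theorem \ref{ThmLB} closes the argument.
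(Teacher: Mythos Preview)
Your approach is essentially the paper's: choose the two hypotheses $\bar\theta_n$ and $\bar\theta_n-2^{-5/2}v_n$, apply the contractive case of Theorem \ref{ThmMain} with $\bar B_\theta=B_n$, exploit commutativity, use the comparison $|R_{\theta_{\text{small}}}|\succcurlyeq\tfrac12|\bar R_n|$ (the paper phrases this as $|R_{\bf 0}|^{-1}\preccurlyeq 2|\bar R_n|^{-1}$), and close with Theorem \ref{ThmLB}. The only substantive difference is that you swap the labels $\theta_0,\theta_1$, which is immaterial.

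One small gap: the theorem states the explicit threshold $2^{-7/2}$, so the choice $\delta_n=2^{-5/2}v_n$ is not free---you must verify that your constant $C$ actually satisfies $C\cdot 2^{-5}\le 1$, rather than asserting that ``$2^{-5/2}$ is chosen small enough''. The paper tracks this by bounding both Hilbert--Schmidt summands by the same expression in $R_{\bf 0}$ (via $|R_{\bf 0}|\preccurlyeq|R_{\bf 1}|$), then using $(1+y)^{-2}\le(1+y^2)^{-1}$ and $|R_{\bf 0}|_{T_n}^{-1}\preccurlyeq 2|\bar R_n|_{T_n}^{-1}$ to land exactly on the trace defining $v_n^{-2}$. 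With your pair of scalar inequalities the constant comes out slightly differently; you should check that it still clears the bar.
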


\begin{proof}
By Theorem \ref{ThmMain} for $R_{\bf 0}, R_{\bf 1}\preccurlyeq 0$ we have for $\theta_{\bf 0}=\bar\theta_n-2^{-5/2}v_n$ and $\theta_{\bf 1}=\bar\theta_n$ in our setting, noting $\theta_{\bf 0},\theta_{\bf 1}\in[\underline{\theta}_n,\bar\theta_n]$, $\abs{R_{\bf 0}} \preccurlyeq\abs{R_{\bf 1}}$ and the commutativity,
\begin{align*}
H^2({\cal N}_{cyl}(0,Q_{\bf 0}),{\cal N}_{cyl}(0,Q_{\bf 1}))
&\le T_n2^{-5}v_n^2 \norm{(\eps_n^2\abs{R_{\bf 0}}_{T_n^{-1}}^2 B_n^{-2}+\Id)^{-1}\Lambda \abs{R_{\bf 0}}_{T_n}^{-1/2}}_{HS}^2\\
&\le T_n v_n^2 \trace\Big(\abs{\Lambda}^2 (\eps_n^4 2^4\abs{R_{\bf 0}}_{T_n^{-1}}^4 B_n^{-4}+\Id)^{-1}\tfrac12\abs{R_{\bf 0}}_{T_n}^{-1}\Big).
\end{align*}
We have $\abs{R_{\bf 0}}^{-1}\preccurlyeq \frac{\bar\theta_n}{\bar\theta_n-2^{-5/2}v_n} \abs{\bar R_n}^{-1}\preccurlyeq 2\abs{\bar R_n}^{-1}$  so that our choice of $v_n$ yields
$H^2({\cal N}_{cyl}(0,Q_{\bf 0}),{\cal N}_{cyl}(0,Q_{\bf 1}))\le 1$.
Therefore the result follows from Theorem \ref{ThmLB} with $\delta=2^{-5/2}v_n$.
\end{proof}

Estimation of the parameter $\theta$ in \eqref{EqPar} is non-trivial. In the finite-dimensional case the laws of the Ornstein-Uhlenbeck processes are equivalent for different $\theta$ and estimation of $\theta$ could be based on standard filter theory. Even then, however, the maximum-likelihood estimator (MLE) is not explicit and a one-step MLE based on a preliminary estimator is  preferred \citep{KuZh2021}. In the infinite-dimensional setting, e.g. for the stochastic heat equation with $A_\theta=\theta\Delta$, the laws of $(X_t,t\in[0,T_n])$ are often singular for different parameters $\theta$, see also the interesting study by \citet{HiTr2021} for one-dimensional SPDEs under different observation schemes. In that case the observations $(dY_t,t\in[0,T_n])$ only become equivalent through the action of the observation noise $dV_t$ and the likelihood becomes intractable besides an abstract Gaussian approach.

Our estimation procedure is based on a preaveraging method, where the observational noise is first reduced by local averaging and then a regression-type estimator is applied to the averaged data. A similar approach is employed in nonparametric drift estimation for diffusions under noise by \citet{schmisser2011}, yet there the measurement noise level persists in the final rate, which we avoid, when specialised to the parametric Ornstein-Uhlenbeck case.

If we observed $X$ directly, a natural estimation approach for $\theta$ would be to regress $dX_t-M_nX_tdt=\theta\Lambda X_tdt+dW_t$ on $\Lambda X_tdt$  over $t\in[0,T]$. Given the noisy observations $dY$ of $X$, we smooth out the  observational noise $dV$ by regressing the average $\int_0^T (\partial_t K^{(n)}(t,s)-M_nK^{(n)}(t,s))\,dY_s$ on $\Lambda\,dY_t$ with an operator-valued kernel $K^{(n)}(t,s)$, similarly to estimation in instrumental variable regression.  We require $K^{(n)}(t,s)=0$ for $s> t$ so that the stochastic integral is taken over $s\in[0,t]$ to keep the martingale structure. Integration by parts with $(\partial_t-M_n)^\ast=-(\partial_t+M_n)$ and vanishing boundary terms leads to our estimator.

\begin{definition}\label{Defhattheta}
For the operator-valued kernel $K^{(n)}$ given by
\begin{align*}
&K^{(n)}(t,s) = K_n\psi_{t-s,s}^{(n)}(\bar A_n),\quad t,s\in[0,T],\text{ with }\\
& \psi_{v,s}^{(n)}(a)=v_+\wedge\big(\abs{a}_{T_n-s}^{-1}-v\big)_+,\;
K_n=B_n^2\big(\eps_n^4\abs{\bar R_n}_{T_n^{-1}}+B_n^4\abs{\bar A_n}_{T_n}^{-3}\big)^{-1} \abs{\bar A_n}_{T_n}^{-1},
\end{align*}
 consider the estimator $\hat\theta_n:=(Z_n/N_n){\bf 1}(N_n\not=0)$ with
\begin{align}
 Z_n&=-\int_0^{T_n}\int_0^t \scapro{(\partial_t \Lambda K^{(n)}(t,s)+M_n \Lambda K^{(n)}(t,s))dY_s}{dY_t}\label{EqZ},\\
 N_n&=\int_0^{T_n}\int_0^t \scapro{\abs{\Lambda}^2 K^{(n)}(t,s)dY_s}{dY_t} \label{EqN}
 \end{align}
 based on observing $dY_t$ according to \eqref{EqPar}.
 \end{definition}

A detailed analysis yields the main parametric upper bound.

\begin{theorem}\label{ThmUpperBound}
Grant Assumption \ref{AssPar}(a-d). Assume that
\[{\cal I}_n(\theta):=T_n\trace\Big(\abs{\Lambda}^2 \big(\eps_n^4\abs{\bar R_n}_{T_n^{-1}}\abs{\bar A_n}_{T_n^{-1}}^3B_n^{-4}+\Id\big)^{-1} \abs{ R_{\theta}}_{T_n}^{-1} \Big)
\]
is finite for all $n$ with $\lim_{n\to\infty}{\cal I}_n(\theta)=\infty$ and
\begin{align}
&T_n\trace\Big(\abs{\Lambda}^4\big(\eps_n^4\abs{\bar R_n}_{T_n^{-1}}\abs{\bar A_n}_{T_n^{-1}}^3 B_n^{-4}+\Id)^{-2}
 \big(\eps_n^4\abs{\bar A_n}_{T_n^{-1}}B_n^{-4}+\abs{R_{\theta}}_{T_n}^{-3}\big)\Big)\nonumber\\
 &= o({\cal I}_n(\theta)^2)\label{EqVarNBound}
 \end{align}
uniformly over $\theta\in[\underline\theta_n,\bar\theta_n]$.
Then $\hat\theta_n$ is well defined and
\[ \hat\theta_n-\theta={\cal O}_{\PP_\theta}\big({\cal I}_n(\theta)^{-1/2}\big)\]
holds uniformly over $\theta\in[\underline\theta_n,\bar\theta_n]$.
\end{theorem}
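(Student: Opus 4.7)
The plan is to write $\hat\theta_n-\theta=(Z_n-\theta N_n)/N_n$ on $\{N_n\neq 0\}$ and to estimate numerator and denominator separately via first and second moments. By Assumption \ref{AssPar}(c), the commutativity of all operators allows me to pass to $\tilde X_t:=B_nX_t$, which satisfies $d\tilde X_t=M_n\tilde X_tdt+\theta\Lambda\tilde X_tdt+B_ndW_t$ and is observed as $dY_t=\tilde X_tdt+\eps_ndV_t$. Introducing the pre-averaged process $\hat Y_t:=\int_0^tK^{(n)}(t,s)dY_s$, the estimator rewrites as $Z_n=-\int_0^{T_n}\scapro{(\partial_t+M_n)(\Lambda\hat Y_t)}{dY_t}$ and $N_n=\int_0^{T_n}\scapro{\abs{\Lambda}^2\hat Y_t}{dY_t}$. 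The cut-off in $\psi^{(n)}_{v,s}$ is engineered so that $K^{(n)}(t,t)=0$ and $K^{(n)}(T_n,s)=0$ ($v_+$ vanishes at $v=t-s=0$, and $\abs{a}_{T_n-s}^{-1}\le T_n-s$ forces the second factor to be non-positive at $t=T_n$), which makes all boundary terms in the integrations by parts that follow disappear.

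To isolate $\theta$, I would substitute $dY_t=\tilde X_tdt+\eps_ndV_t$ in $Z_n$ and integrate by parts in time inside the $\tilde X$-part. Using $M_n^\ast=M_n$, $B_n^\ast=B_n$, normality of $\Lambda$ (so $\Lambda^\ast\Lambda=\abs{\Lambda}^2$) and the dynamics $d\tilde X_t-M_n\tilde X_tdt=\theta\Lambda\tilde X_tdt+B_ndW_t$, this produces the clean identity
\[Z_n-\theta N_n=\int_0^{T_n}\bscapro{B_n\Lambda^\ast\hat Y_t}{dW_t}-\eps_n\int_0^{T_n}\bscapro{(\partial_t+M_n)(\Lambda\hat Y_t)+\theta\abs{\Lambda}^2\hat Y_t}{dV_t}.\]
Substituting $\hat Y_t=\int_0^tK^{(n)}(t,s)(\tilde X_sds+\eps_ndV_s)$ into both integrands and expanding decomposes each term into first- and second-order Wiener chaos in the independent Gaussians $(W,V)$. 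By It\^o isometry and orthogonality of chaoses, $\Var(Z_n-\theta N_n)$ reduces to a sum of Hilbert--Schmidt norms of explicit operator kernels; the weights in $K_n$ are calibrated so that the dominant first-chaos contribution is of order ${\cal I}_n(\theta)$, yielding $Z_n-\theta N_n={\cal O}_{\PP_\theta}({\cal I}_n(\theta)^{1/2})$.

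For the denominator I would make the analogous expansion $N_n=\E[N_n]+\text{(first chaos)}+\text{(second chaos)}$. The expectation is a time-integrated trace involving $\Cov(\tilde X_s,\tilde X_t)$ which, via Lemma \ref{LemCformula} and the common functional calculus from Assumption \ref{AssPar}(c,d), simplifies to a spectral trace equal to a constant multiple of ${\cal I}_n(\theta)$. The variances of the two chaos contributions are bounded by Hilbert--Schmidt norms that, by the choice of $K_n$, collapse to the left-hand side of \eqref{EqVarNBound}, so $\Var(N_n)=o({\cal I}_n(\theta)^2)$ and Chebyshev gives $N_n/{\cal I}_n(\theta)\xrightarrow{\PP_\theta}c>0$. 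Combining the two bounds via Slutsky yields $\hat\theta_n-\theta={\cal O}_{\PP_\theta}({\cal I}_n(\theta)^{-1/2})$. Uniformity in $\theta\in[\underline\theta_n,\bar\theta_n]$ comes for free since $K_n$ depends only on $\bar\theta_n$ and the operator orderings $\abs{R_\theta}\preccurlyeq\abs{\bar R_n}$, $\abs{A_\theta}\preccurlyeq\abs{\bar A_n}$ from Assumption \ref{AssPar}(a) hold throughout.

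The main obstacle is the spectral bookkeeping that collapses each of the four chaos kernels appearing in the expansions of $N_n$ and $Z_n-\theta N_n$ into Hilbert--Schmidt norms matching either ${\cal I}_n(\theta)$ or the quantity on the left of \eqref{EqVarNBound}. This is where the precise form of $K_n$, which interpolates between the noise-dominated regime $\eps_n^4\abs{\bar R_n}_{T_n^{-1}}\gg B_n^4\abs{\bar A_n}_{T_n}^{-3}$ and the signal-dominated one, and of the cut-off $\psi^{(n)}_{v,s}$, which produces an $\abs{R_\theta}^{-1}$-type spectral weight after time integration against $e^{R_\theta u}$, becomes essential. Assumption \ref{AssPar}(d) enters via the compact resolvent, providing a common eigenbasis in which all traces factor into scalar sums over the joint eigenvalues of $(M_n,\Lambda,B_n)$, so that the operator-level comparisons reduce to elementary inequalities for the corresponding spectral weights.
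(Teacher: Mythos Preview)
Your outline matches the paper's proof closely: the same error decomposition $\hat\theta_n-\theta=(Z_n-\theta N_n)/N_n$, the same integration-by-parts identity for the numerator, moment bounds $\E[(Z_n-\theta N_n)^2]\lesssim{\cal I}_n(\theta)$, $\E[N_n]\gtrsim{\cal I}_n(\theta)$, $\Var(N_n)=o({\cal I}_n(\theta)^2)$, and Slutsky to conclude. Two remarks on where your description diverges from what actually happens.

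First, the paper does not expand into chaoses; it observes that both $Z_n-\theta N_n$ and $N_n$ are of the form $\scapro{L\,dY}{dY}_{L^2}$ for an operator kernel $L$, and uses the Gaussian variance identity $\Var(\scapro{L\Gamma}{\Gamma})\le 2\norm{\Sigma L}_{HS}^2$ (Lemma~\ref{LemGVar}) with $\Sigma=Q_\theta=\eps_n^2\Id+B_nC_\theta B_n$. This collapses each variance into a single $HS(L^2(H))$-norm, after which the kernel identities for $S_\theta$ and $C_\theta$ from Lemmas~\ref{LemCovOp} and~\ref{LemCformula} do the work. Your chaos expansion is equivalent but involves four separate kernel computations instead of two.

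Second, and more substantively, your claim that $\E[N_n]$ ``simplifies to a spectral trace equal to a constant multiple of ${\cal I}_n(\theta)$'' is not quite right and skips the one genuinely delicate step. The covariance kernel of $X$ from Lemma~\ref{LemCformula} brings in $\Re(e^{A_\theta(t-s)})$, which oscillates when $J_\theta\neq 0$ and can be negative. The lower bound $\E[N_n]\gtrsim{\cal I}_n(\theta)$ hinges on the fact that the cut-off $\psi^{(n)}_{t-s,s}(\bar A_n)$ is supported on $\{\abs{\bar A_n}\le(t-s)^{-1}\}$, whence $\abs{A_\theta}(t-s)\le 1$ there and $\Re(e^{A_\theta(t-s)})\succcurlyeq e^{-1}\cos(1)\,\Id$. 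This is precisely why the cut-off is placed at $\abs{\bar A_n}$ rather than $\abs{\bar R_n}$, and it is the reason the upper bound may fail to match the lower bound when $\abs{\bar J_n}$ is not dominated by $\abs{\bar R_n}$ (see the discussion after Corollary~\ref{CorUpperBound}). Your sentence about the cut-off ``producing an $\abs{R_\theta}^{-1}$-type spectral weight'' conflates two separate integrations: the $\abs{R_\theta}_{T_n}^{-1}$ factor comes from integrating the covariance $\int_0^{2s}e^{R_\theta v}dv$ over $s$, while integrating $\psi^{(n)}$ contributes $\abs{\bar A_n}_{T_n}^{-2}$.
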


\begin{proof} See Section \ref{SecUpperBoundProofs} below.\end{proof}

\begin{remark}\label{RemIdent}
If ${\cal I}_n(\theta)=\infty$ holds already for finite $n$, then $\theta$ can be usually identified  non-asymptotically. To see this, denote by $\Pi_j$ the orthogonal projection from $H$ onto the eigenspace $V_j$ for the first $j$ eigenvalues of $A_\theta$ (ordered arbitrarily, taking account of multiplicities). Remark that $A_\theta$ has pure point spectrum under  Assumption \ref{AssPar}(d). Then for fixed $n$ we can consider the projected finite-dimensional observations $\Pi_jdY_t$ of $dY_t$ from \eqref{EqPar} on $V_j$, involving $\Pi_jdX_t=A_{\theta,j} X_tdt+dW_{t,j}$ with finite-rank operator $A_{\theta,j}=\Pi_jA_\theta$ and $V_j$-valued Brownian motion $W_{t,j}=\Pi_jW_t$. The corresponding estimator $\hat\theta_{n,j}$ converges for $j\to\infty$ to $\theta$ in probability (or identify $\theta$) if
\begin{align*}
&T_n\trace\Big(\abs{\Lambda_j}^4\big(\eps_n^4\abs{\bar R_{n,j}}_{T_n^{-1}}\abs{\bar A_{n,j}}_{T_n^{-1}}^3 B_{n,j}^{-4}+\Id)^{-2}
 \big(\eps_n^4\abs{\bar A_{n,j}}_{T_n^{-1}}B_{n,j}^{-4}+\abs{R_{\theta,j}}_{T_n}^{-3}\big)\Big)\\
 &= o({\cal I}_{n,j}(\theta)^2)
 \end{align*}
 holds with $F_j:=\Pi_jF|_{V_j}$ denoting the projection of an operator $F$ onto $V_j$. The convergence follows from Theorem \ref{ThmUpperBound} with index $j$ instead of $n$
 because $\dim(V_j)<\infty$ and $\bigcup_{j\ge 1}V_j=H$ imply ${\cal I}_{n,j}(\theta)<\infty$ and ${\cal I}_{n,j}(\theta)\uparrow {\cal I}_n(\theta)=\infty$ as $j\to\infty$.
\end{remark}

We can find simple sufficient conditions for rate optimality.

\begin{corollary}\label{CorUpperBound}
Grant the assumptions of Theorem \ref{ThmUpperBound}. If $\abs{\bar J_n}\preccurlyeq C\abs{\bar R_n}$ holds for a constant $C>0$, independent of $n$, then we have $\hat\theta_n-\theta={\cal O}_{\PP_\theta}({\cal I}_n(\theta)^{-1/2})$ uniformly over $\theta\in[\underline\theta_n,\bar\theta_n]$ with
\begin{equation}\label{EqI2}
{\cal I}_n(\theta)\thicksim T_n\trace\Big(\abs{\Lambda}^2 (\eps_n^4\abs{\bar R_n}_{T_n^{-1}}^4B_n^{-4}+\Id)^{-1} \abs{ R_{\theta}}_{T_n}^{-1}\Big)
\end{equation}
and $\hat\theta_n$ is minimax rate-optimal if $\underline{\theta}_n+{\cal I}_n(\bar\theta_n)^{-1/2}\le\bar\theta_n\lesssim \underline\theta_n$.

Moreover, in this case a sufficient condition for \eqref{EqVarNBound} is
\begin{equation}\label{EqVarNbound2}
T_n\trace\Big(\abs{\Lambda}^4\big(\eps_n^4\abs{\bar R_n}_{T_n^{-1}}^4 B_n^{-4}+\Id)^{-1}\abs{R_{\theta}}_{T_n}^{-3}\Big)=o({\cal I}(\theta)^2)
\end{equation}
uniformly for $\theta\in[\underline\theta_n,\bar\theta_n]$, which is satisfied for
$\norm{\Lambda\abs{R_{\underline{\theta}_n}}_{T_n}^{-1}}\lesssim 1$.
\end{corollary}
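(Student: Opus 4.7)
\emph{Simplification \eqref{EqI2} and upper bound.} Under the commutativity in Assumption \ref{AssPar}(c) the entire argument is purely spectral. By normality of $\bar A_n$, $\abs{\bar A_n}^2=\abs{\bar R_n}^2+\abs{\bar J_n}^2$, so $\abs{\bar J_n}\preccurlyeq C\abs{\bar R_n}$ gives $\abs{\bar R_n}^2\preccurlyeq\abs{\bar A_n}^2\preccurlyeq(1+C^2)\abs{\bar R_n}^2$, and functional calculus yields $\abs{\bar A_n}\asymp\abs{\bar R_n}$, hence $\abs{\bar A_n}_{T_n^{-1}}\asymp\abs{\bar R_n}_{T_n^{-1}}$. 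In particular $\eps_n^4\abs{\bar R_n}_{T_n^{-1}}\abs{\bar A_n}_{T_n^{-1}}^3 B_n^{-4}\asymp \eps_n^4\abs{\bar R_n}_{T_n^{-1}}^4 B_n^{-4}$, so the expression for ${\cal I}_n(\theta)$ in Theorem \ref{ThmUpperBound} is equivalent to \eqref{EqI2}, and the rate $\hat\theta_n-\theta={\cal O}_{\PP_\theta}({\cal I}_n(\theta)^{-1/2})$ follows directly from that theorem.

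\emph{Minimax optimality.} Comparing the definition of $v_n$ in Theorem \ref{ThmLBpar} with \eqref{EqI2} at $\theta=\bar\theta_n$ (noting $R_{\bar\theta_n}=\bar R_n$) gives $v_n={\cal I}_n(\bar\theta_n)^{-1/2}$. By Assumption \ref{AssPar}(a), $R_\theta=M_n+\theta\Re(\Lambda)\preccurlyeq 0$ and $\abs{R_\theta}=\abs{M_n}+\theta\abs{\Re(\Lambda)}$ is monotone in $\theta$, so the hypothesis $\bar\theta_n\lesssim\underline\theta_n$ implies $\abs{R_\theta}\asymp\abs{\bar R_n}$ uniformly in $\theta\in[\underline\theta_n,\bar\theta_n]$, hence ${\cal I}_n(\theta)\asymp v_n^{-2}$. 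The separation condition $\underline\theta_n+{\cal I}_n(\bar\theta_n)^{-1/2}\le\bar\theta_n$ is exactly $\bar\theta_n-\underline\theta_n\ge v_n$ as required by Theorem \ref{ThmLBpar}, whose lower bound then matches the upper bound.

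\emph{Sufficient conditions for \eqref{EqVarNBound}.} Write $W:=\eps_n^4\abs{\bar R_n}_{T_n^{-1}}^4 B_n^{-4}+\Id$; by the first paragraph the outer weight in \eqref{EqVarNBound} is $\asymp W^{-2}$. Splitting the inner bracket, the $\abs{R_\theta}_{T_n}^{-3}$-summand is handled by $W^{-2}\preccurlyeq W^{-1}$ and reduces to \eqref{EqVarNbound2}. For the $\eps_n^4\abs{\bar A_n}_{T_n^{-1}}B_n^{-4}$-summand I would use the spectral factorisation $\eps_n^4\abs{\bar R_n}_{T_n^{-1}}B_n^{-4}=(\eps_n^4\abs{\bar R_n}_{T_n^{-1}}^4 B_n^{-4})\cdot\abs{\bar R_n}_{T_n}^{-3}$ together with $W^{-1}(\eps_n^4\abs{\bar R_n}_{T_n^{-1}}^4 B_n^{-4})\preccurlyeq\Id$, which absorbs one factor of $W^{-1}$ and leaves $W^{-1}\abs{\bar R_n}_{T_n}^{-3}\preccurlyeq W^{-1}\abs{R_\theta}_{T_n}^{-3}$ (since $\abs{R_\theta}\preccurlyeq\abs{\bar R_n}$); this spectral bookkeeping is the only genuinely technical step. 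Finally, $\norm{\Lambda\abs{R_{\underline\theta_n}}_{T_n}^{-1}}\lesssim 1$ and commutativity give $\norm{\abs{\Lambda}^2\abs{R_{\underline\theta_n}}_{T_n}^{-2}}\lesssim 1$, and $\abs{R_\theta}^{-1}\preccurlyeq\abs{R_{\underline\theta_n}}^{-1}$ lifts this uniformly in $\theta$; pulling $\abs{\Lambda}^2\abs{R_\theta}_{T_n}^{-2}$ out of the trace in \eqref{EqVarNbound2} then bounds it by a constant times ${\cal I}_n(\theta)=o({\cal I}_n(\theta)^2)$, using ${\cal I}_n(\theta)\to\infty$ from the standing hypothesis.
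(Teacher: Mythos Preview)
Your proof is correct and follows essentially the same route as the paper's: reduce $\abs{\bar A_n}$ to $\abs{\bar R_n}$ via $\abs{\bar J_n}\preccurlyeq C\abs{\bar R_n}$, invoke Theorem~\ref{ThmLBpar} for the lower bound, and handle \eqref{EqVarNBound} by bounding both summands against $T_n\trace(\abs{\Lambda}^4 W^{-1}\abs{R_\theta}_{T_n}^{-3})$ before pulling out $\abs{\Lambda}^2\abs{R_\theta}_{T_n}^{-2}$ in operator norm. Your factorisation $\eps_n^4\abs{\bar R_n}_{T_n^{-1}}B_n^{-4}=(W-\Id)\abs{\bar R_n}_{T_n}^{-3}$ is exactly the step the paper leaves implicit. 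One cosmetic point: strictly speaking $v_n\thicksim{\cal I}_n(\bar\theta_n)^{-1/2}$ rather than $=$, since ${\cal I}_n$ from Theorem~\ref{ThmUpperBound} uses $\abs{\bar R_n}\abs{\bar A_n}^3$ while $v_n$ uses $\abs{\bar R_n}^4$; the inequality $\abs{\bar A_n}\succcurlyeq\abs{\bar R_n}$ gives ${\cal I}_n(\bar\theta_n)^{-1/2}\ge v_n$, so the separation condition of Theorem~\ref{ThmLBpar} is indeed implied.
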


\begin{proof}
From $\abs{\bar J_n} \preccurlyeq C\abs{\bar R_n}$ we infer $\abs{\bar R_n} \preccurlyeq\abs{\bar A_n} \preccurlyeq (C+1)\abs{\bar R_n}$ and hence in Theorem \ref{ThmUpperBound}
\begin{equation*}
{\cal I}_n(\theta)\thicksim T_n\trace\Big(\abs{\Lambda}^2 \big(\eps_n^4 \abs{\bar R_n}_{T_n^{-1}}^4B_n^{-4}+\Id\big)^{-1} \abs{R_{\theta}}_{T_n}^{-1} \Big).
\end{equation*}
The minimax optimality follows directly from the lower bound in Theorem \ref{ThmLBpar} because $\abs{R_\theta}$ has the same order for all $\theta\in[\underline\theta_n,\bar\theta_n]$ by $\bar\theta_n\lesssim \underline\theta_n$.

Using that $\abs{\bar R_n}$ is of the same order as $\abs{\bar A_n}$ and $\abs{R_{\theta}}\preccurlyeq\abs{\bar R_n}$, the left-hand side of Condition \eqref{EqVarNBound} is at most of order
\[T_n\trace\Big(\abs{\Lambda}^4\big(\eps_n^4\abs{\bar R_n}_{T_n^{-1}}^4 B_n^{-4}+\Id)^{-1}\abs{R_{\theta}}_{T_n}^{-3}\Big)\le {\cal I}_n(\theta)\norm{\abs{\Lambda}^2 \abs{R_{\theta}}_{T_n}^{-2}}.
\]
For $\norm{\Lambda\abs{R_{\underline{\theta}_n}}_{T_n}^{-1}}\lesssim 1$ this bound has the order ${\cal I}_n(\theta)=o({\cal I}_n(\theta)^2)$ due to $\abs{R_{\theta}}_{T_n}^{-1}\preccurlyeq\abs{\bar R_{\underline{\theta}_n}}_{T_n}^{-1}$ and ${\cal I}_n(\theta)\to\infty$. This yields the two sufficient conditions for \eqref{EqVarNBound}.
\end{proof}

If the spectrum of the real part $R_{\theta}$ is asymptotically approaching zero, then the upper bound may not match the lower bound. The reason is that the kernel $K^{(n)}(t,s)$ used for $\hat\theta_n$ involves an indicator ${\bf 1}(\abs{\bar A_n}\le(t-s)^{-1})$, on which event $\Re(e^{A_{\theta}(t-s)})\succcurlyeq e^{-1}\cos(1)\Id$ is ensured and thus $\E[N_n]\gtrsim {\cal I}_n(\theta)$ in Proposition \ref{PropN} below. For unknown $J_{\theta}$ and significantly smaller $R_\theta$ it remains an intriguing open question how to attain the lower bound. Yet, in our cases of interest it concerns only transport estimation under a very small diffusivity asymptotics, see Remark \ref{RemParTransport} below.

\section{Fundamental parametric examples}\label{SEcParEx}

%

\subsection{Scalar Ornstein-Uhlenbeck processes}

Consider the scalar case $H=\R$ of \eqref{EqPartilde} where we observe $(dY_t,t\in[0,T_n])$ for each $n\in\N$ given by
\begin{equation}\label{EqOU}
dY_t=X_tdt+\eps_n dV_t\text{ with } dX_t=-\theta X_tdt+\sigma_n d W_t,\,X_0=0.
\end{equation}
That is, we have $M_n=0$, $\Lambda=-1$ and $B_n=\sigma_n>0$. The parametric theory yields directly upper and lower bounds.

\begin{proposition}\label{PropOU}
Assume  $\bar\theta_n-\underline{\theta}_n\ge v_n(\bar\theta_n)$ with
\[v_n(\theta):=(\theta^{1/2}T_n^{-1/2}\vee T_n^{-1}) (\eps_n^2\sigma_n^{-2}\bar \theta_n^2\vee 1)\to 0\text{ as }n\to\infty.\]
Then for a constant $c>0$ we have the lower bound
\[ \liminf_{n\to\infty}\inf_{\hat\theta_n}\sup_{\theta\in[\underline\theta_n,\bar\theta_n]}
\PP_\theta\big(v_n(\bar\theta_n)^{-1}\abs{\hat\theta_n-\theta}\ge c\big)>0,
\]
where for each $n$ the infimum is taken over all estimators $\hat\theta_n$ based on \eqref{EqOU}.

If $T_n\underline\theta_n(1+\eps_n^4\sigma_n^{-4}\bar\theta_n^4)^{-1}\to\infty$, then the estimator $\hat\theta_n$ from Definition \ref{Defhattheta} satisfies
\[ \hat\theta_n-\theta={\cal O}_{\PP_\theta}(v_n(\theta))\text{ uniformly over }\theta\in[\underline\theta_n,\bar\theta_n].
\]
In particular, for $\theta_n\sigma_n^{-1}\eps_n\lesssim 1$ and $T_n\theta_n\to\infty$ the rate $v_n(\theta_n)=\theta_n^{1/2}T_n^{-1/2}$
is minimax optimal, even locally over $[\theta_n-v_n,\theta_n]$.
\end{proposition}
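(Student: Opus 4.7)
The plan is to specialise the general parametric framework of Section~\ref{SecParRates} to $H=\R$ and invoke Theorem \ref{ThmLBpar} for the lower bound together with Corollary \ref{CorUpperBound} for the upper bound. In the scalar setting we have $A_\theta=-\theta$, $R_\theta=-\theta$, $J_\theta=0$, $M_n=0$, $\Lambda=-1$ and $B_n=\sigma_n>0$; Assumption \ref{AssPar}(a--d) then holds trivially because all objects are real scalars (in particular, commutativity and compactness of the resolvent are automatic, $M_n=0\precsim0$, $\Re(\Lambda)=-1\le0$).

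For the lower bound I would evaluate the trace in Theorem \ref{ThmLBpar} in the scalar case, using $\abs{\bar R_n}_{T_n^{-1}}^4=(\bar\theta_n\vee T_n^{-1})^4$ and $\abs{\bar R_n}_{T_n}^{-1}=\bar\theta_n^{-1}\wedge T_n$. The trace collapses to
\[
\big(\eps_n^4(\bar\theta_n\vee T_n^{-1})^4\sigma_n^{-4}+1\big)^{-1}\big(\bar\theta_n^{-1}\wedge T_n\big),
\]
whence
\[
v_n^2\thicksim T_n^{-1}\big(\bar\theta_n\vee T_n^{-1}\big)\big(\eps_n^2(\bar\theta_n\vee T_n^{-1})^2\sigma_n^{-2}\vee 1\big)^2.
\]
Using $T_n\ge 1$ and $\eps_n\le 1$ to absorb the cross term $\eps_n^2T_n^{-2}\sigma_n^{-2}$ into unity when $\bar\theta_n<T_n^{-1}$, this simplifies to $v_n\thicksim v_n(\bar\theta_n)=(\bar\theta_n^{1/2}T_n^{-1/2}\vee T_n^{-1})(\eps_n^2\sigma_n^{-2}\bar\theta_n^2\vee 1)$. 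Applying Theorem \ref{ThmLBpar} under the separation hypothesis $\bar\theta_n-\underline\theta_n\ge v_n(\bar\theta_n)$ yields the stated lower bound with $c=2^{-7/2}$.

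For the upper bound I would invoke Corollary \ref{CorUpperBound}: the requirement $\abs{\bar J_n}\preccurlyeq C\abs{\bar R_n}$ is vacuous since $J_\theta\equiv 0$. The variance condition \eqref{EqVarNBound} reduces in the scalar case to
\[
\big(\theta^{-1}\wedge T_n\big)=o\!\Big(T_n\big(1+\eps_n^4\sigma_n^{-4}(\bar\theta_n\vee T_n^{-1})^4\big)^{-1}\Big)
\]
uniformly over $\theta\in[\underline\theta_n,\bar\theta_n]$. Under the hypothesis $T_n\underline\theta_n(1+\eps_n^4\sigma_n^{-4}\bar\theta_n^4)^{-1}\to\infty$, we have in particular $T_n\bar\theta_n\to\infty$, so $\bar\theta_n\vee T_n^{-1}=\bar\theta_n$ and the condition becomes $T_n\theta(1+\eps_n^4\sigma_n^{-4}\bar\theta_n^4)^{-1}\to\infty$, which is implied by $\theta\ge\underline\theta_n$. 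Corollary \ref{CorUpperBound} then yields $\hat\theta_n-\theta={\cal O}_{\PP_\theta}({\cal I}_n(\theta)^{-1/2})$ uniformly, and the same scalar trace computation as above gives ${\cal I}_n(\theta)^{-1/2}\thicksim v_n(\theta)$.

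Finally, the minimax statement in the regime $\eps_n\theta_n\sigma_n^{-1}\lesssim 1$ and $T_n\theta_n\to\infty$ follows by inspection: the noise factor $(\eps_n^2\sigma_n^{-2}\theta_n^2\vee 1)$ is of constant order, and $T_n\theta_n\to\infty$ places us in the ergodic branch $\theta_n^{1/2}T_n^{-1/2}\vee T_n^{-1}=\theta_n^{1/2}T_n^{-1/2}$, giving $v_n(\theta_n)=\theta_n^{1/2}T_n^{-1/2}$. For the local optimality over $[\theta_n-v_n,\theta_n]$, choose $\underline\theta_n=\theta_n-v_n$ and $\bar\theta_n=\theta_n$; then $v_n/\theta_n=(T_n\theta_n)^{-1/2}\to 0$ implies $\underline\theta_n\thicksim\bar\theta_n$, so the lower-bound separation hypothesis and the upper-bound hypothesis both hold. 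The main obstacle is purely bookkeeping: unifying the case split between $T_n\bar\theta_n\ge 1$ and $T_n\bar\theta_n<1$ in the lower bound, and checking that the simplified expression $v_n(\bar\theta_n)$ in the proposition genuinely captures the exact scalar Fisher information up to the use of $\eps_n\le 1$ and $T_n\ge 1$.
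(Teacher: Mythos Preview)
Your proposal is correct and follows essentially the same route as the paper: Theorem~\ref{ThmLBpar} for the lower bound and Theorem~\ref{ThmUpperBound}/Corollary~\ref{CorUpperBound} for the upper bound, both specialised to the scalar case $H=\R$. The paper invokes Theorem~\ref{ThmUpperBound} directly while you go through Corollary~\ref{CorUpperBound}, but since $J_\theta\equiv 0$ the extra hypothesis there is vacuous and the two paths coincide; the condition you write down as ``\eqref{EqVarNBound}'' is in fact the sufficient form~\eqref{EqVarNbound2}, which matches the paper's scalar specialisation exactly.

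One remark on your self-flagged ``main obstacle'': the case $T_n\bar\theta_n<1$ never occurs under the proposition's hypothesis, because $\bar\theta_n-\underline\theta_n\ge v_n(\bar\theta_n)\ge T_n^{-1}$ together with $\underline\theta_n\ge 0$ already forces $\bar\theta_n\ge T_n^{-1}$. So the cross-term absorption you worry about is unnecessary, and your computation $v_n\thicksim v_n(\bar\theta_n)$ goes through without any appeal to $\eps_n\le 1$ or control of $\sigma_n$.
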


\begin{proof}
The lower bound follows directly from Theorem \ref{ThmLBpar} for the Ornstein-Uhlenbeck case. For the upper bound we obtain in Theorem \ref{ThmUpperBound} due to $\bar\theta_n\ge\underline\theta_n\gtrsim T_n^{-1}$
\[{\cal I}_n(\theta)\thicksim T_n\big(\eps_n^4\bar \theta_n^4\sigma_n^{-4}+1\big)^{-1} (\theta^{-1}\wedge T_n) \thicksim v_n(\theta)^{-2}.
\]
It remains to check condition \eqref{EqVarNBound} which reads here
\begin{align*}
T_n\big(\eps_n^4\bar\theta_n^4\sigma_n^{-4}+1\big)^{-1}
 \big(\theta^{-3}\wedge T_n^{3}\big)&= o\Big(T_n^2\big(\eps_n^4\bar \theta_n^4\sigma_n^{-4}+1\big)^{-2} (\theta^{-2}\wedge T_n^2) \Big)\\
\iff \eps_n^4\bar\theta_n^4\sigma_n^{-4}+1
 &= o(T_n\theta\vee 1).
\end{align*}
This holds uniformly over $\theta$ by $T_n\underline\theta_n(1+\eps_n^4\sigma_n^{-4}\bar\theta_n^4)^{-1}\to\infty$ and yields the upper bound. Applying the lower bound to $\bar\theta_n=\theta_n$ and $\underline\theta_n=\theta_n-v_n(\theta_n)$ yields minimax optimality due to $\bar\theta_n-v_n(\theta_n)\thicksim\theta_n\ge 0$.
\end{proof}

Let us discuss this minimax rate in different cases and first assume $\eps_n=0$ (no noise). Then the rate $T_n^{-1}\vee \theta_n^{1/2}T_n^{-1/2}$ combines the asymptotic Fisher information bound $\sqrt{2\theta/T_n}$  for fixed $\theta>0$ (positive recurrent case) as $T_n\to\infty$ with the rate $T_n^{-1}$ for $\theta=0$ (null-recurrent case) \citep[Example 2.14, Prop. 3.46]{kutoyants2004}. For the upper bound we need the condition $T_n\theta_n\to\infty$ to stabilise the denominator in $\hat\theta_n$ around its expectation. A more specific analysis, allowing for a diffuse distributional limit, would avoid this condition in a classical way.

In the noisy case $\eps_n>0$ we see that for the asymptotics $T_n\to\infty$ and $\eps_n,\sigma_n,\bar\theta_n$ fixed the rate does not suffer from noisy observations even with constant noise intensity, which for fixed $\theta$ is also well known \cite[Example 3.3]{kutoyants2004}. In view of the spectral approach to SPDE statistics \citep{HuRo1995}, where the operator $A_\theta$ is unbounded and each Fourier mode forms an Ornstein-Uhlenbeck process with drift given by the eigenvalues of $A_\theta$, we note that the noise level intervenes  for the asymptotics $\theta_n\sigma_n^{-1}\eps_n\to \infty$.

\subsection{Stochastic evolution equations without noise}

For $\eps=0$, $B=\Id$ and $R_{\bf0},R_{\bf 1}\preccurlyeq0$ Theorem \ref{ThmMain} shows that $H^2({\cal N}_{cyl}(0,C_{\bf 0}),{\cal N}_{cyl}(0,C_{\bf 1}))$ is bounded by
\begin{align*}
\tfrac{T}4\big(\norm{(A_{\bf 1}-A_{\bf 0}) \abs{R_{\bf 1}}_{T^{-1}}^{-1}}_{HS}^2
 +\norm{(A_{\bf 1}-A_{\bf 0}) \abs{R_{\bf 0}}_{T^{-1}}^{-1}}_{HS}^2\big).
\end{align*}
Let us consider the case of selfadjoint, negative $A_{\bf 0}$ and $A_{\bf 1}$ that can be jointly diagonalized with common eigenvectors $v_k$ and negative eigenvalues $\lambda_{k,0}$ and $\lambda_{k,1}$, respectively. Then
\begin{align*}
H^2({\cal N}_{cyl}(0,C_{\bf 0}),{\cal N}_{cyl}(0,C_{\bf 1}))
&\le T\sum_{k=1}^\infty\frac{(\lambda_{k,0}-\lambda_{k,1})^2}{\abs{\lambda_{k,0}}\wedge\abs{\lambda_{k,1}}}
\end{align*}
follows.
A well known example is given by the fractional Laplacian $A_{\bf 0}=-(-\Delta)^{m/2}$ and $A_{\bf 1}=A_{\bf 0}-\delta (-\Delta)^{m_1/2}$ for the Laplace operator $\Delta$ on a smooth bounded domain $\Lambda\subset\R^d$ with Dirichlet boundary conditions and $m\ge m_1\ge 0$, $\delta>0$. Then  $\abs{\lambda_{k,0}}\wedge\abs{\lambda_{k,1}}=\abs{\lambda_{k,0}}\thicksim k^{m/d}$ and $\abs{\lambda_{k,1}-\lambda_{k,0}}\thicksim \delta k^{m_1/d}$ hold by the Weyl asymptotics of Lemma \ref{lem:Weyl} below. Using that $A_{\bf 0},A_{\bf 1}$ have the same eigenfunctions, we obtain
\begin{align}\label{eq:SEEwithoutNoise:LaplaceSum}
	H^2({\cal N}_{cyl}(0,C_{\bf 0}),{\cal N}_{cyl}(0,C_{\bf 1}))\lesssim T\delta^2\sum_{k= 1}^\infty k^{(2m_1-m)/d}.
\end{align}
Consequently, the equivalence statement of Proposition \ref{PropHellinger} yields

\begin{proposition}
	For $A_{\bf 0}=-(-\Delta)^{m/2}$ and $A_{\bf 1}=A_{\bf 0}-\delta (-\Delta)^{m_1/2}$ with
	\begin{align}\label{eq:SEEwithoutNoise:Condition}
		m_1<(m-d)/2
	\end{align}
	and $\delta>0$ the laws ${\cal N}_{cyl}(0,C_{\bf 0})$ and ${\cal N}_{cyl}(0,C_{\bf 1})$ are equivalent.
\end{proposition}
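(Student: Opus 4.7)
My plan is to combine the Hellinger estimate assembled in the display leading to \eqref{eq:SEEwithoutNoise:LaplaceSum} with the equivalence clause of Proposition \ref{PropHellinger}. The computation just before the proposition statement already reduces the squared Hellinger distance to a constant multiple of $T\delta^2\sum_{k\ge 1}k^{(2m_1-m)/d}$, so the proof amounts to verifying convergence of this series and checking the technical hypotheses needed to conclude equivalence.

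The series converges precisely when $(2m_1-m)/d<-1$, which rearranges to the hypothesis $m_1<(m-d)/2$ in \eqref{eq:SEEwithoutNoise:Condition}. Hence $H^2({\cal N}_{cyl}(0,C_{\bf 0}),{\cal N}_{cyl}(0,C_{\bf 1}))<\infty$ under \eqref{eq:SEEwithoutNoise:Condition}, and Proposition \ref{PropHellinger} delivers equivalence as soon as I verify that $C_{\bf 0},C_{\bf 1}$ are one-to-one and that $\im(C_{\bf 0}^{1/2})=\im(C_{\bf 1}^{1/2})$ on $L^2([0,T];L^2(\Lambda))$.

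For both tasks I would exploit the joint diagonalisation. The operators $A_{\bf 0}$ and $A_{\bf 1}$ share the orthonormal basis $(v_k)_{k\ge 1}$ of Dirichlet eigenfunctions of $-\Delta$, with eigenvalues $\lambda_{k,\bf 0},\lambda_{k,\bf 1}<0$ both of Weyl order $-k^{m/d}$. Writing $L^2([0,T];L^2(\Lambda))=\bigoplus_{k\ge 1} L^2([0,T])\,v_k$, the operators $C_{\bf 0}$ and $C_{\bf 1}$ split into scalar OU-type covariance operators $C_\theta^{(k)}=S_\theta^{(k)}(S_\theta^{(k)})^\ast$ on $L^2([0,T])$. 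Injectivity of each $C_\theta^{(k)}$ follows from the inversion identity $(\partial_t-\lambda_{k,\theta})S_\theta^{(k)}=\Id$ supplied by Proposition \ref{PropSInverse}(b), and assembling the modes yields injectivity of the full covariances. For the image condition I would show that $\im((C_\theta^{(k)})^{1/2})=\im(S_\theta^{(k)})$ is a Sobolev-type subspace of $L^2([0,T])$ whose set-theoretic description is independent of $\theta$, and then patch this together along the modal decomposition to obtain $\im(C_{\bf 0}^{1/2})=\im(C_{\bf 1}^{1/2})$.

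The main technical obstacle I anticipate is controlling this image equality uniformly across modes so that the per-mode identification actually lifts to the cylindrical measure in the sense required by Proposition \ref{PropHellinger}. A cleaner alternative, should the uniform control prove delicate, is to bypass Proposition \ref{PropHellinger} at the final step and invoke Kakutani's dichotomy for countable products of Gaussian measures: the modal decomposition represents both laws as product measures on $\bigoplus_k L^2([0,T])v_k$, and finiteness of $\sum_k H^2({\cal N}(0,C_{\bf 0}^{(k)}),{\cal N}(0,C_{\bf 1}^{(k)}))$---which is exactly what \eqref{eq:SEEwithoutNoise:LaplaceSum} establishes under \eqref{eq:SEEwithoutNoise:Condition}---immediately gives equivalence without any reference to Cameron--Martin images.
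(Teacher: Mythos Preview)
Your proposal is correct and follows the same route as the paper: once \eqref{eq:SEEwithoutNoise:LaplaceSum} is established, convergence of $\sum_k k^{(2m_1-m)/d}$ under \eqref{eq:SEEwithoutNoise:Condition} makes the Hellinger bound finite, and the equivalence clause of Proposition \ref{PropHellinger} is invoked. The paper's argument is literally the one-liner ``Consequently, the equivalence statement of Proposition \ref{PropHellinger} yields\ldots'', so you are in fact being more careful than the paper by flagging the injectivity and image hypotheses of Proposition \ref{PropHellinger}, which the paper leaves implicit (they are also not spelled out in the proof of Theorem \ref{ThmMain} for $\eps=0$).

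Your Kakutani alternative is a genuinely cleaner way to close this gap. The per-mode laws are those of scalar Ornstein--Uhlenbeck processes with drifts $\lambda_{k,\bf 0}$ and $\lambda_{k,\bf 1}$, which are equivalent by Girsanov; the modewise bound that feeds into \eqref{eq:SEEwithoutNoise:LaplaceSum} is precisely $H^2({\cal N}(0,C_{\bf 0}^{(k)}),{\cal N}(0,C_{\bf 1}^{(k)}))\lesssim T(\lambda_{k,\bf 0}-\lambda_{k,\bf 1})^2/\abs{\lambda_{k,\bf 0}}$, so summability gives a positive product of Hellinger affinities and Kakutani applies directly. This avoids the uniform-in-$k$ comparison of Cameron--Martin norms that your first route would need for the image condition on the full space.
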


For $A_{\bf 0}=\Delta$ ($m=2$), \eqref{eq:SEEwithoutNoise:Condition} holds if $d=1$ and $m_1<1/2$.  This is exactly the \citet[Cor. 2.3]{HuRo1995} condition for equivalence of the laws ${\cal N}_{cyl}(0,C_{\bf 0}),{\cal N}_{cyl}(0,C_{\bf 1})$. In the asymptotics $T\to\infty$ we can then estimate $\theta$ in $A_\theta=A_{\bf 0}-\theta (-\Delta)^{m_1/2}$ at best with rate $T^{-1/2}$, setting $\delta\thicksim T^{-1/2}$ in the lower bound of Theorem \ref{ThmLB}. This asymptotic result  slightly extends  a result in \citet{HuRo1995} for fixed $T$.

In the sequel we are mainly interested in the noisy case $\eps>0$ where equivalence of observation laws holds in far greater generality. Still, the Hellinger bound will become infinite for differential operators in large dimensions.

\subsection{Parameter in  the fractional Laplacian}

We study a generalisation of the classical stochastic heat equation $dX_t=\theta\Delta X_tdt+dW_t$ where the diffusivity $\theta>0$ is the target of estimation.
For each $n\in\N$ let the observations $(dY_t,t\in[0,T_n])$ be given by
\begin{equation}\label{EqExPar1}
dY_t= X_tdt+\eps_n dV_t\text{ with } dX_t=-\theta (-\Delta)^\rho X_tdt+ (\Id+(-\Delta)^\rho)^{-\beta} dW_t,
\end{equation}
with $X_0=0$, $T_n\ge 1$, $\eps_n\in[0,1]$, $\theta,\rho> 0$ and $\beta\ge 0$.
The Laplacian $\Delta$ on a $d$-dimensional domain $D$ is supposed  to have eigenvalues $-\lambda_k\thicksim k^{2/d}$ according to the Weyl asymptotics. By Lemma \ref{lem:Weyl} this holds for a Laplacian with Dirichlet, Neumann or periodic boundary conditions, or on a compact $d$-dimensional manifold without boundary. We are in the general parametric setting of \eqref{EqPartilde} with $M_n=0$, $\Lambda=-(-\Delta)^\rho$, $B_n=(\Id+(-\Delta)^\rho)^{-\beta}$ and $J_\theta=0$. We study the dependence of the estimation rate on the dimension $d$, the fractional index $\rho$, the dynamic noise correlation index $\beta$, the observational noise level $\eps_n$ and the observation time $T_n$.

We apply Corollary \ref{CorUpperBound} with
\[ {\cal I}_n(\theta)= T_n\trace\Big((-\Delta)^{2\rho} (\eps_n^4\bar \theta_n^4(-\Delta)^{4\rho}(\Id+(-\Delta)^\rho)^{4\beta}+\Id)^{-1} (\theta^{-1}(-\Delta)^{-\rho}\wedge T_n)\Big)
\]
For a fixed range of parameters $\theta\in[\underline{\theta},\bar\theta]$ with $\bar\theta>\underline\theta>0$ we obtain
\begin{align*}
{\cal I}_n(\theta) &\thicksim T_n\trace\Big(((-\Delta)^{\rho}\wedge T_n(-\Delta)^{2\rho})(\eps_n^4(-\Delta)^{4\rho(1+\beta)}+\Id)^{-1} \Big)\\
&\thicksim T_n \Big(\sum_{k:\lambda_k^{\rho+\beta\rho}\le\eps_n^{-1}}\lambda_k^\rho+ \eps_n^{-4}\sum_{k:\lambda_k^{\rho+\beta\rho}>\eps_n^{-1}}\lambda_k^{-3\rho-4\beta\rho}\Big)
\thicksim T_n \eps_n^{-(2\rho+d)/(2\rho(1+\beta))},
\end{align*}
whenever $d<(6+8\beta)\rho$. From Corollary \ref{CorUpperBound}, noting $\norm{\Lambda\abs{R_{\underline{\theta}}}_{T_n}^{-1}}\le \underline\theta^{-1}\thicksim 1$, we thus deduce the following minimax rate.

\begin{proposition}\label{PropParDiff}
Assume $d<(6+8\beta)\rho$ and the asymptotics
\[v_n:=T_n^{-1/2}\eps_n^{(2\rho+d)/(4\rho(1+\beta))}\to 0\text{ as }n\to\infty.\]
Then uniformly over $\theta\in[\underline\theta,\bar\theta]$  the estimator $\hat\theta_n$ from Definition \ref{Defhattheta} satisfies
\[ \hat\theta_n-\theta={\cal O}_{\PP_\theta}(v_n)
\]
and the rate $v_n$ is minimax optimal.
\end{proposition}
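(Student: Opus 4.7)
My plan is to verify Assumption \ref{AssPar} in this setting and then invoke Corollary \ref{CorUpperBound} for the upper bound together with Theorem \ref{ThmLBpar} for the matching lower bound. Since $\Lambda=-(-\Delta)^\rho$ is selfadjoint and negative (in particular normal with $J_\theta=0$), $M_n=0$, and $B_n=(\Id+(-\Delta)^\rho)^{-\beta}$ is a bounded strictly positive function of $-\Delta$, all three operators admit a common functional calculus through the spectral decomposition of $-\Delta$, so parts (a)--(c) of Assumption \ref{AssPar} hold; the compact resolvent in (d) follows because the Laplacian on the bounded (or compact) domain under consideration has compact resolvent. Because $J_{\bar\theta_n}=0$, the comparability $\abs{\bar J_n}\preccurlyeq C\abs{\bar R_n}$ trivially holds with $C=0$ and Corollary \ref{CorUpperBound} is applicable.

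Next I would compute ${\cal I}_n(\theta)$ from \eqref{EqI2}. With $R_\theta=-\theta(-\Delta)^\rho$, $\bar R_n=-\bar\theta_n(-\Delta)^\rho$ and $\theta\in[\underline\theta,\bar\theta]$ fixed and bounded away from $0$, the formula reduces to
\[
{\cal I}_n(\theta)\thicksim T_n\trace\Big((-\Delta)^{2\rho}\big(\eps_n^4(-\Delta)^{4\rho(1+\beta)}+\Id\big)^{-1}\big((-\Delta)^{-\rho}\wedge T_n\big)\Big).
\]
Diagonalising in the eigenbasis of $-\Delta$ with Weyl asymptotics $\lambda_k\thicksim k^{2/d}$ from Lemma \ref{lem:Weyl} and using $T_n\ge 1$, I would split the resulting sum at the threshold $\lambda_k^{(1+\beta)\rho}\thicksim \eps_n^{-1}$. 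Below the threshold the summand is of order $\lambda_k^\rho$, above the threshold of order $\eps_n^{-4}\lambda_k^{-(3+4\beta)\rho}$. The tail sum converges precisely when $(3+4\beta)\rho\cdot 2/d>1$, i.e.\ when $d<(6+8\beta)\rho$, and under this constraint both partial sums balance at the threshold to give
\[
{\cal I}_n(\theta)\thicksim T_n \eps_n^{-(2\rho+d)/(2\rho(1+\beta))},
\]
so $v_n\thicksim {\cal I}_n(\theta)^{-1/2}$ as claimed.

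It remains to check the variance-type condition \eqref{EqVarNbound2}, for which Corollary \ref{CorUpperBound} provides the simple sufficient criterion $\norm{\Lambda\abs{R_{\underline\theta}}_{T_n}^{-1}}\lesssim 1$; this is immediate from $\Lambda\cdot(\underline\theta\abs{\Lambda})^{-1}=-\underline\theta^{-1}\Id$, since $\underline\theta>0$ is fixed. The asymptotics $v_n\to 0$ forces ${\cal I}_n(\theta)\to\infty$ uniformly over $\theta$, so Corollary \ref{CorUpperBound} yields $\hat\theta_n-\theta=\mathcal{O}_{\PP_\theta}(v_n)$ uniformly. For the lower bound I would apply Theorem \ref{ThmLBpar} to the same parametrisation; since $\abs{R_\theta}$ has the same order for all $\theta\in[\underline\theta,\bar\theta]$ the quantity there agrees, up to constants, with ${\cal I}_n(\theta)$, and the shrinking subinterval $[\bar\theta_n-v_n,\bar\theta_n]\subset[\underline\theta,\bar\theta]$ is legitimate for large $n$, giving a matching $v_n$-lower bound.

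The only genuine obstacle is the Weyl-sum estimate and the dimensional restriction $d<(6+8\beta)\rho$: outside this regime the tail sum diverges, reflecting that the eigenmodes accumulate enough information for $\theta$ to be directly identifiable, as discussed in the introduction. Everything else amounts to bookkeeping with functional calculus in the joint eigenbasis of $-\Delta$.
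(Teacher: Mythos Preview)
Your proposal is correct and follows essentially the same route as the paper: verify Assumption~\ref{AssPar}, apply Corollary~\ref{CorUpperBound} (using $J_\theta=0$), evaluate ${\cal I}_n(\theta)$ via the Weyl asymptotics by splitting the eigenvalue sum at $\lambda_k^{(1+\beta)\rho}\thicksim\eps_n^{-1}$, and check the sufficient condition $\norm{\Lambda\abs{R_{\underline\theta}}_{T_n}^{-1}}\le\underline\theta^{-1}$. The only cosmetic remark is that in your verification of this last norm bound you should keep the truncation $\abs{R_{\underline\theta}}_{T_n}^{-1}=\abs{R_{\underline\theta}}^{-1}\wedge T_n$ rather than writing $\Lambda(\underline\theta\abs{\Lambda})^{-1}=-\underline\theta^{-1}\Id$, but the conclusion is unaffected.
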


The rate for $\eps_n\to 0$ is becoming faster with the dimension $d$ and slower with the fractional index $\rho$ and the dynamic correlation index $\beta$. The fact that the rate slows down for larger $\beta$, i.e. with more correlation in the dynamic noise, reflects classical behaviour, in contrast to the inverse scaling of the dynamic noise from Example \ref{ex:ObsToDynamicNoise}.
In the fundamental case of the classical white-noise stochastic heat equation with $\rho=1$ and $B_n=\Id$ ($\beta=0$) the minimax rate becomes
\begin{equation}\label{EqExPar1Fund}
 v_n= T_n^{-1/2}\eps_n^{(2+d)/4} \text{ for }d\le 5.
 \end{equation}
 Note that  $\theta$ is not identifiable for fixed $T_n,\eps_n>0$ in dimension $d\le 5$ because the observation laws are equivalent. This is in stark contrast to the noiseless case $\eps_n=0$ where $\theta$ is identifiable  for fixed $T_n$ \citep{HuRo1995}. In case $d\ge 6$ (or generally $d\ge (6+8\beta)\rho$) one can check  by Remark \ref{RemIdent} that ${\cal I}_n(\theta)=\infty$ holds and $\theta$ is identifiable already non-asymptotically, which means that the observation laws are singular for different $\theta$.

\subsection{Parameter in  the transport term}

We consider the estimation of the first order coefficient $\theta$ in a second order differential operator $A_\theta$. This is exemplified by $A_\theta=\nu_n\Delta+\theta \partial_\xi$
with the directional derivative $\partial_\xi=\xi\cdot\nabla=\sum_{j=1}^d\xi_j\partial_{x_j}$ for $\xi\in\R^d\setminus\{0\}$. For the Laplacian $\Delta$ the operator $A_\theta=\nu_n\Delta+\theta\partial_\xi$ on the $d$-dimensional torus $[0,1]^d$ with periodic boundary conditions is normal  with eigenfunctions $e_\ell(x)=e^{2\pi i \scapro \ell x}$, $\ell\in\Z^d$,  and corresponding eigenvalues $\lambda_\ell=-(2\pi)^2\nu_n\abs{\ell}^2+2\pi i \scapro \xi\ell$. Thus, for each $n\in\N$ let the observations $(dY_t,t\in[0,T_n])$ be given by
\begin{equation}\label{EqExPar2}
dY_t= X_tdt+\eps_n dV_t\text{ with } dX_t=(\nu_n\Delta X_t+\theta \partial_\xi X_t)\,dt+ (\Id-\Delta)^{-\beta} dW_t
\end{equation}
and $X_0=0$, $T_n\ge 1$, $\eps_n\in[0,1]$, $\theta\in [\underline\theta,\bar\theta]$, $\nu_n\in(0,1]$, $\beta\ge 0$. This has the general form \eqref{EqPartilde} with $R_\theta=M_n=\nu_n\Delta$, $\Lambda=\partial_\xi$, $B_n=(\Id-\Delta)^{-\beta}$. Theorem \ref{ThmLBpar} yields the minimax lower bound rate
\begin{align*}
v_n &= T_n^{-1/2} \trace\Big(\abs{\Lambda}^2 (\eps_n^4\bar R_n^4 B_n^{-4}+\Id)^{-1}\abs{\bar R_n}_{T_n}^{-1}\Big)^{-1/2}\\
&\thicksim T_n^{-1/2}\Big(\sum_{\ell\in\Z^d}\Big(\scapro{\xi}{\ell}^2(\eps_n^4\nu_n^{4}\abs{\ell}^8(1+\abs{\ell}^2)^{4\beta}+1)^{-1}(\nu_n^{-1}\abs{\ell}^{-2}\wedge T_n)\Big)\Big)^{-1/2}\\
&\thicksim T_n^{-1/2}\Big(\sum_{k\ge 1}\Big((\eps_n^{-4}\nu_n^{-4}k^{-(8+8\beta)/d}\wedge 1)(\nu_n^{-1}\wedge T_nk^{2/d})\Big)\Big)^{-1/2}.
\end{align*}
The sum over $k\ge (\nu_n\eps_n)^{-d/(2+2\beta)}$ equals
\begin{align*}
\sum_{k\ge (\nu_n\eps_n)^{-d/(2+2\beta)}}\eps_n^{-4}\nu_n^{-4}k^{-(8+8\beta)/d}\nu_n^{-1}\thicksim \nu_n^{-(d+2+2\beta)/(2+2\beta)}\eps_n^{-d/(2+2\beta)}
\end{align*}
for $d<8+8\beta$.
The sum over $1\le k <(\nu_n\eps_n)^{-d/(2+2\beta)}$  is
\[ \sum_{1\le k<(\nu_n\eps_n)^{-d/(2+2\beta)}}(\nu_n^{-1}\wedge T_nk^{2/d})\lesssim \nu_n^{-(d+2+2\beta)/(2+2\beta)}\eps_n^{-d/(2+2\beta)}
\]
and thus bounded by the sum over larger $k$. Theorem \ref{ThmLBpar} therefore yields a simple lower bound result.

\begin{proposition}\label{PropParametricTransportLB}
Let $d<8+8\beta$ and
\[v_n=T_n^{-1/2}\nu_n^{(d+2+2\beta)/(4+4\beta)}\eps_n^{d/(4+4\beta)}.\]
Then there is a constant $c>0$ such that
\[ \inf_{\hat\theta_n}\sup_{\theta\in[\underline{\theta},\bar\theta]} \PP_\theta\big(v_n^{-1}\abs{\hat\theta_n-\theta}\ge c\big)\ge \tfrac{2-\sqrt{3}}{4} \]
holds for all $n\in\N$, where the infimum is taken over all estimators based on observing \eqref{EqExPar2}.
\end{proposition}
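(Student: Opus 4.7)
The plan is to apply the general parametric lower bound Theorem \ref{ThmLBpar} to the transport SPDE \eqref{EqExPar2}, so the proof reduces to verifying Assumption \ref{AssPar}(a-c) and evaluating $v_n$ explicitly. On the torus $[0,1]^d$ with periodic boundary conditions, the three operators $\nu_n\Delta$, $\partial_\xi$ and $(\Id-\Delta)^{-\beta}$ are simultaneously diagonalised by the Fourier basis $e_\ell(x)=e^{2\pi i\scapro{\ell}{x}}$, $\ell\in\Z^d$, which gives commutativity, a compact resolvent and $R_\theta=\nu_n\Delta\preccurlyeq 0$. The eigenvalues are $\lambda_\ell(\Lambda)=2\pi i\scapro{\xi}{\ell}$, $\lambda_\ell(\bar R_n)=-(2\pi)^2\nu_n\abs{\ell}^2$ and $\lambda_\ell(B_n)=(1+(2\pi)^2\abs{\ell}^2)^{-\beta}$.

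Inserting these eigenvalues into the trace from Theorem \ref{ThmLBpar} produces, up to absolute constants, the lattice sum
\[ v_n^{-2}\thicksim T_n\sum_{\ell\in\Z^d}\scapro{\xi}{\ell}^2\bigl(\eps_n^4\nu_n^4\abs{\ell}^8(1+\abs{\ell}^2)^{4\beta}+1\bigr)^{-1}\bigl(\nu_n^{-1}\abs{\ell}^{-2}\wedge T_n\bigr). \]
To reduce this $d$-dimensional sum to a scalar sum I would reorder the lattice indices by magnitude so that $\abs{\ell_k}\thicksim k^{1/d}$, and replace $\scapro{\xi}{\ell}^2$ by its shell average: by isotropy of the lattice, $\sum_{\abs{\ell}\thicksim r}\scapro{\xi}{\ell}^2=\frac{\abs{\xi}^2}{d}\sum_{\abs{\ell}\thicksim r}\abs{\ell}^2\thicksim\abs{\xi}^2 r^{d+1}$, which per reindexed mode amounts to a factor $\thicksim\abs{\xi}^2\abs{\ell_k}^2\thicksim k^{2/d}$. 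This yields precisely the one-dimensional sum of $(\eps_n^{-4}\nu_n^{-4}k^{-(8+8\beta)/d}\wedge 1)(\nu_n^{-1}\wedge T_n k^{2/d})$ displayed in the excerpt.

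Finally, I would split the sum at the threshold $k_0\thicksim(\nu_n\eps_n)^{-d/(2+2\beta)}$ where the first factor transitions between its two regimes. For $k\ge k_0$ the sum is bounded above and below by $\eps_n^{-4}\nu_n^{-5}\sum_{k\ge k_0}k^{-(8+8\beta)/d}$, which converges iff $d<8+8\beta$ and evaluates to $\thicksim\nu_n^{-(d+2+2\beta)/(2+2\beta)}\eps_n^{-d/(2+2\beta)}$ by comparison with the integral. The head sum $k<k_0$ is of the same order at most, since $\nu_n^{-1}\wedge T_n k^{2/d}\le\nu_n^{-1}$ termwise and $k_0$ gives the correct polynomial factor. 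Plugging $v_n^{-2}\thicksim T_n\nu_n^{-(d+2+2\beta)/(2+2\beta)}\eps_n^{-d/(2+2\beta)}$ back into Theorem \ref{ThmLBpar} yields the claim. The main technical nuisance is the shell-averaging reduction of $\scapro{\xi}{\ell}^2$; since only a lower bound of the sum is needed, it suffices to restrict to the dyadic shells $\abs{\ell}\thicksim r$ where the isotropy identity gives a clean two-sided estimate and no genericity assumption on $\xi$ is required.
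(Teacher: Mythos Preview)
Your proposal is correct and follows essentially the same route as the paper: apply Theorem \ref{ThmLBpar} with $M_n=\nu_n\Delta$, $\Lambda=\partial_\xi$, $B_n=(\Id-\Delta)^{-\beta}$, reduce the lattice sum over $\ell\in\Z^d$ to a one-parameter sum via Weyl asymptotics and the shell-average identity $\sum_{|\ell|\thicksim r}\scapro{\xi}{\ell}^2=\frac{|\xi|^2}{d}\sum_{|\ell|\thicksim r}|\ell|^2$, then split at $k_0\thicksim(\nu_n\eps_n)^{-d/(2+2\beta)}$. The paper presents exactly this computation in the paragraph preceding the proposition; your write-up is slightly more explicit about the shell-averaging step, which the paper leaves implicit in passing from the $\ell$-sum to the $k$-sum.
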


The lower bound suggests that $\theta$ can be consistently estimated under any of the three asymptotics $\eps_n\to 0$, $T_n\to\infty$, $\nu_n\to0$. The small diffusivity asymptotics $\nu_n\to 0$ quantifies how the SPDE \eqref{EqExPar2} approaches the usually singular first order SPDE $dX_t=\theta\partial_\xi X_tdt+(\Id-\Delta)^{-\beta}dW_t$, see also  \cite{GaRe2022}.

The upper bound is more involved because $J_\theta=\theta\partial_\xi$ is not dominated by $R_\theta=\nu_n\Delta$ when $\nu_n\to 0$. In Theorem \ref{ThmUpperBound} we have
\[{\cal I}_n(\theta)\thicksim T_n\sum_{k\ge 1}\Big(k^{2/d} \big(\eps_n^4(\nu_n k^{2/d}\vee T_n^{-1})(\nu_n k^{2/d}+k^{1/d})^3k^{8\beta/d}+1\big)^{-1} (\nu_n^{-1}k^{-2/d}\wedge T_n) \Big)
\]
Consider indices $k_n^\ast$ satisfying
\[ k^\ast_n\thicksim(\nu_n\eps_n)^{-d/(2+2\beta)} \wedge (\nu_n\eps_n^4)^{-d/(5+8\beta)}.
\]
Let us assume $\nu_n (k_n^\ast)^{2/d}\gtrsim T_n^{-1}$, that is $\nu_n^{3+8\beta}\gtrsim \eps_n^8T_n^{-5-8\beta}$. Then we can lower bound
\begin{align*}
{\cal I}_n(\theta) &\gtrsim T_n\nu_n^{-1}\sum_{k_n^\ast\le k\le 2k_n^\ast}\big(\eps_n^4\big(\nu_n^{4}k^{(8+8\beta)/d}\vee\nu_nk^{(5+8\beta)/d}\big) +1\big)^{-1} \\
&\thicksim T_n\nu_n^{-1}k_n^\ast
\thicksim T_n\nu_n^{-1} \big((\nu_n\eps_n)^{-d/(2+2\beta)} \wedge (\nu_n\eps_n^4)^{-d/(5+8\beta)}\big),
\end{align*}
where we used that the summands are of order $1$ for $k\thicksim k_n^\ast$.

It remains to check Condition \eqref{EqVarNBound} in Theorem \ref{ThmUpperBound}. A simple bound of its left-hand side is ${\cal I}_n(\theta)\norm{\abs{\Lambda}^2\abs{R_\theta}_{T_n}^{-2}}\lesssim T_n{\cal I}_n(\theta)$, which for $\eps_n\nu_n\to 0$ is $o({\cal I}_n(\theta)^2)$ as required. Otherwise, we have $\nu_n,\eps_n\thicksim 1$ and $T_n\to\infty$. In that case Condition \eqref{EqVarNBound} follows from $T_n=o(T_n^2)$. Disentangling the cases thus yields:

\begin{proposition}\label{PropParametricTransport}
Let $d<8+8\beta$ and $\nu_n^{3+8\beta}\gtrsim \eps_n^{8}T_n^{-5-8\beta}$. Under the asymptotics $v_n\to 0$ for
\begin{equation}\label{EqParRateTransp}
v_n:=\begin{cases} T_n^{-1/2}\nu_n^{(2+d+2\beta)/(4+4\beta)}\eps_n^{d/(4+4\beta)},&\text{ if }\nu_n\ge \eps_n^{1/(1+2\beta)},\\
T_n^{-1/2}\nu_n^{(5+d+8\beta)/(10+16\beta)}\eps_n^{2d/(5+8\beta)},&\text{ if }  \nu_n< \eps_n^{1/(1+2\beta)},
\end{cases}
\end{equation}
the estimator $\hat\theta_n$ from Definition \ref{Defhattheta} satisfies uniformly over $\theta\in[\underline\theta,\bar\theta]$
\[ \hat\theta_n-\theta={\cal O}_{\PP_\theta}(v_n).
\]
The rate $v_n$ is minimax optimal for $\nu_n\gtrsim \eps_n^{1/(1+2\beta)}$.
\end{proposition}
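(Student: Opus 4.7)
The plan is to apply Theorem \ref{ThmUpperBound}, evaluate ${\cal I}_n(\theta)$ by explicit Fourier diagonalisation, verify the variance condition \eqref{EqVarNBound}, and identify the resulting rate with the lower bound of Proposition \ref{PropParametricTransportLB} in Case 1. Assumption \ref{AssPar} is satisfied: on the torus with periodic boundary conditions, $\Delta$, $\partial_\xi$ and $(\Id-\Delta)^{-\beta}$ share the Fourier eigenbasis $(e_\ell)_{\ell\in\Z^d}$ with eigenvalues $-4\pi^2|\ell|^2$, $2\pi i\langle\xi,\ell\rangle$ and $(1+4\pi^2|\ell|^2)^{-\beta}$, so $M_n=\nu_n\Delta$, $\Lambda=\partial_\xi$ and $B_n=(\Id-\Delta)^{-\beta}$ commute with a common functional calculus; moreover $M_n$ is selfadjoint with $M_n\preccurlyeq 0$, $\Lambda$ is skew-adjoint so that $\Re(\Lambda)=0$, $\dom(A_\theta)=\dom(R_\theta)=\dom(\Delta)$, $B_n$ is bounded and invertible selfadjoint, and $A_\theta$ has compact resolvent.

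The core computation is the explicit evaluation of ${\cal I}_n(\theta)$ in the Fourier basis. Using $|\bar A_n|\thicksim\nu_n|\ell|^2+|\ell|$ and Weyl-type counting $k\thicksim|\ell|^d$, this gives
\[
{\cal I}_n(\theta) \thicksim T_n\sum_{k\ge 1}\frac{k^{2/d}\,(\nu_n^{-1}k^{-2/d}\wedge T_n)}{\eps_n^{4}(\nu_n k^{2/d}\vee T_n^{-1})(\nu_n k^{2/d}+k^{1/d})^{3}k^{8\beta/d}+1},
\]
which precisely extends the lower-bound sum preceding Proposition \ref{PropParametricTransportLB} by the extra factor $(\nu_n k^{2/d}\vee T_n^{-1})$ coming from $|\bar R_n|_{T_n^{-1}}$. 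The crossover index $k_n^\ast$ at which the denominator reaches $1$ depends on which of $\nu_n k^{2/d}$ and $k^{1/d}$ dominates in $|\bar A_n|$: one finds $k_n^\ast\thicksim(\nu_n\eps_n)^{-d/(2+2\beta)}$ when $\nu_n\gtrsim\eps_n^{1/(1+2\beta)}$ (consistent with $\nu_n k^{2/d}\gtrsim k^{1/d}$ at $k_n^\ast$) and $k_n^\ast\thicksim(\nu_n\eps_n^4)^{-d/(5+8\beta)}$ otherwise. The hypothesis $\nu_n^{3+8\beta}\gtrsim\eps_n^{8}T_n^{-5-8\beta}$ is precisely equivalent to $\nu_n(k_n^\ast)^{2/d}\gtrsim T_n^{-1}$ in Case 2 (and is trivially met in Case 1 under $v_n\to 0$), which ensures that near $k\thicksim k_n^\ast$ the truncation $\wedge T_n$ is inactive and each summand has size $\nu_n^{-1}$. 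Summing yields ${\cal I}_n(\theta)\thicksim T_n\nu_n^{-1}k_n^\ast$, which inverts to the announced rate \eqref{EqParRateTransp}.

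For the variance condition \eqref{EqVarNBound}, the same Fourier estimates bound its left-hand side by ${\cal I}_n(\theta)\,\norm{|\Lambda|^2|R_\theta|_{T_n}^{-2}}\lesssim T_n\,{\cal I}_n(\theta)$. This is $o({\cal I}_n(\theta)^2)$ whenever $T_n=o({\cal I}_n(\theta))$, i.e.\ whenever $\nu_n^{-1}k_n^\ast\to\infty$, which is automatic under $\nu_n\eps_n\to 0$ in Case 1 and under $\nu_n\eps_n^4\to 0$ in Case 2. The remaining possibility $\nu_n,\eps_n\thicksim 1$ forces $T_n\to\infty$ and ${\cal I}_n(\theta)\thicksim T_n$, reducing \eqref{EqVarNBound} to $T_n=o(T_n^2)$. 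Theorem \ref{ThmUpperBound} then yields $\hat\theta_n-\theta={\cal O}_{\PP_\theta}(v_n)$ uniformly in $\theta\in[\underline\theta,\bar\theta]$, and matching with Proposition \ref{PropParametricTransportLB} establishes minimax optimality in Case 1.

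The main obstacle is the failure of the domination $|\bar J_n|\preccurlyeq C|\bar R_n|$ when $\nu_n\to 0$: Corollary \ref{CorUpperBound} is unavailable and the spectrum of $|\bar A_n|$ must be analysed via a genuine two-regime split at $k\thicksim\nu_n^{-d}$, producing the two distinct rates in \eqref{EqParRateTransp}. The identification of the correct transition $k_n^\ast$ in each regime, and the recognition that the apparently technical hypothesis $\nu_n^{3+8\beta}\gtrsim\eps_n^{8}T_n^{-5-8\beta}$ is exactly the condition that keeps the essential frequencies inside the effective spectral window $\nu_n|\ell|^2\ge T_n^{-1}$, is the crux of the calculation. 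The same asymmetry also explains why sharp optimality is only established in Case 1: the kernel $K^{(n)}$ in Definition \ref{Defhattheta} is built around $|\bar A_n|$ rather than the effective drift $R_\theta$, so a gap between the two scales in Case 2 leaves room for improvement, cf.~Remark \ref{RemParTransport}.
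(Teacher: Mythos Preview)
Your proposal is correct and follows essentially the same approach as the paper: you apply Theorem~\ref{ThmUpperBound} directly (bypassing Corollary~\ref{CorUpperBound} since $|\bar J_n|\preccurlyeq C|\bar R_n|$ fails as $\nu_n\to 0$), evaluate ${\cal I}_n(\theta)$ via the explicit Fourier diagonalisation, identify the same crossover index $k_n^\ast\thicksim(\nu_n\eps_n)^{-d/(2+2\beta)}\wedge(\nu_n\eps_n^4)^{-d/(5+8\beta)}$, recognise the hypothesis $\nu_n^{3+8\beta}\gtrsim\eps_n^8T_n^{-5-8\beta}$ as the condition $\nu_n(k_n^\ast)^{2/d}\gtrsim T_n^{-1}$, and verify \eqref{EqVarNBound} via the same crude bound ${\cal I}_n(\theta)\norm{|\Lambda|^2|R_\theta|_{T_n}^{-2}}$. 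Two minor remarks: the paper only establishes ${\cal I}_n(\theta)\gtrsim T_n\nu_n^{-1}k_n^\ast$ (a lower bound suffices for the upper error bound), not the two-sided $\thicksim$ you write; and your case split $\nu_n\eps_n\to 0$ versus $\nu_n\eps_n^4\to 0$ in the variance check is redundant since the latter implies the former for $\eps_n\le 1$.
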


\begin{remark}\label{RemParTransport}
The results hold generally for real-valued $\underline{\theta},\bar\theta$ because $R_\theta$ is here independent of $\theta$, compare the proofs of Theorem \ref{ThmLBpar} and \ref{ThmUpperBound}.

For $\nu_n=o(\eps_n^{1/(1+2\beta)})$ the upper bound does not match the lower bound because the real part $R_\theta=\nu_n\Delta$ is too close to zero compared to the imaginary part $J_\theta=\theta\partial_\xi$. If even $\nu_n^{3+8\beta}=o(\eps_n^{8}T_n^{-5-8\beta})$ holds, then the upper bound rate stays the same as for $\nu_n^{3+8\beta}=\eps_n^{8}T_n^{-5-8\beta}$.
\end{remark}

In the fundamental case $B_n=\Id$ ($\beta=0$) and $\nu_n>0$ fixed, the minimax rate is
\begin{equation}\label{EqExPar2Fund}
 v_n= T_n^{-1/2}\eps_n^{d/4} \text{ for }d\le 7
 \end{equation}
for all $T_n,\eps_n$.
 Compared with the rate \eqref{EqExPar1Fund} for a parameter in front of the Laplacian, the rate is by a factor $\eps_n^{-1/2}$ slower.

Again, for $d\ge 8+8\beta$ we have ${\cal I}_n(\theta)=\infty$ and the parameter $\theta$ is non-asymptotically identifiable, checking Remark \ref{RemIdent}.

\subsection{Parameter in the source term}

Finally, we consider estimation of the coefficient in the zero order term of a second order differential operator $A_\theta$. We specify $A_\theta=\nu\Delta-\theta\Id$ which satisfies $A_\theta=R_\theta\preccurlyeq0$ for $\theta\ge 0$, $\nu>0$.
For each $n\in\N$ let the observations $(dY_t,t\in[0,T_n])$ be given by
\begin{equation}\label{EqExPar3}
dY_t= X_tdt+\eps_n dV_t\text{ with } dX_t=(\nu_n\Delta X_t-\theta X_t)\,dt+ (\Id-\nu_n\Delta)^{-\beta} dW_t
\end{equation}
and $X_0=0$, $T_n\ge 1$, $\eps_n\in[0,1]$, $\nu_n\in (0,1]$, $\beta\ge 0$. The Laplacian $\Delta$ on a $d$-dimensional domain $\Lambda$ is supposed  to have eigenvalues $-\lambda_k\thicksim k^{2/d}$ according to the Weyl asymptotics. By Lemma \ref{lem:Weyl} this holds for a Laplacian with Dirichlet, Neumann or periodic boundary conditions, or on a compact $d$-dimensional manifold without boundary.  We apply Corollary \ref{CorUpperBound}
with $M_n=\nu_n\Delta$, $\Lambda=\Id$, $B_n=(\Id-\nu_n\Delta)^{-\beta}$ and a fixed parameter range $\theta\in[\underline\theta,\bar\theta]$, $\bar\theta>\underline\theta>0$, so that

\begin{align*}
{\cal I}_n(\theta) &= T_n\trace\Big( \big(\eps_n^4(\theta\Id-\nu_n\Delta)^4(\Id-\nu_n\Delta)^{4\beta}+\Id\big)^{-1} \abs{\theta\Id-\nu_n\Delta}_{T_n}^{-1}\Big)
\\
 &\thicksim T_n\trace\Big( \big(\eps_n^4(\Id-\nu_n\Delta)^{4(1+\beta)}+\Id\big)^{-1} (\Id-\nu_n\Delta)^{-1}\Big).
\end{align*}
Given the Weyl asymptotics $-\lambda_k\thicksim k^{2/d}$,  we calculate
\begin{align*}
{\cal I}_n(\theta)
 &\thicksim T_n\Big(\sum_{k:\nu_n\abs{\lambda_k}\le 1} 1+ \sum_{k:1<\nu_n\abs{\lambda_k}\le\eps_n^{-1/(1+\beta)}}\abs{\nu_n\lambda_k}^{-1}\\
 &\qquad\qquad
 +\eps_n^{-4}\nu_n^{-5-4\beta}
\sum_{k:\nu_n\abs{\lambda_k}>\eps_n^{-1/(1+\beta)}}\abs{\lambda_k}^{-5-4\beta}\Big)
\\
 &\thicksim T_n\Big(\nu_n^{-d/2}+\nu_n^{-1} \sum_{\nu_n^{-d/2}<k\le\nu_n^{-d/2}\eps_n^{-d/(2+2\beta)}}k^{-2/d} \\
 &\qquad\qquad +\eps_n^{-4}\nu_n^{-5-4\beta}\sum_{k>\nu_n^{-d/2}\eps_n^{-d/(2+2\beta)}}k^{-(10+8\beta)/d}\Big)\\
 &\thicksim T_n\nu_n^{-d/2}\big(1+\eps_n^{-(d-2)/(2+2\beta)} \big),
\end{align*}
where we assumed $d<10+8\beta$ for summability and $d\not=2$ for bounding the second sum by the first and third sum. In the case $d=2$ the second sum dominates and
\[ {\cal I}_n(\theta)\thicksim T_n\nu_n^{-1}\log(e\eps_n^{-1})\]
holds. Thus Corollary \ref{CorUpperBound}, noting  $\norm{\Lambda\abs{R_{\underline{\theta}}}_{T_n}^{-1}}\lesssim 1$, gives the following result.

\begin{proposition}\label{PropParametricSource}
For $d<10+8\beta$ and under the asymptotics $v_n\to 0$ for
\begin{equation}\label{EqParRateSource}
v_n:=\begin{cases} T_n^{-1/2}\nu_n^{1/4},&\text{ if }d=1,\\
T_n^{-1/2}\nu_n^{1/2}(\log(e\eps_n^{-1}))^{-1/2},&\text{ if } d=2,\\
T_n^{-1/2}\nu_n^{d/4}\eps_n^{(d-2)/(4+4\beta)},&\text{ if } d\ge 3,
\end{cases}
\end{equation}
the estimator $\hat\theta_n$ from Definition \ref{Defhattheta} satisfies uniformly over $\theta\in[\underline\theta,\bar\theta]$
\[ \hat\theta_n-\theta={\cal O}_{\PP_\theta}(v_n)
\]
and the rate $v_n$ is minimax optimal.
\end{proposition}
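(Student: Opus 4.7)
The plan is to apply Corollary \ref{CorUpperBound} for the upper bound and Theorem \ref{ThmLBpar} for the matching lower bound. Here the operators are $M_n=\nu_n\Delta$, $\Lambda=\Id$, $B_n=(\Id-\nu_n\Delta)^{-\beta}$, so $A_\theta=R_\theta=\nu_n\Delta-\theta\Id\preccurlyeq 0$ and $J_\theta=0$; Assumption \ref{AssPar} is immediate and the domination $\abs{\bar J_n}\preccurlyeq C\abs{\bar R_n}$ holds trivially with any $C>0$. The boundedness condition $\norm{\Lambda\abs{R_{\underline\theta}}_{T_n}^{-1}}\lesssim 1$ required for \eqref{EqVarNbound2} follows from $\abs{R_{\underline\theta}}\succcurlyeq\underline\theta\Id$ and $\underline\theta>0$, so only the asymptotic evaluation of ${\cal I}_n(\theta)$ remains.

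For the upper bound I would first simplify ${\cal I}_n(\theta)$ by replacing $\abs{R_\theta}=\theta\Id-\nu_n\Delta$ with $(\Id-\nu_n\Delta)$, which is justified up to constants since $\theta\in[\underline\theta,\bar\theta]$ with $\underline\theta,\bar\theta>0$ fixed; likewise $\abs{\bar R_n}_{T_n^{-1}}^4\abs{\bar A_n}_{T_n^{-1}}^3\thicksim(\Id-\nu_n\Delta)^7\vee T_n^{-7}$, but since $T_n\ge 1$ and the kernel is only used inside the $\eps_n^4$-factor, one verifies that the relevant expression reduces to
\[
{\cal I}_n(\theta)\thicksim T_n\trace\Big(\big(\eps_n^4(\Id-\nu_n\Delta)^{4(1+\beta)}+\Id\big)^{-1}(\Id-\nu_n\Delta)^{-1}\Big).
\]
Inserting $-\lambda_k\thicksim k^{2/d}$ from the Weyl asymptotics (Lemma \ref{lem:Weyl}), the summand is of order $1$ for $k\lesssim\nu_n^{-d/2}$, of order $(\nu_n k^{2/d})^{-1}$ for $\nu_n^{-d/2}\lesssim k\lesssim \nu_n^{-d/2}\eps_n^{-d/(2+2\beta)}$, and of order $\eps_n^{-4}(\nu_n k^{2/d})^{-5-4\beta}$ for larger $k$.

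The main step is then the regime split performed in the excerpt. For $d=1$ the first block dominates, giving ${\cal I}_n(\theta)\thicksim T_n\nu_n^{-1/2}$; for $d=2$ the middle block dominates logarithmically, contributing $\log(e\eps_n^{-1})$; for $3\le d<10+8\beta$ both outer blocks are comparable and yield $T_n\nu_n^{-d/2}\eps_n^{-(d-2)/(2+2\beta)}$, while for $d\ge 10+8\beta$ the tail sum diverges and ${\cal I}_n(\theta)=\infty$. Inverting and taking square roots gives the rates \eqref{EqParRateSource}, and Condition \eqref{EqVarNbound2} follows from $\norm{\Lambda\abs{R_\theta}_{T_n}^{-1}}\le\underline\theta^{-1}$ as noted.

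For the lower bound I would apply Theorem \ref{ThmLBpar} with the same operators: since $R_\theta\preccurlyeq 0$ the trace expression there coincides (up to constants) with ${\cal I}_n(\bar\theta_n)$ just computed, so the same $v_n$ serves as the information-theoretic lower bound; the condition $\bar\theta_n-\underline\theta_n\ge v_n$ is satisfied since $\underline\theta,\bar\theta$ are fixed and $v_n\to 0$. The only subtle point, which I expect to be the main minor obstacle, is handling $d=2$ carefully: the middle block $\sum_{\nu_n^{-d/2}<k\le\nu_n^{-d/2}\eps_n^{-d/(2+2\beta)}}k^{-2/d}\thicksim \nu_n^{-1}\log(e\eps_n^{-1})$ now strictly dominates both the first block ($\thicksim\nu_n^{-1}$) and the third ($\thicksim \nu_n^{-1}$), so the logarithmic rate arises from this transitional regime and must be tracked symmetrically in the lower-bound trace as well. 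Once this bookkeeping is in place, minimax optimality follows directly.
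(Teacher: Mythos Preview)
Your proposal is correct and follows essentially the same route as the paper: apply Corollary \ref{CorUpperBound} with $M_n=\nu_n\Delta$, $\Lambda=\Id$, $B_n=(\Id-\nu_n\Delta)^{-\beta}$, reduce ${\cal I}_n(\theta)$ to $T_n\trace\big((\eps_n^4(\Id-\nu_n\Delta)^{4(1+\beta)}+\Id)^{-1}(\Id-\nu_n\Delta)^{-1}\big)$, then split the eigenvalue sum into the three regimes (with the logarithmic middle block dominating in $d=2$) and invoke Theorem \ref{ThmLBpar} for the matching lower bound. One small slip: the expression $\abs{\bar R_n}_{T_n^{-1}}^4\abs{\bar A_n}_{T_n^{-1}}^3$ does not appear in either Theorem \ref{ThmUpperBound} or Corollary \ref{CorUpperBound} (the former has $\abs{\bar R_n}_{T_n^{-1}}\abs{\bar A_n}_{T_n^{-1}}^3$, and since $J_\theta=0$ here this equals $\abs{\bar R_n}_{T_n^{-1}}^4$), but you arrive at the right simplified trace regardless.
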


\begin{remark}
If the noise covariance operator is $B_n=(\Id-\Delta)^{-\beta}$, that is not depending on $\nu_n$, then similar calculations for $\eps_n\lesssim \nu_n^\beta$ yield the same rate for $d\in\{1,2\}$, but $v_n=T_n^{-1/2}\nu_n^{(d+2\beta)/(4+4\beta)}\eps_n^{(d-2)/(4+4\beta)}$ for $3\le d<10+8\beta$.
This is slower in $\nu_n$ than before which is reasonable in view of Example \ref{ex:ObsToDynamicNoise} because the noise covariance operator is smaller in the high frequencies.
\end{remark}

This means that in dimension $d=1$ the minimax rate is independent of $\beta$ and $\eps_n$. It matches the upper bound derived by \citet{GaRe2022} in the vanishing diffusivity regime $\nu_n\to 0$ without noise. In the fundamental case $\beta=0$ and $\nu_n=1$ the rate is
\begin{equation}\label{EqExPar3Fund}
 v_n= \begin{cases} T_n^{-1/2},&\text{ if }d=1,\\ (T_n\log(e\eps_n^{-1}))^{-1/2},&\text{ if }d=2,\\T_n^{-1/2}\eps_n^{(d-2)/4},&\text{ if } 3\le d\le 9.\end{cases}
 \end{equation}

The fact that vanishing observation noise $\eps_n\rightarrow 0$ leads to consistent estimation of the reaction parameter in $d\ge 2$ agrees with the results from \cite{HuRo1995}, where in the noiseless setting $\eps_n=0$ the reaction parameter is identified in these dimensions. Generally, for $d\ge 10+8\beta$ we have ${\cal I}_n(\theta)=\infty$ and $\theta$ is non-asymptotically identifiable by Remark \ref{RemIdent}.

\section{Fundamental nonparametric examples}\label{SecNonparEx}

\subsection{Space-dependent diffusivity}

Passing to nonparametric problems, we now consider a space-dependent coefficient in the leading order of the second-order differential operator, more specifically the weighted Laplacian $\Delta_\theta=\nabla\cdot(\theta(x)\nabla)$ on $H=L^2(\R^d)$ with space-dependent diffusivity $\theta:\R^d\to\R^+$ in a nonparametric regularity class.
For each $n\in\N$ we consider observations $(dY_t,t\in[0,T_n])$ given by
\begin{equation}\label{EqExNonpar1}
dY_t= (\Id-\Delta)^{-\beta}X_tdt+\eps_n dV_t\text{ with } dX_t=\Delta_\theta X_t\,dt+ dW_t,
\end{equation}
where
$X_0=0$,  $T_n\ge 1$, $\beta\in[0,1/2]$, $\eps_n\in[0,1]$, and
 $\theta(\cdot)$ belonging to the class
\begin{align}
	\Theta_{dif}(\alpha,R):=\Big\{\theta\in C^{4\vee\alpha}(\R^d)\,\Big|\,\inf_{x\in\R^d}\theta(x)\ge \tfrac12;\; \norm{\theta}_{C^\alpha}\le R\Big\},\; \alpha>0, \,R>1.
\end{align}
\begin{remark}
For the domain of the weighted Laplacians we need  $\dom(\Delta_\theta)={\cal H}^2(\R^d)$.
Taking into account that by Theorem \ref{ThmNonpar1} below we
only consider $d<6+8\beta\le 10$,
an easy sufficient condition is $\theta\in C^4(\R^d)$ by \citet[Section 2.8.2]{Triebel2010}.
Notice that for different $\theta$, the operators $\Delta_\theta$ do not commute.
The bound $\inf_{x\in\R^d}\theta(x)\ge 1/2$ ensures that $(u, v)\mapsto\scapro{u}{(-\Delta_\theta) v}=\scapro{\nabla u}{\theta\nabla v}$ defines a positive definite bilinear form on ${\cal H}^1(\R^d)$.
\end{remark}

In order to apply Theorem \ref{ThmMain}, we consider $A_{\bf 0}=\Delta$ and the alternative $A_{\bf 1}=\Delta_\theta$ with $\theta(x)=1+L(x/h)$ for some test function $L:\R^d\to\R$  and a bandwidth $h\in(0,1)$, to be chosen later.
We have $B=(\Id-\Delta)^{-\beta}$ and set $\bar B_{\bf 0}=B$. $B$ does not commute with $A_{\bf 1}$, but $-\Delta_\theta \preccurlyeq \norm{\theta}_\infty(-\Delta)$ implies
\[ (\Id-\norm{\theta}_\infty^{-1}\Delta_\theta)^{-2\beta}\succcurlyeq (\Id-\Delta)^{-2\beta}=B^\ast B\text{ for }\beta\in[0,1/2],\]
using operator monotonicity of $t\mapsto -t^{-r}$ on $\R^+$ for $r\in(0,1]$ \citep{bhatia2013}. For $\bar B_{\bf 1}:=(\Id-\norm{\theta}_\infty^{-1}\Delta_\theta)^{-\beta}$ this yields \eqref{EqBtheta}.
By \citet[Thm. VI.5.22]{EN2000} $\dom(A_{\bf 1})=\dom(A_{\bf 0})={\cal H}^2(\R^d)$ holds, as soon as $L\in C^1(\R^d)$.
The minimax lower bound later can still be established over H\"older classes with $\alpha<1$ by using smooth $L$ whose bounds on the first derivatives are finite, but grow with the asymptotics.

To deal with the non-commutativity of $A_{\bf 0}$ and $A_{\bf 1}$, we establish for certain functions $f:\R^+\to\R$ that $f(-\Delta_\theta)\preccurlyeq C_{f,\theta} f(-\Delta)$ with a well quantified constant $C_{f,\theta}$.
Let us remark that most monotone functions $g$ are not operator monotone so that operators $T_1,T_2$ with $T_1\preccurlyeq T_2$ do not necessarily satisfy $g(T_1)\preccurlyeq g(T_2)$. In particular, $g:\R^+\to\R$, $g(\lambda)=-(\eps^2\lambda^{2+2\beta}+1)^{-1}$ is not an operator monotone function because $\lambda\mapsto\lambda^{2+2\beta}$ is not operator monotone, see \citet[Chapter V]{bhatia2013} for this and more results on operator monotonicity for matrices which directly extend to linear operators.

Based on perturbation ideas from \citet[Prop. VI.5.24]{EN2000}, however, we are able to establish $(\eps^2(-\Delta_\theta)^{2+2\beta}+\Id)^{-1}\preccurlyeq C_{d, \beta} (\eps^2(-\Delta)^{2+2\beta}+\Id)^{-1}$ for some precise constant $C_{d, \beta}$, provided a suitable smoothness norm of $\theta$ is bounded by $\eps^{-1}$. 

\begin{proposition}\label{PropOrderDeltatheta}
Consider the operators $\Delta_\theta$ and $\Delta$ on ${\cal H}^2(\R^d)\subset L^2(\R^d)$ with $\theta(x)=1+L(x/h)$, $L\in C^7(\R^d)$,
$\supp L\subset[-1/2,1/2]^d$ and $\norm{L}_\infty\le 1/2$.
There are positive quantities $C_{d, \beta}^{(i)}$, $i=1,\ldots,4$, only depending on $d\ge 1$ and $\beta\in[0,1/2]$,
such that:
\begin{enumerate}
\item  if  $C_{d, \beta}^{(1)}h^{-2}(\norm{\nabla L}_\infty^{4}+\norm{\nabla^2 L}_\infty^{4/3})\le\eps^{-1/(1+\beta)}$, then
\begin{equation}\label{EqDeltathetaorder}
 \big(\eps^2(-\Delta_\theta)^{2+2\beta}+\Id\big)^{-1}\preccurlyeq C_{d, \beta}^{(2)}\big( \eps^2(-\Delta)^{2+2\beta} +\Id\big)^{-1};
\end{equation}
\item if
$C_{d, \beta}^{(3)}h^{-2}(\norm{\nabla L}_\infty^{4} +\norm{\nabla L}_\infty^{10/3}
 + \norm{\nabla^2 L}_\infty^{4/3}+  \norm{\nabla\Delta L}_\infty^{4/5})\le\eps^{-1/(1+\beta)}$,
then
\begin{equation}\label{EqDeltathetaorder2}
\abs{\Delta_\theta}_{T^{-1}}^{-1}\big(\eps^2(-\Delta_\theta)^{2+2\beta}+\Id\big)^{-1}\preccurlyeq C_{d, \beta}^{(4)} \abs{\Delta}_{T^{-1}}^{-1}\big(\eps^2(-\Delta)^{2+2\beta}+\Id\big)^{-1}.
\end{equation}
\end{enumerate}
\end{proposition}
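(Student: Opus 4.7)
The plan is to reduce each operator inequality to a G\r{a}rding-type coercivity estimate via the equivalence $P^{-1}\preccurlyeq C Q^{-1}\iff Q\preccurlyeq C P$ for strictly positive self-adjoint $P,Q$. Thus \eqref{EqDeltathetaorder} is equivalent to showing
\[
\eps^2\norm{(-\Delta)^{1+\beta}u}^2 + \norm{u}^2 \le C_{d,\beta}^{(2)}\big(\eps^2\norm{(-\Delta_\theta)^{1+\beta}u}^2 + \norm{u}^2\big)
\]
on a dense subspace. Since the $\norm{u}^2$ terms match, the real task is to bound $\norm{(-\Delta_\theta)^{1+\beta}u}$ from below by a constant multiple of $\norm{(-\Delta)^{1+\beta}u}$ modulo an error absorbable into $\eps^{-2}\norm{u}^2$. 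The central perturbation tool is the identity $-\Delta_\theta = -\Delta - R_\theta$ with $R_\theta u = (\theta-1)\Delta u + \nabla\theta\cdot\nabla u$; under the scaling $\theta(x)=1+L(x/h)$ the chain rule makes all $h$-dependence explicit as $h^{-k}\norm{\nabla^k L}_\infty$ for $k=1,2,3$, while the normalisation $\norm{L}_\infty\le 1/2$ together with $\supp L\subset[-1/2,1/2]^d$ ensures $1/2\le\theta\le 3/2$ uniformly in $h$.

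I would first treat the integer endpoint $\beta=0$ by a standard perturbation computation: triangle inequality, the interpolation $\norm{\nabla u}^2\le\norm{u}\norm{\Delta u}$ and Young's inequality give
\[
\norm{\Delta_\theta u}^2 \ge \tfrac12\norm{\Delta u}^2 - C_d h^{-4}\norm{\nabla L}_\infty^4\norm{u}^2,
\]
which yields (a) at $\beta=0$ as soon as $\eps h^{-2}\norm{\nabla L}_\infty^2$ is bounded by a fixed small constant, matching the threshold in the hypothesis. The main obstacle is extending this to fractional $\beta\in(0,1/2)$, for which $t\mapsto t^{1+\beta}$ is \emph{not} operator monotone and no direct comparison of fractional powers is available. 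My proposal is to invoke the Balakrishnan representation
\[
A^{\beta} = \frac{\sin(\pi\beta)}{\pi}\int_0^\infty s^{\beta-1}A(A+s\Id)^{-1}\,ds,\qquad A\succcurlyeq 0,
\]
together with the second resolvent identity
\[
(-\Delta_\theta+s\Id)^{-1} - (-\Delta+s\Id)^{-1} = (-\Delta_\theta+s\Id)^{-1}R_\theta(-\Delta+s\Id)^{-1},
\]
which propagates the $\beta=0$ coercivity through the $s$-integral. Uniform control of this integral in $s$ is the real technical hurdle; balancing $s^{\beta-1}$ against the operator norms of $R_\theta(-\Delta+s\Id)^{-1}$ via Young's inequality produces precisely the exponents $\norm{\nabla L}_\infty^4$ and $\norm{\nabla^2 L}_\infty^{4/3}$ recorded in the hypothesis of (a).

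For part (b), the extra factor $\abs{\Delta_\theta}_{T^{-1}}^{-1}=(|\Delta_\theta|\vee T^{-1})^{-1}$ is handled by noting that its inverse expands as a polynomial in $-\Delta_\theta$ of one degree higher than in (a), modulo the $T^{-1}$ floor absorbed by the elementary bound $|t|\vee T^{-1}\le |t|+T^{-1}$ applied within functional calculus. The argument of (a) is then repeated one derivative order higher: one establishes $\norm{(-\Delta_\theta)^{3/2+\beta}u}^2\ge c_{d,\beta}\norm{(-\Delta)^{3/2+\beta}u}^2$ up to absorbable error, applying the Balakrishnan formula now to an exponent in $[3/2,2]$ with $\nabla R_\theta$ as the effective perturbation. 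Differentiating $R_\theta$ once more brings in $\nabla\Delta L$ and, via Young's inequality with the adjusted H\"older pair, the cross term $\norm{\nabla L}_\infty^{10/3}$; this supplies exactly the additional derivative conditions $\norm{\nabla L}_\infty^{10/3}$ and $\norm{\nabla\Delta L}_\infty^{4/5}$ appearing in (b). The constants $C_{d,\beta}^{(i)}$ depend only on $d$ and $\beta$ because every $\theta$-dependent quantity is absorbed into a small prefactor of $\norm{u}^2$ before the functional calculus is applied; the critical bookkeeping is matching each Young exponent in the integrand of the Balakrishnan representation to its counterpart in the hypothesis.
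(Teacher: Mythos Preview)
Your strategy diverges from the paper's in a way that leaves the fractional step genuinely unfinished. The paper does \emph{not} interpolate from the endpoint $\beta=0$ via Balakrishnan; instead it goes \emph{up} to an integer power and then descends. Concretely, for (a) one proves the cubic estimate
\[
(-\Delta_\theta)^3 \succcurlyeq 2^{-6}(-\Delta)^3 - c_{\theta,d}^2 h^{-6}\Id
\]
by writing $\scapro{(-\Delta_\theta)^3u}{u}\ge\tfrac12\norm{\nabla\Delta_\theta u}^2$ and expanding $\nabla\Delta_\theta u$. Since $2+2\beta\le 3$ for $\beta\in[0,1/2]$, the map $t\mapsto t^{(2+2\beta)/3}$ \emph{is} operator monotone, so one applies it to both sides (using $(T_2+C\Id)^\gamma\preccurlyeq T_2^\gamma+C^\gamma\Id$) to obtain the desired bound on $(-\Delta_\theta)^{2+2\beta}$. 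For (b) the same idea is pushed to $(-\Delta_\theta)^4$, and the fractional exponent $3+2\beta$ is reached via the Kubo--Ando weighted geometric mean of $(-\Delta_\theta)$ and $(-\Delta_\theta)^4$. Your Balakrishnan route, by contrast, yields only a formula for the \emph{difference} $(-\Delta_\theta+s)^{-1}-(-\Delta+s)^{-1}$; turning that into a coercivity lower bound on $\norm{(-\Delta_\theta)^{1+\beta}u}$ uniformly over the $s$-integral is not addressed, and the assertion that the right exponents $\norm{\nabla L}_\infty^4$, $\norm{\nabla^2 L}_\infty^{4/3}$, $\norm{\nabla\Delta L}_\infty^{4/5}$ fall out of ``balancing $s^{\beta-1}$'' is unsupported.

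There is also a concrete mismatch in your $\beta=0$ computation. Your bound $\norm{\Delta_\theta u}^2\ge\tfrac12\norm{\Delta u}^2-C_d h^{-4}\norm{\nabla L}_\infty^4\norm{u}^2$ gives the threshold $h^{-2}\norm{\nabla L}_\infty^2\lesssim\eps^{-1}$, not the stated $h^{-2}\norm{\nabla L}_\infty^4\lesssim\eps^{-1}$; these differ once $L=h^\alpha K$ is inserted. The reason is that you have not exploited the compact support $\supp L\subset[-1/2,1/2]^d$: the paper's Lemma~\ref{lem:CompactProductInterpolation} integrates by parts on the support to gain an extra $h^{1/2}$ in every product estimate (e.g.\ $\norm{\scapro{\nabla\theta}{\nabla u}}\le (2d^2h)^{1/2}\norm{\nabla\theta}_\infty\norm{\nabla u}^{1/2}\norm{\Delta u}^{1/2}$), and this gain is what produces the exponents $4$, $4/3$, $10/3$, $4/5$ on the various norms of $L$. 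Without that lemma your exponents will not match the hypothesis, regardless of how the fractional-power step is handled.
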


\begin{proof}
	See Section \ref{SecProofNonpar}.
\end{proof}

\begin{remark}\label{RemOpMon}
The condition $L\in C^7(\R^d)$ ensures $\dom(\Delta_\theta^4)={\cal H}^8(\R^d)$.
Later we shall consider kernels $L$ of the form $L=h_n^\alpha K$ with $h_n\downarrow 0$ and $K$ fixed so that the norms in terms of $L$ matter for the asymptotics.

In the case $\beta=0$ Proposition \ref{PropOrderDeltatheta}(a) holds already under the condition $C_d^{(1)}h^{-2}\norm{\nabla L}_\infty^4\le\eps^{-1}$ and Proposition \ref{PropOrderDeltatheta}(b) under $C_d^{(3)}h^{-2}(\norm{\nabla L}_\infty^4+\norm{\nabla^2 L}_\infty^{4/3}) \le\eps^{-1}$.
The reason is that in the proof we only need to consider $(-\Delta_\theta)^m$ for $m\le 3$ instead of $m\le 4$.
\end{remark}

\begin{theorem}\label{ThmNonpar1}
For each $n$ consider the observations  given by \eqref{EqExNonpar1}
with a space-varying diffusivity $\theta\in\Theta_{dif}(\alpha,R)$
for $\alpha>0$,\,$R>1$.
Assume
\[v_n:= \begin{cases} (T_n\eps_n^{-(d+2)/(2+2\beta)})^{-\alpha/(2\alpha+d)},&\text{if } T_n\le\eps_n^{(1-\alpha)/(1+\beta)},\\
(T_n\eps_n^{-(5+4\beta)/(2+2\beta)})^{-\alpha/(2\alpha+3+4\beta)},&\text{if } T_n\ge\eps_n^{(1-\alpha)/(1+\beta)}
\end{cases}
\quad\longrightarrow 0
\]
as  $n\to\infty$. If $1\le d <6+8\beta$ and
\begin{equation}\label{Eqalphamin}
	\alpha > \limsup_{n\rightarrow\infty}\tfrac{(5+5\beta)\log(T_n) + 5\log(\eps_n^{-1})}{(2+2\beta)\log(T_n) + (10+4\beta)\log(\eps_n^{-1})}
\end{equation}
is satisfied, then for a constant $c>0$
\[ \liminf_{n\to\infty}\inf_{\hat\theta_n}\sup_{\theta\in \Theta_{dif}(\alpha,R)}\PP_\theta\big(v_n^{-1}\abs{\hat\theta_n(0)-\theta(0)}\ge c\big)>0
\]
holds, where the infimum is taken over all estimators $\hat\theta_n$ of $\theta$ based on \eqref{EqExNonpar1}.
\end{theorem}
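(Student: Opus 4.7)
The plan is to apply the two-hypothesis reduction of Theorem \ref{ThmLB}. First I would take $\theta_{\bf 0}\equiv 1$, so $A_{\bf 0}=\Delta$, and $\theta_{\bf 1}(x)=1+h_n^\alpha K(x/h_n)$ for a bandwidth $h_n\downarrow 0$ and a fixed bump $K\in C^7(\R^d)$ with $\supp K\subset[-\tfrac12,\tfrac12]^d$, $K(0)\ne 0$ and $\int K\,dx=0$. For $h_n$ sufficiently small both functions lie in $\Theta_{dif}(\alpha,R)$, and the semi-distance at $x=0$ is $|\theta_{\bf 1}(0)-\theta_{\bf 0}(0)|=h_n^\alpha|K(0)|$; so I aim to pick the largest $h_n$ for which the Hellinger bound stays $\le 1$, which will give $v_n\thicksim h_n^\alpha$.

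Next I would invoke the contractive-case bound of Theorem \ref{ThmMain} (both $\Delta$ and $\Delta_{\theta_{\bf 1}}$ are negative) with $\bar B_{\bf 0}=(\Id-\Delta)^{-\beta}$ and $\bar B_{\bf 1}=(\Id-\norm{\theta_{\bf 1}}_\infty^{-1}\Delta_{\theta_{\bf 1}})^{-\beta}$; the latter satisfies \eqref{EqBtheta} by operator monotonicity of $t\mapsto -t^{-r}$ on $\R^+$. The crucial step is then to replace the $\Delta_{\theta_{\bf 1}}$-factors by $\Delta$-factors at the cost of a universal constant, for which Proposition \ref{PropOrderDeltatheta}(b) is tailor-made. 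With $L=h_n^\alpha K$, its smoothness condition is dominated by $\norm{\nabla\Delta L}_\infty^{4/5}\thicksim h_n^{4\alpha/5}$, giving $h_n^{4\alpha/5-2}\lesssim\eps_n^{-1/(1+\beta)}$; inserting the optimal $h_n$ of regime~2 produces exactly the restriction \eqref{Eqalphamin}.

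Once everything is expressed in $\Delta$ alone, Fourier analysis on $\R^d$ diagonalises the reference operator and turns $(\Delta_{\theta_{\bf 1}}-\Delta)g=\nabla\cdot((\theta_{\bf 1}-1)\nabla g)$ into an integral operator with frequency kernel proportional to $\scapro{u}{v}\,\widehat\delta(u-v)$, where $\widehat\delta(r)=h_n^{\alpha+d}\,\widehat K(h_n r)$ concentrates at scale $|r|\thicksim h_n^{-1}$. After absorbing the weights $(\eps_n^2|u|^{4+4\beta}+1)^{-1/2}$ and $(1+T_n|v|^2)^{-1/2}$, the Hilbert--Schmidt norm squared becomes, up to constants,
\[T_n\int_{\R^d}|\widehat\delta(r)|^2\Big(\int_{\R^d}\frac{\scapro{u}{u+r}^2\,du}{(\eps_n^2|u|^{4+4\beta}+1)(1+T_n|u|^2)(\eps_n^2|u+r|^{4+4\beta}+1)}\Big)\,dr.\]
Substituting $w=\eps_n^{1/(2+2\beta)}u$ and distinguishing the regions $|w|\gtrless 1$ and $|w+\eps_n^{1/(2+2\beta)}r|\gtrless 1$, the inner integral is of order $\eps_n^{-(d+2)/(2+2\beta)}(1+\eps_n^{1/(2+2\beta)}|r|)^{d-3-4\beta}$ for $d<6+8\beta$. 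Rescaling $s=h_n r$ outside then yields $T_n h_n^{2\alpha+d}\eps_n^{-(d+2)/(2+2\beta)}(1+\eps_n^{1/(2+2\beta)}h_n^{-1})^{d-3-4\beta}$, and setting this $\le 1$ splits into two regimes according to $h_n\gtrless\eps_n^{1/(2+2\beta)}$, which translates to $T_n\lessgtr\eps_n^{(1-\alpha)/(1+\beta)}$ after inserting the optimal $h_n$; both rates in the statement of $v_n$ then come out by solving for $h_n$.

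The main obstacle is the non-commutativity of $\Delta$ and $\Delta_{\theta_{\bf 1}}$ inside the Hilbert--Schmidt norm, which is resolved precisely by Proposition \ref{PropOrderDeltatheta}. Beyond that, the delicate bookkeeping is verifying that the operator-monotonicity threshold $h_n^{4\alpha/5-2}\lesssim\eps_n^{-1/(1+\beta)}$ is compatible with the optimal bandwidth in each regime: in regime~1 ($T_n\le\eps_n^{(1-\alpha)/(1+\beta)}$) the optimal $h_n$ is at least of order $\eps_n^{1/(2+2\beta)}$ and the threshold is automatic for any $\alpha>0$, while in regime~2 the threshold becomes binding and, after a short computation comparing the powers of $T_n$ and $\eps_n$, produces exactly the lower bound \eqref{Eqalphamin} on $\alpha$.
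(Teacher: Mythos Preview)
Your proposal is correct and follows the paper's proof almost verbatim: the same alternative $\theta_{\bf 1}=1+h^\alpha K(\cdot/h)$, the same $\bar B_{\bf 1}=(\Id-\norm{\theta_{\bf 1}}_\infty^{-1}\Delta_{\theta_{\bf 1}})^{-\beta}$, the same reduction via Proposition~\ref{PropOrderDeltatheta}, the same Fourier computation with the inner integral of order $(1+\eps_n^{1/(2+2\beta)}|r|)^{d-3-4\beta}$, the same case split, and the restriction~\eqref{Eqalphamin} arising exactly as you describe. One small technical correction: the paper imposes \emph{two} vanishing moments on $K$ (both $\int K\,dx=0$ and $\int xK\,dx=0$, yielding $|\widehat K(u)|\lesssim |u|^2\wedge|u|^{-d}$), since with only $\int K=0$ the outer integral $\int|\widehat K(u)|^2(1+\eps_n^{1/(2+2\beta)}h_n^{-1}|u|)^{d-3-4\beta}\,du$ is no longer of order $(1+\eps_n^{1/(2+2\beta)}h_n^{-1})^{d-3-4\beta}$ in the edge case $d=1$, $\beta$ close to $1/2$.
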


\begin{remark}\
\begin{enumerate}

\item The same lower bound holds for $\abs{\hat\theta_n(x_0)-\theta(x_0)}$ at any $x_0\in\R^d$, the concrete choice $x_0=0$ just simplifies notation. Working on the unbounded domain $\R^d$ allows us to use Fourier transforms which facilitates the analysis considerably. It is intuitive, but not rigorously established  that for our pointwise estimation risk the asymptotic results transfer to smooth bounded domains.

\item We can understand the rate in case $T_n\le\eps_n^{(1-\alpha)/(1+\beta)}$   in terms of the parametric rate $T_n^{-1/2}\eps_n^{(d+2)/(4+4\beta)}$ from Proposition \ref{PropParDiff}.
For fixed $T_n>0$ and $\beta=0$ the rates simplify to
\begin{equation}\label{EqExNonpar1Fund}
 v_n= \begin{cases}\eps_n^{(d+2)\alpha/(4\alpha+2d)}&\text{, if } \alpha\ge 1,\\ \eps_n^{5\alpha/(4\alpha+6)}&\text{, if }  \alpha\in(1/2,1].
\end{cases}
\end{equation}
In view of Remark \ref{RemOpMon} the second case holds even for $\alpha\in(3/10,1]$. With considerable efforts \cite{GPMR2024} have obtained a corresponding upper bound for the case $\alpha\ge 1$.

\item
In dimension $d<3+4\beta$, the rate slows down in the regime $T_n\ll\eps_n^{(1-\alpha)/(1+\beta)}$.
This {\it ellbow effect} stems from the bound \eqref{EqHellNonpar} below in the Sobolev space ${\cal H}^{(d-3-4\beta)/2}(\R^d)$ of negative order for relatively rough $\theta$ with bandwidth $h_n\lesssim\eps_n^{1/(2+2\beta)}$.  In  the critical case $\alpha<1$ for fixed $T_n$ a standard preaveraging estimator does no longer work \citep{GPMR2024}, which might give an upper bound perspective on this effect.

\item The additional condition \eqref{Eqalphamin} on $\alpha$ is technically required in order to profit from the operator monotonicity in Proposition \ref{PropOrderDeltatheta}. It requires necessarily $\alpha>5/(10+4\beta)$ (which suffices also for fixed $T_n$) and is always satisfied if
$\alpha\ge 5/2$.

\end{enumerate}

\end{remark}

\begin{proof}
For simplicity we drop the index $n$ at $\eps_n,T_n$.
We  transfer the problem into the spectral domain by the Fourier transform ${\cal F}f(u)=\int e^{i\scapro{u}{x}}f(x)\,dx$. In particular, we have ${\cal F}(\Delta_\theta g)(u)=(2\pi)^{-d}M[-iu^\top]C[{\cal F}\theta(u)]M[-iu]{\cal F}g(u)$ for $g\in {\cal H}^2(\R^d)$, where $M[f]g=fg$ and $C[f]g=f*g$ are multiplication and convolution operators.
Consider $\theta\in C^\infty(\R^d)$ fulfilling the conditions of Proposition \ref{PropOrderDeltatheta}(a,b) and the derived constant $C_{d, \beta}^{(5)}:=\tfrac12(C_{d, \beta}^{(2)} + C_{d, \beta}^{(4)})$.
Note that using $x^2(1+x)^{2\beta}\ge x^{2+2\beta}$, $x\ge 0$, and functional calculus, we have
\begin{align*}
	(\eps^2R_{\bf 0}^2\bar B_{\bf 0}^{-2}+\Id)^{-1}&\preccurlyeq (\eps^2\Delta^{2+2\beta}+\Id)^{-1},\\
	(\eps^2R_{\bf 1}^2\bar B_{\bf 1}^{-2}+\Id)^{-1}&\preccurlyeq (\eps^2\norm{\theta}_\infty^{-2\beta}\Delta_\theta^{2+2\beta}+\Id)^{-1}\preccurlyeq \norm{\theta}_\infty^{2\beta}(\eps^2\Delta_\theta^{2+2\beta}+\Id)^{-1}.
\end{align*}
Then by using these estimates, Proposition \ref{PropOrderDeltatheta}, Lemma \ref{LemTraceBound} (a) below and  the isometry of $(2\pi)^{-d/2}\cal F$, we obtain from Theorem \ref{ThmMain} for the case $R_\theta\preccurlyeq 0$,
writing $\delta=\theta-1$
\begin{align*}
&\norm{\theta}_\infty^{-2\beta}H^2({\cal N}_{cyl}(0,Q_{\bf 0}),{\cal N}_{cyl}(0,Q_{\bf 1}))\\
&\le \tfrac {T}4 \norm{(\eps^2\Delta^{2+2\beta}+\Id)^{-1/2}(\Delta_\theta-\Delta) \abs{\Delta_\theta}_{T^{-1}}^{-1/2}(\eps^2\Delta_\theta^{2+2\beta}+\Id)^{-1/2}}_{HS}^2\\
&\quad +\tfrac {T}4 \norm{(\eps^2\Delta_\theta^{2+2\beta}+\Id)^{-1/2}(\Delta_\theta-\Delta) \abs{\Delta}_{T^{-1}}^{-1/2}(\eps^2\Delta^{2+2\beta}+\Id)^{-1/2}}_{HS}^2\\
&\le C_{d, \beta}^{(5)} T \norm{(\eps^2\Delta^{2+2\beta}+\Id)^{-1/2}(\Delta_\theta-\Delta)\abs{\Delta}_{T^{-1}}^{-1/2}(\eps^2\Delta^{2+2\beta}+\Id)^{-1/2}}_{HS}^2\\
&\le C_{d, \beta}^{(5)} T\Big\|M[(\eps^2 \abs{u}^{4+4\beta}+1)^{-1/2}(-iu^\top)]C[{\cal F}\delta]\\
 &\qquad\qquad M[(-iu)(\abs{u}^{-1}\wedge T^{1/2})(\eps^2\abs{u}^{4+4\beta}+1)^{-1/2}]\Big\|_{HS}^2.
\end{align*}
The operator $K$ inside the Hilbert-Schmidt norm is given as an integral operator of the form $Kf(u)=\int_{\R^d} k(u, v)f(v)\,dv$ with real-valued kernel
\[k(u,v)=(\eps^2 \abs{u}^{4+4\beta}+1)^{-1/2}\scapro{u}{v}{\cal F}\delta(u-v)(\abs{v}^{-1}\wedge T^{1/2}) (\eps^2\abs{v}^{4+4\beta}+1)^{-1/2}.\]
Its Hilbert-Schmidt norm is therefore given by $\norm{k}_{L^2(\R^d\times\R^d)}$ and we obtain with the substitution $w=\eps^{1/(2+2\beta)}u$
\begin{align*}
&(\norm{\theta}_\infty^{2\beta}C_{d, \beta}^{(5)})^{-1} H^2({\cal N}_{cyl}(0,Q_{\bf 0}),{\cal N}_{cyl}(0,Q_{\bf 1}))\\
&\le T\int_{\R^d}\int_{\R^d} (\eps^2 \abs{u}^{4+4\beta}+1)^{-1}\scapro{u}{v}^2(\abs{v}^{-2}\wedge T) \abs{{\cal F}\delta(u-v)}^2(\eps^2\abs{v}^{4+4\beta}+1)^{-1}dudv\\
&\le  T\int_{\R^d} \abs{{\cal F}\delta(r)}^2 \Big(\int_{\R^d} (\eps^2 \abs{u}^{4+4\beta}+1)^{-1}\abs{u}^2 (\eps^2\abs{u-r}^{4+4\beta}+1)^{-1}du\Big)\,dr\\
&\le T\eps^{-\frac{d+2}{2+2\beta}}\int_{\R^d} \abs{{\cal F}\delta(r)}^2 \Big(\int_{\R^d} (\abs{w}^{4+4\beta}+1)^{-1}\abs{w}^2 (\abs{\abs{w}-\abs{\eps^{\frac1{2+2\beta}}r}}^{4+4\beta}+1)^{-1}dw\Big)\,dr\\
&\le C_d^{(6)}T\eps^{-\frac{d+2}{2+2\beta}}\int_{\R^d} \abs{{\cal F}\delta(r)}^2 \Big(\int_0^\infty (z^{d-3-4\beta}\wedge z^{d+1}) (\abs{z-\eps^{\frac1{2+2\beta}}\abs{r}}^{-4-4\beta}\wedge 1)\,dz\Big)\,dr,
\end{align*}
where $C_d^{(6)}$ is the surface of the $d$-dimensional unit sphere.
By Lemma \ref{lem:aux:integrals}(b) below,
for $d< 6+8\beta$ the inner integral is finite and bounded in order by $(1+\eps^{1/(2+2\beta)}\abs{r})^{d-3-4\beta}$.
Hence, we obtain
\begin{equation}\label{EqHellNonpar}
H^2({\cal N}_{cyl}(0,Q_{\bf 0}),{\cal N}_{cyl}(0,Q_{\bf 1}))\lesssim  T\eps^{-(d+2)/(2+2\beta)}\int_{\R^d} \abs{{\cal F}\delta(r)}^2 (1+\eps^{1/(2+2\beta)}\abs{r})^{d-3-4\beta} dr.
\end{equation}
Note that the weight function in the integral is fundamentally different for $d<3+4\beta$ and $d>3+4\beta$. The integral corresponds to a squared $L^2$-Sobolev norm of $\delta=\theta-1$ with order $(d-3-4\beta)/2$ and additional $\eps$-dependent weighting.

We choose $\delta(x)=h^\alpha K(x/h)$
for some bandwidth $h\in(0,1)$ and $K\in C^\infty(\R^d)$ with support in $[-1/2,1/2]^d$, $\norm{K}_\infty\le 1/2$ and $\norm{K}_{C^\alpha}\le R-1$.
In particular, $K$, $xK$ and all derivatives of $K$ up to order $d$ are in $L^1(\R^d)$.
We assume $K(0)\not=0$ as well as $\int_{\R^d}x^mK(x)\,dx=0$, $m\in\{0,1\}$.
Then $\abs{{\cal F}K(u)}\lesssim \abs{u}^2\wedge \abs{u}^{-d}$, and
\begin{align*}
&\int_{\R^d} \abs{{\cal F}\delta(r)}^2 (1+\eps^{1/(2+2\beta)}\abs{r})^{d-3-4\beta} dr\\
 &= h^{2\alpha+2d}\int_{\R^d} \abs{{\cal F}K(hr)}^2 (1+\eps^{1/(2+2\beta)}\abs{r})^{d-3-4\beta} dr\\
&= h^{2\alpha+d}\int_{\R^d} \abs{{\cal F}K(u)}^2 (1+\eps^{1/(2+2\beta)}h^{-1}\abs{u})^{d-3-4\beta} du\\
&\lesssim h^{2\alpha+d}\int_{\R^d} (\abs{u}^4\wedge \abs{u}^{-2d}) (1+\eps^{1/(2+2\beta)}h^{-1}\abs{u})^{d-3-4\beta} du\\
&\lesssim h^{2\alpha+d}\int_0^\infty  (z^{d+3}\wedge z^{-d-1})(1+\eps^{1/(2+2\beta)}h^{-1}z)^{d-3-4\beta} dz.
\end{align*}
Lemma \ref{lem:aux:integrals}(a) below together with $d+3+d-3-4\beta\ge 0$ (recall $\beta\le 1/2$) as well as $-d-1+d-3-4\beta<-1$ shows that the integral is finite and of order $(1+\eps^{1/(2+2\beta)}h^{-1})^{d-3-4\beta}$. Inserting into \eqref{EqHellNonpar}, we arrive at
\begin{align*}
H^2({\cal N}_{cyl}(0,Q_{\bf 0}),{\cal N}_{cyl}(0,Q_{\bf 1}))
 &\lesssim T\eps^{-(d+2)/(2+2\beta)}h^{2\alpha+d}(1+\eps^{1/(2+2\beta)}h^{-1})^{d-3-4\beta}.
\end{align*}
For $h\ge\eps^{1/(2+2\beta)}$ this gives the order $T h^{2\alpha+d}\eps^{-(d+2)/(2+2\beta)}$ and for $h\le \eps^{1/(2+2\beta)}$ the order $Th^{2\alpha+3+4\beta}\eps^{-(5+4\beta)/(2+2\beta)}$. We choose
\begin{equation}\label{Eqh1} h\thicksim \begin{cases} (T\eps^{-(d+2)/(2+2\beta)})^{-1/(2\alpha+d)},&\text{if } T\le\eps^{(1-\alpha)/(1+\beta)},\\
(T\eps^{-(5+4\beta)/(2+2\beta)})^{-1/(2\alpha+3+4\beta)},&\text{if } T\ge\eps^{(1-\alpha)/(1+\beta)}.
\end{cases}
\end{equation}
With Theorem \ref{ThmLB} this yields the claim in view of ${\bf 1},\theta\in \Theta_{dif}(\alpha,R)$
by construction (note $\norm{\theta}_{C^\alpha}\le 1+\norm{\delta}_{C^\alpha}\le 1+\norm{K}_{C^\alpha}\le R$) and $\abs{\theta(0)-1}=h^{\alpha}\abs{K(0)}\thicksim h^\alpha$.

It remains to verify the conditions from Proposition \ref{PropOrderDeltatheta}(a,b). In order to do so, we note for $L=h^\alpha K$ that $h^{4\alpha/5-2}\lesssim \eps^{-1/(1+\beta)}$
with a sufficiently small constant implies both conditions. In the first case of \eqref{Eqh1} this holds due to $h\gtrsim\eps^{1/(2+2\beta)}$. In the second case of \eqref{Eqh1} a short calculation shows that it suffices to consider the case $\alpha<5/2$, and in this case the condition is equivalent to
$T\lesssim \eps^{-((10+4\beta)\alpha+5)/((1+\beta)(5-2\alpha))}$,
which is equivalent to \eqref{Eqalphamin} by Lemma \ref{lem:aux:alpha} below.
\end{proof}

\subsection{Space-dependent transport}

For each $n\in\N$ consider the observations $(dY_t,t\in[0,T_n])$  given by
\begin{equation}\label{EqExNonpar2}
dY_t= X_tdt+\eps_n dV_t\text{ with } dX_t=(\nu_n\Delta X_t+\nabla\cdot(\theta(x)X_t))\,dt+ dW_t
\end{equation}
and $X_0=0$, $T_n\ge 1$, $\eps_n\in [0,1]$ and $\nu_n\in(0,1]$. The  Laplace operator $\Delta$ is considered on ${\cal H}^2(\R^d)\subset L^2(\R^d)=H$ and $\theta:\R^d\to \R^d$ is a smooth divergence-free vector field.
More specifically, we assume that $\theta$ is in
\begin{align*}
	\Theta_{tra}(\alpha,R):=\Big\{\theta\in C^{3\vee\alpha}(\R^d;\R^d)\,\Big|\, \norm{\theta}_{C^\alpha}\le R\text{ and } \nabla\cdot\theta=0\Big\},\; \alpha>0,\,R>0.
\end{align*}
\begin{remark}
	We require that $U\mapsto\nabla\cdot(\theta U)$ defines a bounded operator ${\cal H}^2(\R^d)\rightarrow L^2(\R^d)$.
	From Theorem \ref{ThmNonpar2} below we see that we are only interested in the case $d\le 7$, in which case $\theta\in C^3(\R^d;\R^d)$ is a simple sufficient condition by \citet[Section 2.8.2]{Triebel2010}.
	In particular, we have $\dom(A_\theta)={\cal H}^2(\R^d)$ and $\nabla\cdot(\theta U)=\theta\cdot \nabla U$ on ${\cal H}^2(\R^d)$, which implies $A_{\theta}^*f=\nu_n\Delta f-\nabla\cdot(\theta f)=A_{-\theta}f$ such that $R_\theta = \nu_n\Delta$.
\end{remark}
In the notation of Theorem \ref{ThmMain}
consider $\bar B_{\bf 0}:=\bar B_{\bf 1}:=B=\Id$, $A_{\bf 0}=\nu_n\Delta$ and  $A_{\bf 1}=\nu_n\Delta+\theta(x)\cdot\nabla$.
We only consider the case $B=\Id$ (white noise), but with more technical efforts we can handle for instance $B=(\Id-\Delta)^{-\beta}$, $\beta\in[0,1/2]$, as in the preceding section.

\begin{theorem}\label{ThmNonpar2}
For each $n\in\N$ consider the observations  given by \eqref{EqExNonpar2} with the space-varying transport coefficient $\theta\in\Theta_{tra}(\alpha,R)$ for some $\alpha>0$, $R>0$.
Assume the asymptotics
\[v_n:= \begin{cases} (T_n^{-1}\eps_n^{d/2}\nu_n^{(d+2)/2})^{\alpha/(2\alpha+d)} &\text{, if }T_n\le\eps_n^{-\alpha}\nu_n^{1-\alpha},\\ T_n^{-\alpha/(2\alpha+5)} \eps_n^{5\alpha/(4\alpha+10)}\nu_n^{7\alpha/(4\alpha+10)}&\text{, if }T_n\ge\eps_n^{-\alpha}\nu_n^{1-\alpha}, \end{cases} \quad\rightarrow 0
\]
as $n\to\infty$. For $1\le d\le 7$ there is a constant $c>0$ such that
\[ \liminf_{n\to\infty}\inf_{\hat\theta_n}\sup_{\theta\in \Theta_{tra}(\alpha,R)}\PP_\theta\big(v_n^{-1}\abs{\hat\theta_n(0)-\theta(0)}\ge c\big)>0,
\]
where the infimum is taken over all estimators $\hat\theta_n$ of $\theta$ based on \eqref{EqExNonpar2}.
\end{theorem}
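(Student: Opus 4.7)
The plan is to mirror the proof of Theorem \ref{ThmNonpar1}, exploiting two simplifications specific to the divergence-free transport setting. First, since $\nabla\cdot\theta=0$, the operator $\theta\cdot\nabla$ is skew-adjoint on $L^2(\R^d)$, so the real part $R_\theta=\nu_n\Delta$ is independent of $\theta$. Hence no nontrivial operator-monotonicity argument analogous to Proposition \ref{PropOrderDeltatheta} is needed: with $A_{\bf 0}=\nu_n\Delta$, $A_{\bf 1}=A_{\bf 0}+\theta\cdot\nabla$ and $\bar B_{\bf 0}=\bar B_{\bf 1}=B=\Id$, the second bound of Theorem \ref{ThmMain} applies directly and, after combining the two symmetric terms via $\norm{F}_{HS}=\norm{F^\ast}_{HS}$, yields
\[
H^2({\cal N}_{cyl}(0,Q_{\bf 0}),{\cal N}_{cyl}(0,Q_{\bf 1}))\le \tfrac{T_n}{2}\bnorm{(\eps_n^2\nu_n^2\Delta^2+\Id)^{-1/2}(\theta\cdot\nabla)\abs{\nu_n\Delta}_{T_n^{-1}}^{-1/2}(\eps_n^2\nu_n^2\Delta^2+\Id)^{-1/2}}_{HS}^2.
\]

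Next I would transfer to the Fourier domain as in Theorem \ref{ThmNonpar1}. The operator inside the Hilbert--Schmidt norm becomes an integral operator on $L^2(\R^d)$ whose kernel is proportional to
\[
(\eps_n^2\nu_n^2\abs{u}^4+1)^{-1/2}\,\scapro{{\cal F}\theta(u-v)}{v}\,(\nu_n\abs{v}^2\vee T_n^{-1})^{-1/2}\,(\eps_n^2\nu_n^2\abs{v}^4+1)^{-1/2}.
\]
The divergence-free hypothesis gives $\scapro{{\cal F}\theta(r)}{r}=0$, hence $\scapro{{\cal F}\theta(r)}{v}=\scapro{{\cal F}\theta(r)}{u}$ for $u=v+r$, which is crucial for the sharp bound $\abs{\scapro{{\cal F}\theta(r)}{v}}^2\le\abs{{\cal F}\theta(r)}^2\min(\abs{u}^2,\abs{v}^2)$. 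After rescaling $w=(\eps_n\nu_n)^{1/2}v$ and using $\abs{v}^2(\nu_n\abs{v}^2\vee T_n^{-1})^{-1}\le\nu_n^{-1}$, the Hilbert--Schmidt squared norm is bounded by
\[
\nu_n^{-1}(\eps_n\nu_n)^{-d/2}\int\abs{{\cal F}\theta(r)}^2\,{\cal I}_d\big((\eps_n\nu_n)^{1/2}r\big)\,dr
\]
for a universal kernel ${\cal I}_d$ whose decay at infinity I would determine by a dyadic decomposition analogous to Lemma \ref{lem:aux:integrals}.

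To produce the rate, test with $\theta(x)=h^\alpha K(x/h)$ for a fixed smooth, compactly supported, divergence-free vector field $K$ with $K(0)\neq 0$ and sufficiently many vanishing moments. For $d\ge 2$ such $K$ can be constructed as $K=(\partial_2\phi,-\partial_1\phi,0,\ldots,0)$ with $\phi(x)=x_2\psi(x)$ for a bump function $\psi$, giving $K(0)=\psi(0)e_1$ and $\nabla\cdot K=0$; additional moment conditions are imposed through $\psi$ to control the decay of $\widehat K$. Scaling $r'=hr$ introduces $\rho=(\eps_n\nu_n)^{1/2}/h$, and the integral exhibits an elbow at $\rho\thicksim 1$, i.e.\ at $h\thicksim(\eps_n\nu_n)^{1/2}$. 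In the regime $h\ge(\eps_n\nu_n)^{1/2}$, the naive bound $\abs{\scapro{{\cal F}\theta(r)}{v}}^2\le\abs{{\cal F}\theta(r)}^2\abs{v}^2$ already gives $H^2\lesssim T_n\nu_n^{-(d+2)/2}\eps_n^{-d/2}h^{2\alpha+d}$, mirroring the parametric rate of Proposition \ref{PropParametricTransport}. In the regime $h\le(\eps_n\nu_n)^{1/2}$ the refined $\min(\abs{u}^2,\abs{v}^2)$ bound from the divergence-free identity is required to avoid losing a power of $h$ and yields $H^2\lesssim T_n h^{2\alpha+5}\eps_n^{-5/2}\nu_n^{-7/2}$. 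Setting each bound equal to a small constant, the two choices of $h$ give exactly the two branches of $v_n$ with transition at $T_n\thicksim\eps_n^{-\alpha}\nu_n^{1-\alpha}$; an application of Theorem \ref{ThmLB} then concludes the proof.

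The main obstacle is the sharp Fourier-kernel estimate producing the exponent $5$ in the elbow regime (rather than $3$ as in the diffusivity case with $\beta=0$). The transport perturbation is first-order, so the kernel contains only one factor of $v$ (compared with $\scapro{u}{v}$ for diffusivity); a naive Cauchy--Schwarz would produce a suboptimal exponent, and recovery of the correct scaling requires the $\min(\abs{u}^2,\abs{v}^2)$ refinement that depends essentially on $\nabla\cdot\theta=0$. A secondary technical subtlety is the explicit construction of a compactly supported divergence-free vector field with prescribed pointwise value and sufficient moment conditions, which is straightforward for $d\ge 2$ via the skew-gradient construction above; for $d=1$ the constraint forces $\theta$ constant and the lower bound reduces to the parametric statement of Proposition \ref{PropParametricTransport}.
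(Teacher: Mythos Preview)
Your overall strategy matches the paper's proof: exploit $R_\theta=\nu_n\Delta$ to bypass operator monotonicity, apply Theorem~\ref{ThmMain} directly, pass to Fourier space, and test against $\theta(x)=h^\alpha K(x/h)$ for a smooth compactly supported divergence-free $K$. The paper's construction of $K$ is $K=A\nabla\phi$ for an antisymmetric matrix $A$, of which your skew-gradient is a special case; your observation that $d=1$ forces $\theta$ constant is also correct.

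However, you have misdiagnosed the ``main obstacle''. The divergence-free identity $\scapro{{\cal F}\theta(r)}{u}=\scapro{{\cal F}\theta(r)}{v}$ and the resulting $\min(\abs{u}^2,\abs{v}^2)$ bound are \emph{not needed}. The paper uses only the naive estimate $\abs{\scapro{{\cal F}\theta(u-v)}{v}}^2\le\abs{{\cal F}\theta(u-v)}^2\abs{v}^2$, after which the factor $\abs{v}^2$ cancels exactly against $\abs{\nu_n\Delta}_{T_n^{-1}}^{-1}$ via $\abs{v}^2(\nu_n\abs{v}^2\vee T_n^{-1})^{-1}\le\nu_n^{-1}$. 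What remains is the symmetric convolution integral
\[
\int_{\R^d}(\abs{w}^4+1)^{-1}(\abs{w-(\eps_n\nu_n)^{1/2}r}^4+1)^{-1}\,dw\lesssim (1+(\eps_n\nu_n)^{1/2}\abs{r})^{d-5},\quad d\le 7,
\]
by Lemma~\ref{lem:aux:integrals}(b) with $a=d-1$, $b=d-5$, $c=-4$. The exponent $5$ in the elbow regime is thus a direct consequence of having \emph{two} factors $(\abs{w}^4+1)^{-1}$ with no surviving frequency weight, not of any refinement. Plugging in $\theta=h^\alpha K(\cdot/h)$ with two vanishing moments then gives $H^2\lesssim T_n\eps_n^{-d/2}\nu_n^{-(d+2)/2}h^{2\alpha+d}(1+(\eps_n\nu_n)^{1/2}h^{-1})^{d-5}$, which yields both branches at once.

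So the divergence-free hypothesis is used only once, to make $\theta\cdot\nabla$ skew-adjoint so that $R_{\bf 0}=R_{\bf 1}$; it plays no role in the Fourier bound. Your proposed refinement is an unnecessary detour that would likely make the inner-integral analysis harder, not easier.
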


\begin{remark}
In the first case $T_n\le\eps_n^{-\alpha}\nu_n^{1-\alpha}$ we obtain the nonparametric analogue of the parametric rate from Proposition \ref{PropParametricTransportLB}.
In general, we observe again an ellbow effect whenever $h_n=v_n^{1/\alpha}$ becomes smaller than $\eps_n^{1/2}\nu_n^{1/2}$ in the sense that the rate slows down for $d\le 4$ and speeds up for $d\ge 6$. There is no further restriction on $\alpha>0$ as in Theorem \ref{ThmNonpar1} because the real part $R_\theta$ does not depend on $\theta$ for divergence-free vector fields $\theta$.
\end{remark}

\begin{proof}
Throughout the proof we drop again the index $n$.
Passing into the Fourier domain as for Theorem \ref{ThmNonpar1} and applying multiplication and convolution operations coefficientwise, we find by Theorem \ref{ThmMain}:
\begin{align*}
&H^2({\cal N}_{cyl}(0,Q_{\bf 0}),{\cal N}_{cyl}(0,Q_{\bf 1}))\\
&\le \tfrac T2\norm{(\eps^2\nu^2\Delta^2+\Id)^{-1/2}(M[\theta]\cdot \nabla)\abs{\nu\Delta}_{T^{-1}}^{-1/2}(\eps^2\nu^2\Delta^2+\Id)^{-1/2}}_{HS}^2\\
&\le T\norm{M[(\eps^2 \nu^2\abs{u}^4+1)^{-\frac12}]C[{\cal F}\theta]\cdot M[(-iu)(\nu^{-\frac12}\abs{u}^{-1}\wedge T^{-1/2})(\eps^2\nu^2\abs{u}^4+1)^{-\frac12}]}_{HS}^2,
\end{align*}
where we write $A\cdot B:=\sum_{i=1}^dA_iB_i$ for operators $A_i,B_i$, $1\le i\le d$,
thus
\begin{align*}
&H^2({\cal N}_{cyl}(0,Q_{\bf 0}),{\cal N}_{cyl}(0,Q_{\bf 1}))\\
&\le \frac T{\nu} \int_{\R^d}\int_{\R^d} (\eps^2\nu^2 \abs{u}^4+1)^{-1} \abs{{\cal F}\theta(u-v)}^2(\eps^2\nu^2\abs{v}^4+1)^{-1}dudv\\
&=\frac T{\nu}\int_{\R^d} \abs{{\cal F}\theta(r)}^2 \Big(\int_{\R^d} (\eps^2 \nu^2\abs{u}^4+1)^{-1}(\eps^2\nu^2\abs{u-r}^4+1)^{-1}du\Big)\,dr\\
&=\frac T{\nu}(\nu\eps)^{-d/2}\int_{\R^d} \abs{{\cal F}\theta(r)}^2 \Big(\int_{\R^d} (\abs{w}^4+1)^{-1} (\abs{w-\eps^{1/2}\nu^{1/2}r}^4+1)^{-1}dw\Big)\,dr \\
&\lesssim\frac T{\nu}(\nu\eps)^{-d/2}\int_{\R^d} \abs{{\cal F}\theta(r)}^2 \Big(\int_{0}^\infty (z^{d-5}\wedge z^{d-1}) (\abs{z-\eps^{1/2}\nu^{1/2}\abs{r}}^{-4}\wedge 1)dz\Big)\,dr.
\end{align*}
As in the derivation of \eqref{EqHellNonpar},
using Lemma \ref{lem:aux:integrals}(b) below,
the last line can be bounded in order by
\[  T\eps^{-d/2}\nu^{-(d+2)/2}\int_{\R^d} \abs{{\cal F}\theta(r)}^2 (1+\eps^{1/2}\nu^{1/2}\abs{r})^{d-5}dr,
\]
provided $d\le 7$. We take $\theta(x)=h^\alpha K(x/h)$
for some bandwidth $h\in(0,1)$ and a smooth divergence-free vector field $K=(K_1,\ldots,K_d):\R^d\to\R^d$ with $K(0)\not=0$, $\norm{K}_{C^\alpha}\le R$, $\int_{\R^d}K_i(x)\,dx=0$ and $\int_{\R^d}K_i(x)x\,dx=0$, $i=1,\ldots,d$, such that $K_i$, $xK_i$, $\abs{x}^2K_i$ and all derivatives of $K_i$ up to order $d$ are in $L^1(\R^d)$ implying $\abs{{\cal F}K(u)}\lesssim \abs{u}^2\wedge \abs{u}^{-d}$.
A concrete construction is given by $K(x)=A\nabla\phi(x)$ for an invertible antisymmetric matrix $A\in\R^{d\times d}$ and $\phi\in C^\infty(\R^d)$ with compact support, $\nabla\varphi(0)\neq 0$ and $\int_{-\infty}^\infty \phi(x_1,\ldots,x_d)dx_j=0$ for all $j=1,\ldots,d$ and $x\in\R^d$. Then $K$ is divergence-free and ${\cal F}K(u)={\cal F}\phi(u)A(-iu)$ has the desired properties.
Following the proof of Theorem \ref{ThmNonpar1} and with $\norm{A\nabla\varphi}_{C^\alpha}$ sufficiently small, we deduce by Lemma \ref{lem:aux:integrals}(a) below
\begin{align*}
&H^2({\cal N}_{cyl}(0,Q_{\bf 0}),{\cal N}_{cyl}(0,Q_{\bf 1}))\\
 &\lesssim T\eps^{-d/2}\nu^{-(d+2)/2}h^{2\alpha+d}\int_{\R^d} (\abs{u}^4\wedge \abs{u}^{-2d}) (1+\eps^{1/2}\nu^{1/2}h^{-1}\abs{u})^{d-5} du\\
 &\lesssim T\eps^{-d/2}\nu^{-(d+2)/2}h^{2\alpha+d}\int_0^\infty (z^{d+3}\wedge z^{-d-1}) (1+\eps^{1/2}\nu^{1/2}h^{-1}z)^{d-5} dz\\
&\lesssim \begin{cases} T\eps^{-d/2}\nu^{-(d+2)/2}h^{2\alpha+d}&\text{, if }h\ge\eps^{1/2}\nu^{1/2},\\ T\eps^{-5/2}\nu^{-7/2}h^{2\alpha+5}&\text{, if }h\le\eps^{1/2}\nu^{1/2}. \end{cases}
\end{align*}
We choose $h=(T^{-1}\eps^{d/2}\nu^{(d+2)/2})^{1/(2\alpha+d)}$ in case $T\le\eps^{-\alpha}\nu^{1-\alpha}$ and $h=(T^{-1}\eps^{5/2}\nu^{7/2})^{1/(2\alpha+5)}$ otherwise. Then we can apply  Theorem \ref{ThmLB} with Euclidean norm $\abs{\theta(0)}\thicksim h^\alpha$, noting
${\bf 0}, \theta\in\Theta_{tra}(\alpha, R)$ for large $n$
by construction.
\end{proof}

\subsection{Space-dependent source}

For each $n\in\N$ consider observations $(dY_t,t\in[0,T_n])$ given by
\begin{equation}\label{EqExNonpar3}
dY_t= X_tdt+\eps_n dV_t\text{ with } dX_t=(\nu_n\Delta X_t-M[\theta]X_t)\,dt+ dW_t
\end{equation}
with the multiplication operator $M[\cdot]$
and $X_0=0$,  $T_n\ge 1$, $\eps_n\in[0,1]$, and $\nu_n\in(0,1]$, where $\Delta$ is the Laplacian on ${\cal H}^2(\R^d)\subset L^2(\R^d)=H$ and $\theta(\cdot)$ belongs to the class
\begin{align*}
	\Theta_{sou}(\alpha,R):=\Big\{\theta\in C^\alpha(\R^d)\,\Big|\, \theta\ge 0,\, \norm{\theta}_{C^\alpha}\le R\Big\},\; \alpha>0,\, R>1.
\end{align*}
We have $A_\theta=R_\theta=\nu_n\Delta-M[\theta]$   and $\dom(A_\theta)={\cal H}^2(\R^d)$ for $\theta\in\Theta_{sou}(\alpha,\R)$.
To apply Theorem \ref{ThmMain}, we take $A_{\bf 0}=\nu_n\Delta-\Id$  and the alternative $A_{\bf 1}=\nu_n\Delta-M[\theta]$ with $\theta(x)=1+h^\alpha K(x/h)$, $h\in(0,1)$, and $K\in C^{1\vee\alpha}(\R^d)$ of compact support.
Then $A_{\bf 0}$ and $A_{\bf 1}$ are selfadjoint negative operators. We have $\bar B_{\bf 0}=\bar B_{\bf 1}=B=\Id$. First, we establish an operator monotonicity result for this case in analogy to Proposition \ref{PropOrderDeltatheta}.

\begin{proposition}\label{PropOrderDeltaplustheta}
Consider $A_{\bf 0}=\nu\Delta-\Id$, $A_{\bf 1}=\nu\Delta-M[\theta]$ with $\nu>0$, $\theta(x)=1+L(x/h)$ for $h\in(0,1)$, $L\in C^1(\R^d)$ and $\supp L\subset[-1/2,1/2]^d$, $\norm{L}_\infty\le 1/2$.
\begin{enumerate}
\item  We have
\begin{equation}\label{EqDeltaplusthetaorder}
(\eps^2 A_{\bf 1}^2+\Id)^{-1}\preccurlyeq 2 (\eps^2 A_{\bf 0}^2+\Id)^{-1}.
\end{equation}

\item If $\norm{L}_\infty+d^{1/2}\nu^{1/4} h^{-1/2}\norm{\nabla L}_\infty\le \frac1{4\sqrt 2}\eps^{-1}$, then
    we have
\begin{equation}\label{EqDeltaplusthetaorder2}
\abs{A_{\bf 1}}_{T^{-1}}^{-1}(\eps^2A_{\bf 1}^2+\Id)^{-1}\preccurlyeq 16 \abs{A_{\bf 1}}_{T^{-1}}^{-1}(\eps^2A_{\bf 0}^2+\Id)^{-1}.
\end{equation}
\end{enumerate}
\end{proposition}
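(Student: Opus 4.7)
I would write $A_{\bf 1}=A_{\bf 0}-M[L_h]$ with $L_h(x):=L(x/h)$, so that the two operators differ only by the bounded selfadjoint multiplication operator $M[L_h]$ with $\norm{M[L_h]}\le\norm{L}_\infty\le\tfrac12$. For selfadjoint $P,Q$ the elementary bound $(P-Q)^2\succcurlyeq \tfrac12 P^2-Q^2$ (which is just $(\tfrac{1}{\sqrt 2}P-\sqrt 2 Q)^2\succcurlyeq 0$), applied with $P=A_{\bf 0}$ and $Q=M[L_h]$, yields $A_{\bf 1}^2\succcurlyeq\tfrac12 A_{\bf 0}^2-M[L_h]^2\succcurlyeq\tfrac12 A_{\bf 0}^2-\tfrac14\Id$. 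Multiplying by $\eps^2\le 1$ and adding $\Id$ gives
\[
\eps^2 A_{\bf 1}^2+\Id\succcurlyeq \tfrac12\eps^2 A_{\bf 0}^2+\tfrac34\Id\succcurlyeq\tfrac12(\eps^2 A_{\bf 0}^2+\Id),
\]
and inverting both sides (order-reversing on positive operators) produces \eqref{EqDeltaplusthetaorder}.

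\textbf{Part (b).} My natural target is the reverse operator inequality
\[
\abs{A_{\bf 1}}_{T^{-1}}(\eps^2 A_{\bf 1}^2+\Id)\succcurlyeq \tfrac1{16}\abs{A_{\bf 0}}_{T^{-1}}(\eps^2 A_{\bf 0}^2+\Id),
\]
whose inverse is \eqref{EqDeltaplusthetaorder2}, because on each side both factors are commuting positive functions of a single selfadjoint operator. Since $\abs{A_{\bf 0}}=-\nu\Delta+\Id\succcurlyeq\Id\succcurlyeq T^{-1}\Id$ for $T\ge 1$, we have $\abs{A_{\bf 0}}_{T^{-1}}=\abs{A_{\bf 0}}$. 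From $\abs{A_{\bf 1}}=\abs{A_{\bf 0}}+M[L_h]$ together with $\norm{L_h}_\infty\le\tfrac12$ I obtain $\abs{A_{\bf 1}}_{T^{-1}}\succcurlyeq\abs{A_{\bf 1}}\succcurlyeq\tfrac12\abs{A_{\bf 0}}$, while functional calculus (using $(\lambda\vee T^{-1})\lambda^2\ge\lambda^3$ for $\lambda\ge 0$) gives $\abs{A_{\bf 1}}_{T^{-1}}A_{\bf 1}^2\succcurlyeq\abs{A_{\bf 1}}^3$. The problem thus reduces to the cube estimate
\[
\eps^2\abs{A_{\bf 1}}^3+\abs{A_{\bf 1}}\succcurlyeq\tfrac1{16}\bigl(\eps^2\abs{A_{\bf 0}}^3+\abs{A_{\bf 0}}\bigr),
\]
where the lower-order term is already handled by $\abs{A_{\bf 1}}\succcurlyeq\tfrac12\abs{A_{\bf 0}}$, leaving a slack of $\tfrac7{16}\abs{A_{\bf 0}}\succcurlyeq\tfrac7{16}\Id$ to absorb remainders from the cubic part.

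For the cubic bound I compute, on a suitable dense core, the integration-by-parts identity
\[
\scapro{\abs{A_{\bf 1}}^3 u}{u}=\nu\norm{\nabla(A_{\bf 1}u)}^2+\int\theta(A_{\bf 1}u)^2,
\]
then substitute $A_{\bf 1}u=A_{\bf 0}u-L_h u$ and $\nabla(L_h u)=h^{-1}(\nabla L)(\cdot/h)u+L_h\nabla u$, and apply the elementary scalar inequalities $(a-b)^2\ge\tfrac12 a^2-b^2$ and $(a-b-c)^2\ge\tfrac12 a^2-2b^2-2c^2$ together with $\theta\ge\tfrac12$ and $\norm{L_h}_\infty\le\tfrac12$. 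This produces a principal term equal to a positive constant multiple of $\nu\norm{\nabla A_{\bf 0}u}^2+\norm{A_{\bf 0}u}^2=\scapro{\abs{A_{\bf 0}}^3 u}{u}$, plus two remainders of order $\nu h^{-2}\norm{\nabla L}_\infty^2\norm{u}^2$ and $\nu\norm{\nabla u}^2$. The second is absorbed through $\norm{A_{\bf 0}u}^2=\nu^2\norm{\Delta u}^2+2\nu\norm{\nabla u}^2+\norm{u}^2$, which yields $\nu\norm{\nabla u}^2\le\tfrac12\norm{A_{\bf 0}u}^2$; the first, multiplied by $\eps^2$, is absorbed into the slack $\tfrac7{16}\Id$ precisely by the quantitative smoothness hypothesis $\eps\, d^{1/2}\nu^{1/4}h^{-1/2}\norm{\nabla L}_\infty\le\tfrac1{4\sqrt 2}$.

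\textbf{Main obstacle.} The cube $x\mapsto x^3$ is not operator monotone on $(0,\infty)$, so the pointwise bound $\abs{A_{\bf 1}}\succcurlyeq\tfrac12\abs{A_{\bf 0}}$ cannot be transferred to any inequality between $\abs{A_{\bf 1}}^3$ and $\abs{A_{\bf 0}}^3$ by spectral calculus alone. One is forced into the direct quadratic-form expansion above, which generates a commutator-type contribution through $\nabla L_h$, whose supremum norm $h^{-1}\norm{\nabla L}_\infty$ is exactly what the smoothness assumption on $L$ is tailored to control. The overall strategy parallels Proposition \ref{PropOrderDeltatheta}, but is considerably shorter here because $A_{\bf 1}-A_{\bf 0}=-M[L_h]$ is a perturbation of order zero rather than order two, so only first derivatives of $L$ enter and no analogue of the higher-order norms $\norm{\nabla^2 L}_\infty$ or $\norm{\nabla\Delta L}_\infty$ is needed.
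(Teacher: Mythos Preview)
Your part (a) is correct and is the operator-form version of the paper's norm argument; the two are equivalent.

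In part (b) your overall architecture matches the paper's: expand $\scapro{\abs{A_{\bf 1}}^3u}{u}=\nu\norm{\nabla A_{\bf 1}u}^2+\scapro{\theta A_{\bf 1}u}{A_{\bf 1}u}$, insert $A_{\bf 1}u=A_{\bf 0}u-L_hu$, and absorb the commutator-type remainders. The gap is in how you size the gradient remainder. From $\nabla(L_hu)=h^{-1}(\nabla L)(\cdot/h)\,u+L_h\nabla u$ you use the naive pointwise bound and obtain a remainder of order $\nu h^{-2}\norm{\nabla L}_\infty^2\norm{u}^2$. After multiplying by $\eps^2$ this is $\eps^2\nu h^{-2}\norm{\nabla L}_\infty^2$, whereas the hypothesis only gives $\eps^2 d\,\nu^{1/2}h^{-1}\norm{\nabla L}_\infty^2\le\tfrac1{32}$. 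The ratio of these two quantities is $\nu^{1/2}h^{-1}/d$, which is not controlled (take $\nu\sim 1$, $h\downarrow 0$). So the absorption you claim ``precisely by the quantitative smoothness hypothesis'' does not go through.

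What the paper does differently is exploit the compact support of $L$ via the interpolation inequality $\norm{(\nabla\theta)u}\le (2dh)^{1/2}\norm{\nabla\theta}_\infty\norm{u}^{1/2}\norm{\nabla u}^{1/2}$ of Lemma~\ref{lem:CompactProductInterpolation}; this gains an extra factor $h^{1/2}$ and replaces one $\norm{u}$ by $\norm{u}^{1/2}\norm{\nabla u}^{1/2}$. After a Young-type split with parameter $\eta^2=d\nu^{1/2}h^{-1}\norm{\nabla L}_\infty^2$, the remainders become $(\norm{L}_\infty^2+d\nu^{1/2}h^{-1}\norm{\nabla L}_\infty^2)$ times $\scapro{(-A_{\bf 0})u}{u}$, which is exactly the combination appearing in the stated hypothesis. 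The peculiar exponents $\nu^{1/4}h^{-1/2}$ in the assumption are the fingerprint of this interpolation step; your $\nu h^{-2}$ remainder would instead require the stronger (and for the subsequent nonparametric rates useless) condition $\nu^{1/2}h^{-1}\norm{\nabla L}_\infty\lesssim\eps^{-1}$.
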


\begin{proof}
	See Section \ref{SecProofNonpar}.
\end{proof}

\begin{theorem}\label{ThmNonpar3}
In terms of the parametric rate $v_n^{par}$ from \eqref{EqParRateSource} with $\beta=0$ define the rate
\[v_n:= \begin{cases} (v_n^{par})^{2\alpha/(2\alpha+d)}, & \text{ if } v_n^{par}\ge (\nu_n\eps_n)^{(2\alpha+d)/4},\\
(\nu_n\eps_n v_n^{par})^{2\alpha/(2\alpha+d+4)}, & \text{ if } d\le 2,\,v_n^{par}\le (\nu_n\eps_n)^{(2\alpha+d)/4},\\
 ((\nu_n\eps_n)^{(7-d)/4}v_n^{par})^{2\alpha/(2\alpha+7)}, & \text{ if } d\ge 3, v_n^{par}\le (\nu_n\eps_n)^{(2\alpha+d)/4}. \end{cases}
\]
If $1\le d\le 9$ and $v_n\to 0$
hold, then there is a constant $c>0$ such that
\[ \liminf_{n\to\infty}\inf_{\hat\theta_n}\sup_{\theta\in \Theta_{sou}(\alpha,R)}\PP_\theta\big(v_n^{-1}\abs{\hat\theta_n(0)-\theta(0)}\ge c\big)>0,
\]
where the infimum is taken over all estimators $\hat\theta_n$ of $\theta$  based on \eqref{EqExNonpar3}, provided the following condition on $\alpha$ is satisfied:
in case $v_n^{par}\ge (\nu_n\eps_n)^{(2\alpha+d)/4}$
\[
\alpha > \limsup_{n\to\infty}
\begin{cases}
	\frac{\log(T_n)-2d\log(\eps_n^{-1})}{2\log(T_n)+4\log(\eps_n^{-1})+(d+1)\log(\nu_n^{-1})},&\text{ if } d=1,\\
	\frac{\log(T_n)+\log(\log(\eps_n^{-1}))-2d\log(\eps_n^{-1})} {2\log(T_n)+2\log(\log(\eps_n^{-1}))+4\log(\eps_n^{-1})+(d+1)\log(\nu_n^{-1})},&\text{ if } d=2,\\
	\frac{\log(T_n)-(1.5d+1)\log(\eps_n^{-1})}{2\log(T_n)+(d+2)\log(\eps_n^{-1})+(d+1)\log(\nu_n^{-1})},&\text{ if } d\ge 3,
\end{cases}
\]
and in case $v_n^{par}\le (\nu_n\eps_n)^{(2\alpha+d)/4}$
\[
\alpha > \limsup_{n\to\infty}
\begin{cases}
	\frac{\log(T_n)-(2d+6)\log(\eps_n^{-1})}{2\log(T_n)+8\log(\eps_n^{-1})+(d+5)\log(\nu_n^{-1})},&\text{ if } d=1,\\
	\frac{\log(T_n)+\log(\log(\eps_n^{-1}))-(2d+6)\log(\eps_n^{-1})} {2\log(T_n)+2\log(\log(\eps_n^{-1}))+8\log(\eps_n^{-1})+(d+5)\log(\nu_n^{-1})},&\text{ if } d=2,\\
	\frac{\log(T_n)-11.5\log(\eps_n^{-1})}{2\log(T_n)+9\log(\eps_n^{-1})+8\log(\nu_n^{-1})},&\text{ if } d\ge 3.
\end{cases}
\]
\end{theorem}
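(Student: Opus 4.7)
The approach parallels Theorems \ref{ThmNonpar1} and \ref{ThmNonpar2}, specialised to the source-term problem. As competing parameters I take $\theta_0\equiv 1$ and $\theta_1(x)=1+h_n^\alpha K(x/h_n)$ for a bandwidth $h_n\in(0,1)$ and a smooth, compactly supported bump $K$ with $K(0)\neq 0$, $\int K=0$, $\int xK(x)\,dx=0$ and $\|K\|_{C^\alpha}\le R-1$, so that $|\mathcal{F}K(u)|\lesssim|u|^2\wedge|u|^{-d}$. Both $A_{\bf 0}=\nu_n\Delta-\Id$ and $A_{\bf 1}=A_{\bf 0}-M[\delta]$ with $\delta:=\theta_1-1$ are selfadjoint and negative, so Theorem \ref{ThmMain} in its $R\preccurlyeq 0$ version applies with $\bar B_\theta=B=\Id$. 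Since $|A_{\bf 0}|\succcurlyeq\Id$ and $T_n\ge 1$ yield $|A_{\bf 0}|_{T_n^{-1}}=|A_{\bf 0}|$, and $|A_{\bf 1}|\succcurlyeq\tfrac12|A_{\bf 0}|$ permits controlling the middle factor $|A_{\bf 1}|_{T_n^{-1}}^{-1/2}$ through Lemma \ref{LemTraceBound}(a), Proposition \ref{PropOrderDeltaplustheta}(b) then replaces the $A_{\bf 1}$-dependent spectral factors by $A_{\bf 0}$-ones, so up to absolute constants
\begin{equation*}
H^2(\mathcal N_{cyl}(0,Q_{\bf 0}),\mathcal N_{cyl}(0,Q_{\bf 1}))\lesssim T_n\bigl\|(\eps_n^2A_{\bf 0}^2+\Id)^{-1/2}M[\delta]\,|A_{\bf 0}|^{-1/2}(\eps_n^2A_{\bf 0}^2+\Id)^{-1/2}\bigr\|_{HS}^2.
\end{equation*}

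Passing to the Fourier domain diagonalises $A_{\bf 0}$ as multiplication by $-(\nu_n|u|^2+1)$, while $M[\delta]$ becomes convolution by $\mathcal F\delta$. Evaluating the Hilbert--Schmidt norm as the $L^2$-norm of its integral kernel produces
\begin{equation*}
H^2\lesssim T_n\int_{\R^d}\!\int_{\R^d}\frac{|\mathcal F\delta(u-v)|^2\,du\,dv}{(\eps_n^2(\nu_n|u|^2+1)^2+1)\,(\nu_n|v|^2+1)\,(\eps_n^2(\nu_n|v|^2+1)^2+1)}.
\end{equation*}
Substituting $r=u-v$ and rescaling by $\nu_n^{1/2}$, the inner integral in one frequency variable produces a weight in $|r|$ whose behaviour changes across the two scales $\nu_n^{-1/2}$ (coming from the $(\nu_n|v|^2+1)^{-1}$ factor) and $(\nu_n\eps_n)^{-1/2}$ (the noise cut-off). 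Inserting $\delta=h_n^\alpha K(\cdot/h_n)$ and carrying out the outer integral as in the proof of Theorem \ref{ThmNonpar1}, a case distinction on whether $h_n$ lies above or below $(\nu_n\eps_n)^{1/2}$, combined with a dimensional case analysis ($d\le 2$ versus $d\ge 3$, with a logarithmic correction in $d=2$ reflecting the borderline integrability already visible in the parametric Proposition \ref{PropParametricSource}), produces the three regimes stated in the theorem. Choosing $h_n$ so that $H^2\le 1$ and invoking Theorem \ref{ThmLB} at separation $|\theta_1(0)-\theta_0(0)|\sim h_n^\alpha$ then yields the stated rate $v_n$.

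The main obstacle is verifying the hypothesis of Proposition \ref{PropOrderDeltaplustheta}(b), namely $\|L\|_\infty+d^{1/2}\nu_n^{1/4}h_n^{-1/2}\|\nabla L\|_\infty\le\tfrac1{4\sqrt 2}\eps_n^{-1}$, which for $L=h_n^\alpha K$ reduces to $\nu_n^{1/4}h_n^{\alpha-1/2}\lesssim\eps_n^{-1}$ together with $h_n^\alpha$ staying within the class constraint $\theta_1\in\Theta_{sou}(\alpha,R)$. Substituting the explicit bandwidth $h_n$ from each of the three regimes and separating the dimensional cases $d=1$, $d=2$ (which picks up the log factor from the parametric rate) and $d\ge 3$ converts this smallness condition into a quantitative inequality between $T_n$, $\eps_n$ and $\nu_n$ in which $\alpha$ appears algebraically; solving that inequality for $\alpha$ yields exactly the six asymptotic conditions listed in the theorem. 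Finally, the dimensional cut-off $d\le 9$ mirrors the parametric constraint $d<10$ of Proposition \ref{PropParametricSource} and ensures convergence of the spectral integrals.
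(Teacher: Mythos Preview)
Your proposal is correct and follows essentially the same route as the paper: the same two-point alternative $\theta_0\equiv 1$, $\theta_1=1+h_n^\alpha K(\cdot/h_n)$, the same reduction via Theorem~\ref{ThmMain} combined with Proposition~\ref{PropOrderDeltaplustheta} to replace $A_{\bf 1}$-factors by $A_{\bf 0}$-factors, the same Fourier-side Hilbert--Schmidt computation with the dimensional split $d\le 2$ versus $3\le d\le 9$, and the same translation of the hypothesis of Proposition~\ref{PropOrderDeltaplustheta}(b) into the stated $\alpha$-conditions via Lemma~\ref{lem:aux:alpha}. The only cosmetic difference is that the paper rescales the inner frequency variable by $(\nu_n\eps_n)^{1/2}$ rather than $\nu_n^{1/2}$, and then invokes Lemma~\ref{lem:aux:integrals} explicitly for the resulting one-dimensional integrals; your description of ``two scales $\nu_n^{-1/2}$ and $(\nu_n\eps_n)^{-1/2}$'' captures the same structure.
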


\begin{remark} \
\begin{enumerate}
	\item Writing $v_n^{par}=T_n^{-1/2}\nu_n^{d/4}\eps_n^{(d-2)_+/4}$ (neglecting log terms), we can write the rate as
\[
	v_n =
	\begin{cases}
		(T_n^{-1/2}\nu_n^{d/4}\eps_n^{(d-2)_+/4})^{2\alpha/(2\alpha+d)}, & \text{ if } v_n^{par}\ge (\nu_n\eps_n)^{(2\alpha+d)/4},\\
		(T_n^{-1/2}\nu_n^{(d+4)/4}\eps_n)^{2\alpha/(2\alpha+d+4)}, & \text{ if } d\le 2,\,v_n^{par}\le (\nu_n\eps_n)^{(2\alpha+d)/4},\\
		(T_n^{-1/2}\nu_n^{7/4}\eps_n^{5/4})^{2\alpha/(2\alpha+7)}, & \text{ if } d\ge 3, v_n^{par}\le (\nu_n\eps_n)^{(2\alpha+d)/4}.
 	\end{cases}
\]
The first case gives the classical nonparametric analogue of the parametric rate, which always applies for fixed observation time $T_n$. On the other hand, for $T_n\to\infty$ and $\nu_n,\eps_n$ fixed, the two other cases apply and for $d\le 2$ the rate necessarily slows down to $T_n^{-\alpha/(2\alpha+d+4)}$ instead of $T_n^{-\alpha/(2\alpha+d)}$.
	\item The technical condition that $\alpha$ is not too small is always satisfied if $\alpha\ge 1/2$ or if $T_n\lesssim \eps_n^{-2}$ in $d=1$ or $T_n\lesssim\eps_n^{-(11.5\wedge(1.5d+1))}$ in $d\ge 2$. In particular, it is true if $T_n$ is fixed.
\end{enumerate}
\end{remark}

\begin{proof}
We follow the road exposed in Theorem \ref{ThmNonpar1} and also drop the index $n$.
Using $A_{\bf 0}$ and $A_{\bf 1}$ as in Proposition \ref{PropOrderDeltaplustheta}, for which the conditions will be discussed below, we find from Theorem \ref{ThmMain}, writing $\delta=\theta-1$
\begin{align*}
&\tfrac{1}{4} H^2({\cal N}_{cyl}(0,Q_{\bf 0}),{\cal N}_{cyl}(0,Q_{\bf 1}))\\
&\le  T\norm{(\eps^2(\nu\Delta-\Id)^2+\Id)^{-1/2}M[\delta]\abs{\Id-\nu\Delta}_{T^{-1}}^{-1/2}
(\eps^2(\nu\Delta-\Id)^2+\Id)^{-1/2}}_{HS}^2\\
&\le T\norm{(\eps^2\nu^2\Delta^2+\Id)^{-1/2}M[\delta](\Id-\nu\Delta)^{-1/2}
(\eps^2\nu^2\Delta^2+\Id)^{-1/2}}_{HS}^2\\
&\le  T\norm{M[(\eps^2\nu^2 \abs{u}^4+1)^{-1/2}]C[{\cal F}\delta(u)] M[(\nu\abs{u}^2+1)^{-1/2}(\eps^2\nu^2\abs{u}^4+1)^{-1/2}]}_{HS}^2\\
&=  T\int_{\R^d}\int_{\R^d} (\eps^2 \nu^2\abs{u}^4+1)^{-1} \abs{{\cal F}\delta(u-v)}^2(\nu\abs{v}^2+1)^{-1}(\eps^2\nu^2\abs{v}^4+1)^{-1}dudv\\
&=  T (\nu\eps)^{-\frac d2}\int_{\R^d}\int_{\R^d} \abs{{\cal F}\delta(r)}^2
 (\abs{w+(\nu\eps)^{\frac12}r}^4+1)^{-1} (\eps^{-1}\abs{w}^2+1)^{-1} (\abs{w}^4+1)^{-1}dwdr\\
&\lesssim  T (\nu\eps)^{-\frac d2}\int_{\R^d} \abs{{\cal F}\delta(r)}^2
\Big(\int_0^\infty (\abs{z-(\nu\eps)^{\frac12}\abs{r}}^{-4}\wedge 1) ((\eps z^{-2})\wedge 1) (z^{d-5}\wedge z^{d-1})\,dz\Big)\,dr.
\end{align*}
Set $\rho:=(\nu\eps)^{1/2}\abs{r}\ge 0$. For $1\le d\le 2$ the inner integral can be estimated in order with the help of Lemma \ref{lem:aux:integrals}(b) below, inserting $a=0$, $b=d-7$, $c=-4$ therein, by
\begin{align*}
& \int_0^1 (\eps z^{d-3}\wedge z^{d-1})\,dz\max_{0\le z\le 1}\big(\abs{z-\rho}^{-4}\wedge 1\big) +\eps \int_1^\infty (\abs{z-\rho}^{-4}\wedge 1)z^{d-7}dz\\
&\lesssim \Big(\int_0^{\sqrt\eps}z^{d-1}dz+\eps\int_{\sqrt\eps}^1z^{d-3}dz+\eps\Big)(1+\rho)^{-4}\thicksim \begin{cases} \eps^{d/2}(1+\rho)^{-4},& d=1,\\ \eps^{d/2}\log(e\eps^{-1})(1+\rho)^{-4},& d=2.\end{cases}
\end{align*}
For $3\le d\le 9$ the inner integral can be estimated in order by
\begin{align*}
& \eps \int_0^\infty (\abs{z-\rho}^{-4}\wedge 1)(z^{d-7}\wedge z^{d-3})\,dz
\lesssim \eps(1+\rho)^{d-7}
\end{align*}
using Lemma \ref{lem:aux:integrals}(b) below.
For $d\ge 10$ the inner integral is infinite.

Now let $\delta(x)=h^\alpha K(x/h)$ for $h\in(0,1)$ and $K\in C^\infty(\R^d)$ with support in $[-1/2,1/2]^d$, $\norm{K}_\infty\le 1/2$, $\norm{K}_\alpha\le R-1$, and $K(0)\neq 0$. Then $K$, $xK$ and all derivatives of $K$ are in $L^1(\R^d)$. Assume $\int_{\R^d}x^mK(x)dx=0$ for $m\in\{0,1\}$ so that
 $\abs{{\cal F}K(u)}\lesssim \abs{u}^2\wedge \abs{u}^{-6}$.
For $d\in\{1,2\}$ the Hellinger bound becomes in terms of the parametric rate $v_n^{par}$ from Proposition \ref{PropParametricSource}, using Lemma \ref{lem:aux:integrals}(a) below,
\begin{align*}
&(v_n^{par})^{-2}\int_{\R^d} \abs{{\cal F}\delta(r)}^2 (1+(\nu\eps)^{1/2}\abs{r})^{-4}dr\\
&\lesssim (v_n^{par})^{-2}h^{2\alpha+d}\int_0^\infty (z^{d+3}\wedge z^{d-13}) (1+(\nu\eps)^{1/2}h^{-1}z)^{-4}dz\\
&\thicksim (v_n^{par})^{-2}h^{2\alpha+d}(1+(\nu\eps)^{1/2}h^{-1})^{-4}.
\end{align*}
Hence,  for $h\ge (\nu\eps)^{1/2}$ we choose $h\thicksim (v_n^{par})^{2/(2\alpha+d)}$, while for $h\le (\nu\eps)^{1/2}$ we choose $h\thicksim (\nu\eps v_n^{par})^{2/(2\alpha+d+4)}$.
For $3\le d\le 9$ the Hellinger bound becomes
\begin{align*}
&(v_n^{par})^{-2}\int_{\R^d} \abs{{\cal F}\delta(r)}^2 (1+(\nu\eps)^{1/2}\abs{r})^{d-7}dr\\
&\lesssim (v_n^{par})^{-2}h^{2\alpha+d}\int_0^\infty (z^{d+3}\wedge z^{d-13}) (1+(\nu\eps)^{1/2}h^{-1}z)^{d-7}dz\\
&\thicksim (v_n^{par})^{-2}h^{2\alpha+d}(1+(\nu\eps)^{1/2}h^{-1})^{d-7}.
\end{align*}
For $h\ge(\nu\eps)^{1/2}$ we choose $h\thicksim (v_n^{par})^{2/(2\alpha+d)}$, while for $h\le (\nu\eps)^{1/2}$ we choose $h\thicksim ((\nu\eps)^{(7-d)/4}v_n^{par})^{2/(2\alpha+7)}$.
Writing the condition $h\le (\nu\eps)^{1/2}$ in terms of $v_n^{par}$, we obtain the asserted lower bound rate $v_n\thicksim \abs{\theta(0)-1}\thicksim h^\alpha$ via Theorem \ref{ThmLB}, using ${\bf 1}, \theta\in\Theta_{sou}(\alpha, R)$ for large $n$.

It remains to verify the condition for Proposition \ref{PropOrderDeltaplustheta} with $L=h^\alpha K$, that is $h^{\alpha-1/2}\lesssim \nu^{-1/4}\eps^{-1}$ with a sufficiently small constant.
In case $h\ge(\nu\eps)^{1/2}$ this reduces to $1\lesssim T^{2\alpha-1}\nu^{-(d+1)\alpha}\eps^{-(4\alpha+2d)}$ for $d=1$ and $1\lesssim T^{2\alpha-1}\nu^{-(d+1)\alpha}\eps^{-((d+2)\alpha+1.5d+1)}$ for $d\ge 3$.
In case $h\le(\nu\eps)^{1/2}$ we have to check
$1\lesssim T^{2\alpha-1}\nu^{-(d+5)\alpha}\eps^{-(8\alpha+2d+6)}$ for $d=1$ and
$1\lesssim T^{2\alpha-1}\nu^{-8\alpha}\eps^{-(9\alpha+11.5)}$ for $d\ge 3$.
The case $d=2$ is treated analogously to $d=1$.
By Lemma \ref{lem:aux:alpha} below, all relations hold due to the condition on $\alpha$.
\end{proof}

\appendix

\section{Notation}\label{SecNotation}

Consider real numbers $a_n,b_n,a,b$. We write $a_n\lesssim b_n$ or $a_n={\cal O}(b_n)$ if $a_n\le Cb_n$ holds for a constant $C>0$, uniform over all parameters involved. In an analogous way $a_n\gtrsim b_n$ is defined and $a_n\thicksim b_n$ means $a_n\lesssim b_n$ and $b_n\lesssim a_n$. Moreover, $a_n=o(b_n)$ stands for $a_n/b_n\to 0$. We set $a\wedge b:=\min(a,b)$, $a\vee b:=\max(a,b)$, $a_+=a\vee 0$ and $\abs{a}_T^{-p}:=\abs{a}^{-p}\wedge T^p$, $\abs{a}_{T^{-1}}^{p}:=\abs{a}^p\vee T^{-p}=(\abs{a}_T^{-p})^{-1}$ for $p,T>0$ with $\abs{0}_T^{-p}:=T^p$. The latter is also applied to complex $a$.

For real random variables $X_n$ and deterministic $a_n>0$ we say that $X_n={\cal O}_{\PP_\theta}(a_n)$ holds uniformly over $\theta\in[\underline\theta_n,\bar\theta_n]$ if
\[ \lim_{R\to\infty}\limsup_{n\to\infty}\sup_{\theta\in[\underline\theta_n,\bar\theta_n]}\PP_\theta(\abs{X_n}>Ra_n)=0,\]
that is the laws of $a_n^{-1}X_n$ are uniformly tight under all $\PP_\theta$ and over all $n$.

For $v,w$ in a Hilbert space $H$ the corresponding norm and scalar product are canonically denoted by $\norm{v}$ and $\scapro vw$, respectively. Yet, when $H=\R^d$, we write instead $\abs{v}$ for the Euclidean norm. We denote by $\Id$ the identity operator on $H$ and by $\im(Q)$ the range or image of an operator $Q$. For (possibly unbounded) self-adjoint operators $A, B$ on a Hilbert space $H$  write $A\preccurlyeq B$ and $B\succcurlyeq A$ if  $\dom(B)\subseteq\dom(A)$ and $\scapro{(B-A)v}{v}\ge 0$ for all $v\in\dom(B)$.

For $T>0$ let $L^2([0,T];H)=L^2(H)$ be the Hilbert space of all Borel-measurable $f:[0,T]\to H$ with $\norm{f}_{L^2(H)}^2:=\int_0^T \norm{f(t)}^2dt<\infty$. We write $\norm{A}$ for the operator (or spectral) norm and $\norm{A}_{HS(H)}=\norm{A}_{HS}=\trace(A^\ast A)^{1/2}$ for the Hilbert-Schmidt (or Frobenius) norm of a linear operator $A:H\rightarrow H$. In analogy to real-valued Sobolev spaces we consider ${\cal H}^1([0,T];H)={\cal H}^1(H)$ with
\[\textstyle {\cal H}^1(H)=\Big\{f\in L^2(H)\,\Big|\,\exists g\in L^2(H)\,\forall t\in[0,T]:\;f(t)=f(0)+\int_0^t g(s)ds\Big\}
\]
and ${\cal H}_0^1(H)=\{f\in {\cal H}^1(H)\,|\,f(0)=0\}$, ${\cal H}_T^1(H)=\{f\in {\cal H}^1(H)\,|\,f(T)=0\}$.
Note that point evaluations at $t\in[0,T]$ are well-defined by Sobolev embedding.
Further, using the same notation, define the operator $\partial_t:{\cal H}^1(H)\rightarrow L^2(H)$ via $\partial_tf:=g$.

For $\alpha>0$ we consider the H\"older spaces $C^\alpha([0,T];\R^d)$ of $\floor{\alpha}$-times continuously differentiable functions $f:[0,T]\to\R^d$ with $L:=\sup_{t\not=s}\frac{\abs{f^{(\floor{\alpha})}(t)-f^{(\floor{\alpha})}(s)}}{\abs{t-s}^{\alpha-\floor{\alpha}}}<\infty$, where $f^{(\floor{\alpha})}$ denotes the corresponding derivative.  The H\"older norm is  $\norm{f}_{C^\alpha}=L+\max_{k=0,\ldots\floor{\alpha}}\norm{f^{(k)}}_\infty$, where for integer $\alpha$ we require instead that $f^{(\alpha-1)}$ is $L$-Lipschitz continuous.

Let $\norm{\nabla u}=(\sum_{i=1}^d\int_{\R^d}(\partial_{x_i}u)^2(x)\,dx)^{1/2}$ denote the  $L^2(\R^d;\R^d)$-norm of the gradient and $\norm{\nabla^2u}$ the  $L^2(\R^d;HS(\R^{d\times d}))$-norm of the Hessian for $u$ in a Sobolev space of sufficient regularity. Using integration by parts, we deduce $\norm{\nabla u}^2=\norm{(-\Delta)^{1/2}u}^2$, $\norm{\nabla^2u}^2 = \norm{\Delta u}^2$. Let $\norm{\nabla L}_\infty:= \sup_{x\in\R^d}\norm{\nabla L(x)}_{\R^d}$ and $\norm{\nabla^2 L}_\infty^2:=\sup_{x\in\R^d}\sum_{j,k=1}^d(\partial_{x_j}\partial_{x_k}L(x))^2$ for sufficiently smooth $L:\R^d\to\R$.
We frequently use the interpolation inequality
\begin{equation}\label{EqInterp}
\norm{(-\Delta)^{(1-\alpha)\rho_1+\alpha\rho_2}u}\le \norm{(-\Delta)^{\rho_1}u}^{1-\alpha} \norm{(-\Delta)^{\rho_2}u}^{\alpha},\quad \alpha\in[0,1],\,\rho_1,\rho_2\ge 0,
\end{equation}
whenever the right-hand side is finite. This follows for instance by applying H\"older's inequality in the Fourier representation.

The $L^2$-Fourier transform is given by extending ${\cal F}g(u):=\int_{\R^d}g(x)e^{i\scapro ux}dx$ from $g\in L^1(\R^d)$ to $L^2(\R^d)$.
Denote by $M[f]g:= fg$ the  multiplication operator with $f$, and by $C[f]g:=f*g$ the convolution operator for functions on $\R^d$, whenever these quantities are defined.

\section{Further proofs}\label{SecProofs}

\subsection{Proofs for Section \ref{SecSEE}}\label{SecProofSEE}

\begin{proof}[Proof of Proposition \ref{PropRSnorm}]
 For $f\in L^2([0,T];\dom(A_\theta))$ put $g(t):=e^{-iJ_\theta t}f(t)$ and note $\norm{g}_{L^2(H)}=\norm{f}_{L^2(H)}$. Moreover, it is convenient to  extend $g$ to all of $\R$ via $g(t):=0$ for $t\in\R\setminus[0,T]$. Then we obtain from Lemma \ref{LemCformula}
\begin{align*}
&\scapro{C_\theta R_\theta f}{R_\theta f}_{L^2(H)}\\
&=\frac12\int_0^T\int_0^T \bscapro{e^{iJ_\theta t} R_\theta\Big(\int_{\abs{t-s}}^{t+s} e^{R_\theta v}dv\Big)R_\theta e^{-iJ_\theta s} f(s)}{f(t)}\,dsdt\\
&=\frac12\int_{0}^T\int_0^T \scapro{ R_\theta(e^{R_\theta(t+ s)}-e^{R_\theta\abs{t-s}})g(s)}{g(t)}\,dsdt.
\end{align*}
Observing
\[\int_0^T\int_0^T\scapro{ (R_\theta)_- e^{R_\theta(t+ s)}g(s)}{g(t)}\,dsdt=\bnorm{\int_0^T (R_\theta)_-^{1/2}e^{R_\theta t}g(t)\,dt}^2\ge 0,\]
we have for the negative part
\begin{align*}
&\int_0^T\int_0^T \scapro{ -(R_\theta)_-(e^{R_\theta(t+ s)}-e^{R_\theta\abs{t-s}})g(s)}{g(t)}\,dsdt \\
&\le \int_0^T\int_0^T \scapro{ (R_\theta)_- e^{R_\theta\abs{t-s}}g(s)}{g(t)}\,dsdt\\
&= \int_{-\infty}^\infty\int_{-\infty}^\infty \scapro{ (R_\theta)_- e^{-(R_\theta)_-\abs{t-s}}g(s)}{g(t)}\,dsdt.
\end{align*}
Expressing the scalar product via the trace involving the tensor product of vectors, we can disentangle the integrals over $g$ and the kernel:
\begin{align*}
&\int_{-\infty}^\infty\int_{-\infty}^\infty \scapro{ (R_\theta)_-e^{-(R_\theta)_-\abs{t-s}}g(s)}{g(t)}\,dsdt\\
&=\int_{-\infty}^\infty\trace\Big( (R_\theta)_-e^{-(R_\theta)_-\abs{r}}\Re\Big(\int_{-\infty}^\infty g(t+r)\otimes \overline{g(t)}\,dt\Big)\Big)\,dr.
\end{align*}
Now  for $v\in H$ by  Cauchy-Schwarz inequality
\begin{align*}
\bscapro{\Big(\int_{-\infty}^\infty g(t+r)\otimes \overline{g(t)}\,dt\Big)v}{v}
&=\int_{-\infty}^\infty \scapro{g(t+r)}{v}\scapro{v}{g(t)}\,dt\\
&\le \int_{-\infty}^\infty \abs{\scapro{g(t)}{v}}^2dt
=\bscapro{\Big(\int_{-\infty}^\infty g(t)\otimes \overline{g(t)}\,dt\Big)v}{v}
\end{align*}
holds. We thus obtain by Lemma \ref{LemTraceBound}(a) below
\begin{align*}
& \int_{-\infty}^\infty\trace\Big( (R_\theta)_-e^{-(R_\theta)_-\abs{r}}\Re\Big(\int_{-\infty}^\infty g(t+r)\otimes \overline{g(t)} \,dt\Big)\Big)\,dr\\
&\le \int_{-\infty}^\infty\int_{-\infty}^\infty\trace\big( (R_\theta)_-e^{-(R_\theta)_-\abs{r}} g(t)\otimes \overline{g(t)}\big)\,dtdr\\
&= \int_{-\infty}^\infty\trace\Big( \Big(\int_{-\infty}^\infty(R_\theta)_-e^{-(R_\theta)_-\abs{r}}dr\Big) g(t)\otimes \overline{g(t)}\Big)\,dt\\
&= \int_{-\infty}^\infty\trace\Big( 2g(t)\otimes \overline{g(t)}\Big)\,dt=2\norm{g}_{L^2(H)}^2.
\end{align*}
For the positive part we argue directly, using $(R_\theta)_+e^{R_\theta(t+s)}=(R_\theta)_+e^{(R_\theta)_+(t+s)}$ as well as that $(R_\theta)_+$ is a bounded operator whose semigroup can be extended to negative times, and applying the Cauchy-Schwarz inequality twice:
\begin{align*}
&\int_0^T\int_0^T \scapro{ (R_\theta)_+(e^{R_\theta(t+ s)}-e^{R_\theta\abs{t-s}})g(s)}{g(t)}\,dsdt\\
&=\int_0^T\int_0^T \scapro{(\Id-e^{(R_\theta)_+(\abs{t-s}-(t+s))}) (R_\theta)_+^{1/2}e^{(R_\theta)_+ s} g(s)}{(R_\theta)_+^{1/2}e^{(R_\theta)_+ t} g(t)}\,dsdt\\
&\le\int_0^T\int_0^T \norm{\Id-e^{-2(R_\theta)_+(t\wedge s)}}\norm{(R_\theta)_+^{1/2}e^{(R_\theta)_+ s}}\norm{g(s)} \norm{(R_\theta)_+^{1/2}e^{(R_\theta)_+ t}}\norm{ g(t)}\,ds dt\\
&\le\int_0^T \norm{(R_\theta)_+^{1/2}e^{(R_\theta)_+ t}}^2dt \int_0^T\norm{g(t)}^2 dt\\
&\le \tfrac{1}{2} (e^{2\norm{(R_\theta)_+}T}-1)\norm{g}_{L^2(H)}^2,
\end{align*}
using $\norm{h((R_\theta)_+)} \le h(\norm{(R_\theta)_+})$ for non-negative increasing functions $h$.
We conclude
\begin{align*}
&\norm{S_\theta^\ast R_\theta f}_{L^2(H)}^2= \scapro{ C_\theta R_\theta f}{R_\theta f}_{L^2(H)}\\
&\le \frac12\Big(2+\tfrac{1}{2} (e^{2\norm{(R_\theta)_+}T}-1)\Big)\norm{g}_{L^2(H)}^2
 = \Big(\tfrac34+\tfrac{1}{4} e^{2\norm{(R_\theta)_+}T}\Big)\norm{f}_{L^2(H)}^2.
\end{align*}
Since $L^2([0,T];\dom(A_\theta))$ is dense in $L^2(H)$, this shows that $S_\theta^\ast R_\theta$ and its adjoint $R_\theta S_\theta$ extend to bounded linear operators on $L^2(H)$.
\end{proof}

\begin{proof}[Proof of Proposition \ref{PropSInverse}] \label{proof:PropSInverse}
We present the proof only for $S_\theta$, that for $S_\theta^\ast$ is completely analogous. We proceed stepwise.

\noindent {\it Proof of (a).}
Proposition \ref{PropRSnorm} shows that $R_\theta S_\theta$ extends to a bounded operator on $L^2(H)$, whence $S_\theta$ maps $L^2(H)$ into $L^2([0,T];\dom(R_\theta))$ and the first statement follows from the assumption $\dom(A_\theta)= \dom(R_\theta)$.

For the second property we use standard differentiability properties of Bochner integrals. For $f\in {\cal H}^1(H)$ we obtain by substitution and the chain rule
\begin{equation}\label{EqDtS}
\partial_tS_\theta f(t)=\partial_t\Big(\int_0^t e^{A_\theta u}f(t-u)\,du\Big)=e^{A_\theta t}f(0)+\int_0^t e^{A_\theta u}f'(t-u)\,du
\end{equation}
and the right-hand side is in $L^2(H)$.
This shows  $S_\theta f\in {\cal H}_0^1(H)$ in view of $S_\theta f(0)=0$ by definition.

\noindent {\it Proof of (b).} For $f\in {\cal H}^1(H)$ we have by (a) that $S_\theta f\in {\cal H}_0^1(H)\cap L^2([0,T];\dom(A_\theta))$. Thus, by formula \eqref{EqDtS} and partial integration
\begin{align*}
&(\partial_t-A_\theta)S_\theta f(t)\\
&=e^{A_\theta t}f(0)+\int_0^t e^{A_\theta u}f'(t-u)\,du-\int_0^t A_\theta e^{A_\theta u}f(t-u)\,du\\
&=e^{A_\theta t}f(0)+\int_0^t e^{A_\theta u}f'(t-u)\,du-\Big(e^{A_\theta u}f(t-u)\Big|_{u=0}^t+\int_0^t e^{A_\theta u}f'(t-u)\,du\Big)\\
&=f(t).
\end{align*}
This gives $(\partial_t-A_\theta)S_\theta=\Id$ on ${\cal H}^1(H)$. Clearly, $A_\theta$ and $S_\theta$ commute on $L^2([0,T];\dom(A_\theta))$. Moreover, for $f\in {\cal H}_0^1(H)$ we find by formula \eqref{EqDtS} with $f(0)=0$
\begin{equation}\label{EqdtScomm} \partial_tS_\theta f(t)=\int_0^t e^{Au}f'(t-u)\,du=S_\theta \partial_t f(t).\end{equation}
This finishes the proof of (b).

\noindent {\it Proof of (c).} For $f\in {\cal H}^1(H)$ we have $S_{\bf 0}f,S_{\bf 1}f\in {\cal H}_0^1(H)\cap L^2([0,T];\dom(A_\theta))$ by (a). Part (b) yields
    \begin{align*}
    	S_{\bf 0}(A_{\bf 1}-A_{\bf 0})S_{\bf 1}f&=S_{\bf 0}(\partial_t-A_{\bf 0})S_{\bf 1}f-S_{\bf 0}(\partial_t-A_{\bf 1})S_{\bf 1}f=S_{\bf 1}f-S_{\bf 0}f, \\
    	S_{\bf 1}(A_{\bf 1}-A_{\bf 0})S_{\bf 0}f&=S_{\bf 1}(\partial_t-A_{\bf 0})S_{\bf 0}f-S_{\bf 1}(\partial_t-A_{\bf 1})S_{\bf 0}f=S_{\bf 1}f-S_{\bf 0}f.
    \end{align*}
    Hence, $S_{\bf 1}-S_{\bf 0}=S_{\bf 0}(A_{\bf 1}-A_{\bf 0})S_{\bf 1}=S_{\bf 1}(A_{\bf 1}-A_{\bf 0})S_{\bf 0}$ holds on ${\cal H}^1(H)$. Since all three terms present bounded linear operators on $L^2(H)$ and ${\cal H}^1(H)$ is dense in $L^2(H)$ \cite[Eq. (1.2.3)]{amann2019}, the assertion follows by a continuity argument.
\end{proof}

\subsection{Proofs for Section \ref{SecParRates}}\label{SecUpperBoundProofs}

We prove the parametric upper bound in Theorem \ref{ThmUpperBound} by establishing several intermediate results. Throughout Assumption \ref{AssPar} is in force.

In Definition \ref{Defhattheta} we write short $Z_n=-\scapro{(\partial_t+M_n)\Lambda K^{(n)}\dot Y}{\dot Y}_{L^2}$, $N_n=\scapro{\abs{\Lambda}^2 K^{(n)} \dot Y}{\dot Y}_{L^2}$
and we shall mostly use this formal notation with scalar products in $L^2=L^2([0,T]^2;H)$ and It\^o differentials, interpreting $\dot Y_tdt=dY_t$ and taking the inner integral over the entry of the $L^2$-scalar product, in the sequel, making sure that the integrands are always adapted. By definition,  $K^{(n)}(t,s)=0$ holds for $t\le s$ and for $t=T_n$. For $s<t$ we have $\psi_{t-s,s}^{(n)}(a)=0$ for $\abs{a}\ge (t-s)^{-1}$ so that $K^{(n)}(t,s)$ introduces the projection ${\bf 1}(\abs{\bar A_n}\le (t-s)^{-1})$ onto a finite-dimensional subspace by the assumption of a compact resolvent. This shows that $K^{(n)}(t,s)$ is for all $t,s\in[0,T_n]$ a finite-rank operator. Since $\psi_{t-s,s}^{(n)}$ is weakly differentiable in $t$, we have $(\partial_t+M_n)\Lambda K^{(n)}(t,s), \abs{\Lambda}^2K^{(n)}(t,s)\in HS(H)$ for all $t,s\in[0,T_n]$. The stochastic integrals in the definition are well-defined if $(\partial_t+M_n)\Lambda K^{(n)},\abs{\Lambda}^2 K^{(n)}\in HS(L^2):=HS(L^2([0,T_n];H))$ holds (cf. \cite{DPZa2014}), which depends on the behaviour of $K^{(n)}(t,s)$ as $t-s\downarrow 0$.
In Propositions \ref{PropZ} and \ref{PropN} below, we shall establish conditions ensuring that $Z_n-\theta N_n$ and $N_n$ are well defined. Before, we work implicitly under this  assumption.

\begin{lemma}
The estimation error of $\hat\theta_n$ allows on $\{N_n\not=0\}$ the decomposition
\begin{equation*} \hat\theta_n-\theta=\frac{\scapro{\Lambda K^{(n)}\dot Y}{B_n\dot W}_{L^2}-\scapro{(\partial_t+A_{\theta}^\ast) \Lambda K^{(n)}\dot Y}{ \eps_n \dot V}_{L^2}}{ N_n}.
\end{equation*}
\end{lemma}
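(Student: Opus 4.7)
The plan is to compute the numerator $Z_n-\theta N_n$ of $\hat\theta_n-\theta$ and reduce it to stochastic integrals against the noise sources $dW$ and $dV$ via an integration-by-parts identity.

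First I would set $\tilde X(t):=\int_0^t K^{(n)}(t,s)\,dY_s$, the preaveraged observation, and record three boundary facts from the definition of $\psi_{v,s}^{(n)}$. Since $\psi_{0,s}^{(n)}(a)=0_+\wedge\dots=0$, one has $K^{(n)}(t,t)=0$ for all $t$, which by the Leibniz rule for stochastic integrals gives
\[
\partial_t\tilde X(t)=\int_0^t\partial_tK^{(n)}(t,s)\,dY_s.
\]
Direct inspection also shows $\psi_{T_n-s,s}^{(n)}(a)=(T_n-s)\wedge(|a|_{T_n-s}^{-1}-(T_n-s))_+=0$ for every $a$ (both mins/positive parts collapse), hence $K^{(n)}(T_n,s)=0$ for all $s$, so that $\tilde X(0)=\tilde X(T_n)=0$. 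Using the commutativity in Assumption \ref{AssPar}(c), the estimator rewrites compactly as
\[
Z_n=-\int_0^{T_n}\langle(\partial_t+M_n)(\Lambda\tilde X)(t),dY_t\rangle,\qquad N_n=\int_0^{T_n}\langle\Lambda^*\Lambda\,\tilde X(t),dY_t\rangle.
\]

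Next, since $A_\theta=M_n+\theta\Lambda$ with $M_n$ selfadjoint and $\theta\in\R$, one has $A_\theta^*=M_n+\theta\Lambda^*$, and linearity immediately yields
\[
Z_n-\theta N_n=-\int_0^{T_n}\langle(\partial_t+A_\theta^*)(\Lambda\tilde X)(t),dY_t\rangle.
\]
Introducing $\xi_t:=B_nX_t$, Example \ref{ex:ObsToDynamicNoise} gives $d\xi_t=A_\theta\xi_t\,dt+B_n\,dW_t$ with $\xi_0=0$ and $dY_t=\xi_t\,dt+\eps_n\,dV_t$. Splitting the $dY_t$ integral along this decomposition, the $\eps_n\,dV_t$ contribution is already the second term in the claim. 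The remaining task will then be to show
\[
-\int_0^{T_n}\langle(\partial_t+A_\theta^*)(\Lambda\tilde X)(t),\xi_t\rangle\,dt=\int_0^{T_n}\langle\Lambda\tilde X(t),B_n\,dW_t\rangle.
\]

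I would obtain this identity by applying the product rule to $t\mapsto\langle\eta_t,\xi_t\rangle$ for $\eta_t:=(\Lambda\tilde X)(t)$. Because $K^{(n)}(t,t)=0$, the process $\eta$ has no martingale part and $d\eta_t=(\partial_t\eta_t)\,dt$; combined with $\eta_0=\eta_{T_n}=0$ the boundary terms vanish, leaving
\[
0=\int_0^{T_n}\langle\partial_t\eta_t,\xi_t\rangle\,dt+\int_0^{T_n}\langle\eta_t,A_\theta\xi_t\rangle\,dt+\int_0^{T_n}\langle\eta_t,B_n\,dW_t\rangle,
\]
and moving $A_\theta$ across via its adjoint together with selfadjointness of $B_n$ yields the desired identity. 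Dividing by $N_n$ on $\{N_n\neq 0\}$ then gives the claim.

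The main technical obstacle is to rigorously justify these manipulations in the infinite-dimensional operator-valued setting: stochastic Fubini for $\tilde X$, differentiability of $t\mapsto K^{(n)}(t,s)$, and a suitable Itô/product formula for $\langle\eta_t,\xi_t\rangle$. This is made manageable by Assumption \ref{AssPar}(d): the compact resolvent of $\bar A_n$ together with the cutoff in $\psi_{v,s}^{(n)}$ confines $K^{(n)}(t,s)$ to the finite-rank spectral projection ${\bf 1}(|\bar A_n|\le(t-s)^{-1})$, so that for each $(t,s)$ everything reduces to computations with finitely many scalar Ornstein--Uhlenbeck coordinates, where the product rule and Fubini arguments are elementary.
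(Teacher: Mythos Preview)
Your proposal is correct and follows essentially the same route as the paper. Both arguments use the boundary facts $K^{(n)}(s,s)=K^{(n)}(T_n,s)=0$ and then integrate by parts in $t$; you phrase this as the It\^o product rule for $\langle\eta_t,\xi_t\rangle$ with $\eta_t=\Lambda\tilde X(t)$ of finite variation and $\xi_t=B_nX_t$, while the paper writes it as partial integration directly on the kernel $K^{(n)}(t,s)$ against $B_nX_t\,dt$ and then substitutes $B_n\,dX_t=A_\theta B_nX_t\,dt+B_n\,dW_t$ --- the computations are identical (the selfadjointness of $B_n$ you mention is not actually needed at this step; only commutativity of $B_n$ with $A_\theta$ enters, implicitly via Example~\ref{ex:ObsToDynamicNoise}).
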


\begin{proof}
By partial integration in $t$, using  $K^{(n)}(T_n,s)=K^{(n)}(s,s)=0$ for the boundary values and noting that the inner integral is adapted due to $\supp(K^{(n)}(t,\cdot))\subset[0,t]$, we have
\begin{align*}
&-\scapro{(\partial_t+A_{\theta}^\ast) \Lambda K^{(n)}\dot Y}{B_nX}_{L^2}\\
 &=-\int_0^{T_n}\int_0^{T_n} \scapro{\partial_t+A_{\theta}^\ast) \Lambda K^{(n)}(t,s)\,dY_s}{B_nX_tdt}\\
 &=\int_0^{T_n}\int_0^t \Big(\scapro{\Lambda K^{(n)}(t,s)\,dY_s}{B_ndX_t}-\scapro{\Lambda K^{(n)}(t,s)\,dY_s} {A_{\theta}B_n X_tdt}\Big)\\
&=\int_0^{T_n}\int_0^{T_n} \scapro{\Lambda K^{(n)}(t,s)dY_s}{B_ndW_t}=\scapro{\Lambda K^{(n)}\dot Y}{B_n\dot W}_{L^2},
\end{align*}
by commutativity of $A_\theta$ and $B_n$.
This gives
\begin{align}
Z_n-\theta N_n &= -\scapro{(\partial_t+A_{\theta}^\ast)\Lambda K^{(n)}\dot Y}{B_nX+\eps_n \dot V}_{L^2}\nonumber\\
&= \scapro{\Lambda K^{(n)}\dot Y}{ B_n\dot W}_{L^2}-\scapro{(\partial_t+A_{\theta}^\ast) \Lambda K^{(n)}\dot Y}{\eps_n \dot V}_{L^2}.\label{EqErrDecomp}
\end{align}
It remains to divide by $N_n$ for $N_n\not=0$.
\end{proof}

\begin{proposition}\label{PropZ}
$Z_n-\theta N_n$ is well defined and we have
\[ \E[(Z_n-\theta N_n)^2]\lesssim  T_n\trace\Big(K_n^2\abs{\Lambda}^2  \big(\eps_n^4\abs{\bar A_n}_{T_n}^{-1}+B_n^4 \abs{\bar A_n}_{T_n}^{-4} \abs{R_{\theta}}_{T_n}^{-1}\big)\Big)
\]
if the right-hand side is finite.
\end{proposition}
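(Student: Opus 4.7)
The plan is to start from the decomposition \eqref{EqErrDecomp}, which expresses $Z_n - \theta N_n = T_1 - T_2$ with $T_1 := \scapro{\Lambda K^{(n)} \dot Y}{B_n \dot W}_{L^2}$ and $T_2 := \eps_n \scapro{(\partial_t + A_\theta^\ast) \Lambda K^{(n)} \dot Y}{\dot V}_{L^2}$, and to bound $\E[T_i^2]$ for $i = 1, 2$ separately. Since $K^{(n)}(t, s) = 0$ for $s \ge t$, each inner integrand $U_i(t) := \int_0^t G_i(t, s)\, dY_s$, with $G_1 := \Lambda K^{(n)}$ and $G_2 := (\partial_t + A_\theta^\ast) \Lambda K^{(n)}$, is adapted to the joint filtration of $(W, V)$. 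Since $\dot W, \dot V$ are independent cylindrical Brownian motions, the It\^o isometry in the outer variable $t$ yields
\[
\E[T_1^2] = \int_0^{T_n} \trace\big(B_n^2 \Cov(U_1(t))\big)\, dt, \qquad \E[T_2^2] = \eps_n^2 \int_0^{T_n} \trace\big(\Cov(U_2(t))\big)\, dt.
\]

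Next I would decompose each $\Cov(U_i(t))$ using $dY_s = B_n X_s\, ds + \eps_n\, dV_s$ and the independence of $X, V$ into an $X$-part built from the covariance kernel $C_\theta(s, s') = \int_0^{s \wedge s'} e^{A_\theta(s - u)} e^{A_\theta^\ast(s' - u)}\, du$ (scaling as $B_n^2$) plus a pure $V$-part equal to $\eps_n^2 \int_0^t G_i(t, s) G_i(t, s)^\ast\, ds$. Combined with the outer $B_n^2$ or $\eps_n^2$, this produces four contributions scaling as $B_n^4$, $B_n^2 \eps_n^2$, $\eps_n^2 B_n^2$, and $\eps_n^4$, with the two cross terms absorbed into the extremes via $2 a b \le a^2 + b^2$. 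Assumption \ref{AssPar}(c) then puts $M_n, \Lambda, B_n$ (hence $A_\theta, \bar A_n, R_\theta, \bar R_n$) in a common functional calculus, so that $K^{(n)}(t, s) = K_n \psi_{t - s, s}^{(n)}(\bar A_n)$ commutes with $C_\theta(s, s')$ under the trace.

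The core step is the time integration. The spectral bounds $\abs{\psi_{v, s}^{(n)}(a)} \le v \wedge \abs{a}_{T_n - s}^{-1}$ (with support $v \in [0, \abs{a}_{T_n - s}^{-1}]$) and $\abs{\partial_v \psi_{v, s}^{(n)}(a)} \le {\bf 1}(v \le \abs{a}_{T_n - s}^{-1})$, together with $\abs{R_\theta} \preccurlyeq \abs{\bar R_n} \preccurlyeq \abs{\bar A_n}$, imply in particular $\norm{e^{A_\theta v}} \le e$ on the support of $\psi$. For the pure $V$-part of $T_1$ these give $\int_0^{T_n} \int_0^t \psi_{t - s, s}^{(n)}(\bar A_n)^2\, ds\, dt \preccurlyeq T_n \abs{\bar A_n}_{T_n}^{-3}$. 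For the $X$-part of $T_1$, Lemma \ref{LemCformula} first reduces the double integral against $C_\theta$ to a single integral of $e^{R_\theta v}$, which after integration in $u \in [0, s \wedge s']$ produces the factor $\abs{R_\theta}_{T_n}^{-1}$; integrating the triangular kernel over $s, s' \in [0, t]$ and $t \in [0, T_n]$ then contributes $T_n \abs{\bar A_n}_{T_n}^{-4}$. For $T_2$, the derivative $\partial_t + A_\theta^\ast$ acting on $\psi_{t-s,s}^{(n)}(\bar A_n)$ injects an extra spectral factor of order $\abs{\bar A_n}$, which squares under the second moment to cancel two negative powers and yield the $\eps_n^4 T_n \trace(K_n^2 \abs{\Lambda}^2 \abs{\bar A_n}_{T_n}^{-1})$ piece. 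Assembling the four contributions gives the claimed bound; well-definedness of $Z_n - \theta N_n$ then follows a posteriori from finiteness of the right-hand side.

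The hard part will be executing these time integrations cleanly in the presence of the triangular kernel $\psi$, the non-selfadjoint exponentials $e^{A_\theta v} = e^{i J_\theta v} e^{R_\theta v}$ (requiring the identity of Lemma \ref{LemCformula} to extract the real part without a commutativity loss), and the regime-dependent cutoff in $\abs{a}_{T_n - s}^{-1}$ that mixes the behaviours $\abs{\bar A_n} \gtrless T_n^{-1}$. A delicate point is also that $\psi_{v,s}^{(n)}$ depends on $s$ through the remaining time $T_n - s$, so that separating the dependencies on $s, s', t$ cannot simply be achieved by a convolution argument and must instead be controlled termwise in the spectrum of $\bar A_n$.
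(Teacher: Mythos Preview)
Your route is genuinely different from the paper's and, as written, has a gap in one of the cross terms.

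\textbf{What the paper does.} The paper does \emph{not} split via \eqref{EqErrDecomp} for the variance. It writes $Z_n-\theta N_n=-\scapro{(\partial_t+A_\theta^\ast)\Lambda K^{(n)}\dot Y}{\dot Y}_{L^2}$ as a Gaussian quadratic form and applies Lemma~\ref{LemGVar} to get $\Var\le 2\norm{Q_\theta(\partial_t+A_\theta^\ast)\Lambda K^{(n)}}_{HS(L^2)}^2$. The decisive step is the Cauchy--problem identity $S_\theta^\ast(\partial_t+A_\theta^\ast)=-\Id$ from Proposition~\ref{PropSInverse}(b): together with $Q_\theta=\eps_n^2\Id+B_n^2 S_\theta S_\theta^\ast$ this collapses the expression to $\eps_n^2(\partial_t+A_\theta^\ast)\Lambda K^{(n)}-B_n^2 S_\theta\Lambda K^{(n)}$, i.e.\ exactly \emph{two} Hilbert--Schmidt norms, not four. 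The time integrals then reduce to the elementary identities \eqref{Eqpsi}, \eqref{Eqpsi2} and the kernel bound for $S_\theta K^{(n)}$.

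\textbf{Where your plan breaks.} Your four contributions are fine for the $B_n^4$ piece ($X$-part of $T_1$), the $\eps_n^4$ piece ($V$-part of $T_2$), and the $V$-part of $T_1$ (which does absorb via $2ab\le a^2+b^2$ since $\abs{\bar A_n}_{T_n}^{-1}\le\abs{R_\theta}_{T_n}^{-1}$). The problem is the $X$-part of $T_2$. Bounding $(\partial_t+A_\theta^\ast)\psi$ by $O(1)$ on its support and then integrating against $C_\theta$ as you describe for $T_1$ produces a contribution of order
\[
\eps_n^2 B_n^2\,T_n\trace\big(K_n^2\abs{\Lambda}^2\,\abs{\bar A_n}_{T_n}^{-2}\abs{R_\theta}_{T_n}^{-1}\big).
\]
This is \emph{not} dominated by $\eps_n^4\abs{\bar A_n}_{T_n}^{-1}+B_n^4\abs{\bar A_n}_{T_n}^{-4}\abs{R_\theta}_{T_n}^{-1}$ in the regime $\abs{R_\theta}\ll\abs{\bar A_n}$: at the eigenvalue level the ratio to the target behaves like $(\abs{\bar A_n}/\abs{R_\theta})^{1/2}$, which is unbounded (this is precisely the transport case $R_\theta=\nu_n\Delta$, $J_\theta=\theta\partial_\xi$). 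So the claimed $2ab\le a^2+b^2$ absorption fails here.

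\textbf{How to fix it within your scheme.} Integrate by parts in $s$ inside $\int_u^t(\partial_t+A_\theta^\ast)\psi_{t-s,s}(\bar A_n)e^{A_\theta(s-u)}\,ds$: the $\partial_t\psi$ term, after the substitution $v=t-s$ and $\partial_t\psi=-\partial_s\psi$, combines with $A_\theta^\ast\psi$ to leave $\psi(t-u)$ plus $2R_\theta\int \psi\,e^{A_\theta(\cdot)}$. The extra factor $R_\theta$ (rather than $\bar A_n$) then cancels the $\abs{R_\theta}_{T_n}^{-1}$ from the semigroup integral and yields the harmless order $\abs{\bar A_n}_{T_n}^{-3}$, which \emph{does} absorb. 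But this integration by parts is exactly the kernel-level content of $S_\theta^\ast(\partial_t+A_\theta^\ast)=-\Id$, which is the paper's key identity --- so you would be re-deriving it rather than bypassing it.
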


\begin{proof}
The representation in \eqref{EqErrDecomp} shows that $Z_n-\theta N_n$ is the sum of stochastic integrals with respect to $dW$ and $dV$. This implies $\E[Z_n-\theta N_n]=0$, provided the corresponding variance expression is finite, which will be established next.

By Lemma \ref{LemGVar} below we obtain for the variance
\begin{align*}\
\Var(Z-\theta N)=\Var(\scapro{(\partial_t+A_{\theta}^\ast)\Lambda K^{(n)}\dot Y}{\dot Y}_{L^2})
&\le 2\norm{Q_{\theta}(\partial_t+A_{\theta}^\ast)\Lambda K^{(n)}}_{HS(L^2)}^2.
\end{align*}
By $S_{\theta}^\ast(\partial_t+A_{\theta}^\ast)=-\Id$ on $L^2([0,T_n];\dom(A_{\theta}))\cap {\cal H}^1(H)$ from Proposition \ref{PropSInverse} and commutativity we obtain
\begin{align}
\Var(Z_n-\theta N_n) &\le 2\norm{(\eps_n^2\Id+B_nS_{\theta} S_{\theta}^\ast B_n) (\partial_t+A_{\theta}^\ast) \Lambda K^{(n)}}_{HS(L^2)}^2\nonumber\\
&\le  4\eps_n^4\norm{\Lambda(\partial_t+A_{\theta}^\ast) K^{(n)}}_{HS(L^2)}^2+4\norm{\Lambda B_n^2 S_{\theta}  K^{(n)}}_{HS(L^2)}^2\label{EqVarZ}
\end{align}
Inserting $K^{(n)}(t,s)=K_n\psi^{(n)}_{t-s,s}(\bar A_n)$, the Hilbert-Schmidt norm representation via kernels according to Lemma \ref{lem:HS:kernel} below yields
\begin{align*}
\norm{\Lambda(\partial_t+A_{\theta}^\ast) K^{(n)}}_{HS(L^2)}^2
&= \int_0^{T_n}\int_s^{T_n}\norm{\Lambda K_n (\partial_t+A_{\theta}^\ast)\psi_{t-s,s}^{(n)}(\bar A_n)}_{HS(H)}^2dtds\\
 &= \int_0^{T_n}\trace\Big(K_n^2\abs{\Lambda}^2 \int_0^{T_n-s}\abs{(\partial_v+A_{\theta}^\ast)\psi_{v,s}^{(n)}(\bar A_n)}^2dv \Big)\,ds.
\end{align*}
The simple identities
\begin{equation}\label{Eqpsi}
\int_0^{T_n-s}\abs{\psi_{v,s}^{(n)}(a)}^2dv=\tfrac1{12} \abs{a}_{T_n-s}^{-3},\quad \int_0^{T_n-s}\abs{\partial_v\psi_{v,s}^{(n)}(a)}^2dv
= \abs{a}_{T_n-s}^{-1}
\end{equation}
and $\abs{A_{\theta}}^2\preccurlyeq\abs{\bar A_n}^2$ yield further, using Lemma \ref{LemTraceBound}(a) below,
\begin{align}
&\norm{\Lambda(\partial_t+A_{\theta}^\ast) K^{(n)}}_{HS(L^2)}^2\label{EqVarZ1}\\ &\lesssim
T_n\trace\Big(K_n^2\abs{\Lambda}^2  \big(\Id+\abs{A_{\theta}}^2\abs{\bar A_n}_{T_n}^{-2}\big)\abs{\bar A_n}_{T_n}^{-1}\Big)
\lesssim
T_n\trace\Big(K_n^2\abs{\Lambda}^2 \abs{\bar A_n}_{T_n}^{-1}\Big).\nonumber
\end{align}

For the second term in \eqref{EqVarZ} we use the kernel of $S_{\theta}$ from Lemma \ref{LemCovOp},
\begin{equation}\label{Eqpsi2}
\int_0^{T_n-s}\psi_{v,s}^{(n)}(a)\,dv=\tfrac14 \abs{a}_{T_n-s}^{-2}
\end{equation}
as well as (note $\abs{R_\theta}w\preccurlyeq \Id$ on the support  of $\psi_{w,s}^{(n)}(\bar A_n)$)
\begin{equation}\label{EqRbounded}
e^{-R_\theta w}\psi_{w,s}^{(n)}(\bar A_n)\preccurlyeq e^1\psi_{w,s}^{(n)}(\bar A_n),\quad w\ge  0,
\end{equation}
 to find for $s\le t$
\begin{align*}
 \abs{(S_{\theta,n}K^{(n)})(t,s)}&=\babs{\int_0^t e^{A_{\theta}(t-u)}K_n\psi_{u-s,s}^{(n)}(\bar A_n)\,du}\\
 &\precsim K_n e^{R_{\theta}(t-s)}\int_s^t e^{R_{\theta}(s-u)}\psi_{u-s,s}^{(n)}(\bar A_n)\,du
 \precsim K_n e^{R_{\theta}(t-s)}\abs{\bar A_n}_{T_n}^{-2}.
\end{align*}
We thus obtain
\begin{align*}
\norm{\Lambda B_n^2 S_{\theta}  K^{(n)}}_{HS(L^2)}^2&=\int_0^{T_n}\int_0^t \norm{\Lambda B_n^2 (S_{\theta} K^{(n)})(t,s)}_{HS(H)}^2dsdt\\
 &\le \trace\Big(K_n^2\abs{\Lambda}^2 B_n^4 \abs{\bar A_n}_{T_n}^{-4}\int_0^{T_n}\int_0^t e^{2R_{\theta}s}dsdt\Big)\\
 &\le T_n\trace\Big(K_n^2\abs{\Lambda}^2 B_n^4 \abs{\bar A_n}_{T_n}^{-4}\abs{R_{\theta}}_{T_n}^{-1}\Big).
\end{align*}
Inserting \eqref{EqVarZ1} and this bound into \eqref{EqVarZ}, we arrive at
\begin{align*}
\Var(Z_n-\theta N_n)&\lesssim  T_n\trace\Big(K_n^2\abs{\Lambda}^2  \big(\eps_n^4\abs{\bar A_n}_{T_n}^{-1}+B_n^4 \abs{\bar A_n}_{T_n}^{-4} \abs{R_{\theta}}_{T_n}^{-1}\big)\Big),
\end{align*}
which gives the claim. Arguing from bottom to top, $Z_n-\theta N_n$ is well defined if the right-hand side is finite.
\end{proof}

\begin{proposition}\label{PropN}
We have
\begin{align*}
\E[N_n] &\gtrsim  T_n\trace\Big(K_n\abs{\Lambda}^2B_n^2   \abs{\bar A_n}_{T_n}^{-2}\abs{R_{\theta}}_{T_n}^{-1} \Big),\\
\Var(N_n) & \lesssim T_n\trace\Big(K_n^2\abs{\Lambda}^4\big(\eps_n^4\abs{\bar A_n}_{T_n}^{-3} + B_n^4\abs{\bar A_n}_{T_n}^{-4}\abs{R_{\theta}}_{T_n}^{-3}\big)\Big),
\end{align*}
where $N_n$ is well defined if both expressions on the right are finite.
\end{proposition}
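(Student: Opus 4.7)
The plan is to treat the mean and variance of $N_n$ separately, closely following the pattern established in Proposition \ref{PropZ} and exploiting the common functional calculus from Assumption \ref{AssPar}(c).

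For the lower bound on $\E[N_n]$, I would first expand $dY_t = B_n X_t\,dt + \eps_n\,dV_t$ and split $N_n$ into four pieces. By the independence of $V$ and $W$ together with the adaptedness coming from $\supp K^{(n)}(t,\cdot)\subset[0,t]$, the two cross terms and the purely $V$-driven double It\^o integral have zero mean. What remains is
\[
\E[N_n] = \int_0^{T_n}\int_0^t \Re\trace\Big(|\Lambda|^2 K^{(n)}(t,s)\,B_n^2\, e^{A_\theta(t-s)}\, C_\theta^{(s)}\Big)\,ds\,dt,
\]
where $C_\theta^{(s)}=\int_0^s e^{2R_\theta u}du=\tfrac{1-e^{-2|R_\theta|s}}{2|R_\theta|}$ by normality of $A_\theta$ and $R_\theta\preccurlyeq 0$. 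Using commutativity of all factors I would pull the real part inside to get $\Re(e^{A_\theta(t-s)}) = e^{R_\theta(t-s)}\cos(J_\theta(t-s))$. On the spectral support $\{|\bar A_n|\le (t-s)^{-1}\}$ of $\psi_{t-s,s}^{(n)}(\bar A_n)$ we have $|R_\theta|(t-s)\le 1$ and $|J_\theta|(t-s)\le 1$, so $\Re(e^{A_\theta(t-s)})\succcurlyeq e^{-1}\cos(1)\,\Id$ on this support. Combining with the elementary bound $C_\theta^{(s)}\succcurlyeq \tfrac{1-e^{-1}}{2}(s\wedge |R_\theta|^{-1})$, restricting to $s\in[T_n/2,T_n]$ so that $s\wedge|R_\theta|^{-1}\gtrsim |R_\theta|_{T_n}^{-1}$, and using $\int_0^{T_n-s}\psi_{v,s}^{(n)}(\bar A_n)\,dv=\tfrac14|\bar A_n|_{T_n-s}^{-2}\gtrsim|\bar A_n|_{T_n}^{-2}$ (which holds for $T_n-s\ge T_n/2$ after a further sub-restriction on $t$, or directly by a spectral split into $\{|\bar A_n|\ge 2/T_n\}$ and its complement), yields the claimed lower bound
\[
\E[N_n]\gtrsim T_n\trace\Big(K_n|\Lambda|^2 B_n^2\,|\bar A_n|_{T_n}^{-2}|R_\theta|_{T_n}^{-1}\Big).
\]

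For the variance bound I would apply Lemma \ref{LemGVar} (as in Proposition \ref{PropZ}) to the quadratic functional $N_n=\scapro{|\Lambda|^2 K^{(n)}\dot Y}{\dot Y}_{L^2}$, giving
\[
\Var(N_n)\le 2\norm{Q_\theta |\Lambda|^2 K^{(n)}}_{HS(L^2)}^2
\le 4\eps_n^4\norm{|\Lambda|^2 K^{(n)}}_{HS(L^2)}^2 + 4\norm{B_n^2 S_\theta S_\theta^\ast |\Lambda|^2 K^{(n)}}_{HS(L^2)}^2,
\]
after splitting $Q_\theta=\eps_n^2\Id+B_n S_\theta S_\theta^\ast B_n$ and exploiting commutativity of $B_n$ with everything. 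The first summand is handled exactly as in \eqref{EqVarZ1} but now with $|\Lambda|^4$ in place of $|\Lambda|^2$ and the $\psi$-identity $\int_0^{T_n-s}|\psi_{v,s}^{(n)}(a)|^2\,dv=\tfrac1{12}|a|_{T_n-s}^{-3}$ from \eqref{Eqpsi}, giving $\lesssim T_n\trace(K_n^2|\Lambda|^4|\bar A_n|_{T_n}^{-3})$.

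The main obstacle is the second summand, where two factors of $S_\theta$ appear in front of $K^{(n)}$. I would mimic the kernel computation used for $\|\Lambda B_n^2 S_\theta K^{(n)}\|_{HS(L^2)}$ in Proposition \ref{PropZ}: apply $S_\theta$ first, using \eqref{Eqpsi2} and the elementary majorant \eqref{EqRbounded} to obtain
\[
|(S_\theta K^{(n)})(t,s)|\precsim K_n e^{R_\theta(t-s)}|\bar A_n|_{T_n}^{-2},
\]
and then convolve once more with $e^{A_\theta^\ast\cdot}$ to obtain, for the kernel of $S_\theta S_\theta^\ast$ applied to $|\Lambda|^2 K^{(n)}$ (or equivalently of $S_\theta^\ast S_\theta$ acting on the same kernel, via commutativity and a change of order of integration similar to Lemma \ref{LemCformula}),
\[
|\big(S_\theta S_\theta^\ast|\Lambda|^2 K^{(n)}\big)(t,s)| \precsim K_n |\Lambda|^2 e^{R_\theta|t-s|}\,|\bar A_n|_{T_n}^{-2}|R_\theta|_{T_n}^{-1}.
\]
Computing the Hilbert-Schmidt norm, the temporal double integral $\int_0^{T_n}\int_0^{T_n} e^{2R_\theta|t-s|}\,ds\,dt\lesssim T_n|R_\theta|_{T_n}^{-1}$ supplies the factor $T_n|R_\theta|_{T_n}^{-1}$, which together with the two squared prefactors $|\bar A_n|_{T_n}^{-4}|R_\theta|_{T_n}^{-2}$ produces the claimed contribution $T_n\trace(K_n^2|\Lambda|^4 B_n^4|\bar A_n|_{T_n}^{-4}|R_\theta|_{T_n}^{-3})$. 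Adding the two pieces gives the asserted variance bound, and the well-definedness of the stochastic double integrals defining $N_n$ follows a posteriori from the finiteness of this bound together with the already-established Hilbert-Schmidt property of $|\Lambda|^2K^{(n)}$ on $L^2(H)$.
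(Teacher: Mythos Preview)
Your strategy matches the paper's: for the mean you reduce to the $X$--$X$ covariance and exploit that $\psi_{t-s,s}^{(n)}(\bar A_n)$ carries the projection $\{|\bar A_n|(t-s)\le1\}$ so that $\Re(e^{A_\theta(t-s)})\succcurlyeq e^{-1}\cos(1)\Id$ there; for the variance you apply Lemma~\ref{LemGVar} and then bound the kernel of $C_\theta|\Lambda|^2K^{(n)}$. Two points need repair, and the paper handles both slightly differently.

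\emph{Mean.} Your restriction ``$s\in[T_n/2,T_n]$ so that $s\wedge|R_\theta|^{-1}\gtrsim|R_\theta|_{T_n}^{-1}$'' clashes with the subsequent requirement ``$T_n-s\ge T_n/2$'' needed for $|\bar A_n|_{T_n-s}^{-2}\gtrsim|\bar A_n|_{T_n}^{-2}$; these two conditions are disjoint. The paper instead restricts to $s\in[0,T_n/2]$ (so the $|\bar A_n|_{T_n-s}^{-2}$ bound is immediate) and extracts the factor $|R_\theta|_{T_n}^{-1}$ not pointwise in $s$ but from the full $s$--integral, via the elementary inequality
\[
\int_0^{t}\int_0^{2s}e^{-rv}\,dv\,ds\;\ge\;\tfrac t2\,(r^{-1}\wedge t),\qquad r,t>0,
\]
applied with $t=T_n/2$.

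\emph{Variance.} Writing $C_\theta=S_\theta S_\theta^\ast$, the object to bound is the kernel of $C_\theta K^{(n)}$. Your proposal ``apply $S_\theta$ first, then convolve with $e^{A_\theta^\ast\cdot}$'' actually produces the kernel of $S_\theta^\ast S_\theta K^{(n)}$, and the parenthetical ``equivalently $S_\theta^\ast S_\theta$ via commutativity'' is not valid: $S_\theta$ and $S_\theta^\ast$ are Volterra-type operators in the time variable and do not commute on $L^2(H)$, even though all $H$-operators commute. The paper avoids this by bounding the $C_\theta K^{(n)}$ kernel directly from the explicit formula of Lemma~\ref{LemCformula},
\[
(C_\theta K^{(n)})(t,s)=\tfrac12\int_0^{T_n}e^{iJ_\theta t}\Big(\int_{|t-u|}^{t+u}e^{R_\theta v}\,dv\Big)e^{-iJ_\theta u}K_n\psi_{u-s,s}^{(n)}(\bar A_n)\,du,
\]
and then using \eqref{EqRbounded} and \eqref{Eqpsi2} to obtain $|(C_\theta K^{(n)})(t,s)|\precsim K_n e^{R_\theta|t-s|}|R_\theta|_{T_n}^{-1}|\bar A_n|_{T_n}^{-2}$, from which your stated conclusion follows. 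Your two-step idea can be salvaged if you reverse the order (first bound the kernel of $S_\theta^\ast K^{(n)}$, then apply $S_\theta$), but the direct route via Lemma~\ref{LemCformula} is shorter.
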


\begin{proof}
By $\E[\scapro{\abs{\Lambda}^2 K^{(n)}\dot Y}{\dot V}_{L^2}]=0$  and by independence between $X$ and $V$ we have
\begin{align*}
 \E[N_n]&=\E[\scapro{\abs{\Lambda}^2 K^{(n)} \dot Y}{B_nX}_{L^2}]\\
 &=\E[\scapro{B_n\abs{\Lambda}^2 K^{(n)} B_nX}{X}_{L^2}]\\
 &=\int_0^{T_n}\int_0^t\trace\Big(K_n\abs{\Lambda}^2B_n^2\psi_{t-s,s}^{(n)}(\bar A_n)  \tfrac12 \Re\big(e^{A_{\theta} (t-s)}\big)\int_{0}^{2s} e^{R_{\theta} v}dv \Big)\,dsdt,
\end{align*}
where we used the real part of the covariance kernel for $X$ from Lemma \ref{LemCformula}, noting that $\E[N_n]$ is real-valued.
 Now we have for $s<t$ due to $\abs{A_{\theta}}\preccurlyeq\abs{\bar A_n}$ and $\min_{z\in\C,\abs{z}\le 1}\Re(e^z)\ge e^{-1}\cos(1)$
\begin{align*}
\psi_{t-s,s}^{(n)}(\bar A_n)\Re(e^{A_{\theta}(t-s)}) &=\psi_{t-s,s}^{(n)}(\bar A_n){\bf 1}(\abs{\bar A_n}\le (t-s)^{-1})\Re(e^{A_{\theta}(t-s)})\\
&\succcurlyeq e^{-1}\cos(1)\psi_{t-s,s}^{(n)}(\bar A_n).
\end{align*}
With \eqref{Eqpsi2}
and $\int_0^{t}\int_0^{2s}e^{-rv}dvds=\int_0^t r^{-1}(1-e^{-2rs})ds\ge \frac t2(r^{-1}\wedge t)$ for $r,t> 0$ this implies, again using Lemma \ref{LemTraceBound}(a),
\begin{align}
\E[N_n] &\ge \frac{\cos(1)}{2e}\int_0^{T_n}\trace\Big(K_n\abs{\Lambda}^2B_n^2    \int_0^{T_n-s}\psi_{v,s}^{(n)}(\bar A_n)\,dv\,\int_{0}^{2s} e^{R_{\theta} v}dv \Big)\,ds\nonumber\\
&= \frac{\cos(1)}{8e}\int_0^{T_n}\trace\Big(K_n\abs{\Lambda}^2B_n^2  \abs{\bar A_n}_{T_n-s}^{-2}\int_{0}^{2s} e^{R_{\theta} v}dv \Big)\,ds\nonumber\\
&\ge \frac{\cos(1)}{8e}\trace\Big(K_n\abs{\Lambda}^2B_n^2  \abs{\bar A_n}_{T_n/2}^{-2}\int_0^{T_n/2}\int_{0}^{2s} e^{R_{\theta} v}dvds \Big)\nonumber\\
&\gtrsim T_n\trace\Big(K_n\abs{\Lambda}^2B_n^2  \abs{\bar A_n}_{T_n}^{-2}\abs{R_{\theta}}_{T_n}^{-1} \Big).\label{EqNLB}
\end{align}
For the variance of $N_n$ we obtain from Lemma \ref{LemGVar} and Lemma \ref{lem:HS:kernel} below
\[ \Var(N_n) \le 2\int_0^{T_n}\int_0^{T_n}\norm{((\eps_n^2\Id+B_n^2C_{\theta})\abs{\Lambda}^2 K^{(n)})(t,s) }_{HS}^2dsdt
\]
in terms of the corresponding operator kernel. We bound the kernel of $C_{\theta}K^{(n)}$, using the kernel
 of $C_{\theta}$ from Lemma \ref{LemCformula}, \eqref{EqRbounded} and \eqref{Eqpsi2}:
\begin{align*}
 \abs{(C_{\theta}K^{(n)})(t,s)}&\preccurlyeq\frac12\int_0^{T_n} \Big(\int_{\abs{t-u}}^{t+u} e^{R_{\theta} v}dv\Big)K_n\psi_{u-s,s}^{(n)}(\bar A_n)\,du\\
&=\tfrac12K_ne^{R_{\theta}\abs{t-s}}\int_0^{T_n} \Big(\int_{\abs{t-u}-\abs{t-s}}^{t+u-\abs{t-s}} e^{R_{\theta} v}dv\Big)\psi_{u-s,s}^{(n)}(\bar A_n)\,du\\
&\preccurlyeq \tfrac12K_ne^{R_{\theta}\abs{t-s}}\int_0^{T_n-s} \Big(\int_{-w}^{2T_n} e^{R_{\theta} v}dv\Big)\psi_{w,s}^{(n)}(\bar A_n)\,dw\\
&\preccurlyeq \tfrac 12K_ne^{R_{\theta}\abs{t-s}}\int_0^{T_n} e\abs{R_{\theta}}_{3T_n}^{-1}\psi_{w,s}^{(n)}(\bar A_n)\,dw\\
&\preccurlyeq \tfrac e2K_ne^{R_{\theta}\abs{t-s}}\abs{R_{\theta}}_{3T_n}^{-1}\abs{\bar A_n}_{T_n}^{-2}
\end{align*}
and therefore, using Lemma \ref{LemTraceBound}(a) and \eqref{Eqpsi},
\begin{align*}
&\Var(N_n)\\ 
&\lesssim \trace\Big(K_n^2\abs{\Lambda}^4\int_0^{T_n}\int_0^{T_n} \Big(\eps_n^4\psi_{t-s,s}^{(n)}(\bar A_n)^2 + B_n^4e^{2R_{\theta}\abs{t-s}}\abs{R_{\theta}}_{T_n}^{-2}\abs{\bar A_n}_{T_n}^{-4}\Big)\,dsdt\Big) \\
&\lesssim T_n\trace\Big(K_n^2\abs{\Lambda}^4\big(\eps_n^4\abs{\bar A_n}_{T_n}^{-3} + B_n^4\abs{R_{\theta}}_{T_n}^{-3}\abs{\bar A_n}_{T_n}^{-4}\big)\Big).
\end{align*}
as asserted.
$N_n$ is well defined if this variance bound and $\E[N_n]$ are finite. In fact, by construction we have $\E[N_n]\ge 0$. Tracing the  lower bound \eqref{EqNLB} for $\E[N_n]$, we see that the converse inequalities also hold up to multiplicative constants, which establishes $\E[N_n]<\infty$ if the trace is finite as assumed.
\end{proof}

\begin{proof}[Proof of Theorem \ref{ThmUpperBound}]
From Proposition \ref{PropZ} and Lemma \ref{LemTraceBound}(a) below with $\abs{R_{\theta}}_{T_n^{-1}}\preccurlyeq\abs{\bar R_n}_{T_n^{-1}}$ we obtain
\[\E[(Z_n-\theta N_n)^2]\lesssim   T_n\trace\Big(K_n^2\abs{\Lambda}^2  \big(\eps_n^4\abs{\bar R_n}_{T_n^{-1}}+B_n^4 \abs{\bar A_n}_{T_n}^{-3}\big) \abs{\bar A_n}_{T_n}^{-1}\abs{R_{\theta}}_{T_n}^{-1}\Big),\]
which equals ${\cal I}_n(\theta)$ by inserting the choice of $K_n$. Equally,
$\E[N_n]\gtrsim {\cal I}_n(\theta)$  follows from Proposition \ref{PropN}.   In view of Proposition \ref{PropN}, Condition \eqref{EqVarNBound} then ensures that $\Var(N_n)^{1/2}/\E[N_n]\to 0$. This implies $N_n/\E[N_n]\xrightarrow{\PP_\theta} 1$ and in particular $\PP_\theta(N_n=0)\to 0$, while
\[(Z_n-\theta N_n)/\E[N_n]={\cal O}_{\PP_\theta}\big({\cal I}_n(\theta)^{1/2}/\E[N_n]\big)={\cal O}_{\PP_\theta}\big({\cal I}_n(\theta)^{-1/2}\big).\]
Since $Z_n-\theta N_n$, $N_n$ are well defined, $\hat\theta_n$ is well defined and we conclude that $\hat\theta_n-\theta=(Z_n{\bf 1}(N_n\not=0) -\theta N_n)/N_n ={\cal O}_{\PP_\theta}({\cal I}_n(\theta)^{-1/2})$ holds. The convergence of the error is uniform over $\theta$ because   \eqref{EqVarNBound} and all employed moment bounds hold uniformly in $\theta$.
\end{proof}

\subsection{Proofs for Section \ref{SecNonparEx}}\label{SecProofNonpar}

\begin{proof}[Proof of Proposition \ref{PropOrderDeltatheta}.] \label{proof:PropOrderDeltatheta}
In the proof the constants are tracked precisely and  the $C_{d, \beta}^{(i)}$ are used only to state the results concisely.
We use repeatedly that for $x,y\ge 0$
\begin{align}
	((x-y)_+)^2&\ge \tfrac{1}{2}x^2-y^2. \label{eq:LowerPowerBoundSquare}
\end{align}

\noindent {\it Proof of (a):}
For $u\in{\cal H}^6(\R^d)$ integration by parts and $\theta(x)\ge 1/2$ yield
\begin{equation}\label{EqInta1} \scapro{(-\Delta_\theta)^3u}{u}=\scapro{\theta\nabla \Delta_\theta u}{\nabla \Delta_\theta u}\ge \tfrac12\scapro{\nabla\Delta_\theta u}{\nabla\Delta_\theta u}=\tfrac12\norm{\nabla\Delta_\theta u}^2.
\end{equation}
From the identity (with matrix-vector multiplication)
\[ \nabla\Delta_\theta u=\theta\nabla\Delta u+(\nabla^2\theta)\nabla u+(\Delta u)(\nabla\theta)+(\nabla^2 u)(\nabla\theta)
\]
we derive by the inverse triangle inequality and $\theta(x)\ge 1/2$
\begin{equation}\label{EqInta2}
\norm{\nabla\Delta_\theta u}\ge\tfrac12\norm{\nabla\Delta u}-\norm{(\nabla^2\theta)\nabla u}-\norm{(\Delta u)(\nabla\theta)}-\norm{(\nabla^2 u)(\nabla\theta)}.
\end{equation}
We have, using Lemma \ref{lem:CompactProductInterpolation} below, \eqref{EqInterp}
as well as $AB\le A^2/8 + 2B^2$ for $A,B\ge0$ in the last step,
\begin{align*}
\norm{(\nabla^2\theta)\nabla u}
&\le (2d^3h)^{1/2} \norm{\nabla^2\theta}_\infty \norm{\Delta u}^{1/2}\norm{\nabla u}^{1/2}\\
&\le \sqrt{2d^3} h^{-2+1/2}\norm{\nabla^2L}_{\infty} \norm{\nabla\Delta u}^{1/2}\norm{u}^{1/2}\\
&\le \tfrac{1}{8}\norm{\nabla\Delta u}+4d^6h^{-3}\norm{\nabla^2 L}_\infty^2 \norm{u}.
\end{align*}
Similarly, we obtain, using $A^{5/6}B^{1/6}\le \frac56 A+\frac16B$,
\begin{align*}
\norm{(\Delta u)(\nabla\theta)}+\norm{(\nabla^2 u)\nabla \theta}
&\le 2(2d^3h)^{1/2}h^{-1}\norm{\nabla L}_\infty \norm{\nabla\Delta u}^{1/2}\norm{\Delta u}^{1/2}\\
&\le 2^{3/2} d^{3/2}h^{-1/2}\norm{\nabla L}_\infty \norm{\nabla\Delta u}^{5/6}\norm{u}^{1/6}\\
&\le \tfrac{1}{8}\norm{\nabla\Delta u}+ \tfrac{10^52^{13}}{3^6} d^9h^{-3}\norm{\nabla L}_\infty^6 \norm{u}.
\end{align*}
Hence, in \eqref{EqInta2} we have  obtained the bound
\begin{align*}
\norm{\nabla\Delta_\theta u}&\ge\tfrac{1}{4}\norm{\nabla\Delta u}-\Big(4d^6\norm{\nabla^2L}_\infty^2
 + \tfrac{10^52^{13}}{3^6}d^9\norm{\nabla L}_\infty^6\Big)h^{-3}\norm{u}.
\end{align*}
Using \eqref{eq:LowerPowerBoundSquare},  insertion into \eqref{EqInta1} and the denseness of ${\cal H}^6(\R^d)$ in ${\cal H}^3(\R^d)$ therefore yield
\begin{align*}
(-\Delta_\theta)^3&\succcurlyeq\tfrac{1}{64}(-\Delta)^3-c_{\theta,d}^2h^{-6}\Id
\end{align*}
with
$c_{\theta,d}=2^{3/2} d^6\norm{\nabla^2L}_\infty^2 + \tfrac{10^52^{25/2}}{3^6} d^9\norm{\nabla L}_\infty^6$.

Generally, for positive operators $T_1,T_2$ and $C>0$, $\gamma\in[0,1]$   we obtain from the operator monotonicity of $t\mapsto t^\gamma$ \cite[Thm. V.1.9]{bhatia2013}
\begin{equation}\label{EqOpPowers} T_1\preccurlyeq T_2+C\Id \Rightarrow T_1^\gamma\preccurlyeq (T_2+C\Id)^\gamma \preccurlyeq T_2^\gamma+C^\gamma \Id.
\end{equation}
For $T_1=(-\Delta)^3$, $T_2=(-\Delta_\theta)^3$ and $\gamma=(2+2\beta)/3$ this yields
\begin{align*}
(-\Delta_\theta)^{2+2\beta}
&\succcurlyeq 2^{-4-4\beta}(-\Delta)^{2+2\beta}-c_{\theta,d}^{(4+4\beta)/3}h^{-(4+4\beta)}\Id,\\
\eps^2(-\Delta_\theta)^{2+2\beta}+\Id &\succcurlyeq \big(2^{-4-4\beta}\wedge (1-\eps^2c_{\theta,d}^{(4+4\beta)/3}h^{-(4+4\beta)})\big)\big( \eps^2(-\Delta)^{2+2\beta} +\Id\big)
\end{align*}
and thus by operator monotonicity of $x\mapsto -x^{-1}$ \cite[Prop. V.1.6]{bhatia2013} the bound in (a) for $h^{-2}c_{\theta,d}^{2/3}\le\frac12\eps^{-1/(1+\beta)}$.

{\it Proof of (b):}
Given the result in (a) and $\frac12(x+T^{-1})\le x\vee T^{-1}\le x+T^{-1}$, $x\ge 0$, it suffices to show
\begin{align}\label{eq:proofDiffusionPartBReduction}
	(-\Delta_\theta)(\eps^2(-\Delta_\theta)^{2+2\beta}+\Id)\succcurlyeq 2(C_{d, \beta}^{(4)})^{-1} (-\Delta)(\eps^2(-\Delta)^{2+2\beta}+\Id).
\end{align}
For $u\in {\cal H}^4(\R^d)$ we  bound by Lemma \ref{lem:CompactProductInterpolation} below, \eqref{EqInterp}
and $AB\le \tfrac{3}{4}A^{4/3}+\tfrac{1}{4}B^4$
\begin{align}
\norm{\Delta_\theta^2u} &= \norm{\theta\Delta \Delta_\theta u+\scapro{\nabla\theta}{\nabla\Delta_\theta u}}\nonumber\\
&\ge \tfrac12\norm{\Delta\Delta_\theta u}-(2d^2h)^{1/2}h^{-1}\norm{\nabla L}_\infty\norm{\Delta\Delta_\theta u}^{1/2}\norm{\nabla\Delta_\theta u}^{1/2}\nonumber\\
&\ge \tfrac12\norm{\Delta\Delta_\theta u}-\sqrt 2 d h^{-1/2}\norm{\nabla L}_\infty\norm{\Delta\Delta_\theta u}^{3/4}\norm{\Delta_\theta u}^{1/4}\nonumber\\
&\ge \tfrac14\norm{\Delta\Delta_\theta u}-27 d^4h^{-2}\norm{\nabla L}_\infty^4\norm{\Delta_\theta u}.\label{EqDeltatheta2}
\end{align}
Further, by expanding $\Delta\Delta_\theta$ and using the inverse triangle inequality, Lemma \ref{lem:CompactProductInterpolation} below
together with the bound $\norm{\Delta L}_\infty^2\le d\norm{\nabla^2 L}_\infty^2$, \eqref{EqInterp}
and  $A^\alpha B^{1-\alpha}\le \alpha A+(1-\alpha)B$ for $A,B\ge 0$, $\alpha\in[0,1]$,
\begin{align*}
&\tfrac{1}{2}\norm{\Delta^2 u}-\norm{\Delta\Delta_\theta u}\\
&\le \norm{3\scapro{\nabla\theta}{\nabla\Delta u}+(\Delta\theta)(\Delta u)+2\scapro{\nabla^2\theta}{\nabla^2u}_{HS}+\scapro{\nabla\Delta\theta}{\nabla u}}\\
&\le 3(2d^2h)^{1/2}h^{-1}\norm{\nabla L}_\infty\norm{\Delta^2 u}^{1/2} \norm{\nabla\Delta u}^{1/2} \\
&\quad +3(2d^4h)^{1/2}h^{-2}\norm{\nabla^2 L}_\infty\norm{\nabla\Delta u}^{1/2}\norm{\Delta u}^{1/2} \\
&\quad +(2d^2h)^{1/2}h^{-3}\norm{\nabla\Delta L}_\infty\norm{\Delta u}^{1/2}\norm{\nabla u}^{1/2}\\
&\le \sqrt{18}dh^{-1/2}\norm{\nabla L}_\infty\norm{\Delta^2 u}^{5/6}\norm{\nabla u}^{1/6}\\
&\quad +\sqrt{18}d^2h^{-3/2}\norm{\nabla^2 L}_\infty\norm{\Delta^2 u}^{1/2}\norm{\nabla u}^{1/2} \\
&\quad +\sqrt{2} d h^{-5/2}\norm{\nabla\Delta L}_\infty\norm{\Delta^2 u}^{1/6}\norm{\nabla u}^{5/6}\\
&\le \tfrac{1}{12}\norm{\Delta^2 u}+ 15^52^7d^6h^{-3}\norm{\nabla L}_\infty^6\norm{\nabla u} +\tfrac{1}{12}\norm{\Delta^2 u}+ 54 d^4h^{-3}\norm{\nabla^2 L}_\infty^2\norm{\nabla u} \\
&\quad +\tfrac{1}{12}\norm{\Delta^2 u}+ 2^{-1/5}3^{-1}5 d^{6/5}h^{-3}\norm{\nabla\Delta L}_\infty^{6/5}\norm{\nabla u}.
\end{align*}
Consequently,
\begin{align*}
 \norm{\Delta\Delta_\theta u}\ge \tfrac14\norm{\Delta^2 u}-\Big( & 15^52^7d^6\norm{\nabla L}_\infty^6+ 54 d^4 \norm{\nabla^2 L}_\infty^2 \\
	&+ 2^{-1/5}3^{-1}5d^{6/5} \norm{\nabla\Delta L}_\infty^{6/5}\Big)h^{-3}\norm{\nabla u}
\end{align*}
holds. Using $\norm{\theta}_\infty\le 3/2$, the other term in \eqref{EqDeltatheta2} can be simply bounded for any $\eta>0$ via
\begin{align*}
\norm{\Delta_\theta u} &\le \tfrac32\norm{\Delta^2u}^{1/3}\norm{\nabla u}^{2/3}+h^{-1}\norm{\nabla L}_\infty\norm{\nabla u}\\
&\le \eta \norm{\Delta^2u}+\big(2^{-1/2}\eta^{-1/2}+h^{-1}\norm{\nabla L}_\infty\big)\norm{\nabla u},
\end{align*}
Choosing $\eta=\frac{1}{32}\frac{1}{27}d^{-4}h^2\norm{\nabla L}_\infty^{-4}$, we obtain from \eqref{EqDeltatheta2}
\begin{align*}
\norm{\Delta_\theta^2u} &\ge \tfrac{1}{32}\norm{\Delta^2u}-\Big(15^52^5d^6\norm{\nabla L}_\infty^6+ \tfrac{27}{2} d^4 \norm{\nabla^2 L}_\infty^2+ 2^{-11/5}3^{-1}5d^{6/5} \norm{\nabla\Delta L}_\infty^{6/5}\\
&\quad +27\norm{\nabla L}_\infty^4\big(2^23^{3/2}d^6\norm{\nabla L}_\infty^2+d^4\norm{\nabla L}_\infty\big)\Big)h^{-3}\norm{\nabla u}\\
&= \tfrac{1}{32}\norm{\Delta^2u}-\bar c_{\theta,d}h^{-3}\norm{\nabla u}
\end{align*}
where
\begin{align*}
	\bar c_{\theta,d}&=\big(15^52^5+2^2 3^{9/2} \big)d^6\norm{\nabla L}_\infty^6+ 27d^4\norm{\nabla L}_\infty^5 \\
 		&\quad\quad + \tfrac{27}{2} d^4 \norm{\nabla^2 L}_\infty^2+ 2^{-11/5}3^{-1}5d^{6/5} \norm{\nabla\Delta L}_\infty^{6/5}.
\end{align*}
With \eqref{eq:LowerPowerBoundSquare} we thus deduce
\begin{align*}
\Delta_\theta^4&\succcurlyeq 2^{-11}\Delta^4-\bar c_{\theta,d}^2h^{-6}(-\Delta).
\end{align*}
The weighted geometric mean inequality  \citep[Section 3]{Kubo1980} asserts for positive operators $A\preccurlyeq C$, $B\preccurlyeq D$ and $\alpha\in [0,1]$
\[  A^{1/2}\big(A^{-1/2}BA^{-1/2})^\alpha A^{1/2}\preccurlyeq C^{1/2}\big(C^{-1/2}DC^{-1/2})^\alpha C^{1/2}.
\]
We use this in the simpler case when $A$ and $B$ as well as $C$ and $D$ commute such that $A^{1-\alpha}B^\alpha\preccurlyeq C^{1-\alpha}D^\alpha$.
Together with $(-\Delta_\theta)^4\succcurlyeq 0$, $(-\Delta_\theta)\succcurlyeq\tfrac{1}{2}(-\Delta)$ and \eqref{EqOpPowers} we find that
\begin{align*}
(-\Delta_\theta)^{3+2\beta}&=(-\Delta_\theta)^{(1-2\beta)/3} ((-\Delta_\theta)^4)^{(2+2\beta)/3}\\
&\succcurlyeq 
 2^{-(23+20\beta)/3}(-\Delta)^{3+2\beta}
-\bar c_{\theta,d}^{(4+4\beta)/3}2^{-(1-2\beta)/3} h^{-(4+4\beta)}(-\Delta).
\end{align*}
Proceeding as in part (a) and
gathering the numerical constants in $C_{d, \beta}^{(3)}$, this implies \eqref{eq:proofDiffusionPartBReduction} and therefore the result in (b).
\end{proof}

\begin{proof}[Proof of Proposition \ref{PropOrderDeltaplustheta}.] \label{proof:PropOrderDeltaplustheta}
For $u\in {\cal H}^2(\R^d)$ we have
\begin{align*}
\norm{(\nu\Delta-M[\theta]) u}&\ge \norm{(\nu\Delta-\Id) u}-\norm{L}_\infty\norm{ u}
\end{align*}
and thus, with \eqref{eq:LowerPowerBoundSquare},
\begin{equation}\label{EqA1bound} \norm{A_{\bf 1} u}^2\ge \tfrac12\norm{A_{\bf 0} u}^2-\norm{L}_\infty^2\norm{ u}^2.
\end{equation}
This shows for all test functions $u\in {\cal H}^4(\R^d)$ that due to $\norm{L}_\infty\le 1/2$ and $\eps\le 1$
\[ \scapro{(\eps^2A_{\bf 1}^2+\Id)u}{u}\ge \tfrac12 \scapro{(\eps^2A_{\bf 0}^2+\Id)u}{u}.
\]
By the denseness of ${\cal H}^4(\R^d)$ in $L^2(\R^d)$ and the operator monotonicity of $x\mapsto -x^{-1}$ we establish \eqref{EqDeltaplusthetaorder}.

For (b) let us first bound from below for $u\in {\cal H}^6(\R^d)$, $\eta>0$  by Lemma \ref{lem:CompactProductInterpolation} and $2AB\le A^2+B^2$
\begin{align*}
\norm{\nabla A_{\bf 1}u}&=
\norm{\nabla(A_{\bf 0}-M[L(x/h)]u}\\
&\ge \norm{\nabla A_{\bf 0}u}-(2dh)^{1/2}h^{-1}\norm{\nabla L}_\infty\norm{u}^{1/2}\norm{\nabla u}^{1/2}-\norm{L}_\infty\norm{\nabla u}\\
&\ge \norm{\nabla A_{\bf 0}u}-\tfrac12 dh^{-1}\norm{\nabla L}_\infty^2\eta^{-1}\norm{u}-(\eta+\norm{L}_\infty)\norm{\nabla u}.
\end{align*}
From \eqref{eq:LowerPowerBoundSquare} we have $(A-B-C)_+^2\ge \frac12 A^2-2B^2-2C^2$ and thus
\[ \scapro{(-\Delta)A_{\bf 1}u}{A_{\bf 1}u} \ge \tfrac12\norm{\nabla A_{\bf 0}u}^2-\tfrac12 d^2h^{-2}\norm{\nabla L}_\infty^4\eta^{-2}\norm{u}^2-2(\norm{L}_\infty+\eta)^2\norm{\nabla u}^2.
\]
In view of $\inf_{x\in\R^d}\theta(x)\ge 1/2$ (due to $\norm{L}_\infty<1/2$) together with \eqref{EqA1bound}, and choosing $\eta^2=d\nu^{1/2}h^{-1}\norm{\nabla L}_\infty^2$, we arrive at
\begin{align*}
\scapro{(-A_{\bf 1})^3u}{u}
&\ge
\tfrac\nu 2 \norm{\nabla A_{\bf 0} u}^2-\tfrac12 d^2\nu\eta^{-2}h^{-2}\norm{\nabla L}_\infty^4\norm{u}^2 -2\nu(\norm{L}_\infty+\eta)^2\norm{\nabla u}^2\\
&\quad
 +\tfrac{1}{4}\norm{A_{\bf 0}u}^2-\tfrac{1}{2}\norm{L}_\infty^2\norm{u}^2\\
&\ge
\tfrac1 4 \scapro{(-A_{\bf 0})^3u}{u}-\tfrac12 \big(d\nu^{1/2} h^{-1}\norm{\nabla L}_\infty^2+\norm{L}_\infty^2\big) \norm{u}^2 \\ &\quad -4\nu(\norm{L}_\infty^2+d\nu^{1/2} h^{-1}\norm{\nabla L}_\infty^2)\norm{\nabla u}^2 \\
&\ge \tfrac1 4 \scapro{(-A_{\bf 0})^3u}{u} -4(\norm{L}_\infty^2+d\nu^{1/2} h^{-1}\norm{\nabla L}_\infty^2)\scapro{(\Id-\nu\Delta)u}{u}.
\end{align*}
We thus obtain
\begin{align*}
&\scapro{(-A_{\bf 1}) (\eps^2A_{\bf 1}^2+\Id)u}{u}\\
&\ge \tfrac12\scapro{(-A_{\bf 0})u}{u}+\tfrac14\eps^2 \scapro{(-A_{\bf 0})^3u}{u} -4\eps^2(\norm{L}_\infty^2+d\nu^{1/2} h^{-1}\norm{\nabla L}_\infty^2)\scapro{(\Id-\nu\Delta)u}{u}\\
&\ge \tfrac14\scapro{(-A_{\bf 0})(\eps^2A_{\bf 0}^2+\Id)u}{u}
 -4\eps^2\big(\norm{L}_\infty^2+d\nu^{1/2} h^{-1}\norm{\nabla L}_\infty^2\big) \scapro{(-A_{\bf 0})u}{u} \\
&\ge  \big(\tfrac14-4\eps^2(\norm{L}_\infty^2+d\nu^{1/2} h^{-1}\norm{\nabla L}_\infty^2)\big) \scapro{(-A_{\bf 0})(\eps^2A_{\bf 0}^2+\Id)u}{u}.
\end{align*}
Together with part (a) and $(x+T^{-1})/2\le x\vee T^{-1}\le x+T^{-1}$ we have thus shown
\[(\Id-2TA_{\bf 1})(\eps^2A_{\bf 1}^2+\Id)\succcurlyeq \tfrac1{16} (\Id-2TA_{\bf 0})(\eps^2A_{\bf 0}^2+\Id),
\]
provided $\norm{L}_\infty^2+d\nu^{1/2} h^{-1}\norm{\nabla L}_\infty^2\le \tfrac{1}{32}\eps^{-2}$. By the operator monotonicity of $x\mapsto -x^{-1}$
and $A+B\le \eps^{-1}\Rightarrow A^2+B^2\le\eps^{-2}$, $A,B\ge 0$, this yields assertion (b).
\end{proof}

\section{Auxiliary Results}\label{SecAux}

In the following interpolation inequalities the dependence on the dimension $d$ is not always optimised, but some dimension dependence cannot be avoided.

\begin{lemma}\label{lem:CompactProductInterpolation}
Let $\theta\in C^2(\R^d)$ have support in $[-h/2,h/2]^d$.
Then
\begin{align}
\norm{\theta u}&\le (2h)^{1/2}\norm{\theta}_\infty \norm{u}^{1/2}\norm{\partial_i u}^{1/2},\quad u\in {\cal H}^1(\R^d). \label{EqPI_ref}
\end{align}
holds for any $i=1,\ldots,d$. In particular, we have
\begin{align}
\norm{(\nabla \theta) u}&\le (2dh)^{1/2}\norm{\nabla \theta}_\infty \norm{u}^{1/2}\norm{\nabla u}^{1/2},\quad u\in {\cal H}^1(\R^d), \\
\norm{\scapro{\nabla \theta}{\nabla u}}&\le (2d^2h)^{1/2}\norm{\nabla \theta}_\infty \norm{\nabla u}^{1/2}\norm{\Delta u}^{1/2},\quad u\in {\cal H}^2(\R^d), \\
\norm{(\nabla^2 \theta) \nabla u}&\le (2d^3h)^{1/2}\norm{\nabla^2 \theta}_\infty \norm{\nabla u}^{1/2}\norm{\Delta u}^{1/2},\quad u\in {\cal H}^2(\R^d), \\
\norm{(\nabla^2 u) \nabla \theta}&\le (2d^3h)^{1/2}\norm{\nabla \theta}_\infty \norm{\Delta u}^{1/2}\norm{\nabla \Delta u}^{1/2},\quad u\in {\cal H}^3(\R^d), \\
\norm{\scapro{\nabla^2 \theta}{\nabla^2 u}_{HS}}&\le (2d^4h)^{1/2}\norm{\nabla^2 \theta}_\infty \norm{\Delta u}^{1/2}\norm{\nabla \Delta u}^{1/2},\quad u\in {\cal H}^3(\R^d).
\end{align}
\end{lemma}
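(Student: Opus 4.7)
The central ingredient will be a single one-dimensional Friedrichs-type cube estimate: for every $v\in{\cal H}^1(\R^d)$ and every index $i\in\{1,\ldots,d\}$,
\[
\int_{[-h/2,h/2]^d} v(x)^2\,dx\le 2h\,\norm{v}\,\norm{\partial_i v}.
\]
I will establish this first and then read off every inequality of the lemma from it by elementary manipulations that exploit the compact support of $\theta$.

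To prove the cube estimate I split $x=(x_i,\tilde x_i)$ with $\tilde x_i\in\R^{d-1}$. For $v$ smooth and decaying at infinity (which suffices by density in ${\cal H}^1(\R^d)$) the fundamental theorem of calculus in the $x_i$-direction gives
\[
v(x_i,\tilde x_i)^2=2\int_{-\infty}^{x_i} v\,\partial_i v\,(s,\tilde x_i)\,ds \le 2\int_{\R}\abs{v\,\partial_i v}(s,\tilde x_i)\,ds,
\]
and the right-hand side no longer depends on $x_i$. Integrating the inequality over $x_i\in[-h/2,h/2]$ produces the factor $h$, and a final Cauchy--Schwarz inequality in $\tilde x_i\in\R^{d-1}$ converts the $L^1$-product in $(s,\tilde x_i)$ into $\norm{v}\norm{\partial_i v}$.

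The bound \eqref{EqPI_ref} then follows immediately from $\norm{\theta u}^2\le\norm{\theta}_\infty^2\int_{\supp\theta}u^2\,dx$ combined with the cube estimate applied to $v=u$. All the remaining inequalities follow the same pattern: bound the integrand pointwise by $\norm{\theta}_\infty$, $\norm{\nabla\theta}_\infty$ or $\norm{\nabla^2\theta}_\infty$ (using, when needed, the pointwise Cauchy--Schwarz $\abs{\scapro{\nabla\theta}{\nabla u}}\le\abs{\nabla\theta}\abs{\nabla u}$ and its analogues), restrict the integral to the cube provided by $\supp\theta$, and apply the cube estimate component-wise to $\nabla u$ or $\nabla^2 u$. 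The identity $\norm{\Delta u}^2=\norm{\nabla^2 u}_{HS}^2=\sum_{j,k}\norm{\partial_j\partial_k u}^2$, obtained by integrating by parts twice, together with elementary Cauchy--Schwarz estimates over the summation indices, lets me pass from individual partial derivatives back to the intrinsic norms $\norm{\nabla u}$, $\norm{\Delta u}$, $\norm{\nabla\Delta u}$ and accounts for the polynomial-in-$d$ factors $d^{1/2}$, $d$, $d^{3/2}$, $d^2$ in the various displayed inequalities.

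I do not anticipate a genuine obstacle: the only care needed is the density argument legitimising the fundamental theorem of calculus step, and a consistent choice of which partial derivative to differentiate in so that the final bound is expressed in the intrinsic gradient and Laplacian norms rather than in individual coordinate derivatives.
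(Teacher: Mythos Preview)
Your proposal is correct and follows essentially the same route as the paper: a single integration by parts in one coordinate direction exploits the width-$h$ support to produce the factor $2h$, and the remaining inequalities are obtained by applying the basic estimate componentwise to derivatives of $u$ and $\theta$, using $\norm{\nabla^2 u}=\norm{\Delta u}$ and Cauchy--Schwarz over the indices to collect the powers of $d$. The only cosmetic difference is that you first bound $\theta^2\le\norm{\theta}_\infty^2\mathbf{1}_{[-h/2,h/2]^d}$ and then prove a standalone cube inequality for $u$, whereas the paper keeps $\theta^2$ in the integration by parts and bounds its antiderivative by $h\norm{\theta}_\infty^2$; the resulting constants are identical.
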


\begin{proof}
	To prove \eqref{EqPI_ref}, use integration by parts and obtain
\begin{align*}
&\norm{\theta u}^2 = \int_{\R^d}\theta^2(x) u^2(x)\,dx\\
&= \babs{- \int_{\R^d} \Big(\int_{-h/2}^{x_i\wedge(h/2)}\theta^2(x_1,\ldots,x_{i-1},y,x_{i+1},\ldots,x_d)\,dy\Big)2u(x)\partial_{i}u(x)
\,dx } \\
&\le 2h\norm{\theta}_\infty^2\int_{\R^d} \abs{u(x)}\abs{\partial_iu(x)}\,dx
\le 2h\norm{\theta}_\infty^2\norm{u}\norm{\partial_i u}.
\end{align*}
All other estimates follow by replacing $u$ and $\theta$ in \eqref{EqPI_ref} by partial derivatives of $u$ and $\theta$, where we take into account $\norm{\nabla^2u} = \norm{\Delta u}$ (and thus $\norm{\nabla^3u} = \norm{\nabla\Delta u}$, where $\norm{\nabla^3 u}^2=\sum_{i,j,k}\norm{\partial_{ijk}u}^2$).
For example, we have
\begin{align*}
	\norm{(\nabla^2 \theta) \nabla u}^2
		&= \sum_{i=1}^d\int_{\R^d}\Big(\sum_{j=1}^d\partial_{ij}\theta(x)\partial_ju(x)\Big)^2\,dx
		\le d\sum_{i,j=1}^d\int_{\R^d}(\partial_{ij}\theta)(x)^2(\partial_ju)(x)^2\,dx \\
		&\le d\sum_{i,j=1}^d 2h\norm{\partial_{ij}\theta}_\infty^2\norm{\partial_j u}\norm{\nabla\partial_j u}
		\le 2d^3h\norm{\nabla^2\theta}_\infty^2\norm{\nabla u}\norm{\Delta u}.
\end{align*}
The other estimates work analogously.
\end{proof}

\begin{lemma}\label{LemTraceBound} \
\begin{enumerate}
\item
For bounded selfadjoint operators $T_1,T_2,T_3$ with $T_1\succcurlyeq 0$ and $T_2\preccurlyeq T_3$
\[\trace(T_1T_2)\le \trace(T_1T_3)\]
holds, provided the right-hand side is finite. Also,
for operators $T_4,T_5,T_6$ with $T_4^\ast T_4\preccurlyeq T_5^\ast T_5$
\[ \norm{T_4T_6}_{HS}^2\le \norm{T_5T_6}_{HS}^2\]
holds, provided the right-hand side is finite.

\item  For  operators $R_1\succcurlyeq0$, $R_2\succcurlyeq0$ with bounded inverses and a bounded operator $T$ we have
\[ R_1\succcurlyeq TR_2T^\ast \Rightarrow T^\ast R_1^{-1}T\preccurlyeq R_2^{-1}.\]

\end{enumerate}
\end{lemma}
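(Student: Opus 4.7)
The plan is to handle the two halves of (a) by reducing both to the simple fact that the trace of a positive semi-definite (trace-class) operator is non-negative, and then to prove (b) via a standard $SS^\ast$ versus $S^\ast S$ manipulation after sandwiching with appropriate square roots.

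For the first inequality of (a), I would write $T_3-T_2\succcurlyeq 0$ and factor $T_1=T_1^{1/2}T_1^{1/2}$ (available because $T_1$ is bounded, selfadjoint and non-negative). Cyclicity of the trace gives
\[
\trace(T_1(T_3-T_2))=\trace\bigl(T_1^{1/2}(T_3-T_2)T_1^{1/2}\bigr)\ge 0,
\]
since $T_1^{1/2}(T_3-T_2)T_1^{1/2}$ is a non-negative operator. A small technical point is to justify the cyclic rearrangement and the expansion $\trace(T_1T_3)-\trace(T_1T_2)=\trace(T_1(T_3-T_2))$ under the mere assumption that $\trace(T_1T_3)$ is finite; for this I would approximate $T_1$ by its spectral truncations $T_1\Pi_N$ onto the finite-dimensional spectral subspaces where $T_1\le N$, apply the identity there, and pass to the monotone limit in $N$, using that the non-negative quantities $\trace(T_1^{1/2}\Pi_N(T_3-T_2)\Pi_N T_1^{1/2})$ and $\trace(T_1\Pi_N T_2)$ are both controlled by $\trace(T_1T_3)<\infty$.

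For the second inequality of (a), I would combine $\norm{T_4T_6}_{HS}^2=\trace((T_4T_6)^\ast(T_4T_6))$ with cyclicity to obtain
\[
\norm{T_4T_6}_{HS}^2=\trace\bigl(T_6T_6^\ast\,T_4^\ast T_4\bigr),\qquad \norm{T_5T_6}_{HS}^2=\trace\bigl(T_6T_6^\ast\,T_5^\ast T_5\bigr),
\]
and then apply the first inequality with $T_1=T_6T_6^\ast\succcurlyeq0$, $T_2=T_4^\ast T_4\preccurlyeq T_5^\ast T_5=T_3$. The only mildly delicate point is again trace-class integrability, but that is automatic since the right-hand side $\norm{T_5T_6}_{HS}^2$ is assumed finite.

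For (b), the key step is to sandwich the hypothesis by $R_1^{-1/2}$, which is a bounded operator by assumption. The inequality $R_1\succcurlyeq TR_2T^\ast$ is equivalent to
\[
(R_1^{-1/2}TR_2^{1/2})(R_1^{-1/2}TR_2^{1/2})^\ast = R_1^{-1/2}TR_2T^\ast R_1^{-1/2}\preccurlyeq \Id.
\]
The main (mild) obstacle is the passage $SS^\ast\preccurlyeq\Id\iff S^\ast S\preccurlyeq\Id$, which I would justify by noting that both are equivalent to $\norm{S}\le 1$ for a bounded operator $S=R_1^{-1/2}TR_2^{1/2}$. Once this is in hand, the equivalent form reads
\[
R_2^{1/2}T^\ast R_1^{-1}TR_2^{1/2}\preccurlyeq \Id,
\]
and conjugating with the bounded operator $R_2^{-1/2}$ on both sides yields $T^\ast R_1^{-1}T\preccurlyeq R_2^{-1}$, as desired.
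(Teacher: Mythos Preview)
Your proposal is correct and follows essentially the same route as the paper: for (a) the paper also writes $\trace(T_1T_3)-\trace(T_1T_2)=\trace(T_1^{1/2}(T_3-T_2)T_1^{1/2})\ge 0$ and then deduces the Hilbert--Schmidt inequality from the trace inequality with the identification $T_1=T_6T_6^\ast$, $T_2=T_4^\ast T_4$, $T_3=T_5^\ast T_5$ (the paper's printed assignment has the indices permuted, but the argument is the one you give); for (b) the paper performs exactly your sandwiching by $R_1^{-1/2}$, the $SS^\ast\preccurlyeq\Id\iff S^\ast S\preccurlyeq\Id$ step via $\norm{SS^\ast}=\norm{S^\ast S}$, and the final conjugation by $R_2^{-1/2}$.
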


\begin{proof}
In (a) the operator  $T_1^{1/2}(T_3-T_2)T_1^{1/2}$ is positive semi-definite such that its trace is nonnegative, implying the first claim in (a):
\[\trace(T_1T_3)-\trace(T_1T_2)=\trace(T_1^{1/2}(T_3-T_2)T_1^{1/2})\ge 0.\]
In view of $\norm{ST}_{HS}^2=\trace(S^\ast STT^\ast)$ for operators $S,T$ the second claim follows from the first with $T_1=T_4^\ast T_4$, $T_2=T_5^\ast T_5$ and $T_3=T_6T_6^\ast$.
	
In (b) we have
\begin{align*}
	R_{\bf 1}\succcurlyeq TR_2T^\ast &\Rightarrow R_{\bf 1}^{-1/2}TR_2T^\ast R_{\bf 1}^{-1/2}\preccurlyeq\Id \\
		&\Rightarrow R_2^{1/2}T^\ast R_{\bf 1}^{-1}TR_2^{1/2}=(R_{\bf 1}^{-1/2}TR_2^{1/2})^\ast R_{\bf 1}^{-1/2}TR_2^{1/2}\preccurlyeq \Id\\
&\Rightarrow T^\ast R_{\bf 1}^{-1}T\preccurlyeq R_2^{-1},
\end{align*}
where $\norm{LL^\ast}=\norm{L^\ast L}$ was used for $L=R_{\bf 1}^{-1/2}TR_2^{1/2}$.
\end{proof}

\begin{lemma}\label{lem:HS:kernel}
	Let $K: L^2(H)\to L^2(H)$ be the operator given by $Kf(t)=\int_0^T k(t,s)f(s)\,ds$ for an operator-valued kernel function $k\in L^2([0,T]^2;HS(H))$. Then we have
\begin{align*}
	\norm{K}_{HS(L^2(H))}^2=\int_0^T\int_0^T \norm{k(t,s)}_{HS(H)}^2dtds.
\end{align*}
\end{lemma}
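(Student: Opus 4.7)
The plan is to verify the claim by computing $\norm{K}_{HS(L^2(H))}^2$ in a suitably chosen orthonormal basis and then using Parseval's identity to reduce the computation to the pointwise Hilbert--Schmidt norm of the kernel.

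First I would fix an orthonormal basis $(e_k)_{k\ge 1}$ of $H$ and an orthonormal basis $(\phi_j)_{j\ge 1}$ of $L^2([0,T];\R)$ (e.g.\ a trigonometric basis). Standard tensorization then gives that $(\phi_j\otimes e_k)_{j,k}$ with $(\phi_j\otimes e_k)(t):=\phi_j(t)e_k$ is an orthonormal basis of $L^2(H)=L^2([0,T];H)$. By definition of the Hilbert--Schmidt norm,
\[
\norm{K}_{HS(L^2(H))}^2=\sum_{j,k\ge 1}\norm{K(\phi_j\otimes e_k)}_{L^2(H)}^2=\sum_{j,k\ge 1}\int_0^T\bnorm{\int_0^T \phi_j(s)k(t,s)e_k\,ds}_H^2 dt.
\]

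Next, for fixed $t\in[0,T]$ and $k\ge 1$ I would introduce the auxiliary function $g_{t,k}(s):=k(t,s)e_k$, which lies in $L^2([0,T];H)$ for a.e.\ $t$ by the assumption $k\in L^2([0,T]^2;HS(H))$ and Tonelli's theorem (since $\norm{k(t,s)e_k}_H\le\norm{k(t,s)}_{HS(H)}$). Applying Parseval's identity to the scalar-valued functions $s\mapsto\scapro{g_{t,k}(s)}{e_\ell}$ and summing over $\ell$ yields
\[
\sum_{j\ge 1}\bnorm{\int_0^T \phi_j(s)g_{t,k}(s)\,ds}_H^2=\norm{g_{t,k}}_{L^2(H)}^2=\int_0^T\norm{k(t,s)e_k}_H^2\,ds.
\]

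Finally, I would interchange the (nonnegative) sums and integrals by Tonelli and use the definition $\norm{k(t,s)}_{HS(H)}^2=\sum_{k\ge 1}\norm{k(t,s)e_k}_H^2$ to obtain
\[
\norm{K}_{HS(L^2(H))}^2=\int_0^T\!\int_0^T\sum_{k\ge 1}\norm{k(t,s)e_k}_H^2\,ds\,dt=\int_0^T\!\int_0^T\norm{k(t,s)}_{HS(H)}^2\,ds\,dt,
\]
which is the asserted identity. There is no genuine obstacle: the only care required is the bookkeeping of the three summations/integrations (over $j$, $k$, and over $[0,T]^2$), which is legitimised throughout by Tonelli's theorem thanks to the nonnegativity of all integrands and the $L^2$-assumption on $k$.
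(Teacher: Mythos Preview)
Your argument is correct and self-contained. The paper itself does not give a proof but simply refers to \citet[Thm.~11.3.6]{BiSo2012}; your tensor-basis computation with Parseval and Tonelli is precisely the standard route to this identity, so there is nothing to add.
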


\begin{proof}
This extension from the classical result for scalar-valued kernels can be found in \citet[Thm. 11.3.6]{BiSo2012}.
\end{proof}

\begin{lemma}\label{LemGVar}
Suppose $\Gamma\thicksim {\cal N}_{cyl}(0,\Sigma)$ is a cylindrical Gaussian measure on a real Hilbert space $H$ and $L$ is a bounded normal operator on $H$ such that $\Sigma^{1/2}\Re(L)\Sigma^{1/2}$ is a Hilbert-Schmidt operator. Then
\[ \Var(\scapro{L\Gamma}{\Gamma})=2\norm{\Sigma^{1/2}\Re(L)\Sigma^{1/2}}_{HS}^2\]
holds. An upper bound is given by $2\norm{\Sigma L}_{HS}^2$.
\end{lemma}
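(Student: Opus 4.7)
The plan is to reduce the quadratic form on a cylindrical Gaussian to a diagonal sum of independent chi-squared variables. Because $H$ is real and $\Gamma$ is real-valued, $\scapro{L\Gamma}{\Gamma}=\scapro{L^\ast\Gamma}{\Gamma}$, so
\[
\scapro{L\Gamma}{\Gamma}=\scapro{\tfrac12(L+L^\ast)\Gamma}{\Gamma}=\scapro{\Re(L)\Gamma}{\Gamma},
\]
and we may replace $L$ by the selfadjoint operator $S:=\Re(L)$. Writing $\Gamma=\Sigma^{1/2}\tilde\Gamma$ with a standard cylindrical Gaussian $\tilde\Gamma\sim{\cal N}_{cyl}(0,\Id)$ (i.e.\ interpreting $\scapro{v}{\Gamma}:=\scapro{\Sigma^{1/2}v}{\tilde\Gamma}$) and using selfadjointness of $\Sigma^{1/2}$ yields the identity
\[
\scapro{L\Gamma}{\Gamma}=\scapro{M\tilde\Gamma}{\tilde\Gamma},\quad M:=\Sigma^{1/2}S\,\Sigma^{1/2},
\]
where by assumption $M$ is a selfadjoint Hilbert--Schmidt operator.

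The second step is to diagonalise. Since $M$ is selfadjoint and compact, it admits a spectral decomposition $M=\sum_k\mu_k e_k\otimes e_k$ in an orthonormal basis $(e_k)$ with $\sum_k\mu_k^2=\norm{M}_{HS}^2<\infty$. Setting $\xi_k:=\scapro{e_k}{\tilde\Gamma}$ defines an i.i.d.\ sequence $\xi_k\sim{\cal N}(0,1)$, and one obtains the $L^2$-convergent representation
\[
\scapro{M\tilde\Gamma}{\tilde\Gamma}=\sum_k \mu_k\xi_k^2.
\]
Using $\Var(\xi_k^2)=2$ and independence gives
\[
\Var(\scapro{L\Gamma}{\Gamma})=\sum_k\mu_k^2\Var(\xi_k^2)=2\sum_k\mu_k^2=2\norm{\Sigma^{1/2}\Re(L)\Sigma^{1/2}}_{HS}^2,
\]
which is the claimed equality.

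For the upper bound, the triangle inequality applied to $\Re(L)=\tfrac12(L+L^\ast)$ and $\norm{A}_{HS}=\norm{A^\ast}_{HS}$ give
\[
\norm{\Sigma^{1/2}\Re(L)\Sigma^{1/2}}_{HS}^2\le\norm{\Sigma^{1/2}L\Sigma^{1/2}}_{HS}^2=\trace(L^\ast\Sigma L\Sigma).
\]
Applying $\abs{\trace(AB)}\le\norm{A}_{HS}\norm{B}_{HS}$ with $A=L^\ast\Sigma$ and $B=L\Sigma$ yields
\[
\trace(L^\ast\Sigma L\Sigma)\le \norm{L^\ast\Sigma}_{HS}\norm{L\Sigma}_{HS},
\]
and the normality $LL^\ast=L^\ast L$ together with selfadjointness of $\Sigma$ gives $\norm{L^\ast\Sigma}_{HS}^2=\trace(LL^\ast\Sigma^2)=\trace(L^\ast L\Sigma^2)=\norm{L\Sigma}_{HS}^2=\norm{\Sigma L}_{HS}^2$. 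This chain of inequalities yields the stated upper bound.

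The only genuinely delicate point is the cylindrical interpretation of $\scapro{L\Gamma}{\Gamma}$: a priori, $\Gamma$ need not be a proper random element of $H$, so one must verify that the series $\sum_k\mu_k\xi_k^2$ defines the quadratic form consistently. This is ensured precisely by the Hilbert--Schmidt assumption on $M$, under which $\sum_k\mu_k\xi_k^2$ converges almost surely and in $L^2$, and the linear form $v\mapsto\scapro{v}{\Gamma}$ extends canonically to all $v$ in the domain of $\Sigma^{1/2}$, making the substitution $\Gamma=\Sigma^{1/2}\tilde\Gamma$ rigorous.
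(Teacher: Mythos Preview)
Your proof is correct and follows essentially the same strategy as the paper: reduce to a standard cylindrical Gaussian, diagonalise, and use $\Var(\xi^2)=2$ for $\xi\sim\mathcal N(0,1)$. The only cosmetic difference is that you diagonalise the selfadjoint Hilbert--Schmidt operator $M=\Sigma^{1/2}\Re(L)\Sigma^{1/2}$ directly, whereas the paper first reduces to $\Sigma=\Id$ and then diagonalises the normal operator $L$ itself; the upper bound arguments likewise use the same ingredients (Cauchy--Schwarz in the Hilbert--Schmidt inner product and normality of $L$) in a slightly different order.
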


\begin{proof}
Without loss of generality assume $\Sigma=\Id$, writing $\scapro{L\Gamma}{\Gamma}=\scapro{(\Sigma^{1/2}L\Sigma^{1/2})\Sigma^{-1/2}\Gamma}{\Sigma^{-1/2}\Gamma}$ and replacing $L$ by $\Sigma^{1/2}L\Sigma^{1/2}$ (if $\Sigma$ is not one-to-one restrict to the range of $\Sigma$).
Denote the eigenvalues of $L$ by $(\lambda_k)$ and the associated orthonormal basis of eigenvectors by $(e_k)$. Then due to $\scapro{L\Gamma}{\Gamma}=\scapro{L^\ast\Gamma}{\Gamma}$ and $\scapro{\Gamma}{e_k}\sim N(0,1)$
\begin{align*}
 \Var(\scapro{L\Gamma}{\Gamma})&=\Var(\scapro{\Re(L)\Gamma}{\Gamma})=\textstyle\sum_k\Var(\Re(\lambda_k)\scapro{\Gamma}{e_k}^2)
 =2\sum_k \Re(\lambda_k)^2,
 \end{align*}
which is $2\norm{\Re(L)}_{HS}^2$. For the upper bound we note
\[\norm{\Sigma^{1/2}\Re(L)\Sigma^{1/2}}_{HS}^2=\scapro{\Sigma \Re(L)}{\Re(L)\Sigma}_{HS}\le \norm{\Sigma\Re(L)}_{HS}^2\le \norm{\Sigma L}_{HS}^2.\]
\end{proof}

For reference, we state the \citet{weyl1912} asymptotics of the Laplacian eigenvalues, see \cite{Shubin2001} for a general discussion.
\begin{lemma}\label{lem:Weyl}
	Let $\Delta$ be the Laplacian on a smooth bounded domain of dimension $d$ with Dirichlet, Neumann or periodic boundary conditions, or on a $d$-dimensional compact manifold without boundary. Then the ordered eigenvalues $(\lambda_k)_{k\ge 1}$ of $\Delta$ satisfy
	\begin{equation}\label{EqWeyl}
		-\lambda_k\thicksim k^{2/d},\quad k\ge 2.
	\end{equation}
Under Neumann or periodic boundary conditions we have $\lambda_1=0$, under Dirichlet boundary conditions $-\lambda_1\thicksim 1$ holds.
\end{lemma}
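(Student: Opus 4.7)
The plan is to reduce the statement to the classical Weyl counting law, which asserts that in each of the listed geometries the eigenvalue counting function
\[N(\lambda):=\#\{k\ge 1\,:\,-\lambda_k\le\lambda\}\]
satisfies $N(\lambda)\sim C_d\abs{\Omega}\lambda^{d/2}$ as $\lambda\to\infty$, where $C_d=(4\pi)^{-d/2}/\Gamma(d/2+1)$ and $\abs{\Omega}$ denotes the volume of the domain or manifold. Inverting this equivalence (i.e. setting $\lambda=-\lambda_k$ and using monotonicity of $N$) gives $-\lambda_k\thicksim (k/(C_d\abs{\Omega}))^{2/d}\thicksim k^{2/d}$ for $k\ge 2$, which is the claim.

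For a bounded Euclidean domain $\Omega\subset\R^d$ with Dirichlet or Neumann boundary conditions, I would invoke the standard Dirichlet--Neumann bracketing argument (as presented for instance in \cite{Shubin2001}): one tiles an enclosing box and an inscribed tiling of $\Omega$ by axis-parallel cubes, computes the cube eigenvalues explicitly via separation of variables, and uses the min-max principle to sandwich $N(\lambda)$ between two counting functions of the same leading order $C_d\abs{\Omega}\lambda^{d/2}$. Periodic boundary conditions on a cube $[0,L]^d$ are even easier, as the eigenvalues are given explicitly by $\lambda_\ell=-(2\pi/L)^2\abs{\ell}^2$, $\ell\in\Z^d$, so that $N(\lambda)$ counts lattice points in a Euclidean ball and the order $\lambda^{d/2}$ follows from the classical lattice-point asymptotics.

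For a compact manifold without boundary, I would combine the short-time heat kernel expansion $\trace(e^{t\Delta})=(4\pi t)^{-d/2}\mathrm{vol}(M)+o(t^{-d/2})$ as $t\downarrow 0$ with Karamata's Tauberian theorem applied to the Laplace transform $\sum_k e^{-t\abs{\lambda_k}}=\int_0^\infty e^{-t\lambda}dN(\lambda)$, again yielding $N(\lambda)\sim C_d\mathrm{vol}(M)\lambda^{d/2}$.

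Finally, the assertions on $\lambda_1$ are immediate: under Neumann or periodic boundary conditions (and on the closed manifold) the constants span the kernel of $-\Delta$, giving $\lambda_1=0$; under Dirichlet conditions the Poincaré inequality forces $\lambda_1<0$, with $-\lambda_1$ bounded away from $0$ uniformly for a fixed domain. The main obstacle is purely bookkeeping: the Weyl law is well-known in each separate case, and the work consists in phrasing all four geometries in a common framework so that the same two-sided asymptotic $-\lambda_k\thicksim k^{2/d}$ can be read off simultaneously; no truly new analytic ingredient is required.
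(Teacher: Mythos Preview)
Your sketch is correct and standard, but the paper does not actually prove this lemma: it merely states the Weyl asymptotics as a known result and points to \cite{weyl1912} and \cite{Shubin2001} for details. So your proposal is considerably more detailed than what the paper provides; there is nothing to compare at the level of argument.
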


\begin{lemma} \label{lem:aux:integrals} \
	\begin{enumerate}
		\item Let $a, b, c\in\R$ with
		$(a+c)\wedge a > -1 > (b+c)\vee b$.
		Then uniformly in $t\ge 0$
		\begin{align*}
			\int_0^\infty (z^a\wedge z^b)(1 + tz)^c \,dz \sim (1+t)^c.
		\end{align*}
		
		\item
		Let $a,b,c\in\R$ with $a>b$ and $a>-1>(b+c)\vee c$.
		Then uniformly in $t\ge 0$
		\begin{align*}
			\int_0^\infty (z^a\wedge z^{b})(1\wedge\abs{z-t}^{c})\,dz \lesssim (1+t)^{b\vee c}.
		\end{align*}
		In particular, if $a\ge 0$ and $b=a+c$, the right-hand side is of order $(1+t)^{b}$.
	\end{enumerate}
\end{lemma}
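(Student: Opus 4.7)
Both parts are settled by elementary case analysis once one observes that the hypotheses force $a>-1>b$ (so that $z^a\wedge z^b$ equals $z^a$ on $[0,1]$ and $z^b$ on $[1,\infty)$) and splits the integral at suitable thresholds. No deep ingredients are needed; the main task is to route each sub-condition of the hypotheses to the piece of the integral where it is genuinely used.

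\textbf{Part (a).} The hypothesis unfolds into the four sub-conditions $a>-1$, $a+c>-1$, $b<-1$, $b+c<-1$, each of which matters. I would split at $z=1$. The tail $\int_1^\infty z^b(1+tz)^c\,dz$ is $\lesssim (1+t)^c$ by using $1+tz\le (1+t)z$ for $z\ge 1$ (which handles both signs of $c$) together with $b+c<-1$. For the head $\int_0^1 z^a(1+tz)^c\,dz$, the trivial bound $(1+tz)^c\le (1+t)^c$ suffices when $c\ge 0$; when $c<0$ I would split further at $z=(1+t)^{-1}$ to separate the regimes $1+tz\thicksim 1$ and $1+tz\thicksim tz$, and the condition $a+c>-1$ is what makes the dominant piece of the sharp order $(1+t)^c$ rather than decaying faster. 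The matching lower bound follows from restricting to $z\in[1,2]$, where $(1+tz)^c\thicksim (1+t)^c$ uniformly and $z^a\wedge z^b$ is bounded away from zero.

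\textbf{Part (b).} The hypotheses give $a>-1$, $b<-1$, $c<-1$ and $b+c<-1$. For $t\le 2$ the trivial bound $1\wedge\abs{z-t}^c\le 1$ together with integrability of $z^a\wedge z^b$ yields an $O(1)\thicksim (1+t)^{b\vee c}$ estimate. For $t\ge 2$ I would split the half-line into $[0,t/2]$, $[t/2,2t]$ and $[2t,\infty)$. On $[0,t/2]$ one has $\abs{z-t}\ge t/2$, so the weight is $\lesssim t^c$ while the remaining mass of $z^a\wedge z^b$ is bounded using $b<-1$, giving $O(t^c)$. On $[t/2,2t]$ the factor $z^a\wedge z^b=z^b\thicksim t^b$, and the substitution $u=z-t$ reduces the weight integral to $\int_{-t/2}^t(1\wedge\abs{u}^c)\,du=O(1)$ thanks to $c<-1$, yielding $O(t^b)$. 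On $[2t,\infty)$ the bound $\abs{z-t}\ge z/2$ gives $\int_{2t}^\infty z^{b+c}\,dz\thicksim t^{b+c+1}\lesssim t^b$, again by $c<-1$. Summing the three contributions produces $O(t^{b\vee c})$, hence $(1+t)^{b\vee c}$ uniformly in $t\ge 0$. The particular case $a\ge 0$, $b=a+c$ then has $b-c=a\ge 0$, so $b\vee c=b$, giving the $(1+t)^b$ bound.

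\textbf{Main obstacle.} There is no substantial difficulty; the only care needed is to track all four integrability conditions in (a) to avoid losing the sharp rate $(1+t)^c$, and to choose the three regions in (b) so that each of $c<-1$ (tail decay and integrability of the weight away from $t$) and $b<-1$ (integrability at infinity of the profile) is used exactly where it is required.
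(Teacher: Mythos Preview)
Your argument is correct in both parts, but part (a) differs noticeably from the paper's approach. The paper observes the sandwich
\[
1\wedge z \;\le\; \frac{1+tz}{1+t} \;\le\; 1\vee z,\qquad t,z\ge 0,
\]
which immediately bounds the integral divided by $(1+t)^c$ between two fixed, $t$-independent, finite integrals (finiteness being exactly the four sub-conditions). This gives the two-sided estimate in a single stroke, whereas your case-by-case splitting at $z=1$ and then at $z=(1+t)^{-1}$ achieves the same end with more bookkeeping. For part (b) the two arguments are structurally the same; the paper uses four intervals $[0,1]\cup[1,t-1]\cup[t-1,t+1]\cup[t+1,\infty)$ rather than your three $[0,t/2]\cup[t/2,2t]\cup[2t,\infty)$, but both decompositions isolate the same regimes (profile-dominated, near the singularity, far tail). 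One small imprecision: in your first region $[0,t/2]$ you invoke $b<-1$ for integrability of $z^a\wedge z^b$, but on $[0,1]$ the profile is $z^a$ and you also need $a>-1$; this is of course in the hypotheses and doesn't affect the argument.
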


\begin{proof}
		For $t, z\ge 0$ we have $1\wedge z\le\tfrac{1+tz}{1+t}\le 1\vee z$, so in (a)
		\begin{align*}
			0<\int_0^\infty (z^a\wedge z^b)(1\wedge z^c) \,dz &\le \int_0^\infty (z^a\wedge z^b)\big(\tfrac{1+tz}{1+t}\big)^c \,dz
				\le \int_0^\infty (z^a\wedge z^b)(1\vee z^c) \,dz,
		\end{align*}
		and for $a>b$ this is finite if and only if $(a+c)\wedge a>-1$ and $(b+c)\vee b<-1$.
		
		For $0\le t\le 2$ the claim in (b) reduces to $1\sim 1$, so assume $t\ge 2$. We split the integral $\int_0^\infty=\int_0^1+\int_1^{t-1}+\int_{t-1}^{t+1}+\int_{t+1}^\infty$ and treat each part separately.
		First, $\int_0^1z^a(t-z)^{c}dz\lesssim (t-1)^{c}\lesssim t^{c}$.
		Next,
		\begin{align*}
			\int_{1}^{t-1}z^{b}(t-z)^{c}\,dz
				&\lesssim t^{1+b+c}\Big(\int_{1/t}^{1/2}u^{b}\,du + \int_{1/2}^{1-1/t}(1-u)^{c}\,du\Big) \\
				&\lesssim t^{1+b+c}(1\vee\log(t)\vee t^{-b-1} + t^{-c-1}) \lesssim t^{b\vee c},
		\end{align*}
		then $\int_{t-1}^{t+1}z^{b}dz\lesssim t^{b}$ and finally
		\begin{align*}
			\int_{t+1}^\infty z^{b}(z-t)^{c}\,dz
				&\lesssim t^{1+b+c}\Big(\int_{1+1/t}^2(u-1)^{c}\,du + \int_2^\infty u^{b}(u-1)^{c}\,du\Big) \\
				&\lesssim t^{1+b+c}(t^{-c-1} + 1) \sim t^{b}.
		\end{align*}
\end{proof}

\begin{lemma}\label{lem:aux:alpha}
	Let $\alpha>0$ and $p_i,q_i\in\R$ for $i=1,2,3$. Then
	\begin{align}\label{eq:aux:alpha:rate}
		 T_n^{p_1\alpha +q_1}\eps_n^{-(p_2\alpha+q_2)}\nu_n^{-(p_3\alpha+q_3)}\gtrsim 1
	\end{align}
	holds for positive sequences $(T_n)$, $(\eps_n)$, $(\nu_n)$ if $\abs{p_1\log(T_n)+p_2\log(\eps_n^{-1})+p_3\log(\nu_n^{-1})}\rightarrow \infty$ and
	\begin{align*}
		\alpha> \limsup_{n\rightarrow\infty}\frac{-q_1\log(T_n)-q_2\log(\eps_n^{-1})-q_3\log(\nu_n^{-1})}
{p_1\log(T_n)+p_2\log(\eps_n^{-1})+p_3\log(\nu_n^{-1})}.
	\end{align*}
\end{lemma}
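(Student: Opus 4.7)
The proof is essentially bookkeeping once one takes logarithms. The first step is to convert the multiplicative inequality \eqref{eq:aux:alpha:rate} into an additive one. Setting
\[
P_n := p_1\log T_n + p_2\log\eps_n^{-1} + p_3\log\nu_n^{-1}, \qquad Q_n := q_1\log T_n + q_2\log\eps_n^{-1} + q_3\log\nu_n^{-1},
\]
one sees that \eqref{eq:aux:alpha:rate} is equivalent to the existence of a constant $C\ge 0$ (the one absorbed by $\gtrsim$) such that $\alpha P_n + Q_n \ge -C$ for all $n$ large enough.

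The key observation is then that, under the standing conventions $T_n\ge 1$ and $\eps_n,\nu_n\in(0,1]$ and the non-negativity of the $p_i$ in every application in the paper, $P_n$ is itself non-negative, so $|P_n|\to\infty$ forces $P_n\to+\infty$. Granting this, pick $\delta>0$ with $\alpha-\delta>\limsup_n(-Q_n/P_n)$; then for all sufficiently large $n$ one has $-Q_n/P_n<\alpha-\delta$, and multiplying by the eventually positive $P_n$ yields $\alpha P_n + Q_n > \delta P_n\to+\infty$. This is stronger than the required bound $\alpha P_n+Q_n\ge -C$, which finishes the argument.

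The only delicate point is the sign of $P_n$ when multiplying the limsup inequality by $P_n$: in a fully general statement one would need to separate the cases $P_n\to+\infty$ and $P_n\to-\infty$ and replace the limsup by a liminf in the latter. For the applications in the paper only the positive regime arises, so no such case distinction is necessary and the proof reduces to the elementary rearrangement above.
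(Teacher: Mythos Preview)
Your proof is correct and follows the same approach as the paper's, which is simply ``take logarithms and solve for $\alpha$''. You are in fact more careful than the paper's two-line argument: you make explicit that dividing by $P_n$ requires $P_n>0$, and you correctly observe that the lemma as stated could fail if $P_n\to-\infty$ (indeed, take $p_1=1$, $q_i=p_2=p_3=0$, $T_n=1/n$: then $|P_n|\to\infty$, $\limsup(-Q_n/P_n)=0<\alpha$, yet $T_n^\alpha\to 0$). Your justification that $P_n\ge 0$ in all applications, via the standing conventions $T_n\ge 1$, $\eps_n,\nu_n\in(0,1]$ and the non-negativity of the $p_i$, is accurate and is exactly the implicit assumption behind the paper's ``solve for $\alpha$''.
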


\begin{proof}
By taking logarithms, \eqref{eq:aux:alpha:rate} is equivalent to \[(p_1\alpha+q_1)\log(T_n)+(p_2\alpha+q_2)\log(\eps_n^{-1})+(p_3\alpha+q_3)\log(\nu_n^{-1})\ge \log(c),\]
where $c>0$ is the proportionality constant. Solve for $\alpha$.
\end{proof}

\bibliographystyle{apalike2}
\bibliography{SPDENoiseLowerBound}

\end{document}